\numberwithin{equation}{section}
\def\e{\varepsilon}
\def\epsilon{\varepsilon}
\def\eps{\varepsilon}
\newcommand{\wt}{\widetilde}
\def\alb#1\ale{\begin{align*}#1\end{align*}}
\newcommand{\eqb}{\begin{equation}}
\newcommand{\eqe}{\end{equation}}
\newcommand{\bbC}{\mathbb{C}}
\newcommand{\bbE}{\mathbb{E}}
\newcommand{\bbH}{\mathbb{H}}
\newcommand{\cC}{\mathcal{C}}
\newcommand{\cD}{\mathcal{D}}
\newcommand{\QD}{\mathrm{QD}}
\newcommand{\QT}{\mathrm{QT}}
\newcommand{\LF}{\mathrm{LF}}
\newcommand{\SLE}{\mathrm{SLE}}
\newcommand{\CLE}{\mathrm{CLE}}
\newcommand{\Wd}{\mathrm{Weld}}
\newcommand{\Md}{{\mathcal{M}}^\mathrm{disk}}
\newtheorem{theorem}{Theorem}[section]
\newtheorem{lemma}[theorem]{Lemma}
\newtheorem{proposition}[theorem]{Proposition}
\newtheorem*{proposition*}{Proposition}
\newtheorem*{corollary*}{Corollary}
\newtheorem{definition}[theorem]{Definition}
\newtheorem*{definitions*}{Definitions}
\newtheorem*{example*}{\bf Example}
\theoremstyle{remark}
\newtheorem{remark}[theorem]{Remark}
\numberwithin{equation}{section}
\renewcommand{\H}{\mathbb{H}}
\newcommand{\R}{\mathbb{R}}
\newcommand{\BD}{\mathrm{BD}}
\newcommand{\C}{\mathbb{C}}
\newcommand{\Loop}{\mathcal{L}^o}
\newcommand{\ep}[1]{\overline{\overline{#1}}}
\newcommand{\BS}{\mathrm{BS}}
\newcommand{\QA}{\mathrm{QA}}
\newcommand{\fd}{f}
\newcommand{\Mfd}{{\mathcal{M}}^\fd}
\newcommand{\Mfr}{{\mathcal{M}}^{\rm f.r.}}
\newcommand{\Mfl}{{\mathcal{M}}^{\rm f.l.}}
\newcommand{\cf}{\mathfrak{f}}
\begin{document}

\title{Annulus crossing formulae for critical planar percolation}
\date{}

\author{Xin Sun\thanks{Beijing International Center for Mathematical Research, Peking University.}  \quad \qquad Shengjing Xu\thanks{University of Pennsylvania.} \quad \qquad Zijie Zhuang$^\dagger$}

\maketitle

\begin{abstract}
We derive exact formulae for three basic annulus crossing events for the critical planar Bernoulli percolation in the continuum limit. The first is for the probability that there is an open path connecting the two boundaries of an annulus of inner radius $r$ and outer radius $R$. The second is for the probability that there are both open and closed paths connecting the two annulus boundaries. These two results were predicted by Cardy  based on non-rigorous Coulomb gas arguments. Our third result gives the probability that there are two disjoint open paths connecting the two boundaries. Its leading asymptotic as $r/R\to 0$ is captured by the so-called backbone exponent, a transcendental number recently determined by Nolin, Qian and two of the authors. This exponent is the unique real root to the equation $\frac{\sqrt{36 x +3}}{4} + \sin (\frac{2 \pi \sqrt{12 x +1}}{3} ) =0$, other than $-\frac{1}{12}$ and $\frac14$. Besides these three real roots, this equation has countably many complex roots.  
Our third result shows that these roots  appear exactly as exponents of the subleading terms in the crossing formula. This suggests that the backbone exponent is part of a conformal field theory (CFT) whose bulk spectrum contains this set of roots. Expanding the same crossing probability as $r/R\to 1$, we obtain a series with logarithmic corrections at every order, suggesting that the backbone exponent is related to a logarithmic boundary CFT. Our proofs are based on the coupling between SLE curves and Liouville quantum gravity (LQG). The key is to encode the annulus crossing probabilities by the random moduli of certain LQG surfaces with annular topology, whose law can be extracted from the dependence of the LQG annuli partition function on their boundary lengths. 
\end{abstract}

\setcounter{tocdepth} {1}
\tableofcontents
 
\section{Introduction} 
\label{sec:intro}

Bernoulli percolation is a fundamental lattice model in statistical physics, serving as an ideal playground for studying phase transitions and critical phenomena. The continuum limit of critical planar Bernoulli percolation is particularly interesting due to its conformal invariance.  A number of fundamental formulae for this model were derived through non-rigorous conformal field theory approaches, such as~\cite{cardy-formula, Watts1996, SimmonsKlebanZiff2007}. One of the most famous examples is Cardy's formula for the crossing probability of a rectangle. This formula was established for critical site percolation on the triangular lattice by Smirnov~\cite{Sm01}. 
This paves the way for describing  the scaling limit of planar percolation by  Schramm-Loewner evolution (SLE)~\cite{Sc00} curves.
In particular, it was shown in~\cite{CN06} that the full scaling limit is characterized by a collection of SLE$_6$-type loops, 
known as the conformal loop ensemble with parameter $6$ (CLE$_6$).

In~\cite{Car06}, Cardy predicted the exact formula for the probability that there is an open path crossing an annulus in the scaling limit of critical percolation. In this paper, we rigorously prove this formula. Furthermore, we prove the formula for the probability that there are both an open path and a closed path crossing the annulus, again predicted by Cardy in an earlier work~\cite{Car02}. Both formulae are expressed as ratios of Dedekind eta functions with different modular parameters. In addition, we derive the exact formula for the probability that there are two disjoint open paths crossing the annulus, namely the monochromatic two-arm crossing probability. This formula  is a series expansion in terms of the modular parameter of the annulus, which was not known in physics. In fact, the leading asymptotic of this crossing probability is given by the so-called backbone exponent, recently derived by Nolin, Qian, and two of us~\cite{NQSZ23}. Its value is a transcendental number, which is a root of an elementary equation. We show that other roots of this equation capture the asymptotics of the remaining terms in this expansion. When expanding the series in terms of the dual modular parameter, we find logarithmic behavior; see Theorem~\ref{thm:backbone-crossing}.
A physicist-oriented exposition of~\cite{NQSZ23} and our Theorem~\ref{thm:backbone-crossing} can be found in~\cite{NQSZ-physical-2024}.

Cardy's formula for the rectangular crossing can be easily derived via stochastic calculus, assuming SLE$_6$ convergence. However, this does not appear to be the case for the annulus crossing probabilities that we consider. Our derivation is based on Liouville quantum gravity (LQG) on the annulus.
The key is to understand the LQG surface that describes the scaling limit of a random triangulation of the annulus decorated by percolation configurations where the desired crossing events occur. According to the philosophy by Ang, Remy, and the first-named author in~\cite{ARS22}, the law of the modulus of this random surface should capture the corresponding crossing probability. We analyze these LQG annuli using the coupling between CLE$_6$ and LQG. The monochromatic two-arm case is technically the most difficult. See Section~\ref{subsec:proof-strategy} for an overview of the proof.

\subsection{Main results}\label{subsec:main-result}
In this paper, we focus on critical Bernoulli site percolation on the triangular lattice $\delta \mathbb{T}$ with mesh size $\delta>0$. That is, we color each vertex on $\delta \mathbb{T}$ independently as black (=open) or white (=closed) with equal probability. A black (resp.\ white) path is a sequence of neighboring black (resp.\ white) vertices on $\delta \mathbb{T}$. For $0<r<R$, let the annulus $A(r, R) := \{ z \in \mathbb{C}: r < |z|_2 < R \} $ and let $A(r,R)_\delta$ be its discretization, i.e., the largest connected set of $\delta \mathbb{T}$ that is contained in $A(r,R)$. The one-arm event is defined as follows:
\begin{equation*}
\begin{aligned}
    \mathcal{A}_B(r, R, \delta) &:= \{ \mbox{There exists a black path}\mbox{ crossing }A(r,R)_\delta \}.
\end{aligned}
\end{equation*}
Similarly, we define the polychromatic (resp.\ monochromatic) two-arm event $\mathcal{A}_{BW}(r, R, \delta)$ (resp.\ $\mathcal{A}_{BB}(r, R, \delta)$) as the event that there exist both a black and a white path (resp.\ two disjoint black paths) crossing $A(r,R)_\delta$. By the seminal work of Smirnov~\cite{Sm01}, combined with~\cite{CN06, gps-pivotal}, we know that for fixed $0<r<R$,
\begin{equation}\label{eq:def-p(r,R)}
    p_\mathsf{a}(r,R) := \lim_{\delta \rightarrow 0} \mathbb{P}[\mathcal{A}_\mathsf{a}(r, R, \delta)] \mbox{ exists for all } \mathsf{a} \in \{B, BW, BB\}.
\end{equation}
See Lemma~\ref{lem:percolation-limit} for the proof of the existence of the limit. In this paper, we will derive the exact values of these limits. 
Theorem~\ref{thm:cardy-formula} provides the exact formulae for $p_B(r,R)$ and $ p_{BW}(r,R)$, as predicted by Cardy in~\cite{Car02, Car06}.
\begin{theorem}\label{thm:cardy-formula}
For $0<r<R$, let $\tau = \frac{1}{2 \pi} \log(\frac{R}{r})$ be the modulus of $A(r,R)$. We have
\begin{align}
    p_B(r,R) &= \sqrt{\frac{3}{2}} \cdot \frac{\eta(6 i \tau ) \eta\left(\frac{3}{2} i \tau \right)}{\eta(2 i \tau ) \eta(3 i \tau )}\,; \label{eq:1arm}\\
    p_{BW}(r,R) &= \sqrt{3} \cdot \frac{ \eta(i \tau) \eta(6 i \tau)^2}{\eta(3 i \tau) \eta(2 i \tau)^2}\,,\label{eq:2arm}
\end{align}
where $\eta(z) = e^{\frac{i \pi z}{12}} \prod_{n=1}^\infty (1-e^{2 n i \pi z})$ is the \emph{Dedekind eta function}.
\end{theorem}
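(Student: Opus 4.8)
The plan is to reduce the discrete crossing probabilities to $\CLE_6$ crossing probabilities for a fixed annulus, and then to evaluate the latter through the coupling of $\CLE_6$ with $\gamma=\sqrt{8/3}$-LQG (the value matching $\kappa'=6$) on a quantum annulus, following the philosophy of \cite{ARS22}. First, by Lemma~\ref{lem:percolation-limit} and the convergence of critical percolation interfaces to $\CLE_6$ \cite{Sm01, CN06, gps-pivotal}, each $p_{\mathsf a}(r,R)$ equals the probability of the corresponding macroscopic crossing event for a $\CLE_6$ in $A(r,R)$; by conformal invariance this is a function $F_{\mathsf a}(\tau)$ of the modulus $\tau$ alone, so it suffices to identify $F_B$ and $F_{BW}$.

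Next, consider the $\gamma$-LQG quantum annulus $\QA$, a measure on annular quantum surfaces that disintegrates over the two boundary lengths and whose partition function --- an explicit ratio of Dedekind eta functions in $\tau$ for $\gamma=\sqrt{8/3}$ --- can be computed. Sampling an independent $\CLE_6$ on a $\QA$-surface (the $\CLE_6$ feels only the conformal structure), the modulus has an explicit law $\mu_{\mathrm{ref}}$ under $\QA$, and on a crossing event $E$ the decorated surface defines a sub-measure $\mu_E$ with $\frac{d\mu_E}{d\mu_{\mathrm{ref}}}(\tau)=F(\tau)$; hence it suffices to compute $\mu_E$. For $E=\mathcal A_B$ one uses that the absence of a black crossing is equivalent to a nested chain of $\CLE_6$ loops (white circuits) separating the two boundary components; cutting the quantum annulus along all separating loops of this type --- each an $\SLE_6$-type interface --- decomposes it, via conformal welding along boundary circles, into a chain of ``irreducible'' quantum annuli (those in the crossed state) welded in sequence with additive moduli. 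This renewal structure expresses $\mu_{\mathrm{ref}}$, in the Laplace variable conjugate to $\tau$, as a geometric series in the law of the irreducible piece; inverting and using the known $\mu_{\mathrm{ref}}$ produces that law, hence $\mu_{\mathcal A_B}$ (its no-separating-loop part) in closed form. For $E=\mathcal A_{BW}$ one combines color symmetry with inclusion--exclusion, $p_{BW}=2p_B-1+\mathbb P[\text{no black and no white crossing}]$, and handles the last probability by the analogous decomposition along separating circuits of both colors.

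Finally, the Radon--Nikodym derivative $F(\tau)=\frac{d\mu_E/d\tau}{d\mu_{\mathrm{ref}}/d\tau}$ is an explicit eta-quotient; the remaining multiplicative constant and the intensity of separating loops are fixed by the normalizations $F_B(\tau)\to1$ as $\tau\to0^+$ and $F_B(\tau)\to0$ as $\tau\to\infty$ (thin annuli are always crossed, fat ones never), and likewise for $F_{BW}$, which one checks using the modular transformation $\eta(-1/z)=\sqrt{-iz}\,\eta(z)$. A short simplification then yields \eqref{eq:1arm} and \eqref{eq:2arm}.

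The main obstacle is the welding step: proving that the combinatorial crossing events correspond exactly to the stated topological loop/circuit configurations in $\CLE_6$, that the cut pieces are genuine quantum annuli whose conformal welding reconstructs $\QA$ with additive moduli (in particular identifying the correct intensity of separating $\CLE_6$ loops in the coupling), and that the resulting Laplace-transform identity can be inverted in closed form. A secondary difficulty is normalizing the $\QA$ measure so that the prefactors emerge as exactly $\sqrt{3/2}$ and $\sqrt3$.
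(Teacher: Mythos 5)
Your first step (reducing to $\CLE_6$ crossing probabilities via Lemma~\ref{lem:percolation-limit} and the Smirnov--Camia--Newman convergence, and the principle from~\cite{ARS22} that the random modulus of an LQG annulus decorated with a $\CLE_6$ event encodes the crossing probability as a Radon--Nikodym derivative against the unconditioned modulus law) is exactly the high-level strategy of the paper. But the central mechanism you propose for computing that Radon--Nikodym derivative is broken: conformal welding of quantum annuli along $\SLE$-type loops does \emph{not} produce additive moduli. For any Jordan curve separating the two boundary components of an annulus $A$, the Gr\"otzsch inequality gives $\mathrm{Mod}(A_1)+\mathrm{Mod}(A_2)\le \mathrm{Mod}(A)$ with equality only when the cut is a round modular circle, which $\CLE_6$ loops almost surely are not. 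Consequently there is no ``renewal structure in the Laplace variable conjugate to $\tau$,'' no geometric-series factorization of $\widehat{\mu}_{\mathrm{ref}}$ in terms of an irreducible piece, and the inversion you describe cannot be carried out. This is not a fixable technicality: additivity of moduli under cutting is precisely the wrong invariant.

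What \emph{does} factorize under conformal welding is the boundary \emph{length} distribution, and that is where the paper does the real work. It cuts the decorated Brownian annulus along the relevant $\CLE_6$ loop boundary once (not iteratively), applies the conformal welding results of Section~\ref{sec:welding} (Proposition~\ref{prop:conformal-welding-cardy}) to identify the annular piece as a quantum surface $\mathcal{QA}$ (or $\mathcal{QA}_T$), and computes its joint boundary-length distribution from the explicit $\CLE$ conformal-radius laws of Proposition~\ref{prop:formula-CR} (this is Lemma~\ref{lem:distribution-QA}, using the Girsanov-type argument of~\cite{AHS21,ARS21}). The step you are missing entirely is the conversion from boundary-length data back to the modulus law, which is the content of Proposition~\ref{prop:kpz-annulus} (Theorem~1.6 of~\cite{ARS22}, resting on Wu's Liouville CFT annulus partition function) and is highly nontrivial; without it, knowing the welding decomposition of the surface gives you no handle on $\tau$. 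Your inclusion--exclusion identity $p_{BW}=2p_B-1+\mathbb{P}[\text{no black and no white crossing}]$ is arithmetically correct but is a different route from the paper's $p_{BW}(\tau)=2\,\mathbb{P}[\Loop\cap\partial\mathbb{D}\neq\emptyset,\ A\not\subset D_{\Loop}]$, and in any case inherits the same modulus-additivity gap. The closing normalization $F_B(\tau)\to1$ as $\tau\to0$ is fine as a way to fix an overall constant (the paper uses a similar normalization for $p_{BB}$), but cannot determine an ``intensity of separating loops,'' which is not a free parameter once the $\CLE_6$--LQG coupling is fixed.
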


Cardy formula for the rectangular crossing can be derived from a standard SLE$_6$ calculation using It\^{o} calculus; see e.g.~\cite{werner-notes}. The derivation boils down to solving a hypergeometric ODE, which can be viewed as the BPZ equation for a four-point CFT correlation function on the disk. One can try to use the same strategy to prove Theorem~\ref{thm:cardy-formula}. However, in the annulus case, one would encounter a PDE involving both $\tau$ and the location of an auxiliary point on the annulus boundary. Indeed, in~\cite{Dub04} Dub\'edat obtained a description of $p_{BW}(r,R)$ via such a PDE, and it appears hard to extract the right-hand side of~\eqref{eq:2arm} from it. 
In light of this, we obtain a nontrivial PDE result by proving~\eqref{eq:2arm}. We will review our proof of Theorem~\ref{thm:cardy-formula} based on LQG in Section~\ref{subsec:proof-strategy}.  

As shown in~\cite{Car02, Car06}, $p_{B}(r,R)$ and $p_{BW}(r,R)$ can be expressed as follows:
\begin{align}
    p_B(r, R) =\sqrt{\frac{3}{2}} \cdot \frac{  \sum_{k \in \mathbb{Z}} ( \tilde q^{2 h_{4k - \frac{1}{2},0}} - \tilde q^{2 h_{4k+\frac{3}{2},0}} )}{\prod_{n=1}^\infty (1-\tilde q^{2n})} \textrm{ and }  p_{BW}(r, R) = \sqrt{3} \cdot \frac{\sum_{k \in \mathbb{Z}} ( \tilde q^{2 h_{0,6k+1}} - \tilde q^{2 h_{0,6k+2}})}{\prod_{n=1}^\infty (1-\tilde q^{2n})};\label{eq:expand-tilde-q}\\
    p_B(r, R) = \frac{\sum_{k \in \mathbb{Z}} ( q^{h_{1,4k + 1}} - q^{h_{1,4k+3}} )}{\prod_{n=1}^\infty (1-q^n)} \textrm{ and } p_{BW}(r, R) = \frac{\sum_{k \in \mathbb{Z}} ( q^{h_{1,6k+2}} + q^{h_{1,6k+4}}- 2 q^{h_{1,6k+3}} )}{\prod_{n=1}^\infty (1-q^n)},\label{eq:expand-q}
\end{align}
where $h_{r,s} = \frac{(3r - 2s)^2-1}{24}$. In the terminology of conformal field theory,
expansions of the form~\eqref{eq:expand-tilde-q} (resp.~\eqref{eq:expand-q}) are called the closed (resp., open) channel expansions.
The expression for $h_{r,s}$ comes from the Kac formula with the Virasoro central charge equal to $0$. 
These closed channel expansions correspond to summing over the spectrum of a bulk CFT, while the open channel corresponds to a boundary CFT. See the discussion below Equation (2) in \cite{Car06}, as well as the discussion below Corollary 1.10 in~\cite{ARS22}. Despite this appealing interpretation, so far the only rigorous way to obtain~\eqref{eq:expand-tilde-q} and~\eqref{eq:expand-q} is through Theorem~\ref{thm:cardy-formula}. 

Theorem~\ref{thm:backbone-crossing} provides the closed channel and open channel expansions for $p_{BB}(r,R)$.
\begin{theorem}\label{thm:backbone-crossing}
For $0<r<R$, let $\tau = \frac{1}{2 \pi} \log(\frac{R}{r})$ and $\tilde q = e^{-2 \pi \tau}$. 
Let $\mathcal{S} = \{s\in \mathbb{C} : \sin(4 \pi \sqrt{\frac{s}{3}}) + \frac{3}{2}\sqrt{s} = 0 \} \setminus \{0, \frac13 \},$
where $\sqrt{s}$ is defined using arguments in $(-\frac{\pi}{2}, \frac{\pi}{2}]$ for $s \in \mathbb{C}$. We have:
\begin{equation}
\label{eq:thm-backbone-2}
p_{BB}(r,R) = \frac{1}{\prod_{n=1}^\infty (1-\tilde q^{2n})} \sum_{s \in \mathcal{S}} \frac{- \sqrt{3} \sin(\frac{2\pi}{3}  \sqrt{3 s}) \sin(\pi \sqrt{3 s}) }{\cos(\frac{4\pi }{3}  \sqrt{3 s}) + \frac{3 \sqrt{3}}{8 \pi} } \tilde q^{s -\frac{1}{12} }.  
\end{equation}
Let $q = e^{-\pi/\tau}$. Using the convention that $(-1)!! = 1$, we have:
\begin{equation}\label{eq:thm-backbone-1}
   p_{BB}(r, R) = \frac{1 + \sum_{j=1}^\infty \big(g_j(\tau) q^{\frac{2j^2-j}{3}} + h_j(\tau) q^{\frac{2j^2+j}{3}} \big)}{\prod_{n=1}^\infty (1-q^n)} ,
\end{equation}
where 
\begin{equation*}
\begin{aligned}
&g_j(\tau) = \sum_{\substack{m,n,k \geq 0,\\ n + 2m = j \mbox{ }{\rm or} \mbox{ } j-1}} (-1)^{j+k} \binom{n+m}{m}\binom{n}{2k}(2k-1)!! \big(\frac{\sqrt{3}(4j-1)}{4 \tau} \big)^n \big(\frac{12 \tau}{\pi(4j-1)^2} \big)^k;\\
&h_j(\tau) = \sum_{\substack{m,n,k\geq 0,\\ n + 2m = j - 1 \mbox{ }{\rm or} \mbox{ } j-2}} (-1)^{j+k} \binom{n+m}{m}\binom{n}{2k}(2k-1)!! \big(\frac{\sqrt{3}(4j+1)}{4 \tau} \big)^n \big(\frac{12 \tau}{\pi(4j+1)^2} \big)^k.
\end{aligned}
\end{equation*}
\end{theorem}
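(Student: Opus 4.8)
\emph{Overview.} Since $p_{BB}(r,R)$ depends only on the modulus $\tau=\frac{1}{2\pi}\log(R/r)$ by conformal invariance of CLE$_6$, the plan is to compute it via Liouville quantum gravity (LQG), following the Ang--Remy--Sun philosophy of~\cite{ARS22} that underlies our proof of Theorem~\ref{thm:cardy-formula}. First I would express $p_{BB}(r,R)$ as a ratio of partition functions of a CLE$_6$-decorated quantum annulus. Concretely, sample a convenient $\gamma^2=8/3$ LQG surface of annular topology with two boundary components, weighted so that its (random) modulus has a density $\mathcal Z(\tau)$, and decorate it with an independent CLE$_6$; conditionally on the modulus being $\tau$, the decorated surface is a fresh conformally invariant CLE$_6$ on an annulus of that modulus, so the conditional probability of two disjoint open crossings is exactly $p_{BB}(r,R)$. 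Hence
\[
p_{BB}(r,R)\;=\;\frac{\mathcal Z^{BB}(\tau)}{\mathcal Z(\tau)},
\]
where $\mathcal Z^{BB}(\tau)$ is the restriction of $\mathcal Z(\tau)$ to the two-arm crossing event. The denominator $\mathcal Z(\tau)$ is essentially a free-field annulus partition function and contributes the Dedekind-$\eta$ factor $\prod_{n\ge1}(1-\tilde q^{2n})$, just as in~\eqref{eq:1arm}--\eqref{eq:2arm}.

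\emph{Welding, evaluation, and the closed channel.} On the two-arm event I would cut the decorated surface along the relevant CLE$_6$ interfaces, namely the pair delimiting the innermost pair of disjoint open crossings. By conformal welding for $\gamma^2=8/3$ LQG --- the mating-of-trees and quantum-triangle calculus of~\cite{ARS22} together with its annular extensions --- this decomposes the quantum annulus into a fixed, explicit collection of mutually independent quantum disks and quantum triangles glued cyclically along boundary arcs, the modulus $\tau$ being a deterministic function of the quantum lengths of the gluing arcs. Taking Laplace transforms in these internal lengths turns the welding convolutions into products of the (explicitly known, for $\gamma^2=8/3$) quantum-disk and quantum-triangle partition functions. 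The essential input from~\cite{NQSZ23} enters precisely here: the boundary operator created at a monochromatic two-arm point carries the transcendental \emph{backbone} conformal weight, which fixes the relevant triangle weight. Performing the resulting Mellin--Barnes integral, I expect to reach a representation
\[
p_{BB}(r,R)\;=\;\frac{1}{\prod_{n\ge1}(1-\tilde q^{2n})}\cdot\frac{1}{2\pi i}\int_{\mathcal C}\frac{G(s)}{\phi(s)}\,\tilde q^{\,s-\frac{1}{12}}\,ds,\qquad \phi(s):=\sin\!\big(\tfrac{4\pi}{\sqrt3}\sqrt s\big)+\tfrac{3}{2}\sqrt s,
\]
with $G$ an explicit entire function built from trigonometric structure constants and $\mathcal C$ a vertical contour to the left of all zeros of $\phi$. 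Closing $\mathcal C$ to the right, where $\tilde q^{\,s-\frac{1}{12}}$ decays, and summing the residues at the zeros $s\in\mathcal S$ of $\phi$ yields~\eqref{eq:thm-backbone-2}; one uses $\phi'(s)=\frac{1}{2\sqrt s}\cdot\frac{4\pi}{\sqrt3}\big(\cos(\tfrac{4\pi}{3}\sqrt{3s})+\tfrac{3\sqrt3}{8\pi}\big)$, so the denominators $\cos(\tfrac{4\pi}{3}\sqrt{3s})+\tfrac{3\sqrt3}{8\pi}$ in~\eqref{eq:thm-backbone-2} are exactly the residue Jacobians. The excluded roots $s=0,\tfrac13$ correspond to the spurious real roots $-\tfrac{1}{12},\tfrac14$ of the equation in~\cite{NQSZ23}, and the smallest remaining positive $s\in\mathcal S$ reproduces the backbone exponent $s-\tfrac{1}{12}$; justifying the contour deformation and the convergence of the residue series requires the known asymptotic location of the zeros of $\phi$.

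\emph{Open-channel expansion.} To obtain~\eqref{eq:thm-backbone-1}, I would apply the modular transformation $2i\tau\mapsto-1/(2i\tau)$, i.e.\ $\tau\mapsto\frac{1}{2\tau}$ and $\tilde q\mapsto q=e^{-\pi/\tau}$, which sends $\prod_{n\ge1}(1-\tilde q^{2n})$ to $\prod_{n\ge1}(1-q^n)$ up to an $\eta$-prefactor, and then Poisson-sum the closed-channel series. Because the numerator is the character of a logarithmic (indecomposable) Virasoro module rather than of an irreducible one, its modular image is not a pure power series in $q$: each Gaussian contribution $\tilde q^{\,(\mathrm{quadratic\ in\ the\ summation\ index})}$ transforms into a $q$-series whose coefficients are Laurent polynomials in $\tau$, and it is exactly these polynomial factors --- assembled from the $(2k-1)!!$-weighted, Hermite-type sums --- that produce $g_j(\tau)$ and $h_j(\tau)$, while the pentagonal exponents $\frac{2j^2\mp j}{3}$ are the values taken by the underlying quadratic form. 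Reorganizing the double series by powers of $q$ then gives~\eqref{eq:thm-backbone-1}.

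\emph{The main obstacle.} The hard part is the welding step: identifying the exact quantum-triangle decomposition of the monochromatic two-arm configuration and evaluating it to pin down $\phi$ (and $G$). This case is genuinely harder than the one-arm and polychromatic ones of Theorem~\ref{thm:cardy-formula} because cutting along two interfaces yields more pieces and, crucially, a boundary insertion whose conformal weight is the transcendental backbone exponent rather than a Kac weight; there is therefore no degenerate-field/BPZ shortcut, and one must instead import and re-derive the relevant SLE$_6$ observable of~\cite{NQSZ23} inside the LQG framework. Secondary technical work lies in the analytic justification of the contour deformation, the convergence of the sums over $\mathcal S$ and over $j$, and the combinatorial bookkeeping that produces the closed forms for $g_j$ and $h_j$.
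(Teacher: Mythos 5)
Your high-level framework is correct and matches the paper: express $p_{BB}$ as the Radon--Nikodym derivative between the modulus law of a CLE$_6$-decorated quantum annulus and that of the free Brownian annulus (whose density carries the factor $\prod_{n\ge1}(1-\tilde q^{2n})$), extract this density from the joint law of the two boundary lengths via the inverse-Laplace identity of~\cite{ARS22} (Proposition~\ref{prop:kpz-annulus}), recognize~\eqref{eq:thm-backbone-2} as a sum of residues of the resulting Bromwich integral at the zeros of $s\mapsto \sin(4\pi\sqrt{s/3})+\frac32\sqrt s$, and obtain~\eqref{eq:thm-backbone-1} by the modular transformation $\tau\mapsto 1/(2\tau)$ plus Poisson summation, with the $(2k-1)!!$-weighted Hermite-type sums producing the polynomial prefactors. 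Your residue Jacobian $\cos(\tfrac{4\pi}{3}\sqrt{3s})+\tfrac{3\sqrt3}{8\pi}$ is also the correct one.

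However, your welding step is not the one that works, and it misses the key technical idea. You propose cutting along ``the pair of CLE$_6$ interfaces delimiting the innermost pair of disjoint open crossings''; this is not a local or welding-friendly operation and is not what the paper does. The paper cuts along a single simple closed curve: the outer boundary $\ep{\mathcal L^*}$ of the outermost CLE$_6$ loop whose outer boundary separates the two annulus boundaries (Definition~\ref{def:decorate-BA-backbone}), using the Menger-type characterization of the monochromatic two-arm event (Lemma~\ref{lem:CLE-backbone}) that says the crossing event is precisely the non-existence of such a loop in the annular region. Even having identified $\ep{\mathcal L^*}$ as the cutting curve, you would face the genuine obstacle the paper addresses: what is needed is the conformal-radius law of $\ep{\mathcal L^*}$ \emph{conditioned on the loop not touching the boundary}, and this is not available in~\cite{NQSZ23,Wu23}. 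The paper's resolution (Lemma~\ref{lem:length-distribution-backbone}) is to write the unconditional conformal-radius moment from Proposition~\ref{prop:formula-CR}(C) as a sum of the touching and non-touching contributions, identify the touching contribution with the boundary-length density of the pinched quantum annulus $\wt\QA(\gamma^2-2)$ (Proposition~\ref{prop:weld-backbone} and Lemma~\ref{lem:QA(W)-length}), and subtract, leaving two unknown constants fixed at the end by $\lim_{\tau\to0}p_{BB}(\tau)=1$. There is also a nontrivial preliminary step you gloss over: proving that the modulus law of the resulting decorated annulus really is reweighted by $p_{BB}(\tau)$ (Lemma~\ref{lem:decorate-BA-backbone}) requires a separate argument via the Brownian sphere and the SLE$_{8/3}$ loop measure (Lemma~\ref{lem:BABB-equivalent}). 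Without these two ingredients, the ``difference-of-trigonometric-terms'' structure of $g(t)$ in~\eqref{eq:lem7.3-2-1}---which is what produces both the nontrivial pole set $\mathcal S$ and the logarithmic corrections in the open channel---cannot be derived.
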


As explained in the proof of Lemma~\ref{lem:absolute-convergent}, except for the backbone exponent, no other elements of $\mathcal{S}$ are real. 
The numerical values (to three decimal places) of $\mathcal{S}$ are:
$$
\mathcal{S} = \{ 0.440,\, 2.194 \pm 0.601i,\, 5.522 \pm 1.269 i,\, 10.361 \pm 2.020 i, \ldots \},
$$
where the roots are arranged by their real parts.
Note that if $z\in \mathcal{S}$, then $\bar{z} \in \mathcal{S}$ as well, ensuring that the right-hand side of \eqref{eq:thm-backbone-2} is real. 
The closed channel expansion~\eqref{eq:thm-backbone-2} suggests that the backbone exponent is part of a conformal field theory whose bulk spectrum contains $\mathcal S$. 

In the open channel expansion~\eqref{eq:thm-backbone-1}, both $g_j(\tau)$ and $h_j(\tau)$ are polynomials in $\tau^{-1}$ with degrees $j$ and $j-1$, respectively. The first two terms are given by $g_1(\tau) = -1 - \frac{3 \sqrt{3}}{4} \tau^{-1}$ and $g_2(\tau) = 1 + (\frac{7 \sqrt{3}}{4} - \frac{9}{4 \pi}) \tau^{-1} + \frac{147}{16} \tau^{-2}$; $h_1(\tau) = -1$ and $h_2(\tau) = 1 + \frac{9 \sqrt{3}}{4} \tau$.  
As we see logarithmic corrections to the power law at every term, the open channel expansion suggests that the backbone exponent is related to a logarithmic boundary CFT. Focusing on the first logarithmic term, we get:
\begin{equation}\label{eq:asymp-tau-0}
p_{BB}(r, R) = 1 - (1 + \frac{3 \sqrt{3}}{4} \tau^{-1} ) q^{\frac{1}{3}} + o(q^{\frac{1}{3}}) \quad \mbox{as $q \to 0$.}
\end{equation}

The appearance of $\tau^{-1}  q^{\frac{1}{3}}$ can be explained by a qualitative argument, as we now elaborate. By conformal invariance, it is equivalent to consider percolation on a cylinder of size $1 \times \tau $ with periodic boundary conditions in the 1-direction. The modulus of this cylinder is $\tau$. In this setting, $1 - p_B(r, R)$ represents the probability of having a white cluster wrapping around the cylinder. By Russo-Seymour-Welsh estimates, this probability is up-to-constants equivalent to the probability of having a hard crossing in a rectangle of size $1 \times \tau $, which is explicitly determined by Cardy's formula~\cite{cardy-formula, Sm01}. By simple calculations, we can show that $1 - p_B(r, R)$ is up-to-constants equivalent to $q^{\frac{1}{3}}$ as $\tau$ tends to 0. Similarly, $1 - p_{BW}(r, R)$ is also up-to-constants equivalent to $q^{\frac{1}{3}}$ as $\tau$ tends to 0. These two asymptotic expansions are consistent with~\eqref{eq:expand-q}. To see the appearance of $\tau^{-1}  q^{\frac{1}{3}}$ in~\eqref{eq:asymp-tau-0}, note that $p_B(r, R) - p_{BB}(r, R)$ is the probability of having one black crossing without having two disjoint black crossings. This event is equivalent to the existence of a white cluster wrapping around the cylinder, except for one black site on it that belongs to a black crossing. Due to the freedom of this single black crossing, we can deduce that $p_B(r, R) - p_{BB}(r, R)$ is up-to-constants equivalent to $\tau^{-1} q^{1/3}$.

The logarithmic nature of the CFT behind percolation has been extensively discussed in physics; see e.g.~\cite{Gurarie-log, Ridout-2013-log}.
Recently, Camia and Feng~\cite{CamiaFeng2024} rigorously established the logarithmic behavior of certain percolation observables using qualitative arguments. Compared to~\cite{CamiaFeng2024}, we determine the exact expansion at all orders, where the terms after $\tau^{-1}  q^{\frac{1}{3}}$ are difficult to deduce through qualitative arguments. On the other hand, a highlight of~\cite{CamiaFeng2024} is the identification of logarithmic behavior in both the bulk and the boundary. In the fourth point of Section~\ref{subsec:outlook}, we give an example of an exactly solvable observable for percolation on the annulus with logarithmic behavior at all orders in the closed channel expansion, which corresponds to the bulk. See also~\cite{Peltola-log-UST} for the logarithmic behavior in the setting of uniform spanning tree and SLE$_8$. 

\subsection{Proof strategy based on quantum gravity on the annulus}
\label{subsec:proof-strategy}
Our proof is based on quantum gravity on the annulus. The scaling limit of uniform type random planar maps is a natural discretization of random surfaces in pure 2D quantum gravity. The scaling limit as metric-measure spaces was first established for the sphere in~\cite{legall-uniqueness, miermont-brownian-map}, and later extended to other topologies~\cite{bet-mier-disk, bet-mier-surface}. The limiting object is called the \textit{Brownian surface}. For the Brownian sphere and the Brownian disk, Miller and Sheffield \cite{lqg-tbm1, lqg-tbm2, lqg-tbm3} showed that the geometry is governed by 
Liouville quantum gravity (LQG) with parameter $\gamma=\sqrt{8/3}$ under a conformal coordinate. The random geometry of LQG is governed by random fields that locally look like a Gaussian free field.
It was understood in~\cite{AHS17, cercle2021unit, AHS21} that variants of the Gaussian free field describing Brownian surfaces can be obtained from Liouville conformal field theory (CFT).  
This also applied to the Brownian annulus, except that the conformal modulus of the annulus becomes random. 
The exact law of the annulus was conjectured in bosonic string theory~\cite{Remy-annulus, Martinec-annulus} and proved in~\cite{ARS22}. The idea is that on the one hand, 
the exact formula for the partition function of Liouville CFT on a fixed annulus with given boundary lengths was derived by Wu~\cite{BaojunWu22}. 
On the other hand, the partition function for the Brownian annulus with the same boundary length dependence is known from the enumeration results for planar maps. 
Comparing these two, one can extract the law of the modulus for the Brownian annulus.

We use $p_B(r, R)$ to demonstrate our proof strategy for Theorems~\ref{thm:cardy-formula} and \ref{thm:backbone-crossing}. Consider the critical percolation on uniform random planar maps with annular topology, under the condition that there exists an open path crossing the annulus. Conditioning on the conformal modulus, the scaling limit of this random annulus can still be described by Liouville CFT, as in the case of the Brownian annulus. However, the law of the moduli for these two random annuli are different: the new law favors configurations that are easier to form a percolation crossing. In fact, the Radon-Nikodym derivative between these two laws is exactly $p_B(r, R)$.
The first step in our derivation of $p_B(r, R)$ is to make this intuition rigorous. We do not appeal to the discrete setting but instead argue directly in the continuum using CLE$_6$ coupled with $\sqrt{8/3}$-LQG.
Our second step is to compute the partition function for the new annulus with given boundary lengths. This, combined with the argument from~\cite{ARS22},
gives the modulus law for the new annulus and hence solves $p_B(r, R)$. The derivation of $p_{BW}(r, R)$ and $p_{BB}(r, R)$ follows the same strategy.

Unlike in the case of the Brownian annulus, we do not use the enumeration results for planar maps to  compute the partition function for the new random annulus with given boundary lengths. We think that the variants corresponding to $p_B(r, R)$ and $p_{BW}(r, R)$ might be accessible from discrete methods, 
but the case for $p_{BB}(r, R)$ would be challenging, as the backbone exponent is transcendental. 
The key technical tool we use instead is a relation between the law of the conformal radii of certain SLE curves on the disk (viewed from the center) and their LQG boundary lengths. This type of relation was first observed in~\cite{AHS21} based on the conformal welding of LQG surfaces and the Girsanov theorem.
Based on this relation, the partition function for the new random annulus in solving $p_B(r, R)$ and $p_{BW}(r, R)$ can be obtained in various  ways by considering a proper conformal radius observable. Our approach relies on the realization of the Brownian annulus by removing a metric ball from the Brownian disk. The needed formulae for the conformal radius are from~\cite{SSW09, ASYZ24}, respectively. 

The case for $p_{BB}(r, R)$ requires additional ideas. The needed conformal radius observable is the following. 
Given a loop from $\CLE_6$ on the unit disk $\mathbb D$, its outer boundary is defined to be the boundary of the unbounded component after removing the loop from the plane. Then the outer boundaries of all CLE$_6$ loops form a random collection of simple loops, each of which looks like an SLE$_{8/3}$-type curve by SLE duality~\cite{Dub05, zhan2008duality}. Let $\eta$ be the outermost loop of this new loop ensemble surrounding the origin. Let ${\rm CR}(0, D_\eta)$ be the conformal radius of the domain $D_\eta$ inside $\eta$, viewed from the origin. The law of ${\rm CR}(0, D_\eta)$ is essentially known from~\cite{NQSZ23, Wu23}, which can give the law of the LQG length of $\eta$ when the $\sqrt{8/3}$-LQG geometry is put on $\mathbb D$ to make it a Brownian disk. However, what we actually need is the law of ${\rm CR}(0, D_\eta)$ conditioning on $\eta \cap \partial   \mathbb{D}=\emptyset$. This is because on this event, the LQG surface bounded by $\partial \mathbb D$ and $\eta$ forms a random annulus whose random modulus encodes $p_{BB}(r, R)$. 
Deriving this conditioned conformal radius is the main technical challenge addressed in Section~\ref{sec:backbone}.

\subsection{Outlook and perspectives}
\label{subsec:outlook}
In this section, we discuss some future work and open questions.

\begin{itemize} 
\item Our method based on LQG on the annulus is quite general. While the Bernoulli percolation and $\sqrt{8/3}$-LQG surfaces offer additional convenience, we can apply the same method to other models that have SLE and CLE as their scaling limits, including Fortuin-Kasteleyn (FK) percolation, the ${\rm Q}$-Potts spin model, and the ${\rm O}(n)$-loop model. For models other than the Bernoulli percolation, the boundary conditions along the annulus will make a difference. We plan to apply the LQG method to study various cases of interest in physics. In particular, for general FK percolation, assuming its convergence to CLE, we plan to derive similar formulae as in~\eqref{eq:thm-backbone-2} and~\eqref{eq:thm-backbone-1} for the counterpart of $p_{BB}(r,R)$ under wired boundary conditions.

\item  Although our formulae for $p_B(r,R)$, $p_{BW}(r,R)$, and $p_{BB}(r,R)$ have a CFT interpretation in terms of the closed channel and open channel expansions, the underlying CFT structure is still poorly understood. 
The cases of $p_B(r,R) $ and $p_{BW}(r,R)$ should be related to the critical loop CFT recently studied by Nivesvivat, Ribault, Jacobsen, Saleur, and others, which is still an active research topic in the physics community; see~\cite{Nivesvivat:2023kfp} and references therein. In~\cite{ACSW-integrability}, Ang, Cai, Wu, and the first-named author computed certain three-point functions related to this CFT, expressed through the so-called imaginary DOZZ formula. The proof is also based on LQG. It would be extremely interesting to reveal the Virasoro action and the operator product expansion of this CFT, as predicted in physics.  Since our formula for $p_{BB}(r,R)$ is new and does not correspond to any existing CFT in the literature, it poses an exciting challenge to physicists. This also adds motivation for deriving the monochromatic $k$-arm exponents with $k\ge 3$ and their corresponding annulus crossing probabilities.

\item Both our formulae for $p_B(r,R)$ and $p_{BW}(r,R)$ were predicted by Cardy using the Coulomb gas method, where the scaling limit of lattice observables is encoded by functional integrals with respect to the Gaussian free field.  It would be extremely interesting to understand how to implement this powerful method rigorously. This method has predicted various other interesting formulae. For example, Cardy~\cite{Car02} gave the percolation crossing formulae for alternating $2n$-arm events for $n \geq 1$, where $p_{BW}(r,R)$ corresponds to $n=1$.
This method can also be applied to lattice models on the torus. For example, it has been used to  derive  the law of the number of non-contractible clusters in percolation on the torus; see~\cite{dFSaleurZuber1987, Pinson1994}. Proving these predictions is currently beyond the reach of our LQG method.

\item 

We see logarithmic behavior in the open channel expansion~\eqref{eq:thm-backbone-2} of $p_{BB}(r,R)$. We can also propose other percolation events in the annulus where logarithmic behavior appears in the closed channel expansion, related to logarithmic bulk CFT. For instance, in Bernoulli percolation on the annulus with black-white boundary conditions, let $p(r,R)$ be the probability that there is exactly one non-contractible percolation interface. Our method can be used to prove the following formula from~\cite{Car06}:
$$
p(r,R) = \frac{1}{\prod_{n=1}^\infty (1-q^n)} \sum_{m \in \mathbb{Z}} (-1)^{m-1} m q^{\frac23 m^2 - m + \frac13}.
$$
Using the Poisson summation formula, the closed channel expansion for $p(r,R)$ is
$$
\frac{3 \sqrt{3}}{4} \cdot \frac{1}{\prod_{n=1}^\infty (1-\tilde q^{2n})} \sum_{k \geq 1, {\rm odd}} \tilde q^{\frac{3 k^2}{16}- \frac{1}{12}} \Big( \tau k \sin(\frac34 \pi k) - \cos( \frac34 \pi k) \Big),
$$
where we see logarithmic corrections to the power law at every term. In particular, the leading term of this probability as $\tau \to \infty$ (or equivalently, $\tilde q \to 0$) is $\frac{3 \sqrt{6}}{8}(\tau + 1) \tilde q^{\frac{5}{48}} + o(\tilde q^{\frac{5}{48}})$.

\item Equation~\eqref{eq:expand-tilde-q} implies that for critical Bernoulli percolation on the triangular lattice with mesh size $\delta$, $\mathbb{P}[\mathcal{A}_B(r,1,\delta)] = \sqrt{\frac{3}{2}} r^{\frac{5}{48}}(1 - r^{\frac32} + o(r^{\frac32}))$, where $\mathcal{A}_B(r,1,\delta)$ is the one-arm crossing event on the annulus $A(r,1)$. It is plausible that such sharp estimates still hold when $r$ is of microscopic order $\delta$. From~\eqref{eq:expand-tilde-q} and~\eqref{eq:thm-backbone-2}, we can also extract similar sharp estimates for the polychromatic and monochromatic two-arm events. In~\cite{DGLZ22}, this type of estimate is proved for polychromatic arm events without identifying the subleading order. Under this asymptotic assumption, Ziff~\cite{Ziff11} heuristically derived the so-called correct-to-scaling exponent for percolation. We expect that obtaining the precise subleading order at the microscopic level is technically challenging. 

\end{itemize}

\medskip
\noindent\textbf{Organization of the paper.} In Section~\ref{sec:percolation-cle}, we give some background on critical percolation and CLE$_6$. In Section~\ref{sec:lqg}, we provide preliminaries on LQG and Brownian surfaces. In Section~\ref{sec:welding}, we introduce more LQG surfaces and prove conformal welding results for certain CLE loops. We prove Theorems~\ref{thm:cardy-formula} and \ref{thm:backbone-crossing} in Sections~\ref{sec:proof-Cardy} and \ref{sec:backbone}, respectively, modulo a few integral identities that we prove in Section~\ref{sec:calculation}.

\medskip
\noindent\textbf{Acknowledgements.} We thank Jesper Jacobsen and Baojun Wu for helpful discussions. X.S.\ was supported by National Key R\&D Program of China (No.\ 2023YFA1010700). 
X.S. was also partially supported by the NSF grant DMS-2027986, the NSF Career grant DMS-2046514, and a start-up grant from the University of Pennsylvania 
at the beginning of the project.
Z.Z.\ is partially supported by NSF grant DMS-1953848. 
\section{Critical Bernoulli percolation and CLE$_6$}\label{sec:percolation-cle}
In this section, we review the relation between critical planar Bernoulli percolation and CLE$_6$. In Section~\ref{subsec:percolation}, we introduce critical Bernoulli percolation on the triangular lattice and the quad-crossing space. In Section~\ref{subsec:cle6}, we introduce CLE$_6$ as the scaling limit of critical percolation and use it to represent the annulus crossing probabilities. We also review the exact law of conformal radii of some CLE$_6$ loops in Proposition~\ref{prop:formula-CR}.

For $z \in \mathbb{C}$ and $0 < r <R$, let $B(z,R) = \{ w \in \mathbb{C}: |w-z|_2 < R\}$ and $A(z,r,R) = \{ w \in \mathbb{C}: r < |w-z|_2 < R\}$. When $z=0$, we will write $A(r,R)$ short for $A(0,r,R)$. Let $\mathbb{D} = B(0,1)$ be the unit disk. A doubly connected domain is a connected domain in $\mathbb{C}$ whose complement consists of two connected components. In this paper, we only consider doubly connected domains whose boundaries are Jordan curves. By the Riemann mapping theorem, any such doubly connected domain $D$ can be conformally mapped to an annulus $A(e^{-2 \pi \tau}, 1)$ for some $\tau>0$. Let ${\rm Mod}(D) = \tau$ be the modulus of $D$. We call such a domain \textit{nice} if both of its boundaries are piecewise smooth curves.

\subsection{Critical Bernoulli percolation and its quad-crossing scaling limit}
\label{subsec:percolation}

In this paper, we focus on the triangular lattice $\mathbb{T} = (V_{\mathbb{T}}, E_{\mathbb{T}})$ with 
$$V_{\mathbb{T}} = \{ x+ye^{i \pi/3} \in \mathbb{C}: x,y \in \mathbb{Z}\} \quad \mbox{and} \quad E_{\mathbb{T}} = \{ (v,v') \in V_{\mathbb{T}} : |v-v'|_2 = 1\}.$$
We define the critical Bernoulli site percolation on $\mathbb{T}$ by coloring each vertex either black (=open) or white (=closed) with equal probability. We also consider percolation on the triangular lattice $\mathbb{T}$ as a random coloring of hexagons on its dual lattice $\mathbb{T}^*$, where each vertex (resp.\ edge) on $\mathbb{T}$ corresponds to a hexagonal face (resp.\ edge) on $\mathbb{T}^*$. Let $\mathbb{P}_{\delta}$ denote the law of the critical Bernoulli site percolation on the rescaled lattice $\delta \mathbb{T}$ with mesh size $\delta > 0$. For a connected domain $D$ and $\delta>0$, let its discretization $D_\delta$ be the largest connected set of $\delta \mathbb{T}$ that is contained in $D$. Note that when $\delta$ is small enough, such set is unique. For a set $A \subset V_{\mathbb{T}}$, let $\partial_i A := \{ x \in A: \exists y \in V_{\mathbb{T}} \setminus A \mbox{ such that }(x,y) \in E_{\mathbb{T}}\}$.

For a doubly connected domain $D$, when $\delta$ is small enough, $\partial_i D_\delta$ has two connected components corresponding to the inner and outer boundaries of $D$, respectively. Let the one-arm event $\mathcal{A}_B(D_\delta)$ be the event that there exists a black path connecting the two connected components of $\partial_i D_\delta$. Similarly, we define the polychromatic (resp.\ monochromatic) two-arm event $\mathcal{A}_{BW}(D_\delta)$ (resp.\ $\mathcal{A}_{BB}(D_\delta)$) as the event that there exist both a black and a white path (resp.\ two disjoint black paths) connecting the two connected components of $\partial_i D_\delta$. The following lemma is a consequence of the fact that critical Bernoulli percolation on the triangular lattice has a conformally invariant scaling limit~\cite{Sm01, CN06, gps-pivotal}.

\begin{lemma}\label{lem:percolation-limit}
    For any doubly connected domain $D$, $\lim_{\delta \rightarrow 0} \mathbb{P}_{\delta}[\mathcal{A}_B(D_\delta)]$ exists and moreover the limit is a decreasing and continuous function of ${\rm Mod}(D)$. The same holds for $\lim_{\delta \rightarrow 0} \mathbb{P}_{\delta}[ \mathcal{A}_{BW}(D_\delta)]$ and $\lim_{\delta \rightarrow 0} \mathbb{P}_{\delta}[\mathcal{A}_{BB}(D_\delta)]$.
\end{lemma}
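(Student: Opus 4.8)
The plan is to obtain everything from the conformal invariance of the quad-crossing scaling limit of critical percolation~\cite{Sm01,CN06,gps-pivotal} together with RSW estimates. By the Riemann mapping theorem any doubly connected Jordan domain $D$ is conformally equivalent to the round annulus $A(e^{-2\pi\tau},1)$ with $\tau={\rm Mod}(D)$; since the scaling limit is conformally invariant, once the limit is known to exist it is a function of $\tau$ alone (the usual care is needed because a conformal map does not respect the lattice, so one approximates $D$ from inside and outside by nice subdomains and sends the approximation to zero). It therefore suffices to take $D=A(e^{-2\pi\tau},1)$ and to show that for each $\mathsf a\in\{B,BW,BB\}$ the limit $p_{\mathsf a}(\tau):=\lim_{\delta\to0}\mathbb P_\delta[\mathcal A_{\mathsf a}(A(e^{-2\pi\tau},1)_\delta)]$ exists and is strictly decreasing and continuous in $\tau$. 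For existence I would first record the elementary \emph{restriction inclusions}: for $0<r_2<r_1<1$ and all small $\delta$ one has $\mathcal A_{\mathsf a}(A(r_2,1)_\delta)\subseteq\mathcal A_{\mathsf a}(A(r_1,1)_\delta)$, because any black path, pair of black and white paths, or pair of disjoint black paths crossing the wider annulus $A(r_2,1)$ contains, in the sub-annulus $A(r_1,1)$, a crossing (resp.\ pair of crossings) of the narrower one. By planar duality on $\mathbb T$ one rewrites $\mathcal A_B(A(r,1)_\delta)^c$ as the existence of a white circuit around $0$ in $A(r,1)_\delta$, and $\mathcal A_{BW},\mathcal A_{BB}$ as finite Boolean combinations of crossing and circuit events; each is measurable for the quad-crossing $\sigma$-algebra, and by RSW it is a continuity set for the scaling-limit law (marginal crossings pinned to the prescribed circles have probability zero). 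Weak convergence in the quad-crossing space~\cite{gps-pivotal}, together with the standard estimate that the $O(\delta)$-collar of $\partial A(r,1)$ changes the crossing status with probability $o(1)$, then yields the existence of $p_{\mathsf a}(\tau)$ and its monotonicity.

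To upgrade to \emph{strict} monotonicity, fix $0<r_2<r_1<1$ and some $r_1'\in(r_2,r_1)$. The configurations in the disjoint regions $\{r_1<|z|<1\}$ and $\{r_2<|z|<r_1'\}$ are independent, so by RSW there is, with probability at least some $c>0$ uniformly in $\delta$, a configuration in $\{r_1<|z|<1\}$ realizing $\mathcal A_{\mathsf a}(A(r_1,1)_\delta)$ together with a white circuit around $0$ inside $\{r_2<|z|<r_1'\}$; the latter is a white circuit around $0$ in $A(r_2,1)_\delta$, so it forces $\mathcal A_{\mathsf a}(A(r_2,1)_\delta)$ to fail (each of the three events needs at least one black crossing of $A(r_2,1)$). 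Since $\mathcal A_{\mathsf a}(A(r_2,1)_\delta)\subseteq\mathcal A_{\mathsf a}(A(r_1,1)_\delta)$, this gives $\mathbb P_\delta[\mathcal A_{\mathsf a}(A(r_1,1)_\delta)]-\mathbb P_\delta[\mathcal A_{\mathsf a}(A(r_2,1)_\delta)]\ge c$, hence in the limit $p_{\mathsf a}(A(r_1,1))>p_{\mathsf a}(A(r_2,1))$. As $r_2<r_1$ means $A(r_2,1)$ has the larger modulus, $p_{\mathsf a}$ is strictly decreasing in $\tau$.

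It remains to prove \emph{continuity}, which I expect to be the main obstacle. Monotonicity already gives one-sided limits with $p_{\mathsf a}(\tau^+)\le p_{\mathsf a}(\tau)\le p_{\mathsf a}(\tau^-)$, so only jumps must be excluded. For each $\mathsf a$, the event $\mathcal A_{\mathsf a}(A(\rho,1))^c$ is decreasing in $\rho$ and is realized by a suitable extremal blocking configuration (a white circuit for $\mathsf a=B$; a white or a black circuit for $\mathsf a=BW$; a white circuit or a one-point bottleneck for $\mathsf a=BB$); up to null sets it equals $\{\rho^\ast\ge\rho\}$ for a genuine random radius $\rho^\ast$, the innermost radius reached by that configuration, so $1-p_{\mathsf a}(A(\rho,1))=\mathbb P[\rho^\ast\ge\rho]$ and continuity of $p_{\mathsf a}$ is equivalent to $\rho^\ast$ having no atoms. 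This atomlessness I would deduce from the explicit atomless laws of the conformal radii of the relevant outermost CLE$_6$-type loops around $0$ — via~\cite{SSW09} when $\mathsf a\in\{B,BW\}$ and via the conformal radius studied in~\cite{NQSZ23,Wu23} when $\mathsf a=BB$ — or, more by hand, by bounding $p_{\mathsf a}(\tau^-)-p_{\mathsf a}(\tau^+)$ by $\limsup_{\delta\to0}\mathbb P_\delta[\mathcal A_{\mathsf a}(A(r_n,1)_\delta)\setminus\mathcal A_{\mathsf a}(A(r_n',1)_\delta)]$ for $r_n'<r<r_n$ with $r_n',r_n\to r$ (here $r=e^{-2\pi\tau}$) and showing this vanishes via RSW, since the difference event forces a degenerate arm pattern localized on $\{|z|=r\}$. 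By comparison, the bookkeeping in the existence step — quad-crossing measurability, the lattice collar, and the conformal-map approximation — is routine.
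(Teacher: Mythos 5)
Your overall strategy for $\mathsf{a}\in\{B,BW\}$—reduce to the round annulus by conformal invariance, get monotonicity from the restriction inclusion, rule out jumps for continuity—is essentially the paper's, and the RSW argument you add for \emph{strict} monotonicity is correct and slightly stronger than what the lemma asks (only ``decreasing''). Your continuity route via atomlessness of a random blocking radius is a genuinely different flavour from the paper's route (which bounds the difference event by a localized boundary arm event and invokes the half-plane $3$-arm exponent), but it is a viable alternative once one has the CLE$_6$ description in hand; note, though, that what is atomless in the references you cite is the \emph{conformal radius}, not the Euclidean inradius $\rho^\ast$, so an extra step is needed there.

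The real gap is in the case $\mathsf{a}=BB$, and it is exactly the point the paper flags. You assert that $\mathcal{A}_{BB}$ is ``a finite Boolean combination of crossing and circuit events'' in the quad-crossing framework. That is not so: ``two disjoint black crossings'' imposes a planar disjointness constraint that no finite family of quad-crossing/circuit events captures (by Menger's theorem it is an intersection over \emph{all} vertices of modified one-arm events, a genuinely second-order event, which is precisely why the monochromatic exponent differs from the polychromatic one). Correspondingly, the quad-crossing convergence result you invoke (\cite[Lemma 2.9]{gps-pivotal}) covers one-arm and polychromatic arm events but not the monochromatic two-arm event, as the paper explicitly notes. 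The paper instead proves $\mathsf{a}=BB$ via the CLE$_6$ interface representation: $\mathcal{A}_{BB}(D_\delta)$ holds iff no percolation interface has an external frontier separating the two boundary components (Lemma~\ref{lem:CLE-backbone}), and passing this to the limit requires first establishing the joint convergence of $(\omega_\delta,\Gamma_\delta)$ on doubly connected domains (Lemma~\ref{lem:CLE-double-domain}), plus the whole-plane $6$-arm and half-plane $4$-arm estimates to control near-pinch points. Your own continuity step for $BB$ already tacitly leans on this CLE$_6$ description (via the conformal radius law from \cite{NQSZ23,Wu23}); the clean fix is to route the \emph{existence} step for $BB$ through the same description rather than through quad-crossings.
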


\begin{remark}\label{rmk:doubly-connected}
    Theorems~\ref{thm:cardy-formula} and~\ref{thm:backbone-crossing} are stated for the continuum limit of the critical planar percolation on $A(r,R)$. By Lemma~\ref{lem:percolation-limit}, the same results hold for any doubly connected domain. Moreover, the crossing probabilities only depend on the conformal modulus of the domain. In light of this, we will use $p_\mathsf{a}(\tau)$ in most parts of the paper to represent the scaling limit of crossing probabilities, where $\mathsf{a} \in \{B, BW, BB\}$ represents different arm events and $\tau$ is the modulus of the doubly connected domain. In particular, for any $R>r>0$, $D = A(r,R)$, and $\mathsf{a} \in \{B, BW, BB\}$, we have
\begin{equation}
\label{eq:percolation-converge-annulus}
\lim_{\delta \rightarrow 0}\mathbb{P}_{\delta}[\mathcal{A}_\mathsf{a}(D_\delta)] = p_\mathsf{a}(\tau) \quad \mbox{with}\quad \tau = \frac{1}{2 \pi} \log(\frac{R}{r}).
\end{equation}
It is easy to see from percolation estimates that $\lim_{\tau \rightarrow 0} p_\mathsf{a}(\tau) = 1$.
\end{remark}

In the rest of this subsection, we provide the proof of Lemma~\ref{lem:percolation-limit} for the case $\mathsf{a} \in \{B, BW\}$. The case $\mathsf{a} = BB$ will be postponed to Section~\ref{subsec:cle6}, which relies on the convergence of percolation towards CLE$_6$. We first review the quad-crossing space introduced in~\cite{ss-quad-crossing} as a way to describe the scaling limit of percolation. Let $E \subset \mathbb{C}$ be an open domain. A quad is a homeomorphism $Q$ from $[0,1]^2$ into $E$. Let $\mathcal{Q}_E$ be the space of all quads equipped with the following metric:
$$
d_\mathcal{Q}(Q_1,Q_2):= \inf_\phi \sup_{z \in [0,1]^2} |Q_1(z) - Q_2(\phi(z))|,
$$where the infimum is over all homeomorphisms $\phi: [0,1]^2 \rightarrow [0,1]^2$ that preserve the four corners $\{0,1\} \times \{0,1\}$. A crossing of a quad $Q$ is a connected closed subset of $Q([0,1]^2)$ that intersects both $Q(\{0\} \times [0,1])$ and $Q(\{1\} \times [0,1])$. In light of this, there is a natural partial order on quads: we write $Q_1 \leq Q_2$ if and only if every crossing of $Q_1$ contains a crossing of $Q_2$. 

A subset $S \subset \mathcal{Q}_E$ is called hereditary if for any $Q_1 \in S$ and $Q_2 \in \mathcal{Q}_E$ satisfying $Q_1 \leq Q_2$, we also have $Q_2 \in S$. We call a closed hereditary subset of $\mathcal{Q}_E$ a quad-crossing configuration on $E$ and denote the space of quad-crossing configurations by $\mathcal{H}(E)$. According to~\cite{ss-quad-crossing}, $\mathcal{H}(E)$ can be endowed with a metric $d_\mathcal{H}$ such that the topological space $(\mathcal{H}(E), d_\mathcal{H})$ is a separable compact Hausdorff space and satisfies the following properties. For any $Q \in \mathcal{Q}_E$, the set $\boxminus_Q := \{S \in \mathcal{H}(E): Q \in S \}$ is closed, and for any open subset $U \subset \mathcal{Q}_E$, the set $\boxdot_U := \{S \in \mathcal{H}(E): S \cap U = \emptyset \}$ is closed. In addition, for any dense subset $\mathcal{Q}_0 \subset \mathcal{Q}_E$, the events $\{\boxminus_Q: Q \in \mathcal{Q}_0\}$ generate the Borel $\sigma$-algebra of $(\mathcal{H}(E), d_\mathcal{H})$.

Any discrete percolation configuration can naturally induce a quad-crossing configuration. Specifically, we associate $\omega \in \mathcal{H}(E)$ with it such that for all $Q \in \mathcal{Q}_E$, $Q \in \omega$ if and only if the union of percolation-wise open hexagons contains a crossing of the quad $Q$. For $\delta>0$, let $\omega_\delta$ be the random variable in $\mathcal{H}_E$ induced by a Bernoulli-$\frac{1}{2}$ site percolation sampled from $\mathbb{P}_{\delta}$. With slight abuse of notation, we also let $\mathbb{P}_{\delta}$ denote the law of $\omega_\delta$. It was proved in Section 2.3 of \cite{gps-pivotal}, based on~\cite{Sm01,CN06}, that $\mathbb{P}_{\delta}$ weakly converges to a measure on $\mathcal{H}(E)$ for the $d_\mathcal{H}$-metric. This is due to the fact that $(\mathcal{H}(E), d_\mathcal{H})$ is a compact topological space, and hence, the sequence of measures $(\mathbb{P}_\delta)_{\delta>0}$ is tight. Moreover, for any finite set of quads, the joint law of the crossing events of these quads weakly converges to a unique limit characterized by SLE$_6$ curves. Hence, $(\mathbb{P}_\delta)_{\delta>0}$ weakly converges to a limit, which we denote by $\mathbb{P}_E$.

For a conformal map $f: E_1 \rightarrow E_2$ and a quad-crossing configuration $\omega \in \mathcal{H}(E_1)$, the pushforward $\widetilde \omega := f \circ \omega \in \mathcal{H}(E_2)$ is defined such that $\boxminus_{f(Q)}(\widetilde \omega) = \boxminus_Q(\omega)$ for all $Q \in \mathcal{Q}_{E_1}$. Similarly, for open domains $E_2 \subset E_1$ and $\omega \in \mathcal{H}(E_1)$, the restriction $\widehat \omega := \omega|_{E_2} \in \mathcal{H}(E_2)$ is defined such that $\boxminus_{Q}(\widehat \omega) = \boxminus_Q(\omega)$ for all $Q \in \mathcal{Q}_{E_2}$.

\begin{lemma}\label{lem:local}
    The family of measures $(\mathbb{P}_E)_{E \subset \mathbb{C}}$ satisfies the following properties:
    \begin{enumerate}[(i)]
        \item For any open domains $E_2 \subset E_1$, $\mathbb{P}_{E_2}$ has the same law as $\omega|_{E_2}$ where $\omega$ is sampled from $\mathbb{P}_{E_1}$. \label{claim:lem-local-2}
        
        \item For any two simply or doubly connected domains $E_1$ and $E_2$, and a conformal map $f: E_1 \rightarrow E_2$, we have $f \circ \mathbb{P}_{E_1} = \mathbb{P}_{E_2}$.\label{claim:lem-local-1}
    \end{enumerate}
    
\end{lemma}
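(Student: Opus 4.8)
The plan is to prove Lemma~\ref{lem:local} by tracking how the two operations --- restriction to a sub-domain and pushforward by a conformal map --- interact with the discrete approximations $\mathbb P_\delta$, and then passing to the limit. Both claims should follow from the fact that $\mathbb P_E$ was \emph{defined} as the weak limit of the discrete measures $\mathbb P_\delta$ on $\mathcal H(E)$, together with the characterization of the limit by the joint law of crossing events of finitely many quads.

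\textbf{Proof of \eqref{claim:lem-local-2}.} First I would observe that the restriction map $\omega \mapsto \omega|_{E_2}$ from $\mathcal H(E_1)$ to $\mathcal H(E_2)$ is continuous for the $d_{\mathcal H}$-metrics: indeed, for any quad $Q \in \mathcal Q_{E_2} \subset \mathcal Q_{E_1}$ the event $\boxminus_Q$ is preserved under restriction by definition, and since the events $\{\boxminus_Q : Q \in \mathcal Q_0\}$ generate the Borel $\sigma$-algebra for any dense $\mathcal Q_0$, continuity (hence measurability) follows from the structure of the topology on $\mathcal H$ recalled above. Next, at the discrete level the restriction of the quad-crossing configuration induced by a Bernoulli-$\tfrac12$ percolation on $\delta\mathbb T$ inside $E_1$ is exactly the quad-crossing configuration induced by the same percolation read inside $E_2$; in other words, the pushforward of $\mathbb P_\delta$ on $\mathcal H(E_1)$ under restriction equals $\mathbb P_\delta$ on $\mathcal H(E_2)$. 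Passing $\delta \to 0$ and using that $\mathbb P_\delta \Rightarrow \mathbb P_{E_1}$ together with the continuity of the restriction map, the pushforward of $\mathbb P_{E_1}$ under restriction is the weak limit of the pushforwards, namely $\lim_\delta \mathbb P_\delta|_{E_2} = \mathbb P_{E_2}$. This is precisely the assertion that $\mathbb P_{E_2}$ has the law of $\omega|_{E_2}$ with $\omega \sim \mathbb P_{E_1}$.

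\textbf{Proof of \eqref{claim:lem-local-1}.} Here the discrete percolation measures are not themselves conformally invariant, so the argument must go through the continuum characterization rather than through a lattice identity. The plan is: since $f \colon E_1 \to E_2$ is a conformal homeomorphism, the induced map $\omega \mapsto f\circ\omega$ on quad-crossing configurations is a homeomorphism $\mathcal H(E_1) \to \mathcal H(E_2)$ (it sends $\boxminus_Q$ to $\boxminus_{f(Q)}$ bijectively and $f(\mathcal Q_{E_1}) = \mathcal Q_{E_2}$), so it is in particular Borel measurable, and $f\circ\mathbb P_{E_1}$ is a well-defined Borel probability measure on $\mathcal H(E_2)$. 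It then suffices to show $f\circ\mathbb P_{E_1} = \mathbb P_{E_2}$, and by the generating property of crossing events it is enough to match the joint law of the crossing events $\{\boxminus_{Q_j}\}_{j=1}^m$ for finitely many quads $Q_1,\dots,Q_m \in \mathcal Q_{E_2}$. Under $f\circ\mathbb P_{E_1}$ this joint law equals the joint law of $\{\boxminus_{f^{-1}(Q_j)}\}$ under $\mathbb P_{E_1}$, which by the known convergence of percolation is the joint law of the corresponding $\mathrm{SLE}_6$-crossing events in $E_1$; conformal invariance of $\mathrm{SLE}_6$ (equivalently, of $\mathrm{CLE}_6$) under $f$ then identifies this with the joint law of the $\mathrm{SLE}_6$-crossing events of $Q_1,\dots,Q_m$ in $E_2$, which is the joint law of $\{\boxminus_{Q_j}\}$ under $\mathbb P_{E_2}$. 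Since this holds for every finite collection of quads, the two measures agree. Note that for this step one needs the limiting object $\mathbb P_E$ to be characterized by a \emph{conformally covariant} family (the $\mathrm{SLE}_6$/$\mathrm{CLE}_6$ crossing probabilities), which is exactly the content of~\cite{Sm01, CN06, gps-pivotal}; this is also why the statement is restricted to simply or doubly connected domains, where the conformal invariance of the percolation scaling limit is available.

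\textbf{Main obstacle.} The routine parts are the topological facts about the restriction and pushforward maps on $\mathcal H$; the real content is part~\eqref{claim:lem-local-1}, where one must be careful that conformal invariance is not a property of the discrete model but only of the scaling limit, so the argument cannot be a lattice-level bijection and must instead invoke the $\mathrm{SLE}_6$ characterization of the limit together with its conformal invariance. A secondary point requiring care is that one should verify the family of crossing events genuinely determines the law on $\mathcal H$ --- this is supplied by the recalled fact that $\{\boxminus_Q : Q \in \mathcal Q_0\}$ generates the Borel $\sigma$-algebra for dense $\mathcal Q_0$ --- and that the pushforward under a homeomorphism commutes with taking weak limits, which is standard.
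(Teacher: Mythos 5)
Your proof of Claim~\eqref{claim:lem-local-2} matches the paper's approach: the restriction is exact at the discrete level and one passes to the limit; your extra detail about continuity of the restriction map on $\mathcal{H}$ is fine and harmless.

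Your proof of Claim~\eqref{claim:lem-local-1} takes a genuinely different route --- matching the joint laws of finitely many crossing events via an $\mathrm{SLE}_6$ characterization --- and this is where there is a gap. The $\mathrm{SLE}_6$/$\mathrm{CLE}_6$ characterization of the joint law of quad-crossing events, and its conformal invariance, are established in the references you cite (\cite{CN06, gps-pivotal}) \emph{only for simply connected domains} (essentially \cite[Theorem 7]{CN06}). Asserting that ``the known convergence of percolation'' gives the joint law of crossing events in a doubly connected $E_1$ in terms of a conformally invariant $\mathrm{SLE}_6$ description is precisely what needs to be proved, so the step is circular. One cannot simply fix this by localization either: while any \emph{single} quad image is compact and simply connected, and can be enclosed in a simply connected subdomain where Claim~\eqref{claim:lem-local-2} plus the simply connected case of Claim~\eqref{claim:lem-local-1} determine its marginal crossing law, a finite collection of quads in an annulus may together wind around the hole and admit no common simply connected enclosing subdomain, so the \emph{joint} law is not directly pinned down by this reduction.

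The paper handles exactly this obstruction by cutting $E_1$ along a smooth arc to produce a simply connected subdomain $E_1'$, applying the simply connected case of Claim~\eqref{claim:lem-local-1} to the map $f|_{E_1'}$, and then invoking \cite[Theorem 1.19]{ss-quad-crossing}: the quad-crossing configuration on $E_1$ is a.s.\ determined by its restriction to $E_1'$ when the cut is smooth. This determination lemma is the missing ingredient in your argument --- it is what allows the coupling built on the cut domain to propagate to an a.s.\ equality $f\circ\omega_1 = \omega_2$ on all of $E_1$, and hence to equality of the laws $f\circ\mathbb{P}_{E_1} = \mathbb{P}_{E_2}$. The paper then extends from nice (piecewise smooth boundary) doubly connected domains to general ones by approximation, a point your argument also does not address but which is minor once the determination step is in place.
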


\begin{proof}
    Claim~\eqref{claim:lem-local-2} follows from the scaling limit result and the fact that the discrete percolation configurations satisfy the restriction property. Next, we prove Claim~\eqref{claim:lem-local-1}. The case for simply connected domains was proved in Section 2.3 of~\cite{gps-pivotal}, based on~\cite[Theorem 7]{CN06}. We first extend the result to nice doubly connected domains. For two nice doubly connected domains $E_1$ and $E_2$ with a conformal map $f: E_1 \rightarrow E_2$, let $\omega_1$ (resp.\ $\omega_2$) be a sample from $\mathbb{P}_{E_1}$ (resp.\ $\mathbb{P}_{E_2}$). We cut $E_1$ into a simply connected domain $E_1'$ using a smooth cut. Then, we have $f \circ \mathbb{P}_{E_1'} = \mathbb{P}_{f(E_1')}$, and thus, there exists a coupling such that $f \circ (\omega_1|_{E_1'}) = \omega_2|_{f(E_1')}$. Since the cut is smooth, by~\cite[Theorem 1.19]{ss-quad-crossing}, $\omega_1|_{E_1'}$ and $\omega_2|_{f(E_1')}$ can almost surely determine $\omega_1$ and $\omega_2$, respectively. Moreover, for any fixed $Q \in \mathcal{Q}_{E_1}$, we can use $f \circ \mathbb{P}_Q = \mathbb{P}_{f(Q)}$ to check that under this coupling, $\boxminus_{Q}(\omega_1) = \boxminus_{f(Q)}(\omega_2)$ a.s. This implies that $f \circ \omega_1 = \omega_2$ a.s., and thus $f \circ \mathbb{P}_{E_1} = \mathbb{P}_{E_2}$. The general case of Claim~\eqref{claim:lem-local-1} follows by approximating doubly connected domains with increasing sequences of nice doubly connected domains.
\end{proof}

Now we express arm events as measurable events with respect to the quad-crossing space as in Section 2.4 of \cite{gps-pivotal}. Let $D$ be a doubly connected domain and consider the quad-crossing space $(\mathcal{H}(D), d_\mathcal{H})$. Let $A \subset D$ be a doubly connected domain such that $D \setminus \overline A$ has two connected components, both with annular topology. For $\omega \in \mathcal{H}(D)$, the polychromatic two-arm event $\widetilde{\mathcal{A}}_{BW}(A)$ occurs if and only if there exist two quads $Q_1, Q_2 \in \mathcal{Q}(D)$ such that $Q_1 \in \omega$, $Q_2 \not \in \omega$, and $Q_1(\{0\} \times [0,1])$ and $Q_1(\{1\} \times [0,1])$ (resp.\ $Q_2([0,1]\times \{0\})$ and $Q_2([0,1]\times \{1\})$) are contained in the two connected components of $D \setminus \overline A$ respectively; see Figure~\ref{fig:0}. The one-arm event $\widetilde{\mathcal{A}}_B(A)$ can be defined similarly. By \cite[Lemma 2.5]{gps-pivotal}, both of these events are open in $(\mathcal{H}(D), d_\mathcal{H})$. 

\begin{figure}[h]
\centering
\includegraphics[scale = 0.8]{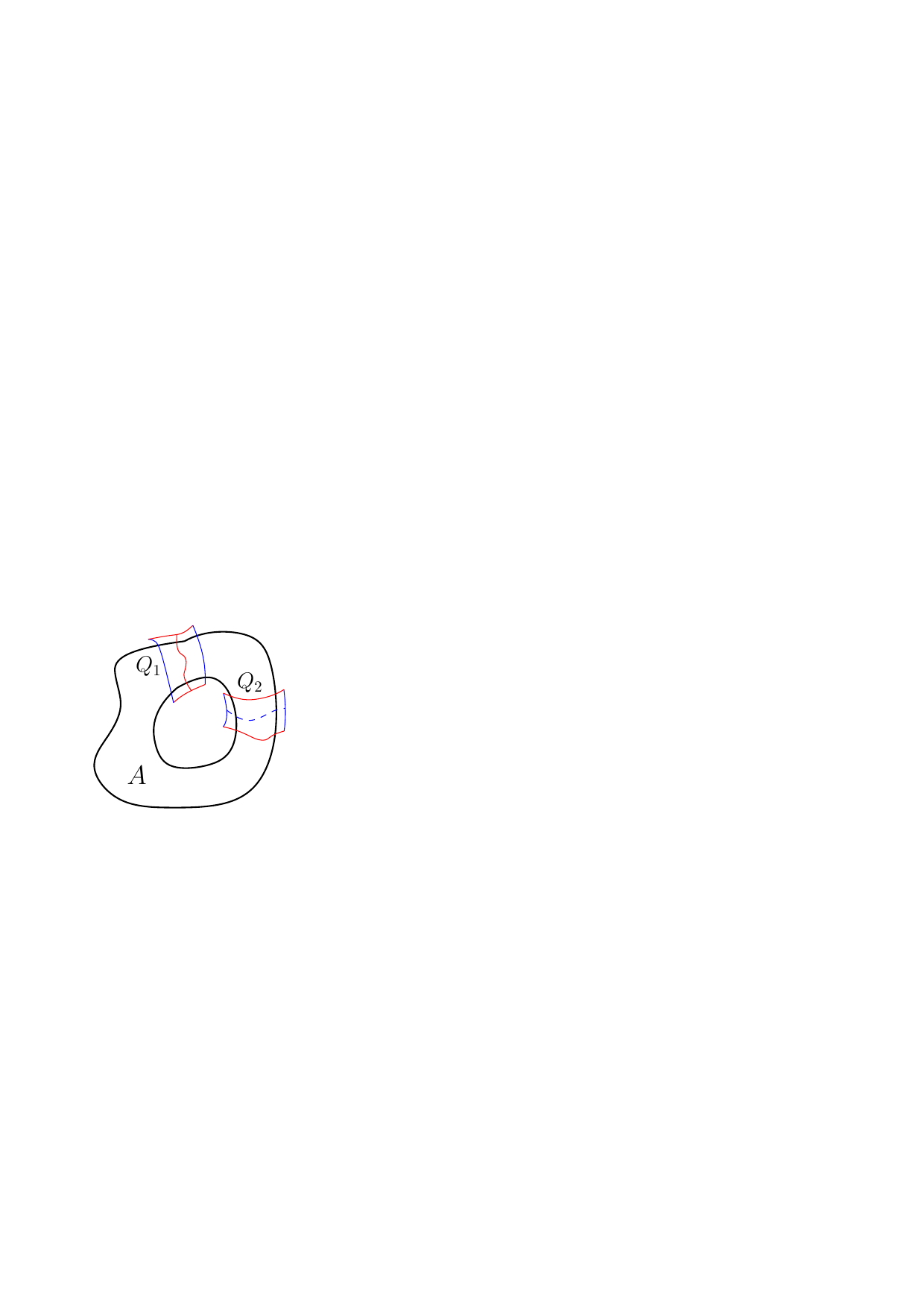}
\caption{Illustration of $\widetilde{\mathcal{A}}_{BW}(A)$. The red boundaries represent $Q_i(\{0\} \times [0,1])$ and $Q_i(\{1 \} \times [0,1])$, while the blue boundaries represent $Q_i([0,1] \times \{0\})$ and $Q_i([0,1] \times \{1\})$ for $i \in \{1,2\}$. The red crossing in $Q_1$ represents an open crossing. The dashed blue crossing in $Q_2$ represents a closed crossing, which exists because $Q_2 \not \in \omega$.} 
\label{fig:0}
\end{figure}

\begin{proof}[Proof of Lemma~\ref{lem:percolation-limit} for the case $\mathsf{a} \in \{ B, BW \}$]
    We will only present the proof for $\mathsf{a} = BW$, and the case for $\mathsf{a} = B$ follows the same argument verbatim.
    
    We begin with the case where $D$ is a nice doubly connected domain. For $\epsilon>0$, let $D^\epsilon$ be the largest connected component of $\{z \in D: d(z, \partial D) > \epsilon\}$. Consider the arm events of $D^\epsilon$ defined with respect to the quad-crossing space $\mathcal{H}(D)$, and recall the definition of $\mathcal{A}_{BW}(D_\delta)$ before Lemma~\ref{lem:percolation-limit}. We first show that $\lim_{\delta \rightarrow 0} \mathbb{P}_{\delta}[\mathcal{A}_{BW}(D_\delta)]$ exists, and moreover,
    \begin{equation}\label{eq:prop2.1-0}
    \lim_{\delta \rightarrow 0} \mathbb{P}_{\delta}[ \mathcal{A}_{BW}(D_\delta)] = \mathbb{P}_D[\omega \in \cap_{\epsilon>0} \widetilde{\mathcal{A}}_{BW}(D^\epsilon)].
    \end{equation}
    
    By \cite[Lemma 2.9]{gps-pivotal}, we have 
    \begin{equation}\label{eq:prop2.1-1}
    \lim_{\delta \rightarrow 0} \mathbb{P}_{\delta}[\omega_\delta \in \widetilde{\mathcal{A}}_{BW}(D^\epsilon)] = \mathbb{P}_D[\omega \in \widetilde{\mathcal{A}}_{BW}(D^\epsilon)] \quad \mbox{for all $\epsilon>0$.}
    \end{equation}The analog of~\eqref{eq:prop2.1-1} for the one-arm event is also proved in \cite[Lemma 2.9]{gps-pivotal}, but the case for the monochromatic two-arm event has not been addressed there. This is the main reason we separate the proof for the case $\mathsf a = BB$.

    Next, we prove the following inequality using percolation estimates:
    \begin{equation}\label{eq:prop2.1-3}
    \limsup_{\delta \rightarrow 0} \big{|}\mathbb{P}_{\delta}[ \mathcal{A}_{BW}(D_\delta)] - \mathbb{P}_{\delta}[ \omega_\delta \in \widetilde{\mathcal{A}}_{BW}(D^\epsilon) ] \big{|} \leq o(\epsilon),
    \end{equation}
    which will be used to show that the left-hand side of~\eqref{eq:prop2.1-1} converges as $\epsilon$ tends to zero. Consider the same sample from $\mathbb{P}_{\delta}$ for both events in~\eqref{eq:prop2.1-3}. By definition, we have $\mathcal{A}_{BW}(D_\delta) \subset \widetilde{\mathcal{A}}_{BW}(D^\epsilon)$ for all sufficiently small $\delta$. We claim that there exists a constant $C>0$ depending only on $D$ such that on the event $\widetilde{\mathcal{A}}_{BW}(D^\epsilon) \setminus \mathcal{A}_{BW}(D_\delta)$, there exists some point $x$ on $\partial_i D_\delta$ with a polychromatic $3$-arm event occurring in $A(x, C\epsilon, C^{-1})$. Suppose that there exists a crossing open cluster of $D^\epsilon$ but not of $D_\delta$ (the case for the existence of such closed cluster can be treated similarly). Let $y$ be any intersection point of the crossing open cluster of $D^\epsilon$ with $\partial D^\epsilon$. Let $x$ be the closest point on $\partial_i D_\delta$ to $y$. If the open cluster does not intersect the connected component of $\partial_i D_\delta$ containing $x$, then along the boundary of the crossing open cluster, we can find two disjoint closed paths from $B(x, C \epsilon)$ to distance $C^{-1}$ for some constant $C>0$. In addition, an open path can be found within the open cluster. This proves the claim. Using the fact that the percolation half-plane $3$-arm exponent is larger than 1~\cite{smirnov-werner-percolation} and that the boundary of $D$ is piecewise smooth, we see that $\mathbb{P}_{\delta}[\widetilde{\mathcal{A}}_{BW}(D^\epsilon) \setminus \mathcal{A}_{BW}(D_\delta)] \leq o(\epsilon)$ for all $\delta>0$. This proves~\eqref{eq:prop2.1-3}. Combining~\eqref{eq:prop2.1-1} and \eqref{eq:prop2.1-3} and taking $\epsilon$ to zero yields~\eqref{eq:prop2.1-0}, where we also used that $\widetilde{\mathcal{A}}_{BW}(D^\epsilon)$ is an increasing event in $\epsilon$.
    
    By~~\eqref{eq:prop2.1-0}, $\lim_{\delta \rightarrow 0} \mathbb{P}_{\delta}[\mathcal{A}_{BW}(D_\delta)]$ can be expressed as the probability of an event measurable with respect to the quad-crossing space $\mathcal{H}(D)$. Hence, by~\eqref{claim:lem-local-1} in Lemma~\ref{lem:local}, this limiting crossing probability depends only on the modulus for nice doubly connected domains. Since $\mathcal{A}_{BW}(D_\delta)$ is a decreasing event with respect to the domain $D$, we have that $\lim_{\delta \rightarrow 0} \mathbb{P}_{\delta}[\mathcal{A}_{BW}(D_\delta)]$ is a decreasing function with respect to ${\rm Mod}(D)$. Using similar arguments to~\eqref{eq:prop2.1-3}, we also see that it is a continuous function of ${\rm Mod}(D)$. 
    
    Finally, we extend the result to general doubly connected domains. For any doubly connected domain $D$, we can approximate it by a decreasing or increasing sequence of nice doubly connected domains such that the moduli of these
    domains converge to ${\rm Mod}(D)$. Combining this with the fact that $\mathcal{A}_{BW}(D_\delta)$ is a decreasing event with respect to the domain $D$ and the continuity of the limiting crossing probabilities with respect to the modulus, we conclude that $\lim_{\delta \rightarrow 0} \mathbb{P}_{\delta}[\mathcal{A}_{BW}(D_\delta)]$ exists and depends only on ${\rm Mod}(D)$. \qedhere
    
\end{proof}

\subsection{CLE$_6$ background and its encoding of the crossing events}
\label{subsec:cle6}

In this section, we first review the construction of CLE$_6$ on the disk and its relation to the scaling limit of critical Bernoulli percolation. Then, in Lemma~\ref{lem:CLE-12arm}, we relate $p_B(\tau)$ and $p_{BW}(\tau)$ to CLE$_6$ on the disk. Next, we introduce CLE$_6$ on the annulus by viewing it as the scaling limit of critical percolation (Lemma~\ref{lem:CLE-double-domain}). In Lemma~\ref{lem:CLE-backbone}, we relate the monochromatic two-arm event to CLE$_6$ on the annulus. As a consequence, we complete the proof of Lemma~\ref{lem:percolation-limit} for the case $\mathsf{a} = BB$. We also review some exact formulae for the conformal radii of certain CLE loops in Proposition~\ref{prop:formula-CR}.

Let us now provide some background on CLE$_6$ on the disk. It was introduced in \cite{CN06} as the scaling limit of critical percolation. The general case where $\kappa \in (8/3,8)$ was constructed in \cite{Sheffield09, SW12}. We will review its construction on the unit disk $\mathbb{D}$ using the continuum exploration tree~\cite{Sheffield09}. Let $a_1, a_2,\ldots$ be a countable dense set of points in $\mathbb{D}$. For $\kappa = 6$, the radial SLE$_6$ process satisfies the target invariance property; see \cite[Proposition 3.14]{Sheffield09}. Therefore, we can find a coupling of radial SLE$_6$ curves in $\mathbb{D}$ from 1 targeted at $a_1,a_2,\ldots$ such that for any $k,l \geq 1$, the radial SLE$_6$ processes targeted at $a_k$ and $a_l$ are the same up to the first time it separates $a_k$ and $a_l$, and then it branches into two curves which continue in the two connected components containing $a_k$ and $a_l$ respectively.

For each $a \in \overline{\mathbb{D}}$, we can take a subsequence $(a_{k_n})$ converging to $a$, and uniquely define a curve $\eta^a$ targeted at $a$ which follows the law of a radial SLE$_6$ curve. Without loss of generality, we assume $a_1 = 0$. Now we review the construction of the CLE loop $\Loop$ surrounding $0$ which almost surely exists. 

\begin{enumerate}[(i)]

    \item Let $\eta:=\eta^o$ be the radial SLE$_6$ curve in $\mathbb{D}$ from 1 and targeted at $0$. Let $\sigma$ be the first time that $\eta$ forms a closed loop around $0$ in the counterclockwise direction.

    \item Let $\sigma'$ be the last time before $\sigma$ that $\eta$ forms a closed loop around 0 ($\sigma' = 0$ if there is no such loop), and $z$ be the leftmost intersection point of $\eta([\sigma', \sigma]) \cap \partial (\mathbb{D} \setminus \eta([0,\sigma']))$ on the boundary of the connected component of $\mathbb{D} \setminus \eta([0,\sigma'])$ containing 0.

    \item Let $\theta$ be the last time before $\sigma$ that $\eta$ visits $z$. We reparametrize the branch $\eta^{z}$ such that $\eta^z ([0,\theta]) = \eta ([0,\theta])$. Then $\Loop$ is defined to be the loop $\eta^z ([\theta,\infty))$.  
\end{enumerate}
The loop $\Loop$ also agrees in law with the concatenation of $\eta([\theta,\sigma])$ and an independent chordal $\SLE_6$ curve in the connected component of $\mathbb{D} \backslash \eta([0,\sigma])$ containing $z$, from $\eta(\sigma)$ to $z$. For $k \geq 2$, the corresponding loop $\mathcal{L}^{a_k}$ surrounding $a_k$ can be constructed analogously. The non-nested CLE$_6$ is then defined by the collection of loops $\{\mathcal{L}^{a_k} : k \geq 1  \}$. By conformal invariance, we can extend this definition to any simply connected domain. (Nested) CLE$_6$ can be obtained by iteratively sampling non-nested CLE$_6$ in the simply connected domains enclosed by these loops. For $\kappa \in (4,8)$, we can obtain the non-nested CLE$_\kappa$ by replacing the radial SLE$_6$ processes in the above construction with radial SLE$_\kappa(\kappa - 6)$ processes whose force point is immediately to the left of the starting point. The nested version follows from the same iteration procedure.

For a continuous loop in $\mathbb{C}$, we can parametrize it by $\mathbb{R} / \mathbb{Z}$, and the distance between two continuous loops is defined by
$$
d_\ell(\gamma_1, \gamma_2):=\inf_\phi \sup_{t \in \mathbb{R}/ \mathbb{Z}} | \gamma_1(t) -  \gamma_2(\phi(t))|,
$$
where the infimum is over all homeomorphisms $\phi: \mathbb{R}/ \mathbb{Z} \rightarrow \mathbb{R}/ \mathbb{Z}$. Collections of loops will be equipped with the Hausdorff distance induced by $d_\ell$. With abuse of notation, we will also use $d_\ell$ to denote the distance on collections of loops. 

By the seminal works~\cite{Sm01, CN06}, we know that the scaling limit of critical Bernoulli percolation can be described by CLE$_6$. Specifically, consider the critical Bernoulli percolation on $\mathbb{D}_\delta$ for $\delta>0$, where all the sites outside $\mathbb{D}_\delta$ are considered to be black. The percolation interfaces are the circuits on $\delta \mathbb{T}^*$ that separate the black and white clusters; see Figure~\ref{fig:1} (left). Then the percolation interfaces converge in law to CLE$_6$ on $\mathbb{D}$ under the distance $d_\ell$. In particular, the outermost percolation interface that surrounds 0 converges in law to $\Loop$ under the distance $d_\ell$.

\begin{figure}[h]
\centering
\includegraphics[scale = 0.8]{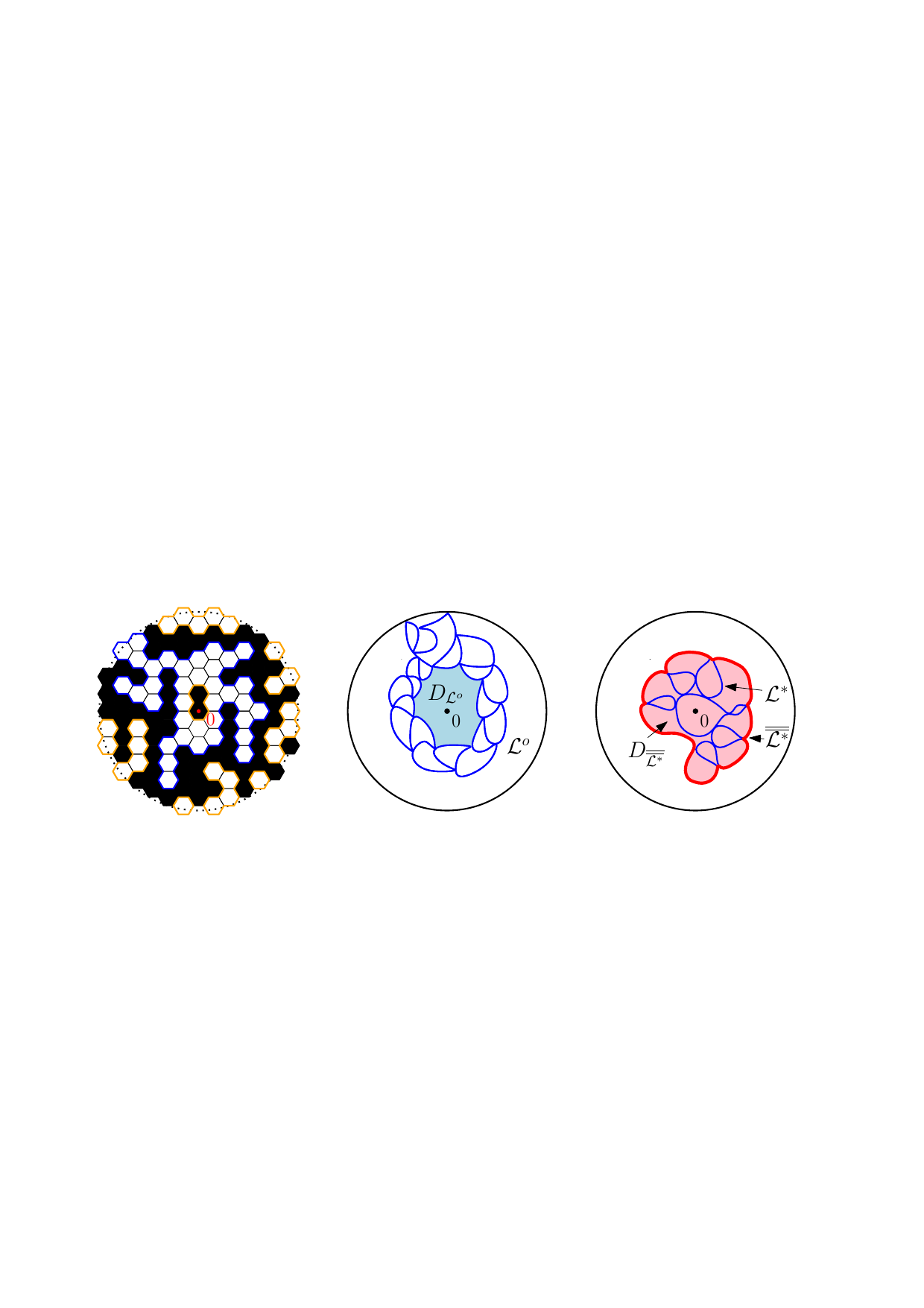}
\caption{\textbf{Left:} Critical Bernoulli percolation on $\mathbb{D}_\delta$, where all vertices outside $\mathbb{D}_\delta$ (not drawn in the figure) are assumed to be black. The colored circuits are percolation interfaces between black and white clusters, with the blue circuit being the outermost one surrounding 0. As $\delta \rightarrow 0$, these circuits converge in law to CLE$_6$. \textbf{Middle:} $\Loop$ is the outermost CLE$_6$ loop that surrounds 0 and the light blue region $D_{\Loop}$ is the simply connected component of $\mathbb{D} \setminus \Loop$ that contains 0. In this figure, we have $\Loop \cap \partial \mathbb{D} \neq \emptyset$. \textbf{Right:} The blue loop $\mathcal{L}^*$ is the outermost CLE$_6$ loop whose outer boundary $\ep{\mathcal{L}^*}$, colored in red, surrounds 0. The pink region $D_{\ep{\mathcal{L}^*}}$ is the simply connected component of $\mathbb{D} \setminus \ep{\mathcal{L}^*}$ that contains 0. In this figure, we have $\mathcal{L}^* \cap \partial \mathbb{D} = \emptyset$.} 
\label{fig:1}
\end{figure}

Using this convergence result, we can derive the following lemma. Recall  $p_{B}(\tau)$ and $p_{BW}(\tau)$ from Lemma~\ref{lem:percolation-limit}.

\begin{lemma}\label{lem:CLE-12arm}
    For any Jordan loop $\ell$ in $\mathbb{D}$ that surrounds $0$, let $A$ be the connected component of $\mathbb{D} \setminus \ell$ that contains 0, and $\tau = {\rm Mod}(\mathbb{D} \setminus \overline A)$ where $\overline A$ is the closure of $A$. Sample a CLE$_6$ on $\mathbb{D}$ and let $\Loop$ be the outermost CLE$_6$ loop that surrounds 0. Let $D_{\Loop}$ be the connected component of $\mathbb{D} \setminus \Loop$ that surrounds 0; see Figure~\ref{fig:1} (middle). Then we have:
    \begin{equation}
        \label{eq:CLE-12arm}
        p_B(\tau) = 1 - \mathbb{P}[A \subset D_{\Loop}] \quad \mbox{and} \quad p_{BW}(\tau) = 2 \cdot \mathbb{P}[\Loop \cap \partial \mathbb{D} \neq \emptyset, A \not \subset D_{\Loop}].
    \end{equation}
\end{lemma}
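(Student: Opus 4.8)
\textbf{Proof proposal for Lemma~\ref{lem:CLE-12arm}.}

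The plan is to translate the two crossing probabilities into events about the outermost CLE$_6$ loop $\Loop$ surrounding $0$, using the convergence of percolation interfaces to CLE$_6$ and the quad-crossing description of arm events established in Section~\ref{subsec:percolation}. It suffices to prove the identities for a reference annular domain, say $D\setminus\overline A = A(e^{-2\pi\tau},1)$ with $\ell=\partial A$ a round circle, since by Lemma~\ref{lem:percolation-limit} and Lemma~\ref{lem:local}\eqref{claim:lem-local-1} both sides depend only on $\tau={\rm Mod}(D\setminus\overline A)$; conformal invariance of CLE$_6$ takes care of the general Jordan loop $\ell$. Throughout I will use that $D_\delta$ with black outer boundary converges to CLE$_6$ on $\mathbb D$ in the loop metric $d_\ell$, and that the outermost interface around $0$ converges to $\Loop$.

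First, the one-arm identity. Working with percolation on $\mathbb D_\delta$ with all sites outside $\mathbb D_\delta$ colored black, the complement event $\mathcal A_B(r,R,\delta)^c$ is that there is \emph{no} black crossing of the annulus $A(r,R)_\delta$, which is equivalent (by planar duality for site percolation on $\mathbb T$) to the existence of a white circuit in $A(r,R)_\delta$ separating the inner boundary from the outer boundary. Since the outer boundary is black, such a white circuit is surrounded by a percolation interface (the outermost one around $0$) that stays inside $A(r,R)$, i.e.\ it does not touch $\partial B(0,R)$ and hence, after rescaling so that $R$ corresponds to the unit disk, the limiting loop $\Loop$ satisfies $\overline A\subset D_{\Loop}$. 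Conversely, if $\overline A\subset D_{\Loop}$ then the region between $\partial\mathbb D$ and $\Loop$ contains, with the correct colour, a separating white circuit, so no black crossing exists. Making this rigorous: express $\mathcal A_B(D_\delta)^c$ through the quad-crossing configuration as in the proof of Lemma~\ref{lem:percolation-limit}, pass to the limit to get a $\mathbb P_D$-measurable event, and identify it with $\{A\subset D_{\Loop}\}$ using that $A\subset D_{\Loop}$ is an open event in the loop topology (inner boundary a positive distance from $\Loop$) up to a $\mathbb P$-null set where $\Loop$ is tangent to $\ell$. This gives $p_B(\tau)=1-\mathbb P[A\subset D_{\Loop}]$.

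Second, the polychromatic two-arm identity. The event $\mathcal A_{BW}(D_\delta)$ asks for both a black and a white crossing of the annulus. Given the black-outer-boundary convention, the relevant picture is: the outermost interface $\Loop$ around $0$ must \emph{reach} $\partial\mathbb D$ in the limit (so that the white cluster attached to $\Loop$ on its outside is separated from $0$, yet the black region outside $\Loop$ connects to $\partial\mathbb D$), which is exactly $\Loop\cap\partial\mathbb D\neq\emptyset$; and simultaneously the white side must also cross, which forces $A\not\subset D_{\Loop}$, i.e.\ the loop $\Loop$ is not strictly smaller than $\ell$. The factor $2$ arises from the colour symmetry of Bernoulli-$\tfrac12$ percolation: running the same analysis with the roles of black and white exchanged (equivalently, white outer boundary) gives a configuration whose limit is the \emph{reflected} event, and the two contributions are disjoint and equally likely, so $\mathbb P[\mathcal A_{BW}]$ equals $2$ times the probability of the single-coloured version $\{\Loop\cap\partial\mathbb D\neq\emptyset,\ A\not\subset D_{\Loop}\}$. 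Concretely I would: (i) decompose $\mathcal A_{BW}(D_\delta)$ according to which colour is the one whose crossing cluster touches the outer boundary; (ii) in each case relate the discrete event to an approximate quad-crossing event of $D^\epsilon$, send $\delta\to0$ and then $\epsilon\to0$ exactly as in~\eqref{eq:prop2.1-3}; (iii) identify the limiting event with $\{\Loop\cap\partial\mathbb D\neq\emptyset,\ A\not\subset D_{\Loop}\}$ using the $d_\ell$-convergence of the outermost interface and the fact that $\{\Loop\cap\partial\mathbb D\neq\emptyset\}$ is a closed-up-to-null event; (iv) invoke colour symmetry for the factor $2$.

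The main obstacle I expect is step (iii)/(ii) for the two-arm case: unlike the one-arm event, $\mathcal A_{BW}$ is not monotone in a way that makes the boundary contribution obviously negligible, and proving $\{\Loop\cap\partial\mathbb D\neq\emptyset\}$ is the correct limiting description requires controlling boundary three-arm events (to rule out the white crossing touching $\partial\mathbb D$ ``by accident'' near where $\Loop$ does) — precisely the kind of estimate invoked in the proof of Lemma~\ref{lem:percolation-limit} via the half-plane three-arm exponent being $>1$. One must also be careful that the limiting loop ensemble does not, almost surely, produce the degenerate configurations ($\Loop$ tangent to $\ell$ or to $\partial\mathbb D$ at isolated points with positive $\Loop$-length) that would spoil the open/closed classification; this follows from standard CLE$_6$ regularity but should be stated. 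Once these boundary estimates and the loop-convergence identification are in place, the identities~\eqref{eq:CLE-12arm} follow, and the reduction to a single modulus $\tau$ via Lemma~\ref{lem:local} completes the proof.
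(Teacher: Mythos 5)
Your high-level strategy is the same as the paper's (interface convergence to CLE$_6$, quad-crossing formalism, boundary arm estimates), and your one-arm argument is essentially correct modulo one technicality noted below. However, the two-arm case has a genuine gap.

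The combinatorial heart of the two-arm identity is the factor $2$, and your proposed decomposition does not deliver it. You suggest splitting $\mathcal{A}_{BW}(D_\delta)$ ``according to which colour is the one whose crossing cluster touches the outer boundary,'' and then invoking colour symmetry to say the two halves are disjoint and equally likely. This is not a well-posed partition of $\mathcal{A}_{BW}(D_\delta)$: on that event both colours have crossing clusters, and whether either reaches $\partial\mathbb{D}_\delta$ is a further configuration-dependent question that need not pick out exactly one colour. Your accompanying geometric description is also inverted -- with black boundary conditions, $\Loop$ bounds white on its \emph{inside} and black on its outside, so the phrase ``the white cluster attached to $\Loop$ on its outside'' does not describe any cluster. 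The paper's decomposition is different and does work: it conditions on the colour of the \emph{outermost percolation cluster in $\mathbb{D}_\delta$ that surrounds $0$}. This object is intrinsic to the configuration (it does not reference the boundary condition), so by colour symmetry it is white or black with equal probability on the colour-symmetric event $\mathcal{A}_{BW}(D_\delta)$, giving the factor $2$ cleanly. The further observation you need -- and which you do not make -- is that when this outermost surrounding cluster is white, it must touch $\partial\mathbb{D}_\delta$ (otherwise a strictly larger surrounding cluster would exist, contradicting outermostness), and its outer boundary is exactly $\mathcal{L}_\delta$; this is what identifies the event with $\{\Loop\cap\partial\mathbb{D}\neq\emptyset\}$ in the limit, and it is what makes the whole identification go through. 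Without this, the equivalence between $\mathcal{A}_{BW}(D_\delta)$-with-white-outermost-cluster and the discrete version of $\{\Loop\cap\partial\mathbb{D}\neq\emptyset, A\not\subset D_{\Loop}\}$ is unjustified.

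A smaller point: in the one-arm case you wave away the $A$ versus $\overline{A}$ discrepancy as ``a $\mathbb{P}$-null set where $\Loop$ is tangent to $\ell$,'' appealing to CLE regularity. The paper does not assume this; it instead derives the two-sided bound $1-\mathbb{P}[A\subset D_{\Loop}]\le p_B(\tau)\le 1-\mathbb{P}[\overline{A}\subset D_{\Loop}]$ from the discrete picture and then closes the gap by approximating $A$ monotonically from inside and using the continuity of $p_B$ in the modulus from Lemma~\ref{lem:percolation-limit}, which also yields $\mathbb{P}[A\subset D_{\Loop}]=\mathbb{P}[\overline{A}\subset D_{\Loop}]$ as a byproduct rather than an input. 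You should either supply the regularity statement you are invoking or adopt the monotone-approximation route.
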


\begin{proof}
    Let $D = \mathbb{D} \setminus \overline A$. Consider a critical Bernoulli percolation on $\mathbb{D}_\delta$ and let $\mathcal{L}_\delta$ be the outermost percolation interface that surrounds 0. Then as $\delta \rightarrow 0$, $\mathcal{L}_\delta$ converges in law to $\Loop$ for the distance $d_\ell$. We assumed that all the sites outside $\mathbb{D}_\delta$ are black, and thus, the sites neighboring to $\mathcal{L}_\delta$ and enclosed by it are white, while those neighboring to $\mathcal{L}_\delta$ but not enclosed by it are black; see Figure~\ref{fig:1} (left). Then the one-arm event $\mathcal{A}_B(D_\delta)$ happens if and only if $\mathcal{L}_\delta$ does not enclose all the sites on the inner boundary of $D_\delta$. (For small $\delta$, $\partial_i D_\delta$ has two connected components. We say a site is on the inner (resp.\ outer) boundary of $D_\delta$ if it belongs to the connected component of $\partial_i D_\delta$ corresponding to the inner (resp.\ outer) boundary.) Taking $\delta$ to 0 and using Lemma~\ref{lem:percolation-limit}, we see that $1 - \mathbb{P}[ A \subset D_{\Loop}] \leq p_B(\tau) \leq 1 - \mathbb{P}[\overline A \subset D_{\Loop}]$. Let $(A_n)_{n \geq 1}$ be an increasing sequence of domains that converges to $A$ and satisfies $\lim_{n \rightarrow \infty} {\rm Mod}(\mathbb{D} \setminus \overline{A}_n) = \tau$. Then using similar arguments as before and the fact that $\overline A_n \subset A$, we have $p_B({\rm Mod}(\mathbb{D} \setminus \overline{A}_n)) \leq 1 - \mathbb{P}[\overline A_n \subset D_{\Loop}] \leq 1 - \mathbb{P}[A \subset D_{\Loop}]$. Taking $n \rightarrow \infty$ and using the continuity of $p_B(\tau)$ from Lemma~\ref{lem:percolation-limit}, we obtain that $p_B(\tau) \leq 1 - \mathbb{P}[A \subset D_{\Loop}]$. Therefore, $p_B(\tau) = 1 - \mathbb{P}[A \subset D_{\Loop}]$. This proves the first equation in~\eqref{eq:CLE-12arm}. Similarly, we can also show that $p_B(\tau) = 1 - \mathbb{P}[\overline A \subset D_{\Loop}]$, and thus, $\mathbb{P}[A \subset D_{\Loop}] = \mathbb{P}[\overline A \subset D_{\Loop}]$.

    Next, we prove the second equation in~\eqref{eq:CLE-12arm}. Consider the outermost percolation cluster in $\mathbb{D}_\delta$ that surrounds 0, which can be either black or white. By symmetry of the black and white sites under $\mathbb{P}_{\delta}$, we see that $\mathbb{P}_{\delta}[\mathcal{A}_{BW}(D_\delta)]$ equals twice of the probability that the polychromatic two-arm event occurs and this cluster is white. We claim that this is equivalent to the event that $\mathcal{L}_\delta$ touches the boundary of $\mathbb{D}_\delta$ and also intersects the inner boundary of $D_\delta$. First, this white cluster must contain some site on the boundary of $\mathbb{D}_\delta$, otherwise we can find another percolation cluster surrounding it, and thus $\mathcal{L}_\delta$, which is the outer boundary of this white cluster, intersects the boundary of $\mathbb{D}_\delta$. Furthermore, the polychromatic two-arm event $\mathcal{A}_{BW}(D_\delta)$ happens if and only if this white cluster does not enclose all the sites on the inner boundary of $D_\delta$. This proves the claim. Similar to~\eqref{eq:prop2.1-3}, we can show that with probability at least $1 - o_\epsilon(1)$, on the event $d(\mathcal{L}_\delta, \partial \mathbb{D}) < \epsilon$, $\mathcal{L}_\delta$ intersects the boundary of $\mathbb{D}_\delta$. Therefore, taking $\delta$ to 0 and using Lemma~\ref{lem:percolation-limit}, we obtain that $p_{BW}(\tau) = 2 \cdot \mathbb{P}[\Loop \cap \partial \mathbb{D} \neq \emptyset, A \not \subset D_{\Loop}]$. \qedhere
    
\end{proof}

Next, we express the probability of the monochromatic two-arm event in terms of CLE$_6$ and prove Lemma~\ref{lem:percolation-limit} for the case $\mathsf{a} = BB$. In Lemma~\ref{lem:CLE-12arm}, we used CLE$_6$ on $\mathbb{D}$ to express $p_B(\tau)$ and $p_{BW}(\tau)$. To express the monochromatic two-arm event, we need to consider CLE$_6$ defined on a doubly connected domain. We will not need its explicit law in this paper, but instead consider it as the scaling limit of critical Bernoulli percolation on a nice doubly connected domain with black boundary conditions, i.e., we specify all the sites outside the domain to be black. We will also show that it can almost surely determine the quad-crossing space on the doubly connected domain, and vice versa. 

We first review the case on the disk. As shown in \cite{gps-pivotal, hs-cardy-embedding}, the quad-crossing configuration and CLE$_6$ on the disk (and more generally, on simply connected domains with piecewise smooth boundaries) can almost surely determine each other. Specifically, for $\delta>0$, consider a critical Bernoulli percolation on $\mathbb{D}_\delta$ with black boundary conditions and let the induced quad-crossing configuration in $\mathcal{H}(\mathbb{D})$ be $\omega_\delta$. Let $\Gamma_\delta$ be the collection of percolation interfaces. Then, as explained in~\cite{gps-pivotal}, $(\omega_\delta, \Gamma_\delta)$ converge jointly in law as $\delta \rightarrow 0$ with respect to $d_{\mathcal{H}(\mathbb{D})}$ and $d_\ell$. Let $(\omega, \Gamma)$ be the limit, where $\Gamma$ has the same law as CLE$_6$ on $\mathbb{D}$. Then, the quad-crossing configuration $\omega$ can almost surely determine $\Gamma$~\cite{gps-pivotal, hs-cardy-embedding}, and $\Gamma$ can also almost surely determine $\omega$~\cite{CN06, gps-pivotal}, namely they are measurable with respect to each other.

This result can be extended to a nice doubly connected domain.
\begin{lemma}\label{lem:CLE-double-domain}
    For a nice doubly connected domain $D$ and $\delta>0$, sample a critical Bernoulli percolation with black boundary conditions on $D_\delta$. Let $\omega_\delta$ be the induced quad-crossing configuration in $\mathcal{H}(D)$ and $\Gamma_\delta$ be the collection of percolation interfaces. Then, as $\delta \rightarrow 0$, $(\omega_\delta, \Gamma_\delta)$ converge jointly in law with respect to $d_{\mathcal{H}(D)}$ and $d_\ell$. Moreover, let $(\omega, \Gamma)$ be the limit. Then $\omega$ and $\Gamma$ can almost surely determine each other.
\end{lemma}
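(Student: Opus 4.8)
The plan is to reduce the doubly connected case to the simply connected case that is already known, by the same ``cut along a smooth arc'' device used in the proof of Lemma~\ref{lem:local}\eqref{claim:lem-local-1}. First I would establish joint tightness of $(\omega_\delta,\Gamma_\delta)$ and convergence along subsequences: tightness of $\omega_\delta$ in $\mathcal{H}(D)$ is immediate since $(\mathcal{H}(D),d_{\mathcal H})$ is compact, and tightness of $\Gamma_\delta$ under $d_\ell$ follows from the standard equicontinuity estimates for percolation interfaces (uniform control on the modulus of continuity of interfaces, as in~\cite{CN06, gps-pivotal}), using that $\partial D$ is piecewise smooth and the boundary arm exponents are favorable. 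So along any subsequence $(\omega_\delta,\Gamma_\delta)$ converges jointly to some $(\omega,\Gamma)$; the content is that the limit is unique and that $\omega$ and $\Gamma$ are measurable functions of each other.

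Next I would fix a smooth crosscut $\gamma\subset D$ joining the two boundary components, so that $D':=D\setminus\gamma$ is a nice simply connected domain, and let $\omega_\delta'=\omega_\delta|_{D'}$, $\Gamma_\delta'$ be the induced quad-crossing configuration and interface collection on $D'_\delta$ (with black boundary conditions inherited from $D_\delta$). By the known simply connected result~\cite{gps-pivotal, hs-cardy-embedding}, $(\omega_\delta',\Gamma_\delta')$ converges jointly to a limit $(\omega',\Gamma')$ with $\Gamma'$ a CLE$_6$ in $D'$ and $\omega'$, $\Gamma'$ measurable with respect to each other. The key structural facts I would then invoke are: (a) by~\cite[Theorem~1.19]{ss-quad-crossing}, since the crosscut is smooth, $\omega_\delta|_{D'}$ almost surely determines $\omega_\delta$, so $\omega'$ determines $\omega$ in the limit (and hence $\omega$ is a measurable function of $\omega'$, giving uniqueness of the sublimit for $\omega_\delta$, consistent with $\mathbb P_D$ from Lemma~\ref{lem:local}); and (b) the percolation interfaces $\Gamma_\delta$ in $D_\delta$ are determined by $\Gamma_\delta'$ together with the (macroscopically bounded, vanishing as $\delta\to 0$) local picture near $\gamma$ — equivalently, $\Gamma_\delta$ and $\Gamma_\delta'$ agree away from an $\epsilon$-neighborhood of $\gamma$ up to an event of probability $o_\epsilon(1)$ uniformly in $\delta$, by an RSW/three-arm argument at the smooth arc $\gamma$ exactly as in the proof of~\eqref{eq:prop2.1-3}. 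Combining (a), (b) with the simply connected measurability $\omega'\leftrightarrow\Gamma'$ yields that $\omega\leftrightarrow\Gamma$ are measurable with respect to each other, and that the joint law of $(\omega_\delta,\Gamma_\delta)$ has a unique limit, since both coordinates are continuous images (in the appropriate sense) of the uniquely-converging $(\omega_\delta',\Gamma_\delta')$ plus negligible local data.

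The main obstacle, and the place I would spend the most care, is fact~(b): controlling how an interface of $D_\delta$ is affected by the presence of the crosscut $\gamma$, i.e.\ showing that with high probability no interface of $D_\delta$ behaves essentially differently from the interfaces of $D'_\delta$ near $\gamma$. The subtlety is that cutting along $\gamma$ changes the topology, so a single interface in $D_\delta$ that winds around the annulus can be split into two arcs in $D'_\delta$; one must argue that $\Gamma_\delta$ is nonetheless recoverable from $\Gamma_\delta'$ by concatenating the appropriate arcs, and that the matching is determined by $\omega_\delta'$ (or by the interface endpoints on $\gamma$), with the error from interfaces that actually cross $\gamma$ many times being $o_\epsilon(1)$ via the half-plane $3$-arm exponent being larger than $1$~\cite{smirnov-werner-percolation}. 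Once this local surgery near the smooth arc is in place, the extension from nice doubly connected domains follows as in Lemma~\ref{lem:local} by monotone approximation, and the proof of Lemma~\ref{lem:percolation-limit} for $\mathsf a=BB$ then follows by encoding $\mathcal A_{BB}(D_\delta)$ as a ($d_{\mathcal H}$-measurable, hence by the above $\Gamma$-measurable) crossing event and repeating the $o(\epsilon)$-boundary argument of the $\mathsf a\in\{B,BW\}$ case.
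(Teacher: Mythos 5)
Your overall strategy coincides with the paper's (cut along a smooth crosscut, reduce to the simply connected case, then reconstruct), and your treatment of the quad-crossing configuration $\omega$ via \cite[Theorems 1.5 and 1.19]{ss-quad-crossing} matches the paper exactly. The divergence is precisely at the point you flag as the main obstacle: how to recover the loop ensemble $\Gamma$ on $D$ from the simply connected picture. You propose reconstructing $\Gamma_\delta$ directly from $\Gamma_\delta'$ by concatenating arcs at the crosscut $\gamma$ and controlling the surgery with a half-plane three-arm estimate. This is more delicate than you acknowledge: a loop winding around the annulus is cut into arcs in $D'_\delta$ with \emph{high} probability (not $o_\epsilon(1)$), so the three-arm bound only controls the regularity of the interfaces near $\gamma$, not the combinatorial matching of arcs into loops; you would still have to argue that the matching is determined in the scaling limit by the endpoints on $\gamma$ together with $\omega'$, which is not worked out and is the genuinely hard part.

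The paper sidesteps this entirely with a second simply connected domain: it takes $D''\subset D$ covering the crosscut, obtains CLE$_6$ loop ensembles $\Gamma'$ (from $\omega|_{D'}$) and $\Gamma''$ (from $\omega|_{D''}$), notes they are consistent on simply connected overlaps by the established scaling limit in those domains, and defines $\Gamma$ by gluing $\Gamma'$ and $\Gamma''$. Convergence $\Gamma_\delta\to\Gamma$ and the mutual measurability of $\omega$ and $\Gamma$ then follow without any local surgery at a boundary arc. I'd recommend adopting this two-chart gluing: it replaces the arc-concatenation step, which is the one place your argument has a real gap, with a clean consistency argument that requires no new estimates beyond what you already cite.
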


\begin{proof}
    We cut $D$ into a simply connected domain $D'$ using a smooth cut. Consider the restriction of critical Bernoulli percolation on $D'_\delta$ endowed with black boundary conditions. Let $\omega_\delta'$ be the induced quad-crossing configuration in $\mathcal{H}(D')$. Then, as $\delta \rightarrow 0$, $\omega_\delta'$ converges in law to $\mathbb{P}_{D'}$. By Skorohod's representation theorem, we can find a coupling such that $\omega_\delta'$ converges almost surely to $\omega' \in \mathcal{H}(D')$. By~\cite[Theorem 1.19]{ss-quad-crossing}, we can use $\omega'$ to determine a quad-crossing configuration $\omega \in \mathcal{H}(D)$. Furthermore, by~\cite[Theorem 1.5]{ss-quad-crossing}, for any fixed $Q \in \mathcal{Q}_D$, $\boxminus_{Q}(\omega_\delta)$ is determined by $\omega'_\delta$ up to an error of probability $o_\delta(1)$. Therefore, $\boxminus_{Q}(\omega_\delta)$ converges almost surely to $\boxminus_{Q}(\omega)$, which implies that $\omega_\delta$ converges almost surely to $\omega$ for $d_{\mathcal{H}(D)}$.
    
    Next, we use $\omega$ to construct $\Gamma$ and show that $\Gamma_\delta$ converges almost surely to $\Gamma$ for $d_\ell$. First, observe that for any simply connected domain $\tilde D \subset D$ with a piecewise smooth boundary, $\omega|_{\tilde D}$ can determine a non-simple loop ensemble on $\tilde D$ which follows the law of CLE$_6$ on $\tilde D$. Moreover, by the convergence of critical percolation on $\tilde D$, the percolation interfaces on $\tilde D_\delta$ induced by $\omega_\delta$ (endowed with black boundary conditions) will converge almost surely to this loop ensemble. We take another simply connected domain $D'' \subset D$ that covers the cut associated with $D'$, and consider the loop ensemble $\Gamma'$ (resp.\ $\Gamma''$) induced by $\omega|_{D'}$ (resp.\ $\omega|_{D''})$ on $D'$ (resp.\ $D''$). Then we can determine $\Gamma$ by gluing the loop ensembles $\Gamma'$ and $\Gamma''$, which is well-defined because $\Gamma'$ and $\Gamma''$ are consistent in any simply connected domain due to the convergence of critical percolation in these domains. We can also check that $\Gamma_\delta$ converges to $\Gamma$, as percolation interfaces are locally finite. Meanwhile, the macroscopic interfaces in $\Gamma_\delta$ converge in any simply connected domain to either the loop segments in $\Gamma'$ or $\Gamma''$. Hence, we have $(\omega_\delta, \Gamma_\delta)$ jointly converging to $(\omega, \Gamma)$. 
    
    Finally, we show that under this coupling, $\omega$ and $\Gamma$ can almost surely determine each other. On the one hand, $\omega$ determines both $\Gamma'$ and $\Gamma''$, which in turn determine $\Gamma$. On the other hand, $\Gamma$ determines $\Gamma'$, which then determines $\omega'$ and $\omega$.\qedhere

\end{proof}

\begin{remark}
    There are at least two different versions of CLE$_6$ on a doubly connected domain, depending on whether the colors of Bernoulli percolation on the inner and outer boundaries are considered the same or not. In this paper, we will only consider the case where the boundaries are assigned the same color. In both cases, it is possible to write down the explicit law of CLE$_6$~\cite{sww-cle-doubly, gmq-cle-inversion} and show that the percolation interfaces converge in law to them. Law of the number of non-contractable loops in these two CLE$_6$ can also be explicitly calculated following the LQG method in this paper, as predicted in~\cite{Car06}. We will not pursue this here.
\end{remark}

For a nice doubly connected domain $D$, Lemma~\ref{lem:CLE-double-domain} defines a joint law for a quad-crossing configuration $\omega$ and a non-simple loop ensemble $\Gamma$ on $D$. The marginal law of $\omega$ follows $\mathbb{P}_D$. With slight abuse of notation, we will also use $\mathbb{P}_D$ to denote their joint law. We will refer to the law of $\Gamma$ as the CLE$_6$ on $D$. Using conformal mappings and Claim~\eqref{claim:lem-local-1} in Lemma~\ref{lem:local}, we can define the joint law of $(\omega, \Gamma)$ on any doubly connected domain, which is conformally invariant (albeit without the scaling limit statement). In the following lemma, we prove the convergence of the monochromatic two-arm crossing probabilities on nice doubly connected domains by expressing it using CLE$_6$.

\begin{lemma}\label{lem:CLE-backbone}
    For any nice doubly connected domain $D$ that surrounds 0, sample $\Gamma$ from the law of the CLE$_6$ on $D$. Then, we have:
    \begin{equation}\label{eq:CLE-backbone}
    \lim_{\delta \rightarrow 0}\mathbb{P}_\delta[\mathcal{A}_{BB}(D_\delta)] = \mathbb{P}_D[\mbox{there does not exist }\gamma \in \Gamma \mbox{ such that $\ep{\gamma}$ separates 0 and $\infty$}],
    \end{equation}
    where $\ep{\gamma}$ is the outer boundary of $\gamma$ seen from $\infty$, i.e., the boundary of the infinite connected component of $\mathbb{C} \setminus \gamma$.
\end{lemma}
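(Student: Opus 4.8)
The plan is to follow the same scheme used for the cases $\mathsf a\in\{B,BW\}$ in Lemma~\ref{lem:percolation-limit}, but now genuinely exploiting the joint convergence $(\omega_\delta,\Gamma_\delta)\to(\omega,\Gamma)$ from Lemma~\ref{lem:CLE-double-domain}, since the monochromatic two-arm event is not directly expressible through quad-crossing events alone. First I would set up a discrete characterization: for a critical percolation on $D_\delta$ with black boundary conditions, the event $\mathcal A_{BB}(D_\delta)$ that there are two disjoint black crossings should be translated into a statement about the interface collection $\Gamma_\delta$. The key combinatorial observation is that two disjoint black crossings of $D_\delta$ exist if and only if \emph{no} black cluster touching one boundary component is surrounded, on the side facing the other boundary component, by a closed circuit — equivalently, if and only if there is no percolation interface $\gamma\in\Gamma_\delta$ whose outer boundary $\ep\gamma$ (seen from $\infty$, i.e.\ from the outer boundary side) separates the two boundary components of $D_\delta$. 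Intuitively, $\ep\gamma$ is always adjacent to a white cluster on one side and a black cluster on the other; if $\ep\gamma$ separates the two boundaries then all black crossings must pass through the single black cluster hugging $\ep\gamma$ from outside, so they cannot be disjoint, while conversely if no such separating outer boundary exists one can route two disjoint black crossings. I would make this precise at the discrete level, being careful about the meaning of ``disjoint'' (vertex-disjoint) and about boundary sites.

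Next I would pass to the limit. Using Skorohod coupling so that $(\omega_\delta,\Gamma_\delta)\to(\omega,\Gamma)$ almost surely, I want to show that the indicator of ``$\exists\,\gamma\in\Gamma_\delta$ with $\ep\gamma$ separating the boundaries'' converges a.s.\ to the indicator of the corresponding continuum event. Since $d_\ell$-convergence of locally finite loop collections controls the macroscopic loops, an interface with a macroscopic separating outer boundary persists in the limit; the delicate direction is ruling out that separation is created or destroyed in the scaling limit by microscopic effects near $\partial D$. Here I would invoke a half-plane three-arm estimate exactly as in the derivation of~\eqref{eq:prop2.1-3}: the continuum event is a.s.\ a continuity point because the probability that the limiting configuration has an outer boundary passing within $\epsilon$ of $\partial D$ without the discrete one genuinely separating tends to $0$ with $\epsilon$, the boundary of $D$ being piecewise smooth and the half-plane three-arm exponent exceeding $1$ \cite{smirnov-werner-percolation}. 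Combining the discrete characterization with this a.s.\ convergence gives~\eqref{eq:CLE-backbone}.

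Finally, to complete Lemma~\ref{lem:percolation-limit} for $\mathsf a=BB$: having shown $\lim_\delta\mathbb P_\delta[\mathcal A_{BB}(D_\delta)]$ exists and equals a continuum quantity for nice doubly connected domains, conformal invariance of the joint law $(\omega,\Gamma)$ (Lemma~\ref{lem:local}\eqref{claim:lem-local-1} together with Lemma~\ref{lem:CLE-double-domain}) shows it depends only on ${\rm Mod}(D)$. Monotonicity in $D$ — $\mathcal A_{BB}$ is a decreasing event under enlarging the annulus — gives monotonicity in the modulus, and continuity follows again from the same half-plane three-arm bound that controls the error when approximating by slightly smaller or larger nice domains; then a monotone approximation extends the statement to arbitrary doubly connected domains, as in the $\mathsf a\in\{B,BW\}$ case. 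The main obstacle I anticipate is the first step: pinning down, and rigorously proving, the exact discrete equivalence between ``two vertex-disjoint black crossings'' and ``no interface outer boundary separates the two boundary components,'' including the subtle boundary bookkeeping (sites on $\partial_i D_\delta$, the role of the black boundary condition, and the fact that $\ep\gamma$ for distinct $\gamma$ can coincide or be nested) — everything downstream is a fairly routine adaptation of the already-established $\mathsf a\in\{B,BW\}$ arguments.
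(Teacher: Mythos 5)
You have the right overall scheme and correctly identify both pieces that must be supplied: a purely discrete combinatorial equivalence between two vertex-disjoint black crossings and the non-existence of an interface whose external frontier separates the two boundary components, and a stability argument showing that this separation event passes to the $d_\ell$-limit along with $(\omega_\delta,\Gamma_\delta)$. This is the same strategy as the paper's proof.

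However, there is a genuine gap in the second step. You propose to control the limit by a \emph{half-plane 3-arm} estimate ``exactly as in the derivation of~\eqref{eq:prop2.1-3}.'' That estimate is the right tool for a different problem: controlling the discrepancy between crossings of the shrunken domain $D^\epsilon$ and of $D_\delta$, i.e.\ the domain-approximation error, where the 3-arm event arises from a crossing cluster that reaches $\partial D^\epsilon$ but fails to extend to $\partial_i D_\delta$. What you need here is quite different: you must rule out that the separation status of an interface's external frontier flips under a $\lambda$-perturbation of the loop collection. The bad configurations are near-pinches of an external frontier --- two macroscopic pieces of the frontier passing within $\lambda$ of each other so that the $\lambda$-thickening separates but the frontier itself does not. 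Such a near-pinch forces a polychromatic \emph{6-arm} event at the pinch point when it occurs in the bulk, and a polychromatic \emph{4-arm} event when it occurs near $\partial D$; the relevant estimates are that the whole-plane 6-arm exponent exceeds $2$ and the half-plane 4-arm exponent exceeds $1$, both from~\cite{smirnov-werner-percolation}. Moreover, you restrict attention to microscopic effects ``near $\partial D$,'' but near-pinches of the frontier can equally well occur in the bulk and must be handled by the 6-arm bound; the 3-arm argument does not see them. So the approximation argument you sketch would not close.

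On the first step, the discrete equivalence you worry about is exactly what Menger's theorem gives you: two vertex-disjoint black crossings exist if and only if no white ``cut set'' separates the two sides, and one checks that a minimal white cut is precisely the external frontier of some interface. The paper records this combinatorial fact (citing Section~2.2 of~\cite{NQSZ23}) rather than re-deriving it, and it is worth noting that the correct discrete object is the external frontier of the interface --- the sites adjacent to the interface that can reach infinity without crossing the interface --- rather than the outer boundary of the limiting loop, with the former converging to the latter. Once you invoke Menger and set up the external frontier carefully, the boundary bookkeeping you flag as the ``main obstacle'' is essentially standard.
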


\begin{proof}
    Sample a critical percolation on $D_\delta$ with black boundary conditions and let $\Gamma_\delta$ be the percolation interfaces. As $\delta \rightarrow 0$, by Lemma~\ref{lem:CLE-double-domain}, $\Gamma_\delta$ weakly converges to $\Gamma$ for the metric $d_\ell$. Define the external frontier of any percolation interface as the sites neighboring it that can be connected to infinity without crossing other such sites. Then, the external frontiers of percolation interfaces converge to the outer boundaries of CLE$_6$ loops. By Menger's theorem, the event $\mathcal{A}_{BB}(D_\delta)$ occurs if and only if there is no percolation interface whose external frontier separates $0$ and $\infty$; see Section 2.2 of \cite{NQSZ23} for more details. Moreover, for $\lambda>0$, with probability at least $1-o_\lambda(1)$, if the external frontier of the $\lambda$-neighbor of any percolation interface separates $0$ and $\infty$, then the external frontier of the percolation interface itself has already separated $0$ and $\infty$. This is because otherwise, a polychromatic 6-arm event would occur in the bulk, or near the boundary, a polychromatic 4-arm event. Meanwhile, the whole-plane 6-arm exponent is larger than 2 and the half-plane 4-arm exponent is larger than 1~\cite{smirnov-werner-percolation}. Taking $\delta$ to 0 yields the lemma.
\end{proof}

Now we finish the proof of Lemma~\ref{lem:percolation-limit}.

\begin{proof}[Proof of Lemma~\ref{lem:percolation-limit} for the case $\mathsf{a} = BB$]
    By Lemma~\ref{lem:CLE-backbone}, for any nice doubly connected domain $D$, $ \lim_{\delta \rightarrow 0}\mathbb{P}_\delta[\mathcal{A}_{BB}(D_\delta)]$ exists and depends only on ${\rm Mod}(D)$. Similar to~\eqref{eq:prop2.1-3} and using the fact that the half-plane 3-arm exponent is larger than 1, we can show that $ \lim_{\delta \rightarrow 0}\mathbb{P}_\delta[\mathcal{A}_{BB}(D_\delta)]$ is a continuous function of ${\rm Mod}(D)$. Thus, we can extend the convergence to any doubly connected domain by approximating them with nice doubly connected domains.
\end{proof}

Finally, we collect exact formulae for the conformal radii of certain domains defined by CLE$_6$ loops from \cite{SSW09, Wu23, NQSZ23, ASYZ24}. For a simply connected domain $D \subset \mathbb{C}$ and $z \in D$, let $f: \mathbb{D} \rightarrow D$ be a conformal map with $f(0) = z$. The conformal radius of $D$ seen from $z$ is defined as ${\rm CR}(z,D) := |f'(z)|$. Recall from Lemma~\ref{lem:CLE-12arm} the definition of $\Loop$ and $D_{\Loop}$, which is relevant to $p_B(\tau)$ and $p_{BW}(\tau)$. For the monochromatic two-arm event, it is natural to consider the outermost CLE$_6$ loop on $\mathbb{D}$ whose outer boundary surrounds 0. See Figure~\ref{fig:1} (right). Let $\mathcal{L}^*$ denote this loop, and let  $\ep{\mathcal{L}^*}$ denote its outer boundary. Let $D_{\ep{\mathcal{L}^*}}$ be the simply connected component of $\mathbb{D} \setminus \ep{\mathcal{L}^*}$ that contains 0. 

\begin{proposition}\label{prop:formula-CR}
    Let $\kappa = 6$. We have:
    \begin{enumerate}[(A)]
\item For $\lambda \leq \frac{3\kappa}{32}+\frac{2}{\kappa}-1$, $\mathbb{E} [ {\rm CR}(0, D_{\Loop})^{\lambda}] = \infty$, and for $\lambda > \frac{3\kappa}{32}+\frac{2}{\kappa}-1$,

\begin{equation}\label{eq:CR-CLE}
\mathbb{E} [ {\rm CR}(0, D_{\Loop})^\lambda] = \frac{\cos(\pi \frac{\kappa-4}{\kappa}) }{ \cos(\frac{\pi}{\kappa} \sqrt{(\kappa-4)^2 - 8 \kappa \lambda})}.
\end{equation}

\item Let $T= \{ \Loop \cap \partial \mathbb{D} \not = \emptyset \}$. Then, we have $\mathbb{P}[T] = \frac{1}{2}$. For $\lambda \leq \frac{\kappa}{8}-1$, $\mathbb{E} [ {\rm CR}(0, D_{\Loop})^{\lambda} 1_{T} ] =\infty$, and for $\lambda > \frac{\kappa}{8}-1$,

\begin{equation}\label{eq:CR-CLE-touch} \mathbb{E} [ {\rm CR}(0, D_{\Loop})^{\lambda} 1_{T} ] = \frac{2\cos(\pi \frac{\kappa-4}{\kappa}) \sin(\pi\frac{\kappa-4}{4\kappa} \sqrt{(\kappa-4)^2 - 8 \kappa \lambda})}{\sin(\frac{\pi}{4} \sqrt{(\kappa-4)^2 - 8 \kappa \lambda})}.
\end{equation}

\item Let $\beta_2$ be the backbone exponent, which is the unique solution in the interval $(\frac{1}{4}, \frac{2}{3})$ to be equation $\frac{\sqrt{36 x +3}}{4} + \sin (\frac{2 \pi \sqrt{12 x +1}}{3}) =0$. For $\lambda \leq - \beta_2$, $\mathbb{E} [ {\rm CR}(0, D_{\ep{\mathcal{L}^*}})^{\lambda}] = \infty$, and for $\lambda > - \beta_2$, we have
\begin{equation}\label{eq:CR-backbone}
\mathbb{E} [ {\rm CR}(0,  D_{\ep{\mathcal{L}^*}})^\lambda] = \frac{\kappa \sin(\frac{8 \pi}{\kappa})}{4 \sin(\frac{\pi \kappa}{4})} \cdot \frac{\sin(\frac{\pi}{4} \sqrt{(\kappa-4)^2 - 8 \kappa \lambda})}{\sin(\frac{2 \pi}{\kappa} \sqrt{(\kappa-4)^2 - 8 \kappa \lambda}) - \frac{1}{4} \sin(\frac{8 \pi}{\kappa}) \sqrt{(\kappa-4)^2 - 8 \kappa \lambda}}.
\end{equation}

\end{enumerate}
\end{proposition}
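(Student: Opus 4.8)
The plan is to treat the proposition as the compilation it is --- the three formulae are taken from \cite{SSW09, Wu23, NQSZ23, ASYZ24} --- and for each item to identify the source and supply the short translation (plus the one or two percolation arguments) that connects it to the objects defined above. For part (A), I would quote the conformal radius law for $\mathrm{CLE}_\kappa$ loops around an interior point from Schramm, Sheffield and Wilson \cite{SSW09} at $\kappa=6$: there $\Loop$ is built from the radial $\mathrm{SLE}_6$ exploration targeted at $0$ --- precisely the exploration recalled in items (i)--(iii) above, to which the SSW computation applies directly --- and the law of $-\log{\rm CR}(0,D_{\Loop})$ is given explicitly, with \eqref{eq:CR-CLE} as its Laplace transform. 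The threshold $\frac{3\kappa}{32}+\frac{2}{\kappa}-1=-\tfrac{5}{48}$ (when $\kappa=6$) is simply the location of the first pole of the right-hand side of \eqref{eq:CR-CLE}, equivalently the one-arm tail $\mathbb{P}[{\rm CR}(0,D_{\Loop})<\epsilon]\asymp\epsilon^{5/48}$; as a sanity check the right-hand side equals $1$ at $\lambda=0$.

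For part (B), I would first obtain $\mathbb{P}[T]=\tfrac{1}{2}$ from the black/white symmetry of critical Bernoulli percolation: on $\mathbb{D}_\delta$ with black boundary conditions $\Loop$ is the scaling limit of the outermost interface around $0$, and this interface reaches $\partial\mathbb{D}$ iff the macroscopic cluster just outside it has one designated colour, an event of probability $\tfrac{1}{2}$; a half-plane three-arm estimate in the spirit of \eqref{eq:prop2.1-3} discards the interfaces that come $\epsilon$-close to but do not touch $\partial\mathbb{D}$. The restricted moment \eqref{eq:CR-CLE-touch} is then the content of \cite{ASYZ24}: on $T$, the topological constraint that $\Loop$ both surrounds $0$ and touches $\partial\mathbb{D}$ forces a polychromatic two-arm configuration out of scale ${\rm CR}(0,D_{\Loop})$, which is why the threshold $\frac{\kappa}{8}-1=-\tfrac{1}{4}$ is minus the bulk two-arm exponent, and the exact expression follows by tracking, in the radial exploration producing $\Loop$, whether the curve hits $\partial\mathbb{D}$ before closing the loop; here the right-hand side equals $\tfrac{1}{2}=\mathbb{P}[T]$ at $\lambda=0$.

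For part (C), the key geometric input is $\mathrm{SLE}$ duality \cite{Dub05, zhan2008duality}: since $6\cdot\tfrac{8}{3}=16$, the outer boundary $\ep{\mathcal{L}^*}$ of the outermost $\mathrm{CLE}_6$ loop whose outer boundary surrounds $0$ is an $\mathrm{SLE}_{8/3}$-type simple loop, and $D_{\ep{\mathcal{L}^*}}$ is exactly the domain whose conformal radius controls the monochromatic two-arm event by Lemma~\ref{lem:CLE-backbone}. The Laplace transform \eqref{eq:CR-backbone} of ${\rm CR}(0,D_{\ep{\mathcal{L}^*}})$ is essentially the observable of \cite{NQSZ23}; indeed $\beta_2$ was pinned down there as the first genuine pole of its right-hand side, i.e.\ the smallest positive root of $\frac{\sqrt{36x+3}}{4}+\sin(\frac{2\pi\sqrt{12x+1}}{3})=0$ at which the numerator of \eqref{eq:CR-backbone} does not also vanish (the root $x=\tfrac{1}{4}$ yields a removable singularity and $x=-\tfrac{1}{12}$ falls outside the range of validity), with the exact prefactor and normalization supplied by \cite{Wu23, ASYZ24}. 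I would reproduce \eqref{eq:CR-backbone} by quoting those formulae, substituting $\kappa=6$ and $\lambda=-x$ to recover the displayed equation for $\beta_2$, and checking the normalization at $\lambda=0$, where the right-hand side equals $1$, consistent with $\mathcal{L}^*$ existing almost surely.

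The step I expect to require the most care is part (C): extracting the full right-hand side of \eqref{eq:CR-backbone}, not merely the exponent $\beta_2$, by reconciling \cite{NQSZ23, Wu23, ASYZ24} and confirming that their observable computes the \emph{unconditioned} law of ${\rm CR}(0,D_{\ep{\mathcal{L}^*}})$ stated here --- the genuinely harder variant, the law conditioned on $\ep{\mathcal{L}^*}\cap\partial\mathbb{D}=\emptyset$, is a different object and is addressed separately in Section~\ref{sec:backbone}. Parts (A) and (B) then reduce to the cited formulae together with the short symmetry and arm-exponent remarks above.
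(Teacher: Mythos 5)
Your proposal takes essentially the same route as the paper: Proposition~\ref{prop:formula-CR} is indeed a compilation, the paper citing \cite[Equation~(3)]{SSW09} for \eqref{eq:CR-CLE}, \cite[Theorems~1.1--1.2]{ASYZ24} for $\mathbb{P}[T]=\tfrac12$ and \eqref{eq:CR-CLE-touch}, and \cite[Proposition~7.12]{Wu23} together with \cite[Theorem~1.4]{NQSZ23} (via \cite[Remark~2.4]{NQSZ23}) for \eqref{eq:CR-backbone}. Your derivation of $\mathbb{P}[T]=\tfrac12$ from the black/white symmetry of critical percolation is a valid alternative to the direct citation of \cite[Theorem~1.1]{ASYZ24}; it is in fact the same symmetry the paper runs inside the proof of Lemma~\ref{lem:CLE-12arm}, so it is self-contained relative to the percolation framework already developed in Section~\ref{subsec:cle6}, whereas the paper's citation emphasizes that the fact is an intrinsic property of $\CLE_6$. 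One small slip in your discussion of (C): the root $x=-\tfrac{1}{12}$ corresponds to $\lambda=\tfrac{1}{12}$, which is \emph{inside} the range $\lambda>-\beta_2$; like $x=\tfrac14$ it is excluded because it is a removable singularity, since $\sqrt{(\kappa-4)^2-8\kappa\lambda}=0$ there and the numerator $\sin(\tfrac{\pi}{4}\sqrt{(\kappa-4)^2-8\kappa\lambda})$ vanishes as well. This does not affect the substance, and your identification of $\beta_2$ as the first genuine pole, as well as your caution that the proposition computes the \emph{unconditioned} law of ${\rm CR}(0,D_{\ep{\mathcal{L}^*}})$ with the conditioned version deferred to Section~\ref{sec:backbone}, are both correct.
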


\begin{proof}
    Equation~\eqref{eq:CR-CLE} is from \cite[Equation (3)]{SSW09}, the identity $\mathbb{P}[T] = \frac{1}{2}$ is from \cite[Theorem 1.1]{ASYZ24}, and Equation~\eqref{eq:CR-CLE-touch} is from \cite[Theorem 1.2]{ASYZ24}. Equation~\eqref{eq:CR-backbone} follows from \cite[Proposition 7.12]{Wu23} and \cite[Theorem 1.4]{NQSZ23}; see Remark 2.4 in \cite{NQSZ23} for the derivation.
\end{proof}

\section{Liouville quantum gravity surfaces and Brownian surfaces}
\label{sec:lqg}

In Section~\ref{subsec:liouville}, we introduce Liouville fields on the annulus and the disk. In Section~\ref{subsec:KPZ}, we review the encoding of the annulus modulus in terms of annulus boundary lengths established in~\cite{ARS22}. In Section~\ref{subsec:brownian-surface}, we give some background on Brownian surfaces. In the rest of the paper, we let $\mathbb{H}=\{z\in \C: {\rm Im} z>0 \}$ be the upper half plane. For $\tau>0$, let $\cC_\tau = [0,\tau]\times[0,1]/{\sim}$ be a finite cylinder where $\sim$ is the identification of $[0, \tau] \times \{0\}$ and $[0, \tau] \times \{1\}$. The modulus of $\cC_\tau$ is $\tau$. Let $\partial_1 \cC_\tau=\{0\}\times [0,1]/{\sim}$ and $\partial_2 \cC_\tau = \{\tau\}\times [0,1]/{\sim}$. For a finite measure $M$, we write $|M|$ as its total mass and define $M^\# = |M|^{-1} M$ as the probability measure proportional to $M$. For infinite measures, we will continue to use probabilistic terminology such as random variable and law. Let $|z|_+ := \max \{|z|, 1\}$ for $z \in \mathbb{C}$.

\subsection{Liouville fields on the annulus and the disk}
\label{subsec:liouville}

We first review the two-dimensional Gaussian free field (GFF); for more background, see e.g.~\cite{sheff-gff, dubedat-coupling}. Let $D$ be a planar domain that is conformally equivalent to a disk or an annulus. Let $\rho(d x)$ be a compactly supported probability measure on $D$ such that  $\iint_{D \times D} \log \frac{1}{|x-y|} \, \rho(d x) \rho (d y)<\infty$. Endow the space $\{f\in C^\infty(D): \int_D f(x) \rho(d x)=0 \}$ with the inner product $(f,g)_\nabla := \frac{1}{2 \pi} \int_D (\nabla f \cdot \nabla g ) d x$, and let $H(D;\rho)$ be its Hilbert closure. Let $(f_n)_{n\ge 1}$ be an orthonormal basis for $H(D;\rho)$. Then, the random series $\sum_{n=1}^\infty \alpha_n f_n$ converges almost surely as a random generalized function, where $(\alpha_n)_{n\ge 1}$ is a sequence of independent standard Gaussian random variables. We normalize $\sum_{n=1}^\infty \alpha_n f_n$ to have average zero over $\rho(d x)$, and call the resulting law on the random generalized function a \emph{free boundary Gaussian free field} (which depends on the choice of $\rho$).

Now we introduce the Liouville field on the annulus. For concreteness, we focus on the horizontal cylinder $\cC_\tau$ to describe the annulus. 
\begin{definition}\label{def:LF-tau}
For $\tau>0$, let $\rho(dx)$ be a probability measure on $\cC_\tau$ as defined before and $\mathbb{P}_{\tau,\rho}$ be the law of the free boundary Gaussian free field on $\cC_\tau$ with $\int_{\cC_\tau} h(x)\rho(d x) =0$. Sample $(h, \mathbf c)$ from $\mathbb{P}_{\tau,\rho}\times dc$ and set $\phi =  h +\mathbf c$. We write $\LF_{\tau}$ for the law of $\phi$, and call $\phi$ a \emph{Liouville field on $\cC_\tau$}.
\end{definition}

By translation invariance of the Lebesgue measure $dc$, the measure $\LF_{\tau}$ does not depend on the choice of the measure $\rho$. Next, we introduce the Liouville field on the disk. We focus on the upper half plane $\mathbb{H}$. We fix $\gamma \in (0,2)$ and set $Q=\frac{\gamma}{2}+\frac{2}{\gamma}$ (although in the application related to critical percolation, we will only consider $\gamma = \sqrt{8/3}$).

\begin{definition}\label{def:LF-H} Let $\mathbb{P}_{\mathbb{H}}$ be the law of the free boundary Gaussian free field on $\mathbb{H}$ with average zero over the semi-circle $\{z\in \mathbb{H}: |z|=1\}$. Sample $(h, \mathbf c)$ from $\mathbb{P}_{\mathbb{H}}\times e^{-Qc} dc$ and set $\phi (z)=  h(z) +\mathbf c- 2Q\log |z|_+$, where $|z|_+ = \max\{|z| , 1\}$.  
We write  $\LF_{\mathbb{H}}$ for the law of $\phi$, and call $\phi$  a  \emph{Liouville field on $\mathbb{H}$}.
\end{definition}
 
We also need Liouville fields with bulk or boundary insertions on the disk.

\begin{definition}\label{def:LF}
    Let $(\alpha, u) \in \mathbb{R} \times \mathbb{H}$ and $(\beta_i,s_i) \in \mathbb{R} \times (\partial \mathbb{H} \cup \{\infty \})$ for $1 \leq i \leq m$, where $m \geq 0$ and all $s_i$'s are distinct. We also assume that $s_i \neq \infty$ for $i \geq 2$. Using the convention that $G_{\mathbb{H}}(z,\infty) := \lim_{w \rightarrow \infty} G_{\mathbb{H}} (z,w) = 2 \log|z|_+$, we let 
    \begin{equation*}
    \begin{aligned}
    &\quad C_{\mathbb{H}}^{(\alpha,u),(\beta_i,s_i)_i} := \\
    & \quad \begin{cases}
        (2 {\rm Im} u)^{-\frac{\alpha^2}{2}} |u|_+^{-2\alpha(Q-\alpha)} \prod_{i=1}^m |s_i|_+^{-\beta_i(Q-\frac{\beta_i}{2})} \times e^{ \sum_{ 1 \leq i < j \leq m } \frac{\beta_i\beta_j}{4} G_{\mathbb{H}}(s_i,s_j) +  \sum_{i=1}^m \frac{\alpha\beta_i}{2} G_{\mathbb{H}}(u,s_i)}\quad  \mbox{  if }s_1 \neq \infty \\
        (2 {\rm Im} u)^{-\frac{\alpha^2}{2}} |u|_+^{-2\alpha(Q-\alpha)} \prod_{i=2}^m |s_i|_+^{-\beta_i(Q-\frac{\beta_i}{2})} \times e^{ \sum_{ 1 \leq i < j \leq m} \frac{\beta_i\beta_j}{4} G_{\mathbb{H}}(s_i,s_j) +  \sum_{i=1}^m \frac{\alpha\beta_i}{2} G_{\mathbb{H}}(u,s_i)}\quad  \mbox{  if }s_1 = \infty
    \end{cases}
    \end{aligned}
    \end{equation*}
    Sample $(h, \bm c)$ from $C_{\mathbb{H}}^{(\alpha,u),(\beta_i,s_i)_i} P_{\mathbb{H}} \times [e^{(\frac{1}{2} \sum_{i=1}^m \beta_i + \alpha - Q)c} dc]$ and set $\phi(z) = h(z) - 2Q\log|z|_+ +\frac{1}{2} \sum_{i=1}^m \beta_i G_{\mathbb{H}} (z,s_i) + \alpha G_{\mathbb{H}}(z,u) + \bm c$. We write ${\rm LF}_{\mathbb{H}}^{(\alpha,u),(\beta_i,s_i)_i}$ for the law of $\phi$.
\end{definition}

\subsection{Solving the random modulus from the annulus boundary lengths}
\label{subsec:KPZ}
We start by reviewing the concept of quantum surfaces. Consider tuples $(D, h, z_1, \ldots, z_n)$, where $D$ is a planar domain, $h$ is a random generalized function on $D$, and $z_i$ are marked points in the
bulk or on the boundary of $D$. We say that $(D, h, z_1, \ldots, z_n) \sim_\gamma(\tilde D, \tilde h, \tilde z_1, \ldots, \tilde z_n)$ if there exists a conformal map $\psi: \tilde{D} \rightarrow D$ such that $\tilde{h}=h \circ \psi+Q \log |\psi^{\prime}|$ and $\tilde z_i = \psi(z_i)$. A quantum surface with marked points in $\gamma$-LQG is an equivalence class under $\sim_\gamma$, denoted as $(D, h, z_1, \ldots, z_n)/{\sim_\gamma}$. We can similarly define a quantum surface decorated with curves. An embedding of a quantum surface is a choice of its representative.

We then introduce the quantum length of a quantum surface. In the above setting, the $\gamma$-LQG length measure on a straight line segment $L$ is defined by $\mathscr{L}_h^\gamma=\lim _{\epsilon \rightarrow 0} \epsilon^{\frac{\gamma^2}{4}} e^{\frac{\gamma}{2} h_\epsilon(z)} d z$, where $d z$ is the Lebesgue measure on $L$ and $h_\epsilon(z)$ is the average of $h$ over $\{w \in D:|w-z|=\epsilon\}$. For instance, if $\phi$ is a Liouville field on $\cC_\tau$, we can define $\mathscr{L}_\phi^\gamma=\lim _{\epsilon \rightarrow 0} \epsilon^{\frac{\gamma^2}{4}} e^{\frac{\gamma}{2} \phi_\epsilon(z)} d z$ on $\partial_1 \mathcal{C}_\tau$ and $\partial_2 \mathcal{C}_\tau$. The definition of quantum length is consistent with the equivalence relation $\sim_\gamma$.

Given a measure $\mathcal{M}$ on quantum surfaces, we can disintegrate $\mathcal{M}$ over the quantum lengths of the boundary arcs. For instance, for $\alpha \in \mathbb{R}$, one can disintegrate the measure $\LF^{(\alpha,i)}_{\mathbb{H}}$ according to its boundary length, i.e., 
 \begin{equation}\label{eq:disint}
     \LF^{(\alpha,i)}_{\mathbb{H}} = \int_0^\infty \LF^{(\alpha,i)}_{\mathbb{H}}(\ell) d\ell,
 \end{equation}
where $\LF^{(\alpha,i)}_{\mathbb{H}}(\ell)$ is supported on the set of quantum surfaces with one interior marked point and boundary length $\ell$. Then we have:

\begin{lemma}\label{lem:LFH-onepoint}
    For $\alpha> \frac{\gamma}{2}$, we have $|\LF^{(\alpha,i)}_\mathbb{H}(\ell)| = C \ell^{\frac{2(\alpha - Q)}{\gamma} - 1}$ for some constant $C>0$.
\end{lemma}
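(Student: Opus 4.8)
The plan is to extract the exponent $\frac{2(\alpha-Q)}{\gamma}-1$ from a scaling relation that is forced on $\LF^{(\alpha,i)}_{\mathbb H}$ by the way the field depends on the additive constant $\mathbf c$, and then to verify that the resulting total mass is finite exactly when $\alpha>\frac\gamma2$. First I would write down an explicit representative of the field $\phi$ sampled from $\LF^{(\alpha,i)}_{\mathbb H}$: by Definition~\ref{def:LF} with $u=i$, $\phi = h - 2Q\log|z|_+ + \alpha G_{\mathbb H}(z,i) + \mathbf c$, where $(h,\mathbf c)$ has law $C_{\mathbb H}^{(\alpha,i)}P_{\mathbb H}\times[e^{(\alpha-Q)c}\,dc]$. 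The key observation is that shifting $\mathbf c \mapsto \mathbf c + \frac2\gamma\log t$ multiplies every $\gamma$-LQG length by $t$ (since lengths scale as $e^{\frac\gamma2\phi}$), while the law of $\phi$ under this shift picks up a Radon--Nikodym factor $e^{(\alpha-Q)\cdot\frac2\gamma\log t} = t^{\frac{2(\alpha-Q)}{\gamma}}$ from the $e^{(\alpha-Q)c}\,dc$ weighting together with the translation invariance of $dc$.

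Concretely, I would fix one reference boundary length, say $\ell=1$, and argue that the disintegration measures satisfy the relation $\LF^{(\alpha,i)}_{\mathbb H}(\ell) = \ell^{\frac{2(\alpha-Q)}{\gamma}}\,\mathrm{push}_{\ell}\big(\LF^{(\alpha,i)}_{\mathbb H}(1)\big)$ at the level of quantum surfaces, where $\mathrm{push}_\ell$ adds $\frac2\gamma\log\ell$ to the field. Since adding a constant to the field is a measure-preserving operation on the space of quantum surfaces (it changes the embedding but the equivalence class only by a trivial automorphism, or more precisely the constant can be absorbed and tracked), this gives $|\LF^{(\alpha,i)}_{\mathbb H}(\ell)| = C\,\ell^{\frac{2(\alpha-Q)}{\gamma}}$ where $C = |\LF^{(\alpha,i)}_{\mathbb H}(1)|$; the extra $-1$ in the exponent of the statement then comes from the Jacobian $d(\text{length}) = \ell\,d(\log\text{length})$ when one passes from the ``$\log$-length'' parametrization in which the scaling is clean to the ``length'' parametrization $\int_0^\infty \LF^{(\alpha,i)}_{\mathbb H}(\ell)\,d\ell$ used in~\eqref{eq:disint}. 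In practice the cleanest route is: compute $\int_0^\infty f(\ell)|\LF^{(\alpha,i)}_{\mathbb H}(\ell)|\,d\ell = \mathbb E_{\LF^{(\alpha,i)}_{\mathbb H}}[f(\mathscr L)]$ for a test function $f$, substitute the explicit law, perform the change of variables $c \mapsto c + \frac2\gamma\log\ell$ inside the $c$-integral, and read off that $|\LF^{(\alpha,i)}_{\mathbb H}(\ell)|$ is a constant times $\ell^{\frac{2(\alpha-Q)}{\gamma}-1}$.

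The remaining point is that the constant $C$ is finite and strictly positive precisely for $\alpha>\frac\gamma2$. Positivity is clear since $\LF^{(\alpha,i)}_{\mathbb H}$ is a nonzero positive measure. For finiteness I would check that $\mathbb E_{\LF^{(\alpha,i)}_{\mathbb H}}[\mathbf 1_{\mathscr L \in[1,2]}]<\infty$; after integrating out $\mathbf c$ using the substitution above, this reduces to the finiteness of a Gaussian-multiplicative-chaos moment, namely that the total $\gamma$-LQG boundary length of the field $h - 2Q\log|z|_+ + \alpha G_{\mathbb H}(z,i)$ has a negative moment of the right order, which holds iff the log-singularity strength $\alpha$ at the insertion point is subcritical in the relevant sense, i.e. $\alpha>\frac\gamma2$ (the Seiberg-type bound for a single bulk insertion). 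I expect this last verification --- controlling the GMC mass near the insertion $i$ and near $\infty$ to get the exact threshold $\alpha>\frac\gamma2$ --- to be the main technical obstacle; the scaling argument itself is essentially bookkeeping. If a cleaner reference is available (this is a standard fact about Liouville fields with one insertion, going back to the disintegration computations in the work on LQG disks), I would instead cite it and merely indicate how the exponent matches.
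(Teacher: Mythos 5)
Your scaling argument is correct and is essentially the proof of the cited result; the paper itself simply references \cite[Lemma~2.7]{ARS21} here. Substituting $c = \frac{2}{\gamma}(\log\ell - \log L_0)$ in the definition, where $L_0$ is the $\gamma$-LQG boundary length of $h - 2Q\log|z|_+ + \alpha G_{\mathbb H}(\cdot,i)$ (so that $\mathscr L = e^{\gamma c/2}L_0$), gives
\[
|\LF^{(\alpha,i)}_{\mathbb H}(\ell)| \;=\; \frac{2}{\gamma}\,2^{-\alpha^2/2}\,\mathbb E_{P_{\mathbb H}}\big[L_0^{\frac{2(Q-\alpha)}{\gamma}}\big]\,\ell^{\frac{2(\alpha-Q)}{\gamma}-1},
\]
which reduces the lemma to $0<\mathbb E_{P_{\mathbb H}}\big[L_0^{\frac{2(Q-\alpha)}{\gamma}}\big]<\infty$, exactly the GMC-moment check you identify. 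Two small imprecisions in how you describe this last step: for $\frac{\gamma}{2}<\alpha<Q$ the exponent $\frac{2(Q-\alpha)}{\gamma}$ is \emph{positive}, so one is controlling a positive GMC moment, not a negative one --- negative moments only appear when $\alpha>Q$, where they are automatically finite. And the threshold does not come from local subcriticality at the bulk insertion $i$ (that point lies off $\partial\mathbb H$ and does not create a boundary singularity) but from a global positive-moment bound for the boundary GMC; the effective boundary singularity controlling it sits at $\infty$, where $-2Q\log|z|_+ + \alpha G_{\mathbb H}(z,i)\sim -2(Q-\alpha)\log|z|$, and the resulting condition $\frac{2(Q-\alpha)}{\gamma}<\frac{4}{\gamma^2}$ is exactly $\alpha>\frac{\gamma}{2}$. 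These details are what \cite{ARS21} supplies; your route to the exponent is the right one.
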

\begin{proof}
    This follows from \cite[Lemma 2.7]{ARS21}.
\end{proof}

The following result is from \cite[Theorem 1.6]{ARS22}. It provides a method for solving the annulus modulus of a quantum annulus described by Liouville fields using its joint boundary length distribution. This result will serve as a key input for our paper.

\begin{proposition}\label{prop:kpz-annulus}
    Let $\gamma \in (0,2)$ and $m(\mathrm{d} \tau)$ be a measure on $(0, \infty)$. For all $x \in \mathbb{R}$, we have
$$
\int_0^{\infty} e^{-\frac{\pi \gamma^2 x^2 \tau}{4}} m(\mathrm{d}\tau)=\frac{2 \sinh (\frac{\gamma^2}{4} \pi x)}{\pi \gamma x \Gamma(1+i x)} \iint \mathscr{L}_\phi^\gamma(\partial_1 \mathcal{C}_\tau) e^{-\mathscr{L}_\phi^\gamma(\partial_1 \mathcal{C}_\tau)} \mathscr{L}_\phi^\gamma(\partial_2 \mathcal{C}_\tau)^{ix} \operatorname{LF}_\tau(\mathrm{d}\phi) m(\mathrm{d}\tau).
$$
\end{proposition}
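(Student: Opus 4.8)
The plan is to reduce the identity first to a pointwise-in-$\tau$ statement and then to an exact computation of the joint law of the two boundary lengths of the Liouville field on $\cC_\tau$. \textbf{Step 0 (reduction to a single modulus).} Both sides are linear in $m$, and for $m=\delta_{\tau_0}$ the right-hand integral is finite (by Step~1 it equals a constant times $\bbE[(L_2/L_1)^{ix}]$, of modulus at most $1$). So, modulo a routine Fubini/dominated-convergence check, it suffices to prove that for every $\tau>0$ and $x\in\bbR$,
\[
\iint \mathscr{L}_\phi^\gamma(\partial_1\cC_\tau)\,e^{-\mathscr{L}_\phi^\gamma(\partial_1\cC_\tau)}\,\mathscr{L}_\phi^\gamma(\partial_2\cC_\tau)^{ix}\,\LF_\tau(\mathrm d\phi)=\frac{\pi\gamma x\,\Gamma(1+ix)}{2\sinh(\tfrac{\gamma^2}{4}\pi x)}\,e^{-\frac{\pi\gamma^2x^2\tau}{4}}.
\]

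\textbf{Step 1 (integrating out the constant mode).} Write $\phi=h+\mathbf c$ as in Definition~\ref{def:LF-tau}, so $\mathscr{L}_\phi^\gamma(\partial_i\cC_\tau)=e^{\gamma\mathbf c/2}L_i$ with $L_i:=\mathscr{L}_h^\gamma(\partial_i\cC_\tau)$. I would integrate out $\mathbf c$ first: the substitution $u=e^{\gamma\mathbf c/2}L_1$ turns $\int_{\bbR}e^{\gamma\mathbf c/2}L_1\,e^{-e^{\gamma\mathbf c/2}L_1}\,e^{i\gamma x\mathbf c/2}\,\mathrm d\mathbf c$ into $\tfrac{2}{\gamma}\Gamma(1+ix)\,(L_2/L_1)^{ix}$, the Gamma factor being exactly the integral $\int_0^\infty u^{ix}e^{-u}\,\mathrm du$ produced by the weight $\mathscr{L}_\phi^\gamma(\partial_1\cC_\tau)e^{-\mathscr{L}_\phi^\gamma(\partial_1\cC_\tau)}$. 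The Gamma functions then cancel and the identity of Step~0 becomes the ratio formula
\[
\bbE\Big[(L_2/L_1)^{ix}\Big]=\frac{\pi(\gamma^2/4)x}{\sinh(\pi(\gamma^2/4)x)}\,e^{-\frac{\pi\gamma^2x^2\tau}{4}}=\bigl|\Gamma(1+i\tfrac{\gamma^2}{4}x)\bigr|^{2}\,e^{-\frac{\pi\gamma^2x^2\tau}{4}},
\]
the expectation being over the free-boundary GFF $h$ on $\cC_\tau$ (here $L_i$ denotes $\mathscr{L}_h^\gamma(\partial_i\cC_\tau)$).

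\textbf{Step 2 (the two factors).} Decompose $h(s,\theta)=\mathfrak{h}(s)+\xi(s,\theta)$ into the average over the vertical circles and the transversal remainder. The longitudinal field $\mathfrak{h}$ is a one-dimensional Gaussian process on $[0,\tau]$, and a short Green's-function computation for $-\partial_s^2$ with Neumann endpoints gives $\mathrm{Var}\bigl(\mathfrak{h}(\tau)-\mathfrak{h}(0)\bigr)=2\pi\tau$. Since $L_i=e^{\gamma\mathfrak{h}(\cdot)/2}Y_i$ with $Y_i$ a functional of $\xi$ restricted to $\partial_i\cC_\tau$, hence independent of $\mathfrak{h}(\tau)-\mathfrak{h}(0)$, we get $\bbE[(L_2/L_1)^{ix}]=e^{-\pi\gamma^2x^2\tau/4}\,\bbE[(Y_2/Y_1)^{ix}]$, which already explains the Gaussian factor. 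It then remains to show $\bbE[(Y_2/Y_1)^{ix}]=|\Gamma(1+i\tfrac{\gamma^2}{4}x)|^{2}$: $Y_1,Y_2$ are Gaussian multiplicative chaos masses on the two boundary circles of a Gaussian field whose $\theta$-Fourier-mode covariances and cross-covariances are explicit functions of $2\pi n\tau$, so their joint law is explicit, and one evaluates the moment with a Fyodorov--Bouchaud-type formula for circle-GMC moments. The cleanest packaging, and the route I would actually take, is to invoke Wu's exact Liouville partition function on the annulus with fixed boundary lengths~\cite{BaojunWu22}: it is precisely the density of the law of $(\mathscr{L}_\phi^\gamma(\partial_1\cC_\tau),\mathscr{L}_\phi^\gamma(\partial_2\cC_\tau))$ under $\LF_\tau$, call it $Z_\tau(\ell_1,\ell_2)$, so that Steps~0--1 reduce the whole proposition to the explicit evaluation $\iint e^{-\ell_1}\,\ell_2^{ix-1}\,Z_\tau(\ell_1,\ell_2)\,\mathrm d\ell_1\,\mathrm d\ell_2=\frac{\pi\gamma x\,\Gamma(1+ix)}{2\sinh(\gamma^2\pi x/4)}\,e^{-\pi\gamma^2x^2\tau/4}$.

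\textbf{Main obstacle.} The hard part is this last step — the exact GMC/partition-function evaluation. One cannot simply treat $Y_1,Y_2$ as independent ``canonical'' circle chaoses: that is correct only in the limit $\tau\to\infty$, whereas on a finite cylinder they are correlated and carry genuine $\tau$-dependence through the reflection terms $\coth(2\pi n\tau)-1$; this $\tau$-dependence must disappear entirely from $\bbE[(Y_2/Y_1)^{ix}]$ (there being no other $\tau$-dependence in the answer besides $e^{-\pi\gamma^2x^2\tau/4}$), which is far from obvious and visibly necessary already from the degenerate limits $\tau\to0$ and $\tau\to\infty$. Carrying this out — equivalently, performing the $\ell_1,\ell_2$ integrals against Wu's formula — is exactly the conformal-bootstrap computation behind~\cite{ARS22, BaojunWu22}, and it is there that the transcendental $\Gamma$- and $1/\sinh$-factors are produced.
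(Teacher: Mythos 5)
The paper does not prove this proposition; it is imported verbatim from~\cite{ARS22} (their Theorem~1.6), with no argument supplied here. Against that backdrop, your opening moves are correct: by linearity in $m$ one may take $m=\delta_{\tau_0}$; integrating out the constant mode $\mathbf{c}$ via $u=e^{\gamma\mathbf{c}/2}L_1$ produces $\tfrac{2}{\gamma}\Gamma(1+ix)(L_2/L_1)^{ix}$, and the $\Gamma$-factors cancel, reducing the claim to
\[
\mathbb{E}_{\tau,\rho}\big[(L_2/L_1)^{ix}\big]=\big|\Gamma(1+ i\tfrac{\gamma^2}{4} x)\big|^2\, e^{-\pi\gamma^2 x^2\tau/4};
\]
and the decomposition $h=\mathfrak{h}+\xi$ together with $\mathrm{Var}(\mathfrak{h}(\tau)-\mathfrak{h}(0))=2\pi\tau$ (coming from the $\tfrac{1}{2\pi}$ Dirichlet-form normalization) correctly peels off the Gaussian prefactor $e^{-\pi\gamma^2x^2\tau/4}$.

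The genuine gap is the remaining identity $\mathbb{E}[(Y_2/Y_1)^{ix}]=|\Gamma(1+i\tfrac{\gamma^2}{4}x)|^2$, uniformly in $\tau$, where $Y_1,Y_2$ are the boundary GMC masses of the lateral field $\xi$ on the two ends of the cylinder. After your reductions this \emph{is} the proposition, and you do not prove it. You correctly observe that $Y_1,Y_2$ are correlated, $\tau$-dependent chaoses, so a naive two-sided Fyodorov--Bouchaud factorization does not apply, and that the $\tau$-independence of the ratio moment is nontrivial — but you then stop and defer to~\cite{ARS22,BaojunWu22}. That is a pointer, not a proof, and it points to the same source the paper is already citing. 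A further caveat: Wu's formula in~\cite{BaojunWu22} is the annulus partition function of the full Liouville CFT with bulk and boundary cosmological terms, whereas the quantity here is a free-field GMC moment under the bare measure $\LF_\tau$; getting from one to the other is itself a nontrivial computation you would still need to supply. In short, your reduction is correct and usefully isolates the Gaussian factor, but the crucial exact GMC evaluation is missing.
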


\subsection{Brownian surfaces}
\label{subsec:brownian-surface}

In this section we review the definition of Brownian surfaces. The \textit{Brownian sphere} is a random metric-measure space obtained in~\cite{legall-uniqueness, miermont-brownian-map} as the Gromov-Hausdorff-Prokhorov scaling limit of uniform quadrangulations. Let BS$_2(1)^\#$ be the probability measure of the Brownian sphere with two marked points and area 1. We define $\BS_2$ as the law on the Brownian sphere without conditioning on the total area.

\begin{definition}\label{def:BS}
For $A>0$, we rescale the area measure of a sample from $\BS_2(1)^{\#}$ by $A$ and let $\BS_2(A)^{\#}$ be the law of this new metric-measure space with two marked points and area $A$. Let $\BS_2=\int_0^\infty A^{-3/2}\BS_2(A)^{\#} d A$. We call a sample from $\BS_2$ a \emph{(free) Brownian sphere}.
\end{definition}

For $A,L>0$, the \textit{Brownian disk} with area $A$ and boundary length $L$ is a random metric-measure space obtained in \cite[Section 2.3]{bet-mier-disk} as the scaling limit of random quadrangulations with boundary. We denote its law by BD$_{0,1}(L;A)^\#$. The subscripts $\{0,1\}$ indicate that a sample from $\BD_{0,1}(L;A)^{\#}$ has no interior marked point and one boundary marked point. Next, we define the probability measure $\BD_{1,0}(L)^\#$ as the law on the Brownian disk with one interior marked point and boundary length $L$.

\begin{definition}\label{def:BD}
For $L>0$, let $\BD_{0,1}(L) = \int_0^\infty \frac{L^\frac{1}{2}}{\sqrt{2\pi A^5}} e^{-\frac{L^2}{2A}} \BD_{0,1}(L;A)^{\#} dA$. Given a sample from $\BD_{0,1}(L)$, we reweight its law by the area divided by the boundary length. We then forget the marked point on the boundary and add an interior marked point according to the probability measure proportional to the area measure. We denote the law of the resulting metric-measure space with one interior marked point by $\BD_{1,0}(L)$. Finally, $\BD_{1,0}(L)^{\#}$ is defined to be the probability measure proportional to $\BD_{1,0}(L)$.
\end{definition}   

The following lemma allows us to identify a Brownian disk within a Brownian sphere. It follows from~\cite[Theorem 3]{legall-disk-snake}. Whenever a subset of a metric space is viewed as a new metric space, we will use the internal metric on this subset. For the rest of the paper, we will use $B_d(x,r)$ to denote the $d$-metric ball of radius $r$ around $x$ and $B_d^\bullet(x,r)$ to denote the filled metric ball.

\begin{lemma}\label{lem:sphere-ball}
Sample $(\mathcal{S}, x, y, d, \mu)$ from $\BS_2$. On the event $d(x,y)>1$, let $D_y$ be the connected component of $\mathcal{S} \setminus B_d(x,1)$ containing $y$ and $\mathscr{L}$ be the boundary length of $D_y$. Conditioned on $d(x,y)>1$ and $\mathscr{L} = \ell$, the conditional law of $(D_y, y)$ is given by $\BD_{1,0}(\ell)^{\#}$.

\end{lemma}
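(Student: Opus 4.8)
The plan is to reduce the statement to the cited decomposition result of Le Gall~\cite{legall-disk-snake}, and the main work is in matching the normalizations: $\BS_2$ is an infinite measure obtained by integrating $\BS_2(A)^\#$ against $A^{-3/2}\,dA$, while the Brownian disk law $\BD_{1,0}(\ell)^\#$ involves a specific reweighting by area-over-boundary-length and a re-rooting at a uniform bulk point, so one must check that all the Jacobian factors coming from disintegrating over $A$ and over $\mathscr L$ combine correctly. I would begin by recalling precisely what~\cite[Theorem 3]{legall-disk-snake} gives: start from a Brownian sphere of \emph{fixed} area $A$, pick the two marked points $x,y$ (independently uniform in the area measure, which is exactly how $\BS_2(A)^\#$ is marked), condition on $d(x,y)>1$, and then the connected component $D_y$ of $\mathcal S\setminus B_d(x,1)$ containing $y$, equipped with its internal metric and the restriction of $\mu$, together with the distinguished point $y$, is a Brownian disk whose conditional law given its boundary length and its area is the corresponding $\BD_{0,1}(\cdot\,;\cdot)^\#$ with the marked point at $y$ (an interior marked point). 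The precise statement of~\cite{legall-disk-snake} also records the joint law of $(\mathscr L, \text{area of }D_y)$ inside the fixed-area sphere, which is what I will need to integrate out.

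Next I would carry out the disintegration on the $\BS_2$ side. Write a sample from $\BS_2$ as first choosing $A$ from $A^{-3/2}\,dA$ and then $(\mathcal S,x,y,d,\mu)$ from $\BS_2(A)^\#$. On the event $\{d(x,y)>1\}$ I decompose $\mathcal S$ into $B_d^\bullet(x,1)$ (the filled ball) and $D_y$, and I record $(\mathscr L, A_1)$ where $A_1$ is the area of $D_y$; note $A = A_1 + A_0$ with $A_0$ the area of the filled ball, and the joint law of $(\mathscr L, A_0, A_1)$ at fixed $A$ is explicit from the Brownian-snake description in~\cite{legall-disk-snake}. Multiplying by $A^{-3/2}$ and integrating over $A$ (equivalently over $A_0$ at fixed $A_1$) produces an explicit density for $(\mathscr L, A_1)$ under $\BS_2$ restricted to $\{d(x,y)>1\}$. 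Conditioning on $\{\mathscr L=\ell\}$ then leaves a measure on pairs (area $A_1$, a Brownian disk of boundary length $\ell$ and area $A_1$ with one interior marked point), and the $A_1$-density that comes out is, after simplification, proportional to $A_1^{1/2}(2\pi A_1^5)^{-1/2}e^{-\ell^2/(2A_1)}$ times the extra area factor dictated by the construction — precisely the weight appearing in Definition~\ref{def:BD}.

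The last step is bookkeeping against Definition~\ref{def:BD}: $\BD_{0,1}(\ell)$ is $\int_0^\infty \frac{\ell^{1/2}}{\sqrt{2\pi A^5}}e^{-\ell^2/(2A)}\BD_{0,1}(\ell;A)^\#\,dA$, then one reweights by area/boundary-length, forgets the boundary marked point, and adds a uniform interior point to get $\BD_{1,0}(\ell)$, and finally normalizes to $\BD_{1,0}(\ell)^\#$. I would verify that the ``reweight by area, re-root at a uniform bulk point'' operations on the $\BS_2$ side are automatically implemented: the marked point $y$ in $\BS_2(A)^\#$ is already a $\mu$-uniform point of $\mathcal S$, hence conditionally on $D_y$ it is a $\mu$-uniform point of $D_y$, which supplies the interior marking; and the area reweighting is exactly the discrepancy between ``$y$ uniform in the whole sphere'' and ``$y$ uniform in $D_y$'', i.e.\ a factor proportional to $A_1$. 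Collecting these, the conditional law of $(D_y,y)$ given $d(x,y)>1$ and $\mathscr L=\ell$ is the probability measure proportional to $\BD_{1,0}(\ell)$, namely $\BD_{1,0}(\ell)^\#$, as claimed.

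\textbf{Main obstacle.} The genuine difficulty is not conceptual but is the careful tracking of the three intertwined normalizations — the $A^{-3/2}$ weight defining $\BS_2$, the $\frac{\ell^{1/2}}{\sqrt{2\pi A^5}}e^{-\ell^2/(2A)}$ weight defining $\BD_{0,1}(\ell)$, and the area/boundary reweighting plus re-rooting in the passage $\BD_{0,1}\to\BD_{1,0}$ — together with extracting the exact joint density of $(\mathscr L, A_1)$ from~\cite{legall-disk-snake}. One must also be slightly careful that the ``internal metric'' on $D_y$ is the object that appears in the Brownian disk literature, and that the event $\{d(x,y)>1\}$ has positive (finite) mass under the relevant conditionings so that the disintegration is legitimate; both are handled by the cited results but should be stated explicitly.
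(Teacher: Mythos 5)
Your proposal is correct and takes the same route as the paper: the paper states Lemma~\ref{lem:sphere-ball} as a direct consequence of \cite[Theorem 3]{legall-disk-snake} with no written proof, so the disintegration and normalization bookkeeping you lay out is precisely the verification the citation leaves implicit. One small slip to watch: in the $A_1$-density you wrote $A_1^{1/2}$ where Definition~\ref{def:BD} has $\ell^{1/2}$, which is harmless once $\ell$ is held fixed by the conditioning but should be corrected when the computation is written out.
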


Next, we introduce the \textit{Brownian annulus}, defined as a random metric-measure space obtained by removing a metric ball from a sample of $\BD_{1,0}$, following the definition in~\cite{ARS22}. It also describes the same object arising from the scaling limit of random quadrangulations of an annulus~\cite{bet-mier-surface}; see~\cite{legall-annulus}.

\begin{definition}\label{def:BA}
    Fix $a,b>0$. Sample $(\mathcal{B}, x, d, \mu)$ from $\BD_{1,0}(a)$. On the event $d(x, \partial \mathcal{B})>1$, let $\mathcal{A} := \mathcal{B} \setminus B_d^\bullet(x,1)$ be an annular domain, and $\mathscr{L}$ be the boundary length of $B_d^\bullet(x,1)$. Let ${\rm BA}(a,b)^\#$ be the conditional law of the metric-measure space $(\mathcal{A}, d, \mu)$, conditioned on $\mathscr{L} = b$. Let ${\rm BA} = \iint_0^\infty \frac{1}{2 \sqrt{ab}(a+b)} {\rm BA}(a,b)^\# dadb$. We call a sample from ${\rm BA}$ a \emph{(free) Brownian annulus}.  
\end{definition}

By Lemma~\ref{lem:sphere-ball}, the law ${\rm BA}(a,b)^\#$ can be obtained by removing two unit metric balls from a sample of ${\rm BS}_2$. From this, we see that ${\rm BA}(a,b)^\# = {\rm BA}(b,a)^\#$ for any $a,b>0$; see also~\cite[Lemma 6.4]{ARS22}.

Miller and Sheffield \cite{lqg-tbm1,lqg-tbm2,lqg-tbm3} provided the $\sqrt{8/3}$-LQG description of the conformally embedded Brownian surfaces. In this subsection, we will review the results for the disk and annulus. The sphere result will be reviewed in Section~\ref{sec:backbone}. Fix $\gamma = \sqrt{8/3}$. Recall from~\eqref{eq:disint} that $\{\LF^{(\gamma,i)}_{\mathbb{H}}(\ell)\}_{\ell>0}$ is the disintegration of $\LF^{(\gamma,i)}_{\mathbb{H}}$ with respect to the boundary length. Let $\LF^{(\gamma,i)}_{\mathbb{H}}(\ell)^\#$ be the probability measure proportional to $\LF^{(\gamma,i)}_{\mathbb{H}}(\ell)$. Sample $\phi$ from $\LF^{(\gamma,i)}_{\mathbb{H}}(\ell)^\#$. By~\cite{dddf-lfpp, gm-uniqueness}, we can choose a smooth regularization $\phi_\eps$ of $\phi$ and $c_\eps > 0$ such that as $\eps \to 0$, the Euclidean metric on $\mathbb{H}$ reweighted by $c_\eps e^{\frac{\gamma}{4} \phi_\eps}$ converges in probability to a metric $d_\phi$ on $\mathbb{H}$, which is called the $\sqrt{8/3}$-LQG metric of $\phi$. Moreover, $d_\phi$ agrees (modulo a multiplicative constant) with the metric constructed earlier in~\cite{lqg-tbm1,lqg-tbm2} via the quantum Loewner evolution. The following theorem summarizes the relation between $\LF^{(\gamma,i)}_\mathbb{H}$ and the Brownian disk.
\begin{theorem}\label{thm:bm-lqg}
There exist constants $c_1, c_2 >0 $ such that the following holds for all $\ell>0$. Sample $\phi$ from $\LF^{(\gamma,i)}_\mathbb{H}(\ell)^{\#}$. Let $(\cD,p, d,\mu)$ be the marked metric-measure space given by  $(\mathbb{H},i, c_1 d_\phi,c_2 \mu_\phi)$. Then $(\cD,p, d,\mu)$ is
a Brownian disk with an interior marked point and boundary length $\ell$; namely, its law is $\BD_{1,0}(\ell)^{\#}$. Moreover, the notion of $\sqrt{8/3}$-LQG length measure agrees with the boundary length measure for the Brownian disk. Finally, $(\cD,p, d,\mu)$ and the quantum surface $(\mathbb{H}, \phi,i)/{\sim_\gamma}$ are measurable with respect to each other. 
\end{theorem}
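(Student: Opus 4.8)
The plan is to invoke the Miller--Sheffield identification of the Brownian disk with $\sqrt{8/3}$-LQG and then extract the precise statement about $\LF^{(\gamma,i)}_{\mathbb{H}}(\ell)^\#$ from it. First, I would recall the Miller--Sheffield result \cite{lqg-tbm1,lqg-tbm2,lqg-tbm3}: the unit-boundary-length, unit-area quantum disk (in the Duplantier--Miller--Sheffield sense, i.e.\ a $\gamma$-LQG surface built from a free-boundary GFF with a $\gamma$-log singularity at a marked interior point, suitably conditioned) equipped with its LQG metric and measure is, as a random metric-measure space, the Brownian disk $\BD_{1,0}(1;1)^\#$. The key translation step is the now-standard dictionary (see e.g.\ \cite{AHS17, cercle2021unit}) identifying the field-theoretic object $\LF^{(\gamma,i)}_{\mathbb{H}}$ with the quantum-disk measure $\Md$ (up to a multiplicative constant and the correct reweighting by boundary length and area): one checks that both have the same law of the pair (boundary length, area) --- for $\LF^{(\gamma,i)}_{\mathbb{H}}$ this is a direct GMC computation, giving a law proportional to $L^{1/2} A^{-5/2} e^{-L^2/(2A)} \, dL\, dA$ after the appropriate Girsanov/change-of-coordinates manipulations, which matches the weighting in Definition~\ref{def:BD} --- and that conditionally on these two quantities the field laws agree. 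Granting this, disintegrating over boundary length $\ell$ and normalizing identifies $\LF^{(\gamma,i)}_{\mathbb{H}}(\ell)^\#$ with the quantum disk of boundary length $\ell$, whose metric-measure structure is $\BD_{1,0}(\ell)^\#$ by Miller--Sheffield plus scaling.

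The steps in order: (1) state the Miller--Sheffield metric identification for the quantum disk at unit scale; (2) record the DMS/AHS description of the quantum disk as a field law and the identification $\LF^{(\gamma,i)}_{\mathbb{H}} \propto$ quantum disk measure with an interior marked point, via matching the joint law of $(\mathscr{L}^\gamma_\phi(\partial\mathbb{H}), \mu_\phi(\mathbb{H}))$ and the conditional field law; (3) apply Lemma~\ref{lem:LFH-onepoint} (with $\alpha=\gamma$) to confirm the boundary-length exponent is consistent, pinning down the constant structure; (4) disintegrate both sides over boundary length and invoke $\gamma$-LQG scaling (adding a constant $c$ to the field rescales lengths by $e^{\gamma c/2}$, areas by $e^{\gamma c}$, and the metric by $e^{\gamma c/(d_\gamma)}$ with $d_\gamma = 4$ for $\gamma=\sqrt{8/3}$) to move from unit boundary length to general $\ell$; (5) read off that the LQG length measure coincides with the Brownian-disk boundary measure --- this is part of the Miller--Sheffield package --- and that the measurability in both directions holds because the embedding $(\mathbb{H},\phi,i)$ is the canonical conformal embedding of the metric-measure space (the metric and measure determine $\phi$ up to the $\sim_\gamma$ equivalence, and conversely $\phi$ determines $d_\phi,\mu_\phi$). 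The constants $c_1,c_2$ are exactly the (universal) multiplicative constants relating the Miller--Sheffield metric normalization and the GMC normalization to the Brownian-disk normalization.

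The main obstacle is step (2): carefully matching $\LF^{(\gamma,i)}_{\mathbb{H}}$ with the quantum disk. The subtlety is bookkeeping the marked points and the reweighting --- the quantum disk in DMS is most naturally described with two boundary marked points or with a $\gamma$-insertion, and moving to the ``one interior marked point, forget the boundary point, reweight by area/length'' normalization of Definition~\ref{def:BD} must be done so that the weights $L^{1/2}A^{-5/2}e^{-L^2/(2A)}$ come out exactly. This is where one has to be most careful that no stray powers of $\ell$ or spurious constants appear; everything else is either a direct citation of Miller--Sheffield or a routine GMC/scaling computation. I would handle it by first establishing the statement for the unit quantum disk where the comparison is cleanest, then propagating to general $\ell$ by scaling, so that the delicate normalization check happens only once.
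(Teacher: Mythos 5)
Your proposal follows essentially the same route as the paper: identify $\LF^{(\gamma,i)}_{\mathbb{H}}(\ell)^{\#}$ with the DMS quantum disk with one interior marked point, then invoke Miller--Sheffield to upgrade this to the Brownian disk statement about $(c_1 d_\phi, c_2\mu_\phi)$, and appeal to the known measurability results. The one difference is that you sketch how to prove the identification between $\LF^{(\gamma,i)}_{\mathbb{H}}$ and the quantum disk (by matching the joint law of boundary length and area and the conditional field laws, then scaling from the unit case), whereas the paper simply cites \cite[Theorem 3.4]{ARS21} for that step, together with \cite[Section 6]{legall-disk-snake} for the Brownian-disk side and \cite{lqg-tbm3, gms-poisson-voronoi} for measurability; your sketch of the Liouville-field/quantum-disk comparison is a fair outline of what that cited result does, so the content is the same, just unpacked one level deeper.
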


\begin{proof}
By~\cite[Theorem 3.4]{ARS21}, $(\H, \phi,i)/{\sim_\gamma}$ is a quantum disk with an interior marked point and boundary length $\ell$, as defined in the mating-of-trees framework~\cite{DMS14}. By~\cite{lqg-tbm1,lqg-tbm2, lqg-tbm3} and~\cite[Section 6]{legall-disk-snake}, it is possible to find appropriate constants $c_1$ and $c_2$ such that the law of $(\cD,p, d,\mu)$ is $\BD_{1,0}(\ell)^{\#}$ as a metric-measure space, and the two notions of boundary lengths agree. The measurability result was initially proved in~\cite{lqg-tbm3} and a more constructive proof was given in~\cite{gms-poisson-voronoi}. 
\end{proof}

\begin{remark}\label{rmk:equivalence}
In this paper, we set $c_1 = 1$ by choosing the normalizing sequence $\{c_\epsilon\}$ appropriately. The constant $c_2$ is canonically defined due to the different notions of area for Brownian surfaces and $\sqrt{8/3}$-LQG surfaces. As explained in~\cite[Remark 3.12]{ghs-metric-peano}, $c_2=\sqrt{3}$. In this paper, we do not require properties related to area measures, and thus, we treat $\LF^{(\gamma,i)}_\mathbb{H}(\ell)^{\#}$ and $\BD_{1,0}(\ell)^{\#}$ as the same law on random metric spaces with well-defined boundary length measures. We will use the same convention for other Brownian surfaces and quantum surfaces considered in the rest of the paper.
\end{remark}

The following lemma is derived from \cite[Lemma 6.8]{ARS22}, combined with Theorem~\ref{thm:bm-lqg}. Its proof is based on the enumeration and scaling limits of random quadrangulations; see Appendix B therein.

\begin{lemma}\label{lem:law-metric-ball}
Fix $a>0$ and let 
\begin{equation}\label{eq:def-f}
    \cf(b) = e^{-\frac{9}{2}b} \quad \mbox{for }b>0.
\end{equation}
Sample $(\mathbb{H}, \phi, i)$ from $\LF^{(\gamma,i)}_\mathbb{H}(a)^{\#}$ and let $d_\phi$ be the $\sqrt{8/3}$-LQG metric of $\phi$. On the event that $d_\phi(i, \partial \mathbb{H})>1$, let $\mathscr{L}$ be the quantum boundary length of $\mathcal{B}_{d_\phi}^{\bullet}(i, 1)$. Then the law of $\mathscr{L}$ conditioned on $d_\phi(i, \partial \mathbb{H})>1$ is
$$
\frac{1}{Z(a)}\frac{a^{3/2}
   b \cf(b)
   }{\sqrt{ab}(a+b)}1_{b>0} d b \quad \textrm{where }  Z(a)=\int_0^\infty   \frac{a^{3/2} b\cf(b) d b}{\sqrt{ab}(a+b)}.
$$
Moreover, $\LF^{(\gamma,i)}_\mathbb{H}(a)^{\#}[d_\phi(i, \partial \mathbb{H})>1]=Z(a) / Z(\infty)$ where $Z(\infty)=\int_0^{\infty} b^{\frac{1}{2}} \cf(b) \mathrm{d} b$.
\end{lemma}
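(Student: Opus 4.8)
The plan is to reduce the statement entirely to \cite[Lemma 6.8]{ARS22}, which is phrased in the language of Brownian surfaces, via the identification in Theorem~\ref{thm:bm-lqg} and Remark~\ref{rmk:equivalence}. First I would recall that by Theorem~\ref{thm:bm-lqg} the metric-measure space $(\mathbb{H}, d_\phi, \mu_\phi)$ with $\phi$ sampled from $\LF^{(\gamma,i)}_{\mathbb{H}}(a)^{\#}$ is (up to the canonical constants $c_1=1$, $c_2=\sqrt{3}$, which we suppress per Remark~\ref{rmk:equivalence}) a sample from $\BD_{1,0}(a)^{\#}$, with the two notions of boundary length agreeing. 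In particular the event $\{d_\phi(i,\partial\mathbb{H})>1\}$ and the quantum boundary length $\mathscr{L}$ of the filled metric ball $\mathcal{B}^\bullet_{d_\phi}(i,1)$ correspond exactly to the event $\{d(x,\partial\mathcal{B})>1\}$ and the Brownian-disk boundary length of $B^\bullet_d(x,1)$ appearing in Definition~\ref{def:BA}.

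Second, I would invoke \cite[Lemma 6.8]{ARS22} in the form stated there: starting from $\BD_{1,0}(a)$ (the non-normalized measure with the $\frac{L^{1/2}}{\sqrt{2\pi A^5}}e^{-L^2/(2A)}$ weighting of Definition~\ref{def:BD}), conditionally on $\{d(x,\partial\mathcal{B})>1\}$ the law of the length $\mathscr{L}$ of $B^\bullet_d(x,1)$ has density proportional to $\frac{a^{3/2}\,b\,\cf(b)}{\sqrt{ab}(a+b)}$ on $(0,\infty)$, where the weight $\cf(b)=e^{-9b/2}$ arises from the enumeration and scaling-limit computations for random quadrangulations of the annulus (this is the content of Appendix~B of \cite{ARS22}, and is precisely the Radon-Nikodym factor that makes the surface obtained by gluing $B^\bullet_d(x,1)$ back a Brownian disk). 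Normalizing by $Z(a) = \int_0^\infty \frac{a^{3/2} b\,\cf(b)}{\sqrt{ab}(a+b)}\,db$ gives the displayed conditional law. The fact that $\BD_{1,0}(a)$ is an infinite measure is harmless: the conditional law of $\mathscr{L}$ on the positive-probability event $\{d(x,\partial\mathcal{B})>1\}$ is still a well-defined probability measure, and one checks $Z(a)<\infty$ directly from the explicit integrand.

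Third, for the last assertion I would compute $\LF^{(\gamma,i)}_{\mathbb{H}}(a)^{\#}[d_\phi(i,\partial\mathbb{H})>1]$. By the same identification this equals the $\BD_{1,0}(a)^{\#}$-probability that the marked point is at $d$-distance $>1$ from the boundary, i.e.\ $|\BD_{1,0}(a)|^{-1}$ times the mass of $\BD_{1,0}(a)$ restricted to that event. The enumeration input of \cite[Lemma 6.8]{ARS22} gives that this restricted mass, after integrating out $\mathscr{L}$, is a constant multiple of $Z(a)$; taking $a\to\infty$ (where $d_\phi(i,\partial\mathbb{H})>1$ almost surely, since the boundary recedes to infinity) identifies the normalizing constant as $Z(\infty) = \int_0^\infty b^{1/2}\cf(b)\,db$, obtained as the $a\to\infty$ limit of $\frac{a^{3/2}b\cf(b)}{\sqrt{ab}(a+b)} = \frac{a\,b^{1/2}\cf(b)}{a+b}\cdot\frac{1}{b^{0}}\to b^{1/2}\cf(b)$ pointwise in $b$, with dominated convergence justified by $e^{-9b/2}$. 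Hence $\LF^{(\gamma,i)}_{\mathbb{H}}(a)^{\#}[d_\phi(i,\partial\mathbb{H})>1] = Z(a)/Z(\infty)$.

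\textbf{Main obstacle.} The genuinely substantive input — the appearance of the specific weight $\cf(b)=e^{-9b/2}$ and the $\frac{1}{\sqrt{ab}(a+b)}$ prefactor — is not something one derives from scratch here; it is imported from the random-quadrangulation enumeration of \cite[Appendix B]{ARS22} together with the metric scaling-limit results. So the only real work on our side is bookkeeping: making sure the normalizations of $\BD_{1,0}(a)$ versus $\BD_{1,0}(a)^{\#}$, the $c_1,c_2$ constants, and the ``filled'' versus ``ordinary'' metric ball conventions all match between Definition~\ref{def:BA}, \cite[Lemma 6.8]{ARS22}, and Theorem~\ref{thm:bm-lqg}, and then the $a\to\infty$ limit to pin down $Z(\infty)$. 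I expect the limit computation and the dominated-convergence justification to be the most delicate routine step, but nothing here requires a new idea beyond citing the stated results.
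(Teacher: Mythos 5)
Your proposal takes essentially the same route as the paper: the paper states this lemma with only a one-sentence attribution to \cite[Lemma 6.8]{ARS22} combined with Theorem~\ref{thm:bm-lqg}, exactly as you do, with the substantive content (the weight $\cf(b)=e^{-9b/2}$ and the $\frac{1}{\sqrt{ab}(a+b)}$ prefactor) imported from the random-quadrangulation enumeration in Appendix~B of \cite{ARS22}. Your added $a\to\infty$ dominated-convergence argument to pin down $Z(\infty)$ is harmless bookkeeping (and in fact the second display in \cite[Lemma 6.8]{ARS22} already states the $Z(a)/Z(\infty)$ identity directly, so it is not strictly needed), but everything you write is consistent with the paper's treatment.
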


In~\cite{ARS22}, the authors provided the $\sqrt{8/3}$-LQG description of the free Brownian annulus ${\rm BA}$. They also derived the distribution of its modulus, which is measurable with respect to the metric-measure space (see their Lemma 6.7). We first review an equivalent construction of ${\rm BA}$ using $\LF^{(\gamma,i)}_\mathbb{H}$ and then review the $\sqrt{8/3}$-LQG description of ${\rm BA}$. 
\begin{lemma}\label{lem:BA-equivalent}
    Let $\gamma = \sqrt{8/3}$ and choose constant $C_1$ such that $|\LF^{(\gamma,i)}_\mathbb{H}(\ell)| = C_1 \ell^{-\frac{3}{2}}$, in light of  Lemma~\ref{lem:LFH-onepoint}.  Let $C_2 = 2 C_1/ Z(\infty)$. Sample $(\mathbb{H}, i, \phi)$ from $\LF_\mathbb{H}^{(\gamma,i)}$ and let $d_\phi$ be the $\sqrt{8/3}$-LQG metric of $\phi$. On the event $E=\{d_\phi(i, \partial \mathbb{H}) > 1\}$, let $\mathcal{A} := \mathbb{H} \setminus B^\bullet_{d_\phi}(i, 1)$ be an annular domain and $\mathscr{L}$ be the quantum boundary length of $B_{d_\phi}^\bullet(i, 1)$. Then under the reweighted measure $M = \frac{1_E}{C_2 \mathscr{L} \cf (\mathscr{L})} \LF_\mathbb{H}^{(\gamma,i)}$, the law of $(\mathcal{A},\phi)/{\sim_\gamma}$ is ${\rm BA}$.
\end{lemma}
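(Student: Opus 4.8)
The plan is to establish Lemma~\ref{lem:BA-equivalent} by matching the proposed construction with Definition~\ref{def:BA} of the free Brownian annulus ${\rm BA} = \iint_0^\infty \frac{1}{2\sqrt{ab}(a+b)} {\rm BA}(a,b)^\# \, da\, db$, using as the central tool the disintegration of $\LF^{(\gamma,i)}_{\mathbb{H}}$ over its boundary length together with Lemma~\ref{lem:law-metric-ball} and Theorem~\ref{thm:bm-lqg}. First I would decompose the measure $\LF^{(\gamma,i)}_{\mathbb{H}}$ according to the outer boundary length $a$, writing $\LF^{(\gamma,i)}_{\mathbb{H}} = \int_0^\infty \LF^{(\gamma,i)}_{\mathbb{H}}(a)\, da = \int_0^\infty |\LF^{(\gamma,i)}_{\mathbb{H}}(a)| \, \LF^{(\gamma,i)}_{\mathbb{H}}(a)^\# \, da = C_1 \int_0^\infty a^{-3/2} \LF^{(\gamma,i)}_{\mathbb{H}}(a)^\# \, da$ by Lemma~\ref{lem:LFH-onepoint}. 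Under $\LF^{(\gamma,i)}_{\mathbb{H}}(a)^\#$, by Theorem~\ref{thm:bm-lqg} the quantum surface $(\mathbb{H},\phi,i)/{\sim_\gamma}$ equipped with the metric $d_\phi$ is precisely a Brownian disk $\BD_{1,0}(a)^\#$, so the operation of removing the filled metric ball $B^\bullet_{d_\phi}(i,1)$ on the event $E$ matches verbatim the construction in Definition~\ref{def:BA} that produces ${\rm BA}(a,b)^\#$ after conditioning on the inner boundary length $\mathscr{L} = b$.

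Next I would apply Lemma~\ref{lem:law-metric-ball}: under $\LF^{(\gamma,i)}_{\mathbb{H}}(a)^\#$, the event $E$ has probability $Z(a)/Z(\infty)$, and conditionally on $E$ the inner boundary length $\mathscr{L}$ has density $\frac{1}{Z(a)} \cdot \frac{a^{3/2} b \cf(b)}{\sqrt{ab}(a+b)} 1_{b>0}\, db$. Combining these, under the measure $1_E \, \LF^{(\gamma,i)}_{\mathbb{H}}$ the joint "law" of $(a, b)$ together with the conditional annular surface is
\begin{equation*}
C_1 \int_0^\infty \int_0^\infty a^{-3/2} \cdot \frac{1}{Z(\infty)} \cdot \frac{a^{3/2} b \cf(b)}{\sqrt{ab}(a+b)} \, {\rm BA}(a,b)^\# \, db\, da = \frac{C_1}{Z(\infty)} \iint_0^\infty \frac{b\, \cf(b)}{\sqrt{ab}(a+b)} \, {\rm BA}(a,b)^\# \, da\, db,
\end{equation*}
where I have used that conditionally on $(a,b)$ and on $E$ the law of $(\mathcal{A},\phi)/{\sim_\gamma}$ is ${\rm BA}(a,b)^\#$. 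Then reweighting by $\frac{1}{C_2 \mathscr{L}\, \cf(\mathscr{L})} = \frac{1}{C_2 b \, \cf(b)}$ converts the density $\frac{b\, \cf(b)}{\sqrt{ab}(a+b)}$ into $\frac{1}{C_2 \sqrt{ab}(a+b)}$, and with $C_2 = 2C_1/Z(\infty)$ this is exactly $\frac{C_1}{Z(\infty)} \cdot \frac{Z(\infty)}{2C_1} \cdot \frac{1}{\sqrt{ab}(a+b)} = \frac{1}{2\sqrt{ab}(a+b)}$, matching Definition~\ref{def:BA}.

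A subtle point requiring care is whether the two notions of inner boundary length agree: in Lemma~\ref{lem:law-metric-ball} $\mathscr{L}$ is the quantum boundary length of $B^\bullet_{d_\phi}(i,1)$, while in Definition~\ref{def:BA} the inner boundary length $b$ is the Brownian-disk boundary length of the removed metric ball; by the "moreover" clause of Theorem~\ref{thm:bm-lqg} these two notions coincide, and similarly the outer boundary length is common to both descriptions. I would also note that the reweighting factor $\frac{1}{C_2 \mathscr{L}\, \cf(\mathscr{L})}$ is measurable with respect to $(\mathcal{A},\phi)/{\sim_\gamma}$ — indeed $\mathscr{L}$ is the inner boundary length of the annular surface — so the reweighted measure $M$ is genuinely a measure on quantum annuli and the identification of its law with ${\rm BA}$ is legitimate. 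The main obstacle I anticipate is purely bookkeeping: tracking the various density factors and the interplay between the disintegration constant $C_1$, the conditioning normalization $Z(a)$, $Z(\infty)$, and the definition of $C_2$, and making sure no Jacobian from the change of the roles of $a$ versus $b$ is lost; there is no genuine analytic difficulty once Theorem~\ref{thm:bm-lqg} and Lemma~\ref{lem:law-metric-ball} are in hand.
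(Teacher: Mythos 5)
Your proof is correct and follows essentially the same route as the paper's: both compute the joint density of the outer and inner boundary lengths $(a,b)$ under the reweighted measure $M$ by chaining Lemma~\ref{lem:LFH-onepoint}, Lemma~\ref{lem:law-metric-ball}, and the reweighting factor, observe that the product collapses to $\frac{1}{2\sqrt{ab}(a+b)}\,da\,db$ with the given choice of $C_2$, and then conclude by matching with Definition~\ref{def:BA} via Theorem~\ref{thm:bm-lqg}. The only differences are expository: you spell out the conditional identification of $(\mathcal{A},\phi)/{\sim_\gamma}$ with ${\rm BA}(a,b)^\#$ and the consistency of the two boundary-length notions, which the paper handles implicitly through the final clause ``Combined with Definition~\ref{def:BA} and Theorem~\ref{thm:bm-lqg}.''
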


\begin{proof}
    Recall that $\mathscr{L}_\phi^\gamma(\partial \mathbb{H})$ is the boundary length of a sample $\phi$ from $\LF_\mathbb{H}^{(\gamma,i)}$. By definition, the joint law of $\mathscr{L}_\phi^\gamma(\partial \mathbb{H})$ and $\mathscr{L}$ under $M$ is given by
    $$
    |\LF^{(\gamma,i)}_\mathbb{H}(a)| \times \LF^{(\gamma,i)}_\mathbb{H}(a)^{\#}[E] \times p(\mathscr{L} = b|E) \times \frac{1}{C_2 b \cf(b)} 1_{a,b>0} da db,
    $$
    where $p(\mathscr{L} = b|E)$ is the probability density of $\mathscr{L}$ conditioned on the event $E$. By Lemmas~\ref{lem:LFH-onepoint} and \ref{lem:law-metric-ball}, this equals to
    $$
    C_1 a^{-3/2} \times \frac{Z(a)}{Z(\infty)} \times \frac{1}{Z(a)}\frac{a^{3/2}
    b \cf(b)}{\sqrt{ab}(a+b)} \times \frac{1}{C_2 b \cf(b)} 1_{a,b>0} da db = \frac{1}{2\sqrt{ab}(a+b) } 1_{a,b>0} da db.
    $$
    Combined with Definition~\ref{def:BA} and Theorem~\ref{thm:bm-lqg}, we conclude.
\end{proof}

We also have an equivalent construction of ${\rm LF}_\H^{(\gamma,i)}$ with $\gamma = \sqrt{8/3}$ from the Brownian sphere, and the proof follows from similar arguments.
\begin{lemma}\label{lem:law-sphere-metric-ball}
    Let $\gamma = \sqrt{8/3}$. Sample $(\mathcal{S}, x, y, d, \mu)$ from $\BS_2$. On the event $O = \{ d(x,y)>1 \}$, let $D_y$ be the connected component of $\mathcal{S} \setminus B_d(x,1)$ containing $y$, and let $\mathscr{L}$ be the quantum boundary length of $D_y$. Then, under the reweighted measure $\frac{1_O}{\mathscr{L} \cf(\mathscr{L})}\BS_2$, the law of $(D_y, y)$ is equivalent to ${\rm LF}_\H^{(\gamma,i)}$ up to a multiplicative constant.
\end{lemma}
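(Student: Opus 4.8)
The plan is to follow the proof of Lemma~\ref{lem:BA-equivalent} almost verbatim, with the Brownian sphere in place of the Liouville field on the disk. Sample $(\mathcal S,x,y,d,\mu)$ from $\BS_2$ and recall $O=\{d(x,y)>1\}$, the component $D_y$ of $\mathcal S\setminus B_d(x,1)$ containing $y$, and its boundary length $\mathscr L$. By Lemma~\ref{lem:sphere-ball}, conditionally on $O$ and $\mathscr L=\ell$ the metric-measure space $(D_y,y)$ (with its internal metric) has law $\BD_{1,0}(\ell)^{\#}$, which by Theorem~\ref{thm:bm-lqg} and Remark~\ref{rmk:equivalence} we identify with $\LF^{(\gamma,i)}_\mathbb{H}(\ell)^{\#}$. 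Writing $g(\ell)\,d\ell$ for the law of $\mathscr L$ under $1_O\,\BS_2$, it follows that under $\frac{1_O}{\mathscr L\,\cf(\mathscr L)}\,\BS_2$ the law of $(D_y,y)$ equals $\int_0^\infty \frac{g(\ell)}{\ell\,\cf(\ell)}\,\LF^{(\gamma,i)}_\mathbb{H}(\ell)^{\#}\,d\ell$. Since $\LF^{(\gamma,i)}_\mathbb{H}=\int_0^\infty |\LF^{(\gamma,i)}_\mathbb{H}(\ell)|\,\LF^{(\gamma,i)}_\mathbb{H}(\ell)^{\#}\,d\ell$ with $|\LF^{(\gamma,i)}_\mathbb{H}(\ell)|=C_1\ell^{-3/2}$ (Lemma~\ref{lem:LFH-onepoint} with $\alpha=\gamma$), the whole statement reduces to proving that
\[
g(\ell)=c\,\ell^{-1/2}\cf(\ell)\qquad\text{for some constant }c>0,
\]
after which the multiplicative constant in the lemma is $c/C_1$.

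It thus remains to identify the marginal law of $\mathscr L$ under $1_O\,\BS_2$, which is the Brownian-sphere analog of Lemma~\ref{lem:law-metric-ball}. This can be read off from the enumeration and scaling limits of quadrangulations of the sphere, exactly as the disk statement is obtained in \cite[Appendix~B]{ARS22}; alternatively, staying in the continuum, one can derive it from Lemma~\ref{lem:law-metric-ball} and the exchangeability of the two marked points of $\BS_2$. For the latter I would remove a filled unit metric ball around $x$ from $\mathcal S$ and then a filled unit metric ball around $y$ from $D_y$ (in the internal metric of $D_y$); a short argument with filled metric balls shows that the resulting region is an annulus precisely on the symmetric event $\{d(x,y)>2\}$, and that on this event the two-ball removal coincides with the one obtained by interchanging $x$ and $y$. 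Computing the joint law of the two boundary lengths $(\mathscr L,\mathscr L')=(\ell,b)$ with $x$ removed first --- using $(D_y,y)\sim\BD_{1,0}(\ell)^{\#}$ and then Lemma~\ref{lem:law-metric-ball} with $a=\ell$, the factor $Z(\ell)/Z(\infty)$ being the probability that the inner ball fits and the $1/Z(\ell)$ of Lemma~\ref{lem:law-metric-ball} cancelling it --- gives this joint law proportional to $g(\ell)\,\ell^{3/2}\,b\,\cf(b)\,/\,(\sqrt{\ell b}\,(\ell+b))\,1_{\ell,b>0}\,d\ell\,db$. Its invariance under $\ell\leftrightarrow b$ forces $g(\ell)\,\ell^{1/2}/\cf(\ell)$ to be constant, i.e.\ $g(\ell)=c\,\ell^{-1/2}\cf(\ell)$, and substituting back into the first paragraph finishes the proof.

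The main obstacle is the input in the second paragraph: the marginal law of $\mathscr L$. Along the combinatorial route it is the quadrangulation enumeration; along the continuum route the delicate point is the bookkeeping with filled metric balls --- identifying the event on which the second ball removal leaves an annulus with the symmetric event $\{d(x,y)>2\}$, and checking that the internal metric of $D_y$ near $y$ agrees with the ambient metric there, so that Lemma~\ref{lem:law-metric-ball} (stated for the internal metric of a Brownian disk) applies and the exchangeability of $(x,y)$ genuinely transfers to the joint boundary-length law. Everything else is routine rewriting of disintegrations, as in Lemma~\ref{lem:BA-equivalent}.
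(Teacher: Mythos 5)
Your first paragraph reproduces the paper's reduction exactly: Lemma~\ref{lem:sphere-ball} plus Theorem~\ref{thm:bm-lqg} identify the conditional law of $(D_y,y)$ given $\mathscr{L}=\ell$ with $\LF^{(\gamma,i)}_\mathbb{H}(\ell)^{\#}$, and then the only remaining input is that the marginal of $\mathscr{L}$ under $1_O\,\BS_2$ is proportional to $\ell^{-1/2}\cf(\ell)\,d\ell$. The paper obtains this last fact by simply asserting it ``similar to Lemma~\ref{lem:law-metric-ball}'', i.e.\ by the same quadrangulation-enumeration argument from \cite[Appendix~B]{ARS22} that underlies Lemma~\ref{lem:law-metric-ball} --- which is precisely your first (combinatorial) route. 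Your second route is genuinely different and, in my view, more satisfying: instead of re-running the enumeration for the sphere, you deduce the marginal of $\mathscr{L}$ from the already-available disk statement (Lemma~\ref{lem:law-metric-ball}) together with exchangeability of the two marked points of $\BS_2$, by computing the joint law of the two metric-ball boundary lengths after a two-step removal and noting it must be symmetric in $(\ell,b)$; the cancellation of $Z(\ell)$ and the resulting functional equation $g(\ell)\ell^{1/2}/\cf(\ell)=\text{const}$ indeed pin down $g$. What this buys is a continuum-only derivation that avoids appealing to a parallel discrete computation the paper never writes out. The price is the bookkeeping you flag yourself, and it is real: one needs (i) $d_{D_y}(y,\partial D_y)=d(x,y)-1$ (so the ``second ball fits'' event is exactly $\{d(x,y)>2\}$, hence symmetric), and (ii) that the filled ball $B_{d_{D_y}}^\bullet(y,1)$ taken for the internal metric of $D_y$ coincides with the ambient filled ball $B_d^\bullet(y,1)$, so that $\mathcal{S}\setminus(B_d^\bullet(x,1)\cup B_d^\bullet(y,1))$ is the same annulus whichever point is peeled first and Lemma~\ref{lem:law-metric-ball} applies verbatim to $(D_y,y)$. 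Both of these are standard facts about filled metric balls in geodesic spaces (geodesics to the nearest boundary point stay in the complement of the filled ball, and $B_d(y,1)\subset D_y$ when $d(x,y)>2$), but they are not trivial and would need to be spelled out to make the exchangeability argument airtight. With those checks supplied, your proposal is correct and gives a valid, more self-contained alternative to the paper's citation-based step.
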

\begin{proof}
    Similar to Lemma~\ref{lem:law-metric-ball}, we can show that the law of $\mathscr{L}$ conditioned on $d(x,y)>1$ is $C^{-1} a^{-1/2} \cf(a) 1_{a>0} da$, where $C = \int_0^\infty a^{-1/2} \cf(a) da$. Hence, under the reweighted measure, the law of $\mathscr{L}$ is $C' a^{-3/2} 1_{a>0} da$. Recall that $|{\rm LF}_\H^{(\gamma,i)}(a)| = C_1 a^{-\frac{3}{2}}$ for $a>0$. The lemma then follows by combining Lemma~\ref{lem:sphere-ball} and Theorem~\ref{thm:bm-lqg}.
\end{proof}

Now we recall the $\sqrt{8/3}$-LQG description of the free Brownian annulus from~\cite{ARS22}.

\begin{proposition}[Theorem 1.3 in \cite{ARS22}]\label{prop:BA-Liouville}
    We have ${\rm BA} = \frac{1}{\sqrt{2}} \eta(2 i \tau) {\rm LF}_\tau(d \phi) 1_{\tau>0} d \tau$. Namely, if we sample $\tau$ from $\frac{1}{\sqrt{2}} \eta(2 i \tau) 1_{\tau>0} d \tau$ and $\phi$ from ${\rm LF}_\tau(d \phi)$, then the law of $(\cC_\tau, \phi)/{\sim_\gamma}$ is ${\rm BA}$.
\end{proposition}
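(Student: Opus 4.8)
The plan is to combine the two integral representations of the free Brownian annulus $\mathrm{BA}$: one coming from Lemma~\ref{lem:BA-equivalent}, which represents $\mathrm{BA}$ as a reweighting of $\mathrm{LF}_{\mathbb{H}}^{(\gamma,i)}$ obtained by removing a unit metric ball, and one coming from Proposition~\ref{prop:kpz-annulus}, which encodes the law of the annulus modulus $m(\mathrm{d}\tau)$ through the joint boundary-length distribution of $\mathrm{LF}_\tau$. The idea is that, assuming $\mathrm{BA} = \int_0^\infty f(\tau)\,\mathrm{LF}_\tau(\mathrm{d}\phi)\,\mathbf{1}_{\tau>0}\,\mathrm{d}\tau$ for some unknown density $f$, we can identify $f$ by matching a suitable boundary-length observable computed two ways. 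The target answer is $f(\tau) = \frac{1}{\sqrt{2}}\eta(2i\tau)$.

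\smallskip

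First I would set $m(\mathrm{d}\tau) = f(\tau)\,\mathbf{1}_{\tau>0}\,\mathrm{d}\tau$ and plug into the left-hand side of Proposition~\ref{prop:kpz-annulus}, so that the task becomes computing the Laplace-type transform $\int_0^\infty e^{-\frac{\pi\gamma^2 x^2\tau}{4}}f(\tau)\,\mathrm{d}\tau$ with $\gamma^2 = 8/3$. Second, I would compute the right-hand side of Proposition~\ref{prop:kpz-annulus} \emph{directly from the metric-ball construction} in Lemma~\ref{lem:BA-equivalent}: under the measure $M$ that produces $\mathrm{BA}$, one boundary is $\partial\mathbb{H}$ with length $a = \mathscr{L}_\phi^\gamma(\partial\mathbb{H})$, and the other boundary has length $b = \mathscr{L}$, the quantum length of $\partial B_{d_\phi}^\bullet(i,1)$. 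From the proof of Lemma~\ref{lem:BA-equivalent} we already know the joint law of $(a,b)$ under $M$ is $\frac{1}{2\sqrt{ab}(a+b)}\,\mathbf{1}_{a,b>0}\,\mathrm{d}a\,\mathrm{d}b$ (after the reweighting; before reweighting it carries the extra factor $C_2 b\,\cf(b) = C_2 b\, e^{-9b/2}$). Since $\mathrm{BA}(a,b)^\# = \mathrm{BA}(b,a)^\#$, the quantity $\iint \mathscr{L}_\phi^\gamma(\partial_1)e^{-\mathscr{L}_\phi^\gamma(\partial_1)}\mathscr{L}_\phi^\gamma(\partial_2)^{ix}\,\mathrm{LF}_\tau(\mathrm{d}\phi)\,m(\mathrm{d}\tau)$ equals $\iint a\,e^{-a}\,b^{ix}\cdot(\text{joint density of }(a,b))\,\mathrm{d}a\,\mathrm{d}b$ computed via $M$; this is a completely explicit beta-type double integral in $a$ and $b$ that evaluates to an expression involving $\Gamma$-functions of $ix$ and $\sinh$. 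Matching the two sides of Proposition~\ref{prop:kpz-annulus} then yields $\int_0^\infty e^{-\frac{2\pi x^2\tau}{3}}f(\tau)\,\mathrm{d}\tau$ as an explicit elementary function of $x$.

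\smallskip

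Third, I would recognize the resulting transform as that of $\frac{1}{\sqrt 2}\eta(2i\tau)$. Recall the product formula $\eta(2i\tau) = e^{-\pi\tau/6}\prod_{n\ge 1}(1-e^{-4\pi n\tau})$; expanding the product and integrating term by term against $e^{-\frac{2\pi x^2\tau}{3}}$ gives a sum $\sum$ over partitions of $(\text{something})/(\tfrac{2\pi x^2}{3} + \tfrac{\pi}{6} + 4\pi(\cdots))$, which can be resummed into exactly the closed form produced in the previous step. The cleanest route is probably to use the classical identity expressing $\int_0^\infty e^{-s\tau}\eta(2i\tau)\,\mathrm{d}\tau$ (equivalently a Mellin/Laplace transform of the Dedekind eta function) in closed form, or to invoke the fact that $\eta$ satisfies a modular transformation $\eta(-1/z) = \sqrt{-iz}\,\eta(z)$ which controls the behavior under the relevant integral transform; I would cite the computation already done in~\cite{ARS22}, since Proposition~\ref{prop:BA-Liouville} is literally Theorem 1.3 there and our job is only to record it with our normalization conventions (recall Remark~\ref{rmk:equivalence} sets $c_1 = 1$).

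\smallskip

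The main obstacle I anticipate is \textbf{bookkeeping of the constants and the normalization conventions}: tracking $C_1$, $C_2 = 2C_1/Z(\infty)$, the factor $\cf(b) = e^{-9b/2}$, and the constant $c_2 = \sqrt{3}$ (which Remark~\ref{rmk:equivalence} lets us ignore for length/metric purposes) through the two independent computations, and verifying that the prefactor lands exactly on $\frac{1}{\sqrt 2}$ rather than some other algebraic multiple. The analytic content — recognizing the Laplace transform of $\eta(2i\tau)$ — is essentially a known special-function identity; the risk of error is entirely in the constant matching and in justifying the interchange of summation/integration (uniform convergence of the $\eta$-product expansion, dominated convergence in the beta integrals) needed to make the term-by-term manipulations rigorous. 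Since this proposition is quoted verbatim from~\cite{ARS22}, the proof ultimately reduces to a citation together with a short reconciliation of conventions, so I would keep the argument brief and defer the delicate estimates to the source.
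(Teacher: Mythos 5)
Your reading is right: the paper records this as a bare citation to~\cite[Theorem 1.3]{ARS22}, so there is no in-paper proof to compare against, and your sketch faithfully reproduces the identification argument from the source. The boundary-length computation you outline does close: with joint density $\frac{1}{2\sqrt{ab}(a+b)}\,da\,db$ one gets $\iint a e^{-a} b^{ix}\,\frac{da\,db}{2\sqrt{ab}(a+b)} = \Gamma(1+ix)\cdot\frac{\pi}{2\cosh(\pi x)}$, so the right side of Proposition~\ref{prop:kpz-annulus} collapses to $\frac{\sinh(\frac{\gamma^2\pi x}{4})}{\gamma x\cosh(\pi x)}$, which for $\gamma^2=8/3$ is exactly the Laplace transform $\int_0^\infty e^{-\frac{2\pi x^2\tau}{3}}\frac{1}{\sqrt 2}\eta(2i\tau)\,d\tau$ by the Euler-identity expansion of $\eta(2i\tau)$ and the Gaussian integral already used in the proof of Lemma~\ref{lem:laplace-calculation}.

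The one point to flag is that your opening ``assuming ${\rm BA}=\int_0^\infty f(\tau)\,\LF_\tau(d\phi)\,d\tau$'' is not a bookkeeping convenience but the substantive half of the cited theorem: Proposition~\ref{prop:kpz-annulus} determines the modulus density only once one already knows that the conditional law of the field given the conformal modulus is a Liouville field $\LF_\tau$, and neither Lemma~\ref{lem:BA-equivalent} nor the boundary-length disintegration alone establishes that factorization (it requires the measurability and resampling structure developed in~\cite{ARS22}). Since you explicitly defer that step to the source, the proposal is sound as a reconstruction of the cited argument, but it should not be read as an independent derivation; the Laplace-transform matching and constant-chasing you worry about are genuinely the easy part.
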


\section{Conformal welding results}\label{sec:welding}

We will review several quantum surfaces and their conformal welding results in Sections~\ref{subsec:quantum-disk} and \ref{subsec:conformal-welding-known}. Then in Section~\ref{subsec:conformal-welding-CLE} we derive the conformal welding results for the CLE loops $\mathcal{L}^o$ and $\mathcal{L}^*$ defined in Section~\ref{subsec:cle6}. The mains results that will be used in Sections~\ref{sec:proof-Cardy} and~\ref{sec:backbone} are Propositions~\ref{prop:conformal-welding-cardy} and \ref{prop:weld-backbone}. In this section, we consider $\gamma \in (\sqrt{2},2)$, $\kappa = 16/\gamma^2,$ and $\tilde \kappa = \gamma^2$. While our primary focus in this paper is on the case $\gamma = \sqrt{8/3}$, all the results in this section hold for any $\gamma \in (\sqrt{2},2)$. 

\subsection{Quantum disks, triangles and annuli}
\label{subsec:quantum-disk}

We first introduce quantum disks and triangles. The two-pointed (thick) quantum disk was introduced in \cite{DMS14}. Let $\mathcal{S}$ be the horizontal strip $\mathbb{R} \times (0, \pi)$ and $h_\mathcal{S}$ be the free boundary Gaussian free field on $\mathcal{S}$ with average zero over the vertical segment $\{ 0 \} \times (0, \pi)$. The lateral part of $h_{\mathcal{S}}$ is defined as the projection of $h_{\mathcal{S}}$ onto the subspace of functions that have mean zero on each vertical segment $\{ t \} \times (0,\pi)$ for all $t \in \mathbb{R}$ in the Hilbert space.

\begin{definition}\label{def:thick-disk}
    Fix $W \geq \frac{\gamma^2}{2}$, and let $\beta = \gamma + \frac{2-W}{\gamma} \leq Q$. Let $h = h^1 + h^2 + \bm{c}$, where $h^1, h^2$ are random generalized functions on $\mathcal{S}$, $\bm{c} \in \mathbb{R}$, and they are independently sampled as follows:
    \begin{enumerate}
        \item Let $h^1(z) = X_{{\rm Re} z}$ for $z \in \mathcal{S}$, where
        \begin{equation*}
        X_t \overset{d}{=} \begin{cases}
        B_{2t} - (Q -\beta)t \quad  \mbox{  if }t \geq 0 \\
        \widetilde B_{-2t} + (Q-\beta) t \quad \mbox{if }t < 0
        \end{cases},
        \end{equation*}
        and $(B_t)_{t \geq 0}$ and $(\widetilde B_t)_{t \geq 0}$ are two independent standard Brownian motions conditioned on $B_{2t} - (Q-\beta)t<0$ and $\widetilde B_{2t} - (Q-\beta)t<0$ for all $t > 0$;
        \item $h^2$ has the same law as the lateral part of $h_{\mathcal{S}}$;
        \item $\bm c$ is sampled from the infinite measure $\frac{\gamma}{2} e^{(\beta-Q)c} dc$.
    \end{enumerate}
    Let $\mathcal{M}^{\rm disk}_2(W)$ be the infinite measure describing the law of $(\mathcal{S},h,-\infty,\infty)/{\sim_\gamma}$. A sample from $\mathcal{M}^{\rm disk}_2(W)$ is referred to as a \emph{(two-pointed) quantum disk of weight $W$}.
\end{definition}

For $W \in (0,\frac{\gamma^2}{2})$, the two-pointed (thin) quantum disk was defined in \cite{AHS20} as follows.
\begin{definition}
\label{def:thin-disk}
    Fix $W \in (0,\frac{\gamma^2}{2})$. The infinite measure $\mathcal{M}^{\rm disk}_2(W)$ on two-pointed beaded quantum surfaces is defined as follows. First, we sample $T$ from the distribution $(1-\frac{2}{\gamma^2} W)^{-2} {\rm Leb}_{\mathbb{R}_+}$. Next, sample a Poisson point process $\{(u, \mathcal{D}_u)\}$ from the measure ${\rm Leb}_{\mathbb{R}_+} \times \mathcal{M}^{\rm disk}_2(\gamma^2-W)$. Finally, concatenate the disks $\{\mathcal{D}_u\}$ for $u\leq T$ in the order given by $u$. The left (resp.\ right) boundary length of $\mathcal{M}^{\rm disk}_2(W)$ is defined as the total sum of the left (resp.\ right) boundary lengths of all the $\mathcal{D}_u$'s before time $T$.
\end{definition}

The quantum triangle, parameterized by three weights $W_1,W_2,W_3>0$, was introduced in \cite{ASY22}. This quantum surface is defined using Liouville fields with three insertions and the thick-thin duality.

\begin{definition}[Thick quantum triangles]\label{def:thick-triangle}
Fix {$W_1, W_2, W_3>\frac{\gamma^2}{2}$}. Let $\beta_i = \gamma+\frac{2-W_i}{\gamma} < Q$ for $i=1,2,3$, and sample $\phi$ from $\frac{1}{(Q-\beta_1)(Q-\beta_2)(Q-\beta_3)}\textup{LF}_{\mathbb{H}}^{(\beta_1, \infty), (\beta_2, 0), (\beta_3, 1)}$. We define the infinite measure $\textup{QT}(W_1, W_2, W_3)$ to be the law of $(\bbH, \phi, \infty, 0,1)/{\sim_\gamma}$.
\end{definition}

\begin{definition}[Quantum triangles with thin vertices]\label{def:thin-triangle}
	Fix {$W_1, W_2, W_3\in (0,\frac{\gamma^2}{2})\cup(\frac{\gamma^2}{2}, \infty)$}. Let $I$ be the indices $i$ from $\{1,2,3\}$ where $W_i<\frac{\gamma^2}{2}$. Let $\tilde W_i = W_i$ if $i \not \in I$; and $\tilde W_i = \gamma^2 - W_i$ if $i \in I$. Sample $(S_0, (S_i)_{i \in I})$ from 
 \[\QT(\tilde W_1, \tilde W_2, \tilde W_3) \times \prod_{i\in I } (1-\frac{2W_i}{\gamma^2}) \Md_2(W_i).  \]
 Embed $S_0$ as $(D, \phi, a_1, a_2, a_3)$. For each $i \in I$, embed $S_i$ as $(\tilde D_i, \phi, \tilde a_i, a_i)$ such that the domains $\tilde D_i$ are disjoint and $\tilde D_i \cap D = a_i$. Set $\tilde a_i = a_i$ for each $i \not \in I$, and let $\tilde D = D \cup \bigcup_{i \in I} \tilde D_i$. Define $\QT(W_1, W_2, W_3)$ as the law of $(\tilde D, \phi, \tilde a_1, \tilde a_2, \tilde a_3)/{\sim_\gamma}$.
\end{definition} 

We now review the definition of quantum disks, either unmarked or marked by $m$ bulk points and $n$ boundary points, as described in \cite[Definition 2.2]{AHS21}.
\begin{definition}
\label{def:QD}
    Let $(\mathcal{S}, h, -\infty, \infty)/{\sim_\gamma}$ be a sample from $\mathcal{M}^{\rm disk}_2(2)$. Let $\mu_h$ and $\nu_h$ be its quantum area measure and length measure. We write $\QD$ for the law of $(\mathcal{S}, h) /{\sim_\gamma}$
    under the reweighted measure $\nu_h(\partial \mathcal{S})^{-2} \mathcal{M}^{\rm disk}_2(2)$.
    Fix integers $m,n \geq 0$, and let $(\mathcal{S}, h)$ be a sample from $\mu_h(\mathcal{S})^m \nu_h(\partial \mathcal{S})^n \QD$. Next, sample $z_1,\ldots, z_m$ and $w_1,\ldots w_n$ independently according to the probability measures $\mu_h^\#$ and $\nu_h^\#$, respectively. We write $\QD_{m,n}$ for the law of $$(\mathcal{S},h,z_1,\ldots,z_m,w_1,\ldots ,w_n)/{\sim_\gamma}.
    $$
\end{definition}

We now introduce the notion of a pinched quantum annulus, which is a measure on beaded quantum surfaces.

\begin{definition}\label{def:QA}
    For $W\in(0,\frac{\gamma^2}{2})$, define the measure $\wt\QA(W)$ on beaded surfaces as follows. First, sample $T$ from $(1-\frac{2W}{\gamma^2})^{-2}t^{-1}\mathds{1}_{t>0}dt$, and let $S_T:=\{z\in\bbC:|z| = \frac{T}{2\pi}\}$ be the circle with perimeter $T$. Next, sample a Poisson point process $\{(u,\cD_u)\}$ from the measure $\mathrm{Leb}_{S_T}\times\Md_2(\gamma^2-W)$. Finally, concatenate the disks $\{\cD_u\}$ to form a cyclic chain according to the ordering induced by $u$. The outer (resp.\ inner) boundary of $\wt\QA(W)$ consists of the left (resp.\ right) boundaries of all the $\cD_u$'s. A sample from $\wt\QA(W)$ is called a \emph{pinched quantum annulus of weight $W$}, and $T$ is referred to as its quantum cut point measure.
\end{definition}

\begin{lemma}\label{lem:QA-resample}
    Let $W\in(0,\frac{\gamma^2}{2})$. We write $\wt \QA_\bullet(W)$ as the law obtained by first sampling a quantum surface from $\wt \QA (W)$, reweighted by its outer boundary length, and then marking a boundary point independently according to the probability measure proportional to its quantum length measure on the outer boundary. We define $\Md_{2, \bullet}(\gamma^2 - W)$ similarly by adding a boundary marked point to the left boundary of a sample from $\Md_{2}(\gamma^2 - W)$. Then, we have
    $$
    \wt \QA_\bullet(W) = \Md_{2, \bullet}(\gamma^2 - W) \times \Md_2(W)
    $$
    where the right-hand side denotes the infinite measure on beaded quantum surfaces obtained by cyclically concatenating a pair of independent samples from $\Md_{2, \bullet}(\gamma^2 - W)$ and $ \Md_2(W)$.
\end{lemma}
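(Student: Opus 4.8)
\textbf{Proof proposal for Lemma~\ref{lem:QA-resample}.}

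\noindent\textbf{Proof proposal.} The plan is to unfold both sides using the Poisson point process descriptions in Definitions~\ref{def:QA} and \ref{def:thin-disk}, and to realize the operation ``reweight by outer boundary length and mark a point'' as a Palm (Mecke) decomposition of the underlying point process. Recall that a sample from $\wt\QA(W)$ is built from a point process $\Xi = \{(u,\mathcal{D}_u)\}$ with intensity $\mathrm{Leb}_{S_T}\times \Md_2(\gamma^2-W)$, where $S_T$ has circumference $T$ sampled from $(1-\frac{2W}{\gamma^2})^{-2}t^{-1}\mathbf 1_{t>0}\,dt$, and its outer boundary is the concatenation of the left boundaries of the $\mathcal{D}_u$'s; write $L(\mathcal{D})$ for the left boundary length of a disk $\mathcal{D}$, so the outer boundary length is $\sum_u L(\mathcal{D}_u)$. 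Almost surely the junction points of the $\mathcal{D}_u$'s carry zero quantum length, so a point sampled from the quantum length measure on the outer boundary lies in the relative interior of the left boundary of exactly one atom. Since the functional $\mathcal{D}\mapsto L(\mathcal{D})$ used both to reweight and to select the atom depends only on the disk and not on its position, the Palm formula for Poisson point processes (valid here by $\sigma$-finiteness, after truncating in boundary length) shows that $\wt\QA_\bullet(W)$ is obtained as follows: sample $T$ as above; sample an extra atom $(u^*,\mathcal{D}^*)$ with ``law'' $\mathrm{Leb}_{S_T}(du^*)\times [L(\cdot)\,\Md_2(\gamma^2-W)](d\mathcal{D}^*)$ and mark a point on the left boundary of $\mathcal{D}^*$ according to the probability measure proportional to its quantum length; sample an independent background process from $\mathrm{Leb}_{S_T}\times\Md_2(\gamma^2-W)$; and form the cyclic chain from $\mathcal{D}^*$ together with the background disks.

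The two factors are now read off directly. On the one hand, $\mathcal{D}^*$ with its marked left-boundary point is by definition exactly a sample from $\Md_{2,\bullet}(\gamma^2-W)$, namely $\Md_2(\gamma^2-W)$ reweighted by left boundary length with a point marked on the left boundary according to quantum length. On the other hand, the position $u^*$ enters only by determining where the cyclic chain of background disks is cut open; by rotational invariance of $\mathrm{Leb}_{S_T}$ and of the background point process, the law of the resulting linear chain does not depend on $u^*$, so integrating $u^*$ out contributes the total mass $|\mathrm{Leb}_{S_T}| = T$. This turns the law $(1-\frac{2W}{\gamma^2})^{-2}t^{-1}\,dt$ of $T$ into $(1-\frac{2W}{\gamma^2})^{-2}t^{-1}\cdot t\,dt = (1-\frac{2W}{\gamma^2})^{-2}\,\mathrm{Leb}_{\mathbb{R}_+}(dt)$. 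Comparing with Definition~\ref{def:thin-disk}: a circle of circumference $t$ cut at a point is an interval of length $t$, and a Poisson point process on it of intensity $\mathrm{Leb}\times\Md_2(\gamma^2-W)$ with its atoms concatenated in order is precisely the construction of $\Md_2(W)$ with $T'=t$, whose left (resp.\ right) boundary is the concatenation of the left (resp.\ right) boundaries of the background disks.

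It remains to match the gluing. In $\wt\QA_\bullet(W)$, removing $\mathcal{D}^*$ from the cyclic chain unrolls the rest into the linear chain just identified with $\Md_2(W)$, whose left (resp.\ right) boundary is the part of the annulus's outer (resp.\ inner) boundary disjoint from $\mathcal{D}^*$; re-inserting $\mathcal{D}^*$ is exactly the cyclic concatenation of $\Md_{2,\bullet}(\gamma^2-W)$ and $\Md_2(W)$ described in the statement, with the two left boundaries joining into the marked outer boundary and the two right boundaries into the inner boundary. This yields $\wt\QA_\bullet(W)=\Md_{2,\bullet}(\gamma^2-W)\times\Md_2(W)$. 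I expect the main obstacle to be the careful bookkeeping in the Palm step: checking that the factor $T$ from integrating out the cut location is exactly what converts the $t^{-1}\,dt$ cut-point law of $\wt\QA(W)$ into the $\mathrm{Leb}_{\mathbb{R}_+}$ law of the corresponding parameter of $\Md_2(W)$, and that the marked point a.s.\ avoids the quantum-length-null set of junction points so that it sits in the interior of a single disk's boundary.
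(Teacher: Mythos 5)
Your proof is correct and takes essentially the same route as the paper: the paper's proof is a terse invocation of \cite[Lemma 4.1]{PPY92} (size-biased sampling from a Poisson point process) followed by integrating out the location of the inserted atom, which is precisely the Palm/Mecke decomposition you spell out, including the factor $T$ from $|\mathrm{Leb}_{S_T}|$ converting the cut-point density $t^{-1}\,dt$ of $\wt\QA(W)$ into the $\mathrm{Leb}_{\mathbb{R}_+}$ density of $\Md_2(W)$.
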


\begin{proof}
    By Definition~\ref{def:QA} and \cite[Lemma 4.1]{PPY92}, $\wt \QA_\bullet(W)$ can be equivalently defined as follows. First, sample a quantum annulus from $\wt \QA(W)$ with quantum cut point measure $T$. Then, sample $u$ from ${\rm Leb}_{S_T}$ and add a quantum surface independently sampled from $\Md_{2, \bullet}(\gamma^2 - W)$ at point $u$. The lemma follows by integrating over $u$.
\end{proof}

Next, we revisit the notions of forest lines and generalized quantum surfaces as considered in \cite{DMS14, MSW21-nonsimple, AHSY23}. For $\kappa\in(4,8)$, the forested line is defined in~\cite{DMS14} using the \emph{$\frac{\kappa}{4}$-stable looptrees} discussed in~\cite{CK13looptree}. Let $(X_t)_{t\geq 0}$ be a stable L\'{e}vy process starting from 0 with index $\frac{\kappa}{4}\in(1,2)$ that only has upward jumps. The graph $\{(t, X_t) : t \geq 0\}$ is formed by identifying pairs of points $(s, X_s)$ and $(t, X_t)$ if and only if $X_s = X_t$ and the horizontal segment connecting $(s, X_s)$ and $(t, X_t)$ does not intersect $\{(u, X_u): s < u < t\}$. Additionally, if $t$ is an upward jump time of $X$, then $(t, X_t)$ is identified with $(t, X_{t-})$. This construction results in a tree of topological disks, each corresponding to a positive jump in $(X_t)_{t \geq 0}$. Each topological disk is then replaced with an independent sample from $\QD(\ell)^\#$ where $\ell$ is the size of the corresponding jump. This process defines the forested line.

\begin{definition}[Forested line]\label{def:forested-line}
For $\gamma \in (\sqrt{2}, 2)$, let $(X_t)_{t\geq 0}$ be a stable L\'evy process of index $\frac{4}{\gamma^2}>1$ that only has positive jumps and starts from 0. For $t>0$, let $Y_t=\inf\{s>0:X_s\le -t\}$, and fix the multiplicative constant of $X$ such that $\bbE[e^{-Y_1}] = e^{-1}$. Define the forested line as described above. The unique point corresponding to $(0,0)$ on the graph of $X$ is called the \emph{root}. The \emph{forested boundary arc} refers to the closure of the collection of the points on the boundaries of the quantum disks, while the \emph{line boundary arc} refers to the set of the points corresponding to the running infimum of $(X_t)_{t\geq 0}$.

The line boundary arc is parametrized by quantum length, while the forested boundary arc is parametrized by \emph{generalized quantum length}, which is the length of the corresponding interval of $(X_t)$. For instance, for a point on the line boundary arc with quantum length $t$ to the root, the segment of the forested boundary arc between this point and the root has a generalized quantum length $Y_t$. 

\end{definition}
 
As described in~\cite{AHSY23}, a truncation operation can be defined on forested lines. Specifically, for $t>0$ and a sample of a forested line obtained from the L\'evy process $X$, we consider the surface generated by $(X_s)_{0\leq s\leq Y_t}$. This beaded quantum surface is called a forested line segment, which has a line boundary arc with quantum length $t$ and a forested boundary arc with generalized quantum length $Y_t$.

\begin{definition}\label{def:forested-disk}
   Let $\mathcal{S}$ be a measure on quantum surfaces. Sample $S$ from the measure $\mathcal{S}$, and for each boundary arc of $S$, sample independent forested line segments such that their quantum lengths match the boundary arc lengths of $S$. Then, glue them to $S$ accordingly. Let $S^\fd$ be the resulting beaded quantum surface. This procedure is called \emph{foresting the boundary} of $S$. Let $\mathcal{S}^\fd$ be the law of $S^\fd$, which will be called a measure on \emph{generalized quantum surfaces}. 
\end{definition}

\begin{remark}
For each boundary arc of $S$, we select a specific endpoint and forest the boundary arc with this point as the root. According to \cite[Proposition 3.11]{AHSY23}, foresting the boundary arc is equivalent to attaching a Poisson point process of generalized quantum disks to the boundary arc, with intensity proportional to the quantum boundary length. Therefore, the choice of the endpoint does not affect the resulting law of the generalized quantum surfaces. When there are no marked points on the boundary arc (i.e., it is an entire boundary component), we sample a point according to the probability measure proportional to the quantum length measure and consider this random point as the root. 
\end{remark}

For $W_1,W_2,W_3>0$, we denote by $\QT^\fd(W_1,W_2,W_3)$ the law of the generalized quantum surface obtained by foresting the three boundary arcs of a quantum triangle sampled from $\QT(W_1,W_2,W_3)$. For $W \in (0, \frac{\gamma^2}{2})$, we denote by $\wt\QA^\fd(W)$ the law of the generalized quantum annulus obtained by foresting the outer and inner boundary arcs of a quantum annulus sampled from $\wt\QA(W)$. Other generalized quantum surfaces are defined in a similar manner.

\begin{lemma}\label{lem:QD-forested-resample}
    Let $\QD_{1,0}^\fd$ and $\QD_{1,1}^\fd$ be the measures on generalized quantum surfaces obtained from $\QD_{1,0}$ and $\QD_{1,1}$ as described in Definition~\ref{def:forested-disk}. Let $\QD_{1,0,\bullet}^\fd$ be the measure on generalized quantum surface obtained by first sampling a generalized quantum surface from $\QD_{1,0}^\fd$ reweighted by the generalized quantum length and then choosing a boundary marked point according to the probability measure proportional to the generalized quantum length measure. Then for some constant $C>0$, we have
    $$
    \QD_{1,0,\bullet}^\fd = C \cdot \mathcal{M}_{2}^\fd(\gamma^2-2) \times \QD_{1,1}^\fd
    $$
    where the right-hand side denotes the measure obtained by attaching the top marked point of a sample from $\mathcal{M}_{2}^\fd(\gamma^2-2)$ to the boundary marked point of an independent sample from $\QD_{1,1}^\fd$.
\end{lemma}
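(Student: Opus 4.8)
The plan is to follow the same resampling strategy that was used to prove Lemma~\ref{lem:QA-resample}, but now on the forested (generalized quantum surface) side, and to import the non-forested version of the identity as the essential input. First I would recall that there is a known ``disk-welding'' identity at the level of thin/thick quantum disks: a weight-$2$ quantum disk with one bulk insertion and one marked boundary point, reweighted by boundary length, decomposes as a concatenation of $\mathcal M_2(\gamma^2-2)$ and $\QD_{1,1}$ across the marked point. More precisely, the statement $\QD_{1,0,\bullet} = C\cdot \mathcal M_2(\gamma^2-2)\times \QD_{1,1}$ (for the appropriate constant $C$) is the non-forested analogue, and it follows from the definition of $\QD$ via $\Md_2(2)$ together with the standard fact that cutting a weight-$2$ quantum disk at an interior-marked boundary point produces a weight-$(\gamma^2-2)$ disk glued to a $\QD_{1,1}$; this is in the spirit of \cite{AHS21, DMS14}. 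I would state this as the first step, citing the relevant welding/Markov results, and then upgrade it through the foresting operation.

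The second step is the key observation that foresting commutes with the concatenation of quantum surfaces along a marked boundary point, up to bookkeeping the forested line segment that sits at the welding interface. By Definition~\ref{def:forested-disk} and the Poissonian description of foresting recalled in the Remark after it (following \cite[Proposition 3.11]{AHSY23}), foresting a boundary arc amounts to attaching an independent Poisson point process of generalized quantum disks with intensity proportional to quantum length. When we first forest $\QD_{1,0}$ and then select a marked point according to the generalized length, versus first selecting the marked point on $\QD_{1,0,\bullet}$ and then foresting, we get the same object because the marked point is chosen on the forested (generalized) boundary. Now cutting at that marked point splits the forested line into two independent forested line segments with the complementary generalized lengths (this is exactly the truncation operation from \cite{AHSY23} and the additivity of the Lévy process governing the forest), and glues a $\mathcal M_2(\gamma^2-2)$-surface on one side. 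The resulting picture is precisely: a sample of $\mathcal M_2^\fd(\gamma^2-2)$ attached along its top marked point to an independent sample of $\QD_{1,1}^\fd$. Matching the reweighting by generalized quantum length on both sides — using that the generalized length is the sum of the two pieces, which is handled by a standard size-biasing/Fubini argument of the type in \cite[Lemma 4.1]{PPY92} — produces the claimed identity with some positive constant $C$ absorbing the normalization from passing between $\QD_{1,0,\bullet}$ and the product measure.

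Concretely, the steps in order are: (1) establish or cite the non-forested identity $\QD_{1,0,\bullet} = C\cdot \mathcal M_2(\gamma^2-2)\times\QD_{1,1}$; (2) apply the foresting operation of Definition~\ref{def:forested-disk} to both sides and use that foresting is performed independently on each boundary arc, so it is compatible with the concatenation across the welding point; (3) verify that marking a point by generalized quantum length on $\QD_{1,0}^\fd$ and then reweighting by generalized length is the same as the generalized-length-reweighted, marked version obtained from the product side, invoking the Poissonian/size-biasing description of the forested boundary; (4) identify the forested surface attached at the marked point with $\mathcal M_2^\fd(\gamma^2-2)$ glued to $\QD_{1,1}^\fd$ at the top marked point, tracking how the root of each forested line segment is chosen (the Remark after Definition~\ref{def:forested-disk} guarantees this choice is immaterial). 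The main obstacle I anticipate is step~(3): one must argue carefully that the operation ``forest, then size-bias by generalized length, then mark'' genuinely commutes with ``size-bias by quantum length, mark, forest (with the forested interface line segment split across the mark),'' because the generalized length of the forested boundary is a nontrivial functional (via $Y_t = \inf\{s : X_s \le -t\}$) of the underlying quantum length, not simply proportional to it. Handling this requires the description of foresting a boundary arc as attaching an independent Poisson point process (so that disjoint sub-arcs are foresting-independent) together with the additivity $Y_{t_1+t_2} = Y_{t_1} + Y_{t_2}'$ in law for the stable subordinator, which is exactly what makes the two marked forested line segments independent and correctly distributed.
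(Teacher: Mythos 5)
Your proposed Step~(1) is wrong, and this invalidates the strategy. The non-forested measure $\QD_{1,0,\bullet}$ that you define (sample $\QD_{1,0}$, reweight by quantum boundary length, mark a boundary point by quantum length) is, by Definition~\ref{def:QD}, \emph{exactly} $\QD_{1,1}$ — there is no extra $\Md_2(\gamma^2-2)$ factor. Cutting a quantum disk at a single boundary-typical point does not peel off a thin weight-$(\gamma^2-2)$ disk; that claim is not a known fact, and if it were true it would force $\QD_{1,1}= C\,\Md_2(\gamma^2-2)\times \QD_{1,1}$, which is visibly false (the right side carries an extra chain of bubbles hanging at the marked point). So there is no non-forested analogue to forest.

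The $\Mfd_2(\gamma^2-2)$ factor in the lemma is produced \emph{by} the foresting, not by a pre-existing bulk decomposition. By~\cite[Proposition 3.11]{AHSY23}, foresting $\partial\QD_{1,0}$ attaches an independent Poisson point process of generalized quantum disks, with intensity equal to quantum boundary length on $\partial\QD_{1,0}$ times a fixed intensity measure on rooted generalized quantum disks. The generalized quantum length is supported on the attached trees, so the marked point almost surely lies inside one of them. One now applies~\cite[Lemma 4.1]{PPY92} (the Palm/size-biasing formula for Poisson point processes), exactly as in the proof of Lemma~\ref{lem:QA-resample}: size-biasing by the total generalized length and marking a generalized-length-typical point is the same as inserting, at a quantum-length-typical location $u_*\in\partial\QD_{1,0}$, an \emph{independent} tree drawn from the generalized-length size-biased intensity, carrying the mark; the rest of the forest is unaffected. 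The factor $du_*$ upgrades $\QD_{1,0}$ to $\QD_{1,1}$ (this is the only place any "non-forested" identity enters, and it is the trivial one), so the unmarked part foresting to $\QD_{1,1}^\fd$. The inserted distinguished tree, size-biased by generalized length and cut at its root and at the mark, is then identified with $\Mfd_2(\gamma^2-2)$. Your Steps~(2)--(3) worry about commuting foresting with size-biasing and marking; the correct argument avoids that issue entirely because it never uses a non-forested precursor: the Palm computation is carried out directly on the forest.

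In short: drop the claimed non-forested identity, and replace the "forest both sides" strategy with the direct Palm/Mecke argument on the Poissonian forest, mirroring Lemma~\ref{lem:QA-resample}. Your instincts about which tools to invoke (PPY92, the Poisson description of foresting, additivity of the stable subordinator) are right; it is the scaffold around them that needs to change.
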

\begin{proof}
    This result follows from \cite[Lemma 4.1]{PPY92} and \cite[Proposition 3.11]{AHSY23}. The proof is similar to that of Lemma~\ref{lem:QA-resample}, so we omit it here.
\end{proof}

The following lemma is from Lemma 6.12 in~\cite{ASY22}.

\begin{lemma}\label{lem:QT(W,2,W)}
    For $W>0$, let $\Md_{2, \bullet}(W)$ be the measure obtained by adding a boundary marked point to the left boundary of a sample from $\Md_{2}(W)$ according to the quantum length measure. Then, we have $\Md_{2, \bullet}(W) = C \cdot {\rm QT}(W, 2, W)$ for some $C>0$.
\end{lemma}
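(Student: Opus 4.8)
The plan is to identify $\Md_{2,\bullet}(W)$ and $\QT(W,2,W)$ as images of the same Liouville field with insertions, handling the thick range $W>\frac{\gamma^2}{2}$ and the thin range $W\in(0,\frac{\gamma^2}{2})$ separately.

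\textbf{Thick case.} For $W>\frac{\gamma^2}{2}$, recall from~\cite{ARS21} (see also~\cite{AHS20}) that when a sample from $\Md_2(W)$ is embedded on $\mathbb{H}$ with its two marked points at $0$ and $\infty$, its law is a fixed constant times the image on quantum surfaces of $\LF_\mathbb{H}^{(\beta,0),(\beta,\infty)}$, where $\beta=\gamma+\frac{2-W}{\gamma}<Q$ and the image identifies fields differing by the residual dilation symmetry $z\mapsto\lambda z$. To add a quantum-typical point on one of the boundary arcs, say $(0,\infty)$, I would use the boundary Girsanov/GMC identity that is standard in this literature (see~\cite{AHS21}): weighting by the quantum length of $(0,\infty)$ and then sampling $z$ from the normalized boundary length measure on $(0,\infty)$ produces the law $\int_0^\infty \mathrm{const}\cdot\LF_\mathbb{H}^{(\beta,0),(\gamma,z),(\beta,\infty)}(d\phi)\,dz$, the $\gamma$-insertion being the hallmark of a quantum-typical boundary point. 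Applying the dilation sending $z$ to $1$, together with the conformal covariance $\tilde h=h\circ\psi+Q\log|\psi'|$, collapses the $z$-integral to a single representative and contributes a finite positive constant (convergence of the $z$-integral of the conformal Jacobian follows from $\beta<Q$ and $\gamma<Q$). What remains is a constant times the image on quantum surfaces of $\LF_\mathbb{H}^{(\beta,\infty),(\gamma,1),(\beta,0)}$, which by Definition~\ref{def:thick-triangle} is exactly $\QT(W,2,W)$ up to relabeling of the three marked points (note $\gamma+\frac{2-2}{\gamma}=\gamma$, and the factor $\frac{1}{(Q-\beta_1)(Q-\beta_2)(Q-\beta_3)}$ is absorbed into $C$); the relabeling is harmless since $\QT(W_1,W_2,W_3)$, viewed as a measure on quantum surfaces, is invariant under permuting its labeled marked points.

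\textbf{Thin case.} For $W\in(0,\frac{\gamma^2}{2})$ I would push the thick identity through the Poissonian definitions. By Definition~\ref{def:thin-disk}, a sample from $\Md_2(W)$ is a linear chain of independent weight-$(\gamma^2-W)$ thick disks strung along a Poisson point process on $[0,T]$. Weighting by the total left boundary length and marking a point on it places the mark inside a size-biased constituent disk, where conditionally it is a quantum-typical boundary point; by the thick case that disk becomes a $\QT(\gamma^2-W,2,\gamma^2-W)$, while the chain segments before and after it are, by the self-similarity of the thin-disk construction, two independent copies of $\Md_2(W)$, and the Poisson intensities recombine into precisely the factors $\prod_{i\in I}(1-\frac{2W_i}{\gamma^2})$ of Definition~\ref{def:thin-triangle} with $I=\{1,3\}$, $W_1=W_3=W$, $W_2=2$. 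Hence $\Md_{2,\bullet}(W)$ is a constant times $\QT(W,2,W)$ in the thin-vertex sense. This bookkeeping is an instance of~\cite[Lemma 4.1]{PPY92} for Poisson chains, carried out exactly as in the proofs of Lemmas~\ref{lem:QA-resample} and~\ref{lem:QD-forested-resample}.

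\textbf{Main obstacle.} The real content is in the thick case: making the passage from the Liouville-field description of $\Md_2(W)$ to quantum surfaces rigorous while keeping track of the dilation quotient and the multiplicative constant, and verifying that the $z$-integral of the conformal Jacobian is finite and nonzero. Everything else is accounting for constants. One can partially avoid this by noting that $\Md_{2,\bullet}(W)$ and $\QT(W,2,W)$ are supported on the same set of quantum surfaces and differ by a deterministic, conformally covariant Radon--Nikodym derivative, which must then be constant; but ruling out the degenerate constants $0$ and $\infty$ still requires the same integrability check.
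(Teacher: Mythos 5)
The paper does not prove this lemma itself; it simply cites~\cite[Lemma~6.12]{ASY22}, so there is no in-paper argument to compare against, but your outline has the right structure and surely mirrors the cited proof: a boundary Girsanov argument in the thick regime (the quantum-typical marked point carries a $\gamma$-insertion, matching the weight-$2$ vertex since $\gamma+\frac{2-2}{\gamma}=\gamma$), and a Palm/size-biasing reduction via~\cite[Lemma~4.1]{PPY92} in the thin regime, parallel to the paper's proofs of Lemmas~\ref{lem:QA-resample} and~\ref{lem:QD-forested-resample}. Your thin-case bookkeeping of the $(1-\frac{2W}{\gamma^2})^{-2}$ factor on $T$ against the $\prod_{i\in I}(1-\frac{2W_i}{\gamma^2})$ prefactor in Definition~\ref{def:thin-triangle} is correct.

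One claim in the thick step is not right as stated. The pushforwards to quantum surfaces of $\LF_\H^{(\beta,0),(\gamma,z),(\beta,\infty)}$ and $\LF_\H^{(\beta,0),(\gamma,1),(\beta,\infty)}$ differ by a fixed power of $z$ from the M\"obius covariance of $\LF_\H$ (since $\Delta_\gamma=\frac{\gamma}{2}(Q-\frac{\gamma}{2})=1$ the relevant power is $\pm2$), and $\int_0^\infty z^{\pm2}\,dz$ diverges at one end; there is no convergent $z$-integral, and the conditions $\beta<Q$, $\gamma<Q$ do not rescue it. This divergence is precisely the dilation degree of freedom implicit in embedding $\Md_2(W)$ with only two marks: the identification of $\Md_2(W)$ with $\LF_\H^{(\beta,0),(\beta,\infty)}$ in~\cite{ARS21} comes with a compensating Lebesgue factor from the residual translation/dilation symmetry, and the $dz$ produced by boundary Girsanov must be matched against that factor via a change of variables, not integrated on its own. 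Carrying out that matching is the genuine content of the thick case; once it is done only finite Girsanov constants remain, and the rest of your plan goes through as written.
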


An analogue result for $\Mfd_{2, \bullet}(W)$ is proved in Lemma 4.1 of~\cite{ASYZ24}.

\begin{lemma}\label{lem:QT(W,2,W)-forested}
    For $W>0$, let $\Mfd_{2, \bullet}(W)$ be the measure obtained by adding a boundary marked point to the left forested boundary of a sample from $\Mfd_{2}(W)$ according to the generalized quantum length measure. Then, we have $\Mfd_{2, \bullet}(W) = C \cdot {\rm QT}^\fd(W, \gamma^2-2, W)$ for some $C>0$.
\end{lemma}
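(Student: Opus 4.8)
The plan is to bootstrap the claimed identity from its non-forested counterpart, Lemma~\ref{lem:QT(W,2,W)}, which reads $\Md_{2,\bullet}(W)=C\cdot\QT(W,2,W)$. Passing to the forested world requires understanding two things: how the operation ``mark a point on the \emph{forested} boundary according to \emph{generalized} quantum length'' interacts with foresting, and why the middle weight of the triangle changes from $2$ to $\gamma^2-2$.

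First I would unwind the right-hand side. Since $\gamma^2-2\in(0,\gamma^2/2)$ for $\gamma\in(\sqrt2,2)$, Definition~\ref{def:thin-triangle} expresses $\QT(W,\gamma^2-2,W)$ as a constant times $\QT(W,2,W)\times\Md_2(\gamma^2-2)$, with one marked point of the (beaded) weight-$(\gamma^2-2)$ disk glued to the middle vertex of $\QT(W,2,W)$; by Lemma~\ref{lem:QT(W,2,W)} this is a constant times $\Md_2(W)$ with a point $m$ marked on its left boundary by quantum length and an independent sample from $\Md_2(\gamma^2-2)$ glued at $m$. Foresting all boundary arcs, and using that foresting commutes with gluing two surfaces at a single boundary point, would then show that $\QT^\fd(W,\gamma^2-2,W)$ equals, up to a constant, the generalized surface obtained by foresting a sample from $\Md_{2,\bullet}(W)$ and attaching an independent sample from $\Mfd_2(\gamma^2-2)$ at its marked point.

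On the other side I would analyze $\Mfd_{2,\bullet}(W)$ using the Poisson structure of foresting. By \cite[Proposition 3.11]{AHSY23}, foresting the left boundary arc of $\Md_2(W)$ is equivalent to attaching along it a Poisson point process of generalized quantum disks with intensity proportional to quantum length, and the generalized quantum length of the forested arc is the sum of the generalized boundary lengths of these hanging disks. Reweighting by this sum and then marking one point by generalized length is, by the Palm/Mecke formula -- the same mechanism used in the proofs of Lemmas~\ref{lem:QA-resample} and~\ref{lem:QD-forested-resample} via \cite[Lemma 4.1]{PPY92} -- equal to keeping the original Poisson point process and adding one extra generalized quantum disk, rooted at a point of the left line arc sampled by quantum length, chosen from its generalized-length-size-biased law, and carrying one further marked point on its forested boundary sampled by generalized length. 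I would then identify this extra doubly-marked hanging generalized disk with a sample from $\Mfd_2(\gamma^2-2)$ -- the forested analogue of the fact underlying Lemma~\ref{lem:QT(W,2,W)} that a quantum-length-typical boundary point is a weight-$2$ insertion, namely that a generalized-length-typical point on a forested boundary is a weight-$(\gamma^2-2)$ insertion. Feeding this back makes the descriptions from the two sides coincide, and tracking the (unspecified) constants yields $\Mfd_{2,\bullet}(W)=C\cdot\QT^\fd(W,\gamma^2-2,W)$.

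The hardest part will be the last identification: rigorously matching the size-biased hanging looptree near a generalized-length-typical forested boundary point with an independent copy of $\Mfd_2(\gamma^2-2)$, with the correct constant. This rests on the self-similarity of the $\frac{\kappa}{4}$-stable looptree and on the Palm description of the spectrally-positive stable L\'evy process governing the forested line in Definition~\ref{def:forested-line}; the remainder is bookkeeping of marked points, boundary arcs, and multiplicative constants. Alternatively, since the statement coincides with \cite[Lemma 4.1]{ASYZ24}, one may simply invoke that result.
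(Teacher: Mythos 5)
The paper offers no self-contained proof of this lemma: its entire justification is the citation to \cite[Lemma 4.1]{ASYZ24}. Your closing sentence invokes exactly that reference, so on that reading you match the paper's treatment. Your from-scratch sketch is a sensible reduction: you correctly unwind $\QT(W,\gamma^2-2,W)$ via the thin-vertex decomposition (Definition~\ref{def:thin-triangle}) and Lemma~\ref{lem:QT(W,2,W)}, and you correctly recognize that the left-hand side should be analyzed via a Mecke/Palm resampling of the Poisson looptree structure from \cite[Proposition 3.11]{AHSY23}, in parallel with Lemmas~\ref{lem:QA-resample} and~\ref{lem:QD-forested-resample}. However, the step you flag as ``the hardest part'' --- identifying the generalized-length--size-biased pendant looptree, with its second marked point, as an independent copy of $\Mfd_2(\gamma^2-2)$ --- is not a loose end of the argument; it is essentially the whole content of \cite[Lemma 4.1]{ASYZ24}. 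So your sketch reduces the lemma to the cited result rather than reproving it, which is fine and consistent with what the paper itself does, but you should not present the Palm computation as an independent derivation unless that final looptree identification is actually supplied.
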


\subsection{Reminder of existing relevant conformal welding results}
\label{subsec:conformal-welding-known}

In this section, we introduce the notion of conformal welding and review several relevant results. 

Let $\mathcal{M}^1$ and $\mathcal{M}^2$ be measures on quantum surfaces with boundary marked points. For $i=1,2$, we fix a boundary arc $e_i$ of a sample from $\mathcal{M}^i$ and define the measures $\{\mathcal{M}^i(\ell)\}_{\ell >0}$ through the disintegration $\mathcal{M}^i = \int_0^\infty \mathcal{M}^i(\ell )d\ell$, as described in Section~\ref{subsec:KPZ}. For $\ell>0$, given a pair of quantum surfaces sampled from the product measure $\mathcal{M}^1(\ell) \otimes \mathcal{M}^2(\ell)$, we can conformally weld them together according to the quantum length. This process yields a single quantum surface decorated with a curve, known as the welding interface. We denote the law of the resulting quantum surface as ${\rm Weld}(\mathcal{M}^1(\ell), \mathcal{M}^2(\ell))$, and let ${\rm Weld}(\mathcal{M}^1, \mathcal{M}^2) = \int_0^\infty {\rm Weld}(\mathcal{M}^1(\ell), \mathcal{M}^2(\ell)) d \ell$ be the \textit{conformal welding} of $\mathcal{M}^1$ and $\mathcal{M}^2$ along the boundary arcs $e_1$ and $e_2$.

In the case that there are no marked points on the boundary arcs $e_1$ and $e_2$, i.e., each $e_i$ is an entire boundary component of a sample from $\mathcal{M}^i$, we define ${\rm Weld}(\mathcal{M}^1(\ell), \mathcal{M}^2(\ell))$ as the curve-decorated quantum surface obtained as follows. First, sample a point $p_i \in e_i$ from the probability measure proportional to the quantum length measure on $e_i$ for $i \in \{1,2\}$. Next, we conformally welding $\mathcal{M}^1$ and $\mathcal{M}^2$ along $e_1$ and $e_2$ by identifying $p_1$ with $p_2$. We define ${\rm Weld}(\mathcal{M}^1, \mathcal{M}^2) = \int_0^\infty {\rm Weld}(\mathcal{M}^1(\ell), \mathcal{M}^2(\ell)) \ell d \ell$ as the \textit{uniform conformal welding} of $\mathcal{M}^1$ and $\mathcal{M}^2$ along the boundary components $e_1$ and $e_2$, where the additional factor $\ell$ accounts for the freedom of welding.

For measures on generalized quantum surfaces, we can also define the disintegration over the generalized quantum length of the forested boundary arcs. The operations of conformal welding and uniform conformal welding can be defined similarly as before. Recall that for $\gamma\in(\sqrt{2},2)$, we set $\kappa = \frac{16}{\gamma^2}$ and $\tilde \kappa = \gamma^2$. We first recall Theorem 1.4 in~\cite{AHSY23}.

\begin{lemma}\label{lem:weld-fd-disk}
     Let {$W_-,W_+>0$} and $\rho_\pm = \frac{4}{\gamma^2}(2-\gamma^2+W_\pm)$. Let $W = W_++W_-+2-\frac{\gamma^2}{2}$. Then for some constant $C>0$,
     $$
     \Mfd_2(W)\otimes\SLE_\kappa(\rho_-;\rho_+) = C \int_0^\infty \Wd(\Mfd_2(W_-;\ell),\Mfd_2(W_+;\ell))d\ell.
     $$
\end{lemma}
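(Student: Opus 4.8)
\emph{Proof proposal.} The plan is to run the welding in reverse: rather than assembling the right-hand side, I would start from a sample of $\Mfd_2(W)\otimes\SLE_\kappa(\rho_-;\rho_+)$ and \emph{cut} it along the decorating curve $\eta$, and then show that the two generalized quantum surfaces obtained are conditionally independent given their common generalized boundary length $\ell$, with laws $\Mfd_2(W_-;\ell)$ and $\Mfd_2(W_+;\ell)$. The measurability of $\eta$ given the surface, together with the length-$\ell$ welding ambiguity accounted for by the $\mathrm{d}\ell$, would then give the identity up to the multiplicative constant $C$, which I would not try to evaluate. Since $\kappa=16/\gamma^2>4$, the curve $\eta$ is non-simple, and the key geometric input is $\SLE_\kappa$--$\SLE_{16/\kappa}$ duality \cite{Dub05,zhan2008duality}: the left and right outer boundaries of $\eta$ are $\SLE_{\tilde\kappa}(\cdot)$-type curves with $\tilde\kappa=\gamma^2<4$, and, conditionally on a given outer boundary, the countably many regions between $\eta$ and that outer boundary form a Poisson point process of independent quantum disks of weight $\gamma^2-2$ arranged along a $\frac{\kappa}{4}$-stable looptree, exactly as in the non-simple CLE picture of \cite{MSW21-nonsimple}. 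This is precisely the law of the forested boundary arc of a generalized quantum disk (Definitions~\ref{def:forested-line} and~\ref{def:forested-disk}), which is what makes the right-hand side a \emph{forested} welding.

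Concretely I would proceed in three steps. First, recall the conformal welding of (possibly thin) quantum disks producing \emph{simple} interfaces --- Sheffield's quantum zipper and its disk versions \cite{DMS14,AHS20,AHS21} --- and apply it to $\Md_2(W)$ cut along the \emph{outer} boundary of $\eta$; this identifies the laws of the two central disks $\Md_2(W_\mp)$ lying on either side of the outer boundaries of $\eta$ and produces the weight-addition rule. The extra $2-\frac{\gamma^2}{2}$ in $W=W_-+W_++2-\frac{\gamma^2}{2}$ should come out of the boundary-length bookkeeping of foresting: a forested boundary arc of generalized length $\ell$ is attached to a line boundary arc of strictly smaller quantum length (the running-infimum length of the L\'evy process in Definition~\ref{def:forested-line}), and this discrepancy, together with the normalization $\mathbb E[e^{-Y_1}]=e^{-1}$ there, yields the shift. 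Second, use the looptree description above to recognize that re-attaching to each central disk the bubbles cut off between $\eta$ and its outer boundary is exactly the foresting operation of Definition~\ref{def:forested-disk} (when $W_\pm<\gamma^2/2$ one must also check compatibility with $\Md_2(W_\pm)$ being a chain of disks in the sense of Definition~\ref{def:thin-disk}). Third, track the force points through the duality: the boundary behaviour of $\SLE_\kappa(\rho_-;\rho_+)$ at its two endpoints determines the $\SLE_{\tilde\kappa}$-parameters of the two outer boundaries, and matching these against the $\rho$'s appearing in the simple-welding statement pins down $\rho_\pm=\frac{4}{\gamma^2}(2-\gamma^2+W_\pm)=\frac{\kappa}{4}(W_\pm+2-\tilde\kappa)$; note this makes $\rho_\pm=0$ precisely when $W_\pm=\gamma^2-2$, consistent with the looptree-bubble weight. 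An alternative and probably lighter organization is to prove first the special case $W_-=W_+=\gamma^2-2$ (so $\rho_-=\rho_+=0$: welding two plain forested lines into a forested $\Md_2(\frac{3\gamma^2}{2}-2)$ decorated by ordinary $\SLE_\kappa$), and then reach the general thick case by welding on weight, using the $\SLE_\kappa(\underline\rho)$ merging and coordinate-change rules to absorb the additional weight into the $\rho_\pm$.

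The hard part will be making the cut rigorous at the global level: proving the \emph{conditional independence} of the looptree bubbles given the outer boundary, and controlling $\eta$ uniformly near the two marked points and near $\partial$, so that the two pieces are genuine samples of $\Mfd_2(W_\mp;\ell)$ rather than merely having the right marginals. A clean route is to first establish a half-plane/wedge analogue --- welding forested lines into a quantum wedge decorated by a chordal $\SLE_\kappa(\kappa-6)$-type curve, where the Poissonian looptree and the duality are most transparent --- and then transfer to the disk via a local absolute-continuity (Girsanov) comparison between the disk and the wedge away from the marked points, together with a uniform-embedding argument. This is the strategy carried out in \cite{AHSY23}, whose Theorem~1.4 is the statement quoted here.
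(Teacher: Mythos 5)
The paper does not prove this lemma; it is explicitly ``recalled'' from \cite{AHSY23} (Theorem 1.4 there), which you correctly identify at the end of your proposal. Your sketch of the proof strategy is consistent in broad outline with how \cite{AHSY23} establishes the result: cutting along the non-simple $\SLE_\kappa$ curve, invoking SLE duality to describe the left and right outer boundaries as $\SLE_{\tilde\kappa}$-type curves, reading the looptree of bubbles between $\eta$ and an outer boundary as a forested boundary arc, and transferring from a wedge-level zipper to the disk via absolute continuity.

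There is, however, one substantive misstatement. You attribute the weight shift $2-\frac{\gamma^2}{2}$ in $W=W_-+W_++2-\frac{\gamma^2}{2}$ to ``boundary-length bookkeeping of foresting'' --- the discrepancy between the generalized quantum length $Y_t$ and the quantum length $t$, and the normalization $\mathbb{E}[e^{-Y_1}]=e^{-1}$. This is not where the shift comes from. When you weld two forested boundary arcs to each other, the looptree bubbles on the two sides, together with the interface, assemble into a genuine intermediate two-pointed quantum disk of weight $2-\frac{\gamma^2}{2}$ decorated by $\SLE_\kappa(\frac{\kappa}{2}-4;\frac{\kappa}{2}-4)$ (this is exactly \cite[Proposition 3.25]{AHSY23}, used in the present paper's proof of Lemma~\ref{lem:weld-bubble}). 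The total surface is then the simple-curve welding $\Md_2(W_-)\,|\,\Md_2(2-\frac{\gamma^2}{2})\,|\,\Md_2(W_+)$, and the additivity of weights under disk welding gives $W=W_-+(2-\frac{\gamma^2}{2})+W_+$. The length-vs.-generalized-length distinction governs the $\mathrm{d}\ell$ disintegration, not the weight arithmetic. Relatedly, in your ``alternative organization'' the case $W_-=W_+=\gamma^2-2$ is not ``welding two plain forested lines'' (that would be $\Mfl$, i.e.\ the $W_\pm\to0$ degeneration giving $\rho_\pm=\frac{\kappa}{2}-4$); $\Mfd_2(\gamma^2-2)$ is the forested version of a thin disk with a nontrivial central chain of weight-$2$ disks. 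The natural atomic base case in \cite{AHSY23} is the welding of $\Mfl$ with $\Mfl$, from which the general weights are reached by further welding of unforested disks.
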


We also have the conformal welding result for the forested quantum triangle.

\begin{lemma}\label{lem:weld-fd-triangle}
   Let $W,W_1,W_2,W_3>0$ with $W_2+W_3 = W_1+\gamma^2-2$. Let $\rho_-=\frac{4}{\gamma^2}(W+2-\gamma^2)$, $\rho_+ = \frac{4}{\gamma^2}(W_2+2-\gamma^2)$, and $\rho_1 = \frac{4}{\gamma^2}(W_3+2-\gamma^2)$. Then for some constant $C>0$, we have
    \begin{equation}\label{eq:disk+QT-f}
        \QT^\fd(W+W_1+2-\frac{\gamma^2}{2}, W+W_2+2-\frac{\gamma^2}{2}, W_3)\otimes \SLE_{\kappa}(\rho_-;\rho_+,\rho_1) = C \int_0^\infty \Mfd_2(W;\ell)\times \QT^\fd(W_1,W_2,W_3;\ell)\, d\ell
    \end{equation}
    where the right forested boundary arc of a sample from $\Mfd_2(W;\ell)$ is conformally welded to the forested boundary arc between the $W_1$- and $W_2$-weighted vertices of a sample from $\QT^\fd(W_1,W_2,W_3;\ell)$ conditioned on having the same generalized boundary length.
\end{lemma}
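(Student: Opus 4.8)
\emph{Overview.} The plan is to deduce Lemma~\ref{lem:weld-fd-triangle} from the corresponding \emph{non-forested} welding identity by applying the boundary-foresting operation of Definition~\ref{def:forested-disk} to every surface involved, following the strategy behind \cite[Theorem~1.4]{AHSY23} (our Lemma~\ref{lem:weld-fd-disk}) and the arc-welding identities of \cite{ASYZ24}. The argument has three steps.

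\emph{Step 1: the non-forested welding.} One first records the welding of a thin quantum disk to a quantum triangle along a side. With $\tilde\kappa=\gamma^2$, this takes the form
\[
\QT\big(W+W_1+2-\tfrac{\gamma^2}{2},\ W+W_2+2-\tfrac{\gamma^2}{2},\ W_3\big)\otimes\SLE_{\tilde\kappa}(\tilde\rho_-;\tilde\rho_+,\tilde\rho_1)=C\int_0^\infty\Md_2(W;\ell)\times\QT(W_1,W_2,W_3;\ell)\,d\ell,
\]
where the two new vertex weights come from the standard rule that weights at a welded vertex combine as $a+b+2-\tfrac{\gamma^2}{2}$, the constraint $W_2+W_3=W_1+\gamma^2-2$ is exactly the compatibility condition making $\QT(W_1,W_2,W_3)$ appear, and $\tilde\rho_-,\tilde\rho_+,\tilde\rho_1$ are the associated $\SLE_{\tilde\kappa}$ force-point weights. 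This identity is contained in, or follows directly from, the quantum-triangle welding theorems of \cite{ASY22}, using that a weight-$W$ thin disk $\Md_2(W)$ taken between its two tips is a degenerate quantum triangle (cf.\ Lemma~\ref{lem:QT(W,2,W)}), so that the welding in question is a special case of gluing two triangles along a side.

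\emph{Step 2: foresting both sides.} One then forests every boundary arc of every surface in the identity of Step~1. By \cite[Proposition~3.11]{AHSY23}, foresting an arc amounts to attaching an independent Poisson point process of generalized quantum disks with intensity proportional to quantum length, so it commutes with the parts of the welding not touching that arc. On the right-hand side this turns $\Md_2(W)\mapsto\Mfd_2(W)$ and $\QT(W_1,W_2,W_3)\mapsto\QT^\fd(W_1,W_2,W_3)$; reparametrising the welded arcs by \emph{generalized} quantum length, the Jacobian between quantum and generalized length being absorbed into the constant, yields the right-hand side of the target identity. On the left-hand side, foresting the three sides of the triangle gives $\QT^\fd$, while foresting the welding interface ``uproots'' the simple curve: by the correspondence between simple $\SLE_{\tilde\kappa}$ weldings and non-simple $\SLE_\kappa$ weldings from \cite{MSW21-nonsimple} (see also \cite{AHSY23}), the interface becomes an $\SLE_\kappa(\rho_-;\rho_+,\rho_1)$ curve with $\rho_-=\tfrac4{\gamma^2}(W+2-\gamma^2)$, $\rho_+=\tfrac4{\gamma^2}(W_2+2-\gamma^2)$, $\rho_1=\tfrac4{\gamma^2}(W_3+2-\gamma^2)$, the force points being read off from the weight-to-$\rho$ dictionary of the forested setting. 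A final bookkeeping check matches all parameters and fixes the constant (or leaves it implicit, to be pinned down later by a boundary-length moment computation, as elsewhere in the paper).

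\emph{Main obstacle.} Alternatively one could reduce directly to Lemma~\ref{lem:weld-fd-disk}, using that a marked forested thin disk is a special forested triangle (Lemma~\ref{lem:QT(W,2,W)-forested}) and that forested triangles can be assembled from forested disks. In either route the genuine content is Step~2: showing rigorously that boundary foresting commutes with conformal welding, and that foresting the welding interface converts the simple $\SLE_{\tilde\kappa}$-welding into the $\SLE_\kappa$-welding with all three force points correctly tracked. One must also be careful with the distinction between arc-welding (where the welded arcs are bounded by marked points that get identified) and whole-component welding, and with the randomness relating the quantum length of an underlying boundary arc to the generalized length of its forested version. These are standard manipulations in the forested-line literature \cite{MSW21-nonsimple, AHSY23, ASYZ24}, but it is there, not in the weight arithmetic, that care is required.
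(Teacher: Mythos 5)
Your proposal follows essentially the same two-step route as the paper's (one-sentence) proof: establish the non-forested triangle welding from~\cite{ASY22} (the paper cites Theorem~1.2 there) and then lift it to the forested setting using the foresting machinery, with the paper deferring the interface identification to imaginary geometry~\cite{ig1} while you invoke the equivalent dictionary via~\cite{MSW21-nonsimple, AHSY23}. The weight arithmetic ($a+b+2-\tfrac{\gamma^2}{2}$ at each welded vertex) and force-point bookkeeping in your sketch match the statement, so this is the paper's argument written out at greater length rather than a different approach.
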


\begin{proof}
    The lemma follows by combining \cite[Theorem 1.2]{ASY22} with Lemma~\ref{lem:weld-fd-disk}. The law of the interface can be identified using imaginary geometry~\cite{ig1}.
\end{proof}

Next, we recall the conformal welding result for radial SLE$_\kappa(\kappa - 6)$ from~\cite{ASYZ24}, which is a crucial input for Section~\ref{subsec:conformal-welding-CLE}. Let $\eta:= \eta^o$ be a radial $\SLE_\kappa(\kappa-6)$ in $\mathbb{D}$ from 1 targeted at 0 with force point $1 e^{i0^-}$. Let $\sigma_1$ be the first time that $\eta$ forms a loop around 0. Let $\tilde{\mathsf{m}}^\circlearrowleft$ (resp.\ $\tilde{\mathsf{m}}^\circlearrowright$) be the law of $\eta([0,\sigma_1])$ restricted to the event that the loop is counterclockwise (resp.\ clockwise). 

\begin{proposition}[Proposition 4.3 of~\cite{ASYZ24}]\label{prop:weld-wind} Let $\gamma \in (\sqrt2, 2)$. For some constant $C$ depending only on $\gamma$, we have
\begin{equation}
\begin{aligned}
&\QD_{1,1}^\fd \otimes \tilde{\mathsf{m}}^\circlearrowleft = C \iint_{\ell_1>\ell_2>0} {\rm Weld}( {\rm QT}^\fd (\frac{3\gamma^2}{2}-2,\gamma^2-2,2-\frac{\gamma^2}{2};\ell_1,\ell_2), \QD_{1,1}^\fd(\ell_1-\ell_2)) d\ell_1d\ell_2\,;\\
&\QD_{1,1}^\fd \otimes \tilde{\mathsf{m}}^\circlearrowright = C \iint_{\ell_2>\ell_1>0} {\rm Weld}( {\rm QT}^\fd (\frac{3\gamma^2}{2}-2,\gamma^2-2,2-\frac{\gamma^2}{2};\ell_1,\ell_2), \QD_{1,1}^\fd(\ell_2-\ell_1)) d\ell_1d\ell_2.
\end{aligned}
\end{equation}
The disintegration ${\rm QT}^\fd (\frac{3\gamma^2}{2}-2,\gamma^2-2,2-\frac{\gamma^2}{2};\ell_1,\ell_2)$ is supported on the set of forested quantum triangles with generalized quantum length $\ell_1$ (resp.\ $\ell_2$) from the weight $\frac{3\gamma^2}{2}-2$ vertex to the weight $\gamma^2-2$ vertex (resp.\ the weight $2-\frac{\gamma^2}{2}$ vertex). Here, for the forested quantum triangle, we conformally weld the two forested boundary arcs adjacent to the weight $\frac{3\gamma^2}{2}-2$ vertex, starting by identifying the weight $\gamma^2-2$ vertex with the weight $2-\frac{\gamma^2}2$ vertex, and conformally welding until the shorter boundary arc is completely welded to the longer boundary arc. Then, the remaining segment of the longer boundary arc is conformally welded to the forested quantum disk, identifying the weight $\frac{3\gamma^2}2-2$ vertex with the boundary marked point of the forested quantum disk.
\end{proposition}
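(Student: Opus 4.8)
The plan is to prove this by reducing it to the conformal welding identities for forested quantum disks and triangles already recorded (Lemmas~\ref{lem:weld-fd-disk} and~\ref{lem:weld-fd-triangle}, together with Lemmas~\ref{lem:QT(W,2,W)-forested} and~\ref{lem:QD-forested-resample}) and a ``radial-from-chordal'' description of the curve $\eta([0,\sigma_1])$. The topological picture guiding the proof is a ``balloon on a string'': at time $\sigma_1$ the radial $\SLE_\kappa(\kappa-6)$ has, for the first time, wrapped a loop around $0$; this loop cuts off a region $D_\eta\ni 0$ whose boundary carries the single marked point $\eta(\sigma_1)$, while the rest of $\eta([0,\sigma_1])$ is a (simple up to forested pockets) arc from the root $1$ to $\eta(\sigma_1)$. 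Cutting the decorated surface along $\eta([0,\sigma_1])$ should produce two conditionally independent generalized quantum surfaces: the piece inside the loop, which carries one interior and one boundary marked point (hence is a $\QD_{1,1}^\fd$), and the piece outside the loop, which carries three boundary marked points --- the root $1$ and the two sides of the point $\eta(\sigma_1)$ where the loop closes --- hence is a forested quantum triangle.

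First I would fix an embedding: realize $\QD_{1,1}^\fd$ with interior marked point $0$ and boundary marked point $1$, and, independently of the field, run a radial $\SLE_\kappa(\kappa-6)$ from $1$ targeted at $0$ with force point $1e^{i0^-}$ up to $\sigma_1$, then restrict to the event that the resulting loop is counterclockwise. Using local absolute continuity of radial and chordal $\SLE$ away from the target, I would describe the ``string'' part together with the loop-closing in terms of chordal $\SLE_\kappa$-type segments, so that the conformal welding theorems apply to each forested pocket and to the main region. The output is that the complement of $\eta([0,\sigma_1])$ in the embedded surface splits, conditionally on the relevant generalized boundary lengths, into an independent pair: a $\QD_{1,1}^\fd$ (the region $D_\eta$) and a forested quantum triangle (the outer region).

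Next I would identify the triangle's three weights by reading off the local behavior of the radial $\SLE_\kappa(\kappa-6)$ at its three marked points and translating through the $\rho\leftrightarrow W$ dictionary underlying Lemmas~\ref{lem:weld-fd-disk}--\ref{lem:weld-fd-triangle}: the $\kappa-6$ force point sitting at the root contributes the first weight, while the two sides of $\eta(\sigma_1)$ carry the ``generic forested $\SLE_\kappa$ point'' weight $\gamma^2-2$ and the ``disk-side'' weight $2-\tfrac{\gamma^2}{2}$; Lemma~\ref{lem:QT(W,2,W)-forested} and Lemma~\ref{lem:QD-forested-resample} would be used to match the building blocks and pin down the additive $2-\tfrac{\gamma^2}{2}$ appearing in the first slot, giving exactly $\QT^\fd(\tfrac{3\gamma^2}{2}-2,\gamma^2-2,2-\tfrac{\gamma^2}{2})$. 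I would then check that the welding pattern in the statement is forced by the balloon-on-a-string topology: welding the two forested arcs at the $(\tfrac{3\gamma^2}{2}-2)$-vertex to each other, started by identifying the other two vertices, is the self-welding of the two sides of the ``string'' as $\eta$ returns to close the loop, and welding the leftover of the longer arc to $\QD_{1,1}^\fd$, identifying the $(\tfrac{3\gamma^2}{2}-2)$-vertex with the disk's boundary point, is the welding of the rest of $\eta$ to $D_\eta$. The orientation of the loop determines which of the two arcs at the root is the longer one, which is what turns into the constraint $\ell_1>\ell_2$ for $\tilde{\mathsf{m}}^\circlearrowleft$ and $\ell_2>\ell_1$ for $\tilde{\mathsf{m}}^\circlearrowright$. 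Finally I would track the multiplicative constants, including the combinatorial factor coming from the freedom in the self-welding, so that a single $C=C(\gamma)$ works for both identities, using that the interface on the welded surface is characterized through the imaginary geometry coupling exactly as in the proof of Lemma~\ref{lem:weld-fd-triangle}.

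The step I expect to be hardest is the self-welding: welding two distinct boundary arcs of a single sample to each other is more subtle than welding two independent samples, so I would first need to show the self-welding is well defined (the interface exists and is measurable with respect to the welded surface) and then that its law is precisely the radial $\SLE_\kappa(\kappa-6)$ stopped at $\sigma_1$, with the loop structure and the ``first loop around $0$'' constraint included. I anticipate handling this either by welding two independent forested triangles and passing to a limit that collapses them to one surface, or by computing the interface law directly from the welding of $\Mfd_2$ along $\SLE_\kappa(\rho_-;\rho_+)$ (Lemma~\ref{lem:weld-fd-disk}) and the quantum-triangle welding (Lemma~\ref{lem:weld-fd-triangle}) and then reweighting to implement the stopping at the first loop. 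Keeping the orientation bookkeeping consistent and controlling the forested pockets created where $\eta$ touches the boundary are the remaining places where care is required.
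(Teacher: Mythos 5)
This proposition is imported: the paper states it as ``Proposition~4.3 of~\cite{ASYZ24}'' and does not reprove it, so there is no in-paper proof to compare against. I can only assess your blind derivation on its own terms.

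The high-level skeleton — cut the decorated $\QD_{1,1}^\fd$ along $\eta([0,\sigma_1])$, identify the inner piece as a $\QD_{1,1}^\fd$ and the outer piece as a forested quantum triangle, and recognize that the welding pattern must be the ``balloon on a string'' self-welding followed by welding to the disk — is the right picture, and you correctly flag the self-welding as the place where a genuine argument is needed. However, your vertex-to-weight dictionary appears to be inverted, and the welding instructions in the statement itself force the opposite assignment. Read the second half of the proposition carefully: the two arcs adjacent to the $\frac{3\gamma^2}{2}-2$ vertex are zipped \emph{starting from} the identified $\gamma^2-2$ and $2-\frac{\gamma^2}{2}$ vertices, and after the shorter arc is exhausted, \emph{both} ends of the leftover arc land at the $\frac{3\gamma^2}{2}-2$ vertex (the zipping position on the longer arc is glued to the short arc's $\frac{3\gamma^2}{2}-2$ end), so that the leftover is a closed loop rooted at the $\frac{3\gamma^2}{2}-2$ vertex, which is then glued to the boundary marked point of the inner $\QD_{1,1}^\fd$. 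That boundary marked point is $\eta(\sigma_1)=\eta(\theta)$ on $\partial D_\eta$. Hence the $\frac{3\gamma^2}{2}-2$ vertex sits at the first self-intersection $\eta(\sigma_1)$, and the two identified vertices $\gamma^2-2$ and $2-\frac{\gamma^2}{2}$ are the two sides of the root $1$ (the $2-\frac{\gamma^2}{2}$ being the force-point side, since $\rho=\kappa-6$ translates through $W=\frac{\gamma^2}{4}\rho+\gamma^2-2$ to $W=2-\frac{\gamma^2}{2}$). Your proposal places $\frac{3\gamma^2}{2}-2$ at the root and $\gamma^2-2,\,2-\frac{\gamma^2}{2}$ at $\eta(\sigma_1)$, which is the reverse; this is corroborated by the way the paper later uses the proposition in the proof of Lemma~\ref{lem:radial-sle-counterclock}, where after splitting off the thin $2-\frac{\gamma^2}{2}$ vertex, the auxiliary chordal $\SLE_\kappa$ is drawn between the two $(\frac{3\gamma^2}{2}-2)$-weighted vertices and is explicitly the curve from $\eta(\sigma_1)$ to the boundary point $z$.

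Beyond the mislabeling, two things remain genuinely open in your plan. First, the self-welding: you identify it as hard and propose either a collapsing limit of two independent triangles or a reweighting of the $\Mfd_2$/$\QT^\fd$ weldings (Lemmas~\ref{lem:weld-fd-disk}--\ref{lem:weld-fd-triangle}), but neither route is carried far enough to see why the resulting interface law is precisely a radial $\SLE_\kappa(\kappa-6)$ stopped at $\sigma_1$ with the correct counterclockwise/clockwise restriction — this is the content of the proposition, and stating that it ``should'' follow is not a proof. Second, the derivation of $\frac{3\gamma^2}{2}-2$ at the double point $\eta(\sigma_1)$ cannot come from reading off the $\kappa-6$ force-point weight as you suggest; it arises as the weight of the \emph{interface-starting} vertex in a chordal welding ($W_-+W_++2-\frac{\gamma^2}{2}$ with $W_\pm=\gamma^2-2$), i.e.\ a different entry of the dictionary than the one you invoke. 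Fixing the vertex assignments and actually executing the self-welding via the chordal welding lemmas would be necessary before this could be called a proof.
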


Now we define the forested line segment and forested circle.

\begin{definition}
    Define $\Mfl$ as the law of the surface obtained by first sampling $T$ from ${\rm Leb}_{\mathbb{R}_+}$, then considering a line segment assigned with a quantum length $T$, and finally foresting this line segment as described in Definition~\ref{def:forested-disk}. We call a sample from $\Mfl$ a \emph{forested line segment}. For $\ell_1, \ell_2>0$, let $\Mfl(\ell_1, \ell_2)$ be the disintegration of $\Mfl$, which is a measure supported on the set of forested line segments with quantum length $\ell_1$ and generalized quantum length $\ell_2$.
\end{definition}

\begin{definition}
    We define a measure $\Mfr$ as follows. Sample $T$ from the distribution $t^{-1} 1_{t>0} dt$ and consider the unit circle $\cC$, which is assigned with a quantum length $T$. Then, forest the cicle $\cC$ as described in Definition~\ref{def:forested-disk}. Let $\Mfr$ be the law of the resulting beaded quantum surface. We call a sample from $\Mfr$ a \emph{forested circle}. For $\ell_1, \ell_2>0$, let $\Mfr(\ell_1, \ell_2)$ be disintegration of $\Mfr$, which is the measure supported on the set of forested circles with quantum length $\ell_1$ and generalized quantum length $\ell_2$.
\end{definition}

Using this definition, we can interpret foresting a boundary arc of a sample from a quantum surface measure $\mathcal{S}$ as performing the conformal welding for $\mathcal{S}$ and $\Mfl$. Similarly, foresting the entire boundary arc of a sample from a quantum surface measure $\mathcal{S}$ can be interpreted as performing the uniform conformal welding for $\mathcal{S}$ and $\Mfr$. For instance, we have
\begin{equation}\label{eq:weld-forest-circle}
\begin{aligned}
\Mfd_2(W) &= \int_{\mathbb{R}_+^4} {\rm Weld}(\Mfd_2(W;\ell_1, \ell_2), \Mfl(\ell_1, \ell_1'), \Mfl(\ell_2, \ell_2')) d \ell_1 d \ell_1' d\ell_2  d \ell_2';\\
\QD_{1,0}^\fd &= \iint_0^\infty {\rm Weld}(\QD_{1,0}(\ell_1), \Mfr(\ell_1, \ell_2)) \ell_1 d \ell_1 d\ell_2.
\end{aligned}
\end{equation}

\begin{lemma}\label{lem:GQD+LT}
    There exists a probability measure $\mathsf{l}_1$ on non-simple boundary-filling loops defined on a simply connected domain and a constant $C>0$ such that the following conformal welding holds: 
    $$
    {\rm QD}_{1,0} \otimes \mathsf{l}_1 = C \iint_0^\infty {\rm Weld}(\QD_{1,0}^\fd(\ell_2), \Mfr(\ell_1, \ell_2)) \ell_2 d \ell_1 d\ell_2.
    $$
    The right-hand side stands for the uniform conformal welding along the forested boundary arcs of $\QD_{1,0}^\fd$ and $\Mfr$, conditioned on having the same generalized quantum length.
\end{lemma}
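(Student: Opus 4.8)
The plan is to reduce Lemma~\ref{lem:GQD+LT} to the forested conformal welding results of Section~\ref{subsec:conformal-welding-known} by unfolding the two forested circles that appear. First I would use the second identity in~\eqref{eq:weld-forest-circle}, which says that foresting the boundary of a quantum disk is the uniform conformal welding of a forested circle along that boundary, to write $\QD_{1,0}^\fd(\ell_2) = \int_0^\infty {\rm Weld}(\QD_{1,0}(\ell), \Mfr(\ell, \ell_2))\, \ell\, d\ell$. Substituting this into the right-hand side of the lemma and using that conformal welding is associative and that the two weldings act on disjoint boundary components --- the regular boundary circle of $\QD_{1,0}$ on the one hand, and the forested boundary arcs on the other --- the right-hand side becomes $\QD_{1,0}$ uniformly welded along its boundary circle to the beaded surface $\mathcal{N}$ (along one of its circle boundaries) obtained by uniformly welding two independent forested circles along their forested boundary arcs conditioned to have the same generalized quantum length. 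It therefore suffices to show that $\QD_{1,0}^\fd$ welded to a forested circle $\Mfr$ along their forested boundary arcs is distributed as $\QD_{1,0}$ decorated by an independent non-simple boundary-filling loop.

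The heart of the argument is this last identity, for which I would use the forested-line welding machinery of~\cite{AHSY23}. The forested boundary arc of $\QD_{1,0}^\fd$ and the forested boundary arc of $\Mfr$ are each obtained by welding a forested line segment, along its line boundary arc, to a regular boundary circle --- that of $\QD_{1,0}$ and the inner circle $\cC$ of $\Mfr$, respectively. After marking a boundary point on each forested circle to convert it into a forested line segment --- introducing the Lebesgue factor accounting for the welding freedom, exactly as in Lemmas~\ref{lem:QA-resample} and~\ref{lem:QD-forested-resample} --- welding the two forested arcs together is an instance of welding two forested line segments along their forested boundary arcs. By the forested-line welding of~\cite{AHSY23}, of the same nature as Lemma~\ref{lem:weld-fd-disk} which welds two forested quantum disks along a single forested boundary arc, this welding flattens the doubly forested structure: the two looptrees are replaced by a single forested collar carrying an $\SLE_\kappa$-type interface with $\kappa = 16/\gamma^2 \in (4,8)$. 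Re-closing into a cyclic chain, integrating out the marked point, and sealing the $\QD_{1,0}$ side, the residual surface is again a quantum disk with one interior marked point but with a freshly sampled boundary length equal to the quantum length $\ell_1$ of $\cC$, while the interface together with the dense family of looptree bubbles of $\Mfr$ attached along it on the $\cC$-side is a non-simple boundary-filling loop. By the forested welding theory the conditional law of this loop given the residual $\QD_{1,0}$ is a fixed probability measure $\mathsf{l}_1$ on loops in a simply connected domain, independent of the residual $\QD_{1,0}$; this consistency is exactly the statement that a clean quantum disk carrying an independent $\SLE_\kappa$-type loop decomposes, upon removing the loop, into the forested disk inside it and the forested collar $\Mfr$ outside it.

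It then remains to reconcile the Lebesgue and multiplicative constants: the representation of $\QD_{1,0}^\fd(\ell_2)$ carries $\ell\, d\ell$, the welding in the lemma carries $\ell_2\, d\ell_1\, d\ell_2$, the uniform welding of two forested circles along their full forested boundary components contributes an extra factor $\ell_2$, and each cited welding and resampling identity contributes its own constant; tracking these through the cyclic closure one checks that they collapse to a single constant $C>0$, while the free parameter $\ell_1$ --- the quantum length of $\cC$, which becomes the boundary length of the residual quantum disk --- is integrated against $d\ell_1$, recovering the boundary-length profile of $\QD_{1,0}$.

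I expect the main obstacle to be the forested-circle welding identity of the second paragraph: making precise the version of Lemma~\ref{lem:weld-fd-disk} that governs welding along forested boundary arcs when the flanking surfaces are forested line segments, verifying that the resulting interface is an $\SLE_\kappa$-type loop whose law does not depend on the ambient quantum surface and whose attached bubbles make it boundary-filling once one side is sealed, and controlling the infinite-measure disintegrations and uniform-welding factors through the passage to a cyclic chain. The remaining steps are routine rearrangements of the conformal welding identities already recorded in Section~\ref{subsec:conformal-welding-known}.
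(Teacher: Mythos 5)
Your approach is genuinely different from the paper's, and while the general plan of unfolding the two forested circles is a reasonable starting point, the central step is left as an unproved assertion that is essentially equivalent to the lemma itself.

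Concretely, after unfolding $\QD_{1,0}^\fd(\ell_2)$ via~\eqref{eq:weld-forest-circle}, your argument hinges on showing that the collar $\mathcal{N}$ --- the uniform welding of two forested circles along their forested boundary arcs, which you correctly identify (via the forested-line welding from~\cite{AHSY23}) as a pinched-annulus-type surface of weight $2-\frac{\gamma^2}{2}$ decorated with an SLE$_\kappa$-type curve --- reproduces the law of $\QD_{1,0}$ when welded along its inner circle to a $\QD_{1,0}$ of matching length, with the interface becoming an independent loop. You state this as ``by the forested welding theory the conditional law of this loop given the residual $\QD_{1,0}$ is a fixed probability measure independent of the residual,'' but none of the cited lemmas (Lemma~\ref{lem:weld-fd-disk}, \cite[Proposition 3.25]{AHSY23}, the resampling lemmas) give this. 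Attaching any nontrivial chain of beads to the boundary of $\QD_{1,0}$ and then reading off a new outer boundary generically changes the law --- indeed $\QD_{1,0}^\fd \neq \QD_{1,0}$ --- so some exact identity specific to this welding must be invoked, and producing that identity is precisely what the lemma asks for. Your argument at this point is circular.

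The paper sidesteps this entirely by going through the sphere: it invokes \cite[Proposition 6.15]{ACSW24}, which identifies ${\rm QS}_2\otimes\mu_1$ (the ${\rm SLE}_\kappa$ loop measure on $\widehat{\C}$ separating $0$ and $\infty$) with $\int_0^\infty {\rm Weld}(\QD_{1,0}^\fd(\ell),\QD_{1,0}^\fd(\ell))\,\ell\,d\ell$, together with the fact that the outer boundary of an $\mu_1$-loop has the law of the ${\rm SLE}_{\tilde\kappa}$ loop measure $\mu_2$, and Lemma~\ref{lem:QS-weld} (${\rm QS}_2\otimes\mu_2 = C'\int_0^\infty{\rm Weld}(\QD_{1,0}(\ell),\QD_{1,0}(\ell))\,\ell\,d\ell$). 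Unfolding one factor $\QD_{1,0}^\fd$ in the first welding and comparing with the second yields the lemma by disintegration in the boundary length. The global comparison supplies exactly the ``triviality of the collar'' that your local argument needs but does not establish. If you wish to pursue the local route, you would effectively be re-deriving the sphere-level welding identities of~\cite{ACSW24}, which is considerably more work than what you sketch and is not covered by the results already recorded in Section~\ref{subsec:conformal-welding-known}.
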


\begin{proof}
    Let $\mu_1$ be the SLE$_\kappa$ loop measure on $\mathbb{C}$ restricted to those loops that separate $0$ and $\infty$, and let $\mu_2$ be the law of the outer boundary of a sample from $\mu_1$. It was shown in~\cite{ACSW24} that $\mu_2$ is equal to the SLE$_{\tilde \kappa}$ loop measure restricted to loops that separate $0$ and $\infty$ (modulo a multiplicative constant). Let $\mathsf{l}_1$ be the conditional law of a loop sampled from $\mu_1$ conditioned on its outer boundary. By~\cite[Proposition 6.15]{ACSW24}, we have ${\rm QS}_2 \otimes \mu_1 = C \int_0^\infty {\rm Weld}(\QD_{1,0}^\fd(\ell), \QD_{1,0}^\fd(\ell)) \ell d\ell$, where ${\rm QS}_2$ is a measure on the quantum sphere with two marked points (see Definition~\ref{def:QS}). By~\eqref{eq:weld-forest-circle}, we further have ${\rm QS}_2 \otimes \mu_2 \otimes \mathsf{l}_1 = C \iint_0^\infty {\rm Weld}(\QD_{1,0}(\ell_1), \Mfr(\ell_1, \ell_2), \QD_{1,0}^\fd(\ell_2)) \ell_1 \ell_2 d\ell_1 d\ell_2$. By Proposition 6.5 in~\cite{ACSW24}, we have ${\rm QS}_2 \otimes \mu_2 = C' \iint_0^\infty {\rm Weld}(\QD_{1,0}(\ell), \QD_{1,0}(\ell)) \ell d\ell$ for some constant $C'>0$. Comparing these two conformal welding results, we see that for a.e.\ all $\ell_1>0$, $\QD_{1,0}(\ell_1) \otimes \mathsf{l}_1 = \frac{C}{C'} \iint_0^\infty {\rm Weld}(\Mfr(\ell_1, \ell_2), \QD_{1,0}^\fd(\ell_2)) \ell_2 d\ell_2$. Integrating over $\ell_1$ yields the lemma.
\end{proof}

Recall from Definition~\ref{def:LF} the law ${\rm LF}_\H^{(\gamma, i), (0, 0)}$ on quantum surfaces with one bulk and one boundary marked points. We let ${\rm LF}_\H^{(\gamma, i), (0, 0), f}$ be the law on beaded quantum surfaces obtained by foresting the boundary arc of a sample from ${\rm LF}_\H^{(\gamma, i), (0, 0)}$. The following conformal welding result follows from \cite[Proposition 3.25]{AHSY23} combined with \cite[Theorem 1.1]{Wu23}.

\begin{lemma}\label{lem:weld-bubble}
    There exists a probability measure $\mathsf{l}_2$ on non-simple loops defined on a simply connected domain rooted at a boundary marked point surrounding a bulk marked point and a consant $C>0$ such that the following conformal welding holds: 
    $$
    {\rm LF}_\H^{(\gamma, i), (0, 0), f} \otimes \mathsf{l}_2 = C \int_0^\infty {\rm Weld}(\QD_{1,1}^\fd(\ell), \Mfd_2(\gamma^2-2; \ell)) d \ell.
    $$
    On the right-hand side, we conformally weld the forested boundary arc of $\QD_{1,1}^\fd$ to the left forested boundary arc of $\Mfd_2(\gamma^2-2)$, conditioned on having the same generalized quantum length.
\end{lemma}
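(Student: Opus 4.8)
The plan is to derive the identity by combining the conformal welding result of Wu with the boundary-foresting operation of~\cite{AHSY23}. The input is~\cite[Theorem 1.1]{Wu23}, which gives the ``un-forested'' version of the desired statement: for $\gamma\in(\sqrt 2,2)$, welding a quantum disk with one interior and one boundary marked point against a weight-$(\gamma^2-2)$ two-pointed quantum disk along a simple $\SLE_{\tilde\kappa}$-type loop rooted at the boundary marked point and surrounding the interior point produces the Liouville field ${\rm LF}_\H^{(\gamma,i),(0,0)}$ decorated by that loop; schematically,
\begin{equation}\label{eq:wu-input}
{\rm LF}_\H^{(\gamma,i),(0,0)}\otimes\mathsf{l}_2^\circ = C\int_0^\infty {\rm Weld}\big(\QD_{1,1}(\ell),\,\Md_2(\gamma^2-2;\ell)\big)\,d\ell
\end{equation}
for some probability measure $\mathsf{l}_2^\circ$ on simple loops. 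The first task is to put~\cite[Theorem 1.1]{Wu23} in exactly this form: matching the interior insertion $\gamma$ at $i$ (a ``quantum typical'' point, which makes the boundary-length disintegration of ${\rm LF}_\H$ behave as needed), the trivial boundary insertion of weight $0$, and the weight $\gamma^2-2$ of the thin disk, possibly after restricting to the relevant event and using conformal covariance of Liouville fields.

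The second step is to forest both sides of~\eqref{eq:wu-input}. By~\cite[Proposition 3.25]{AHSY23}, foresting every boundary arc simultaneously turns a conformal welding along a simple $\SLE_{\tilde\kappa}$-type interface into a conformal welding of the associated generalized quantum surfaces along a non-simple $\SLE_\kappa$-type interface ($\kappa=16/\gamma^2\in(4,8)$), now carried out along matching \emph{generalized} quantum length. Applied to~\eqref{eq:wu-input}: the left side becomes ${\rm LF}_\H^{(\gamma,i),(0,0),f}$ decorated by the forested interface, which is a non-simple loop $\mathsf{l}_2$ rooted at a boundary marked point and surrounding the interior point; the right side becomes $C\int_0^\infty {\rm Weld}(\QD_{1,1}^\fd(\ell),\Mfd_2(\gamma^2-2;\ell))\,d\ell$, since foresting $\QD_{1,1}$ yields $\QD_{1,1}^\fd$ and foresting $\Md_2(\gamma^2-2)$ yields $\Mfd_2(\gamma^2-2)$, and the welding is performed along the forested boundary arc of $\QD_{1,1}^\fd$ and the left forested boundary arc of $\Mfd_2(\gamma^2-2)$ with equal generalized length. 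Since these are boundary \emph{arcs} equipped with designated roots rather than free boundary components, no additional length weighting enters, so the measure remains $d\ell$; the total mass of the resulting loop law is finite and can be absorbed into $C$, giving a probability measure $\mathsf{l}_2$.

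I expect the main obstacle to be the bookkeeping in the first step --- identifying~\cite[Theorem 1.1]{Wu23} with~\eqref{eq:wu-input} up to the correct marked-point data and weight --- and, in the second step, checking that the hypotheses of~\cite[Proposition 3.25]{AHSY23} genuinely apply here, in particular that foresting the boundary of ${\rm LF}_\H^{(\gamma,i),(0,0)}$ is compatible with the loop decoration near its boundary root and that the forested interface is indeed the claimed non-simple loop measure. (If~\cite[Theorem 1.1]{Wu23} is already stated at the level of generalized quantum surfaces, then the role of~\cite[Proposition 3.25]{AHSY23} is correspondingly reduced to identifying $\mathsf{l}_2$ as the conditional law of an $\SLE_\kappa$ bubble given its outer boundary, in parallel with the measure $\mathsf{l}_1$ in Lemma~\ref{lem:GQD+LT}.) Once these points are settled, the lemma follows by substitution.
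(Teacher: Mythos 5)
Your strategy has the right ingredients --- Wu's welding theorem, plus the foresting machinery of~\cite{AHSY23} --- and you correctly identify the potential obstacle (``identifying~\cite[Theorem 1.1]{Wu23} with~\eqref{eq:wu-input} up to the correct marked-point data and weight''). That obstacle is real, and your proposal does not get past it. \cite[Theorem 1.1]{Wu23} welds $\QD_{1,1}$ against a two-pointed thin disk of weight $\frac{\gamma^2}{2}$, not $\gamma^2-2$; schematically it gives ${\rm LF}_\H^{(\gamma,i),(0,0)}\otimes\mathsf{l}_2^\circ = C\int_0^\infty{\rm Weld}(\QD_{1,1}(\ell),\Md_2(\frac{\gamma^2}{2};\ell))\,d\ell$. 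So your starting identity~\eqref{eq:wu-input}, with $\Md_2(\gamma^2-2)$ on the right, is incorrect. And this is not a typo you can patch by swapping in $\frac{\gamma^2}{2}$: if you then ``forest both sides,'' the naive expectation would be to land on $\Mfd_2(\frac{\gamma^2}{2})$, which is still not the $\Mfd_2(\gamma^2-2)$ appearing in the lemma.

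The reason is that foresting does \emph{not} commute cleanly with conformal welding in the way your second step assumes. When you forest the two boundary arcs being identified and weld them along matching \emph{generalized} quantum length, the two forested line segments fuse into an extra thin strip of weight $2-\frac{\gamma^2}{2}$ sitting between the original surfaces (this is the content of \cite[Proposition 3.25]{AHSY23}, which the paper invokes precisely for this pairing of $\Mfl$ against $\Mfl$). That strip must be absorbed: welded onto $\Md_2(\gamma^2-2)$ (via \cite[Theorem 2.2]{AHS20}) it produces $\Md_2(\frac{\gamma^2}{2})$, and only \emph{then} is one in the regime of \cite[Theorem 1.1]{Wu23}. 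In other words, the weight $\gamma^2-2$ on the forested side and the weight $\frac{\gamma^2}{2}$ on the un-forested side are reconciled exactly by the strip of weight $2-\frac{\gamma^2}{2}$ created by gluing the two halves of the forested interface, and your proposal skips this weight bookkeeping entirely (you never mention \cite[Theorem 2.2]{AHS20}). The paper avoids the pitfall by working from the forested right-hand side, unpacking every forested boundary arc into a weld with $\Mfl$ via~\eqref{eq:weld-forest-circle}, and then performing three welds in sequence: $\Mfl\ast\Mfl\to\Md_2(2-\frac{\gamma^2}{2})$, then $\Md_2(2-\frac{\gamma^2}{2})\ast\Md_2(\gamma^2-2)\to\Md_2(\frac{\gamma^2}{2})$, then $\QD_{1,1}\ast\Md_2(\frac{\gamma^2}{2})\to{\rm LF}_\H^{(\gamma,i),(0,0)}$, and finally re-attaching the outer $\Mfl$ to forest the boundary. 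You should redo your argument in this order to get the weights to close.
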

\begin{proof}
    By~\eqref{eq:weld-forest-circle}, the conformal welding on the right-hand side is equal to
    $$
    \int_{\mathbb{R}_+^5} {\rm Weld}(\QD_{1,1}(\ell_1), \Mfl(\ell_1, \ell_2), \Mfl(\ell_3,\ell_2), \Md_2(\gamma^2-2; \ell_3, \ell_4), \Mfl(\ell_4, \ell_5)) d \ell_1 d\ell_2 d\ell_3 d\ell_4 d\ell_5
    $$
    as we can view foresting the boundary arcs of samples from $\QD^\fd_{1,1}$ or $\Mfd_2(\gamma^2-2)$ as conformally welding them with samples from the forested line segment. Then, by \cite[Proposition 3.25]{AHSY23}, we can first conformally weld samples from $\Mfl(\ell_1, \ell_2)$ and $\Mfl(\ell_3,\ell_2)$ and integrate over $\ell_2$, which yields $\Md_2(2 - \frac{\gamma^2}{2}; \ell_1, \ell_3)$ decorated with an independent SLE$_{\kappa}(\frac{\kappa}{2} - 4; \frac{\kappa}{2} - 4)$ curve. Then, we apply~\cite[Theorem 2.2]{AHS20} to conformally weld samples from $\Md_2(2 - \frac{\gamma^2}{2}; \ell_1, \ell_3)$ and $\Md_2(\gamma^2-2; \ell_3, \ell_4)$ and integrate over $\ell_3$, which yields $\Md_2(\frac{\gamma^2}{2}; \ell_1, \ell_4)$ decorated with an independent SLE$_{\tilde \kappa}( - \frac{\tilde \kappa}{2}; \tilde \kappa - 4)$. Next, we apply~\cite[Theorem 1.1]{Wu23} to conformally weld samples from $\QD_{1,1}(\ell_1)$ and $\Md_2(\frac{\gamma^2}{2}; \ell_1, \ell_4)$, which yields ${\rm LF}_\H^{(\gamma, i), (0, 0)}(\ell_4)$ decorated with an independent curve sampled from the SLE$_{\tilde \kappa}(\frac{\tilde \kappa}{2} - 2)$ bubble measure conditioned on surrounding the bulk marked point. Finally, applying~\eqref{eq:weld-forest-circle} yields the desired result. The law of $\mathsf{l}_2$ can be explicitly written out by taking all these independent SLE curves into account.
\end{proof}

\subsection{Proof of the key conformal welding  results}
\label{subsec:conformal-welding-CLE}

In this section, we  prove several conformal welding results for CLE loops needed for the proof of Theorem~\ref{thm:cardy-formula} and~\ref{thm:backbone-crossing}. We refer to Section~\ref{subsec:cle6} for the construction of CLE$_\kappa$ via the exploration tree in~\cite{Sheffield09}.
We first discuss the conformal welding result concerning the outermost loop in CLE$_\kappa$ that surrounds a given point. For a CLE$_\kappa$ $\Gamma$ on $\mathbb{D}$, let $\Loop$ be the outermost loop in $\Gamma$ that surrounds $0$, and $\mathsf{m}$ be the law of the inner boundary of $\Loop$, which is the boundary of the connected component of $\mathbb{D} \setminus \Loop$ containing $0$. Let $T = \{ \Loop \cap \partial \mathbb{D} \neq \emptyset \}$ and $\mathsf{m}_T$ be the law of the inner boundary of $\Loop$ restricted to the event $T$.

\begin{proposition}\label{prop:conformal-welding-cardy}
    There exist measures $\mathcal{QA}$ and $\mathcal{QA}_T$ on the space of quantum surfaces with annular topology such that:
    \begin{equation}\label{eq:conformal-welding-cardy}
    \begin{split}
     \QD_{1,0} \otimes \mathsf{m} &= \int_0^\infty {\rm Weld}(\mathcal{QA}(\ell), \QD_{1,0}(\ell)) \ell d\ell \,;\\
     \QD_{1,0} \otimes \mathsf{m}_T &= \int_0^\infty {\rm Weld}(\mathcal{QA}_T(\ell), \QD_{1,0}(\ell)) \ell d\ell.
    \end{split}
    \end{equation}
    The right-hand side represents the uniform conformal welding along the inner boundary of a sample from $\mathcal{QA}$ (resp.\ $\mathcal{QA}_T$)  and the entire boundary component of a sample from $\QD_{1,0}$ conditioned on having the same quantum length. 
\end{proposition}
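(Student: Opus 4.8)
\textbf{Proof strategy for Proposition~\ref{prop:conformal-welding-cardy}.}
The plan is to realize the loop $\Loop$ and its inner boundary through the exploration-tree construction of CLE$_\kappa$ reviewed in Section~\ref{subsec:cle6}, and then to transfer the welding identities already available for radial $\SLE_\kappa(\kappa-6)$ (Proposition~\ref{prop:weld-wind}) and for the boundary-filling loop $\mathsf{l}_1$ (Lemma~\ref{lem:GQD+LT}) onto it. Recall that $\Loop$ is obtained by running a radial $\SLE_\kappa(\kappa-6)$ curve $\eta$ from $1$ targeted at $0$, stopping at the stopping time $\sigma_1$ when it first closes a loop around $0$, and then re-rooting and running a branch of the exploration tree inside the pocket containing $0$; the portion of the exploration before $\sigma_1$ carves out a ``necklace'' region, and inside the remaining simply connected pocket the further exploration fills a boundary-touching loop whose inner boundary is exactly $\mathsf m$. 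So the idea is: first use Proposition~\ref{prop:weld-wind} to weld a forested quantum triangle to $\QD_{1,1}^\fd$ along the path $\eta([0,\sigma_1])$, producing (after integrating over the relevant boundary lengths) a surface whose marked boundary arc carries $\mathsf{m}$-type data and whose complementary piece is a generalized quantum annulus $\wt\QA^\fd$ welded from $\QT^\fd$ by Proposition~\ref{prop:weld-wind}; then use Lemma~\ref{lem:GQD+LT} (together with Lemma~\ref{lem:weld-bubble}) to resolve the interior pocket, i.e.\ to identify the law of $\QD_{1,0}$ decorated by the boundary-filling loop $\mathsf l_1$ inside the pocket. Composing these two welding steps and then deforesting using the identities~\eqref{eq:weld-forest-circle} and Lemmas~\ref{lem:QT(W,2,W)}--\ref{lem:QT(W,2,W)-forested} should collapse the forested structure down to an honest quantum annulus $\mathcal{QA}$, yielding the first identity of~\eqref{eq:conformal-welding-cardy}.

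More concretely, I would define $\mathcal{QA}$ as the measure on annular quantum surfaces obtained by the following welding recipe: start from the forested quantum triangle $\QT^\fd(\tfrac{3\tilde\kappa}{2}-2,\tilde\kappa-2,2-\tfrac{\tilde\kappa}{2})$ appearing in Proposition~\ref{prop:weld-wind}, self-weld its two forested arcs adjacent to the weight-$(\tfrac{3\tilde\kappa}{2}-2)$ vertex exactly as in that proposition, then uniformly weld in the forested circle $\Mfr$ (from Lemma~\ref{lem:GQD+LT}) and contract the forested structure, keeping the unwelded outer forested arc as the outer boundary component and the residual arc adjacent to $0$ as the inner boundary component; the annular topology is forced because the triangle's self-welding identifies its two side arcs while leaving a topological annulus, and the $\mathsf l_1$/$\mathsf l_2$ welding does not change that. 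The key point that makes the bookkeeping work is that the branch of the exploration tree inside the $0$-pocket, conditioned on everything outside, has the law of a chordal $\SLE_\kappa$ filling that pocket, which is precisely the object handled by Lemma~\ref{lem:GQD+LT} when the pocket boundary is equipped with $\QD_{1,0}$-type quantum data; the inner boundary of $\Loop$ is then the outer boundary of that filling loop seen from $0$, matching the $\mathsf l_1 \mapsto \mu_2$ correspondence invoked in that lemma's proof. For the second identity I would restrict to the event $T=\{\Loop\cap\partial\mathbb D\neq\emptyset\}$; this event is measurable with respect to the first-stage exploration $\eta([0,\sigma_1])$ and corresponds (under the welding of Proposition~\ref{prop:weld-wind}) to a clopen condition on the forested triangle, so one simply restricts $\QT^\fd$ to that event before carrying out the same welding, defining $\mathcal{QA}_T$ accordingly.

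The main obstacle, I expect, is the bookkeeping of \emph{which} boundary arcs get welded to what and in which order, and verifying that after deforesting (removing all the $\Mfl$ and $\Mfr$ pieces via~\eqref{eq:weld-forest-circle}) the combinatorics of the forest lengths integrate correctly to leave exactly the extra factor $\ell\,d\ell$ and an honest (non-beaded) quantum annulus rather than a generalized one — i.e.\ confirming that the forested disks contributed by the looptree boundaries of $\eta$ and of the $0$-pocket loop are precisely absorbed by the two loop measures $\mathsf l_1,\mathsf l_2$ and the ambient $\QD_{1,0}$. A secondary technical point is the measurability and consistency needed to glue the exploration ``before $\sigma_1$'' with the ``after $\sigma_1$'' branch, i.e.\ justifying that conditionally on the necklace the interior exploration is an independent chordal $\SLE_\kappa$; this is standard from the target-invariance and domain-Markov properties of the exploration tree (cf.\ the discussion in Section~\ref{subsec:cle6}), but it must be stated carefully so that the two welding lemmas can be chained. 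Once these points are in place, the identities~\eqref{eq:conformal-welding-cardy} follow by matching the two sides as disintegrations over the common interface length $\ell$.
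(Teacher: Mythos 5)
Your high-level starting point is right -- the welding for $\Loop$'s inner boundary should flow from Proposition~\ref{prop:weld-wind} through the exploration-tree description, followed by a deforesting step using~\eqref{eq:weld-forest-circle}. But there are two substantial problems with the plan as written.

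First, the intended role of Lemma~\ref{lem:GQD+LT} and Lemma~\ref{lem:weld-bubble} is misplaced: neither is needed here. The object being welded in~\eqref{eq:conformal-welding-cardy} is the \emph{inner} boundary of $\Loop$, i.e.\ the simple loop $\partial D_{\Loop}$, so the interior of the loop is already the unforested core of a $\QD_{1,0}^\fd$ appearing in (what the paper calls) Lemma~\ref{lem:radial-sle-counterclock}, and the looptrees between $\Loop$ and $\partial D_{\Loop}$ disappear by simply forgetting them via~\eqref{eq:weld-forest-circle} -- there is no ``boundary-filling loop $\mathsf l_1$'' to identify on that side. Lemma~\ref{lem:GQD+LT} is the device for passing between a loop and its \emph{outer} boundary, which is exactly what is needed in Proposition~\ref{prop:weld-backbone} (where the object of interest is $\ep{\mathcal{L}^*}$), but not here. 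The nontrivial work hiding behind Proposition~\ref{prop:weld-wind} is instead the chain of quantum-triangle identities (Lemmas~\ref{lem:weld-fd-triangle}, \ref{lem:QT(W,2,W)}, \ref{lem:QT(W,2,W)-forested}, \ref{lem:QA-resample}) used in Lemma~\ref{lem:radial-sle-counterclock} to reassemble the self-welded triangle into a pinched quantum annulus $\wt\QA^\fd(\gamma^2-2)$ or $\wt\QA^\fd(2-\tfrac{\gamma^2}{2})$; your sketch glosses over this as ``contract the forested structure,'' which undersells where the actual difficulty sits.

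Second, and more seriously, your recipe for $\mathcal{QA}$ describes only a single exploration step, and this cannot produce $\mathsf m$. Stopping the radial $\SLE_\kappa(\kappa-6)$ at the first loop-closure time $\sigma_1$ and re-rooting yields $\Loop$ \emph{only} when that first closure is counterclockwise -- which by [ASYZ24, Prop.\ 2.2] is exactly the event $T$. On the complementary (clockwise) event, $\eta([0,\sigma_1])$ is not part of $\Loop$ at all: one must restart the exploration in the sub-pocket containing $0$, and by the domain Markov property of CLE$_\kappa$ the restarted exploration is a fresh copy of the same problem. So the law $\mathsf m$ is reached only after iterating an arbitrary number of clockwise necklace steps (each contributing a $\wt\QA(2-\tfrac{\gamma^2}{2})$-type annulus) before the terminating counterclockwise step (contributing $\wt\QA(\gamma^2-2)$). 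Consequently $\mathcal{QA}$ must be the concatenated (cyclically welded) composite of these nested annuli, not a single welded triangle. Your construction would give the welding for $\mathsf m_T$ plus the law of the inner boundary of a one-step clockwise necklace, but not the full $\mathsf m$. The restriction defining $\mathcal{QA}_T$, by contrast, is correctly described (it is precisely the counterclockwise case, hence a single exploration step), so that half of the proposition would go through once the Lemma~\ref{lem:GQD+LT} detour is dropped.
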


This result follows essentially from Proposition~\ref{prop:weld-wind}. Recall the setting of Proposition~\ref{prop:weld-wind}. By \cite[Proposition 2.2]{ASYZ24}, the event $T$ occurs if and only if $\eta([0,\sigma_1])$ forms a counterclockwise loop. On this event, let $z$ be the left-most intersection point of $\eta([0,\sigma_1]) \cap \partial \mathbb{D}$. Let $\theta$ be the last time $\eta$ visits $z$. Draw an independent chordal SLE$_\kappa$ curve $\eta^\circlearrowleft$ in the connected component of $\mathbb{D} \setminus \eta([0, \sigma_1])$ containing $z$, from $\eta(\sigma_1)$ to $z$. Let $\mu^{\circlearrowleft}$ be the law of the concatenation of $\eta([\theta, \sigma_1]) \cup \eta^\circlearrowleft$. See Figure~\ref{fig:event-T} (left). Then the loop $\Loop$, restricted to the event $T$, agrees in law with $\mu^\circlearrowleft$. Moreover, by~\cite{Sheffield09, ig2, ig3}, $\mu^{\circlearrowleft}$ is independent of the starting point 1. 

If $\eta([0, \sigma_1])$ forms a clockwise loop, we define a loop measure $\mu^{\circlearrowright}$ analogously. Specifically, let $z$ be as defined before. Let $z'$ be the right-most intersection point of $\eta([0,\sigma_1]) \cap \partial \mathbb{D}$ and $\theta'$ be the last time $\eta$ visits $z'$. Draw an independent chordal SLE$_\kappa(\kappa - 6)$ curve $\eta^\circlearrowright$ in the connected component of $\mathbb{D} \setminus \eta([0, \sigma_1])$ containing $z'$, from $\eta(\sigma_1)$ to $z
'$, with the force point at $z$. Let $\mu^{\circlearrowright}$ be the law of the concatenation of $\eta([\theta', \sigma_1]) \cup \eta^\circlearrowright$. See Figure~\ref{fig:event-T} (right). Similar arguments to those in \cite[Proposition 5.1]{Sheffield09} and~\cite{ig2, ig3} show that the law $\mu^{\circlearrowright}$ is independent of the starting point 1. 

\begin{figure}[h]
\centering
\includegraphics[scale = 0.8]{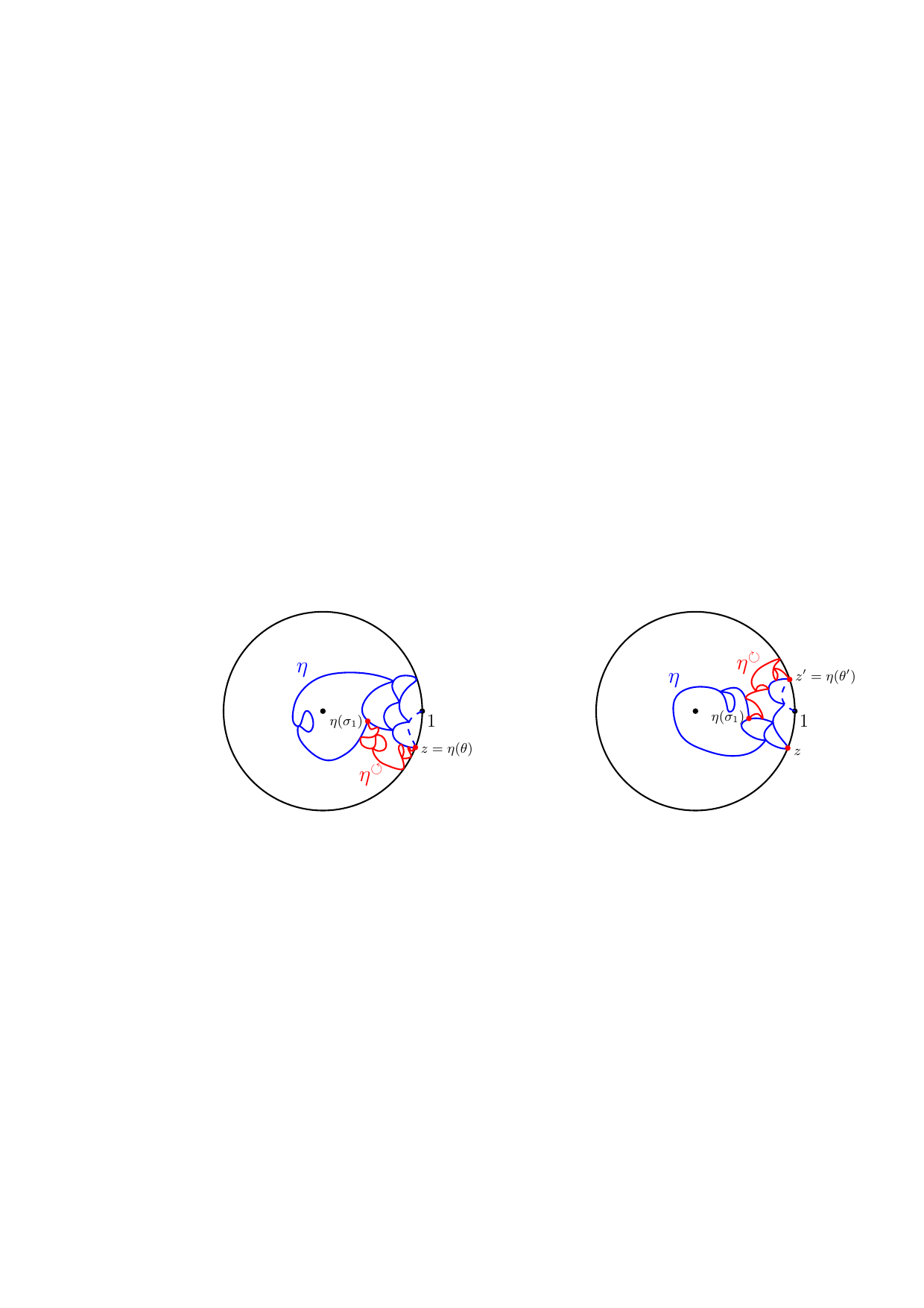}
\caption{\textbf{Left:} $\eta([0,\sigma_1])$ forms a counterclockwise loop, and the event $T$ occurs. $\mu^\circlearrowleft$ is the law of the loop formed by the solid blue and red curves, which is equivalent to the law of $\Loop$ restricted to the event $T$. \textbf{Right:} $\eta([0,\sigma_1])$ forms a clockwise loop, and the event $T$ does not occur. $\mu^\circlearrowright$ is the law of the loop formed by the solid blue and red curves.}
\label{fig:event-T}
\end{figure}

We now show the conformal welding results for $\mu^\circlearrowleft$ and $\mu^\circlearrowright$, from which Proposition~\ref{prop:conformal-welding-cardy} follows.

\begin{lemma}\label{lem:radial-sle-counterclock}
    For some constants $C, C'>0$, we have
    \begin{align}
   &\QD_{1,0}^\fd \otimes \mu^{\circlearrowleft} =  C \int_0^\infty {\rm Weld}(\widetilde{\QA}^\fd(\gamma^2-2;\ell),\QD_{1,0}^\fd(\ell))\ell 
   d\ell\,;\label{eq:welding-lem4.17-1}\\
   &\QD_{1,0}^\fd \otimes \mu^{\circlearrowright} =  C' \int_0^\infty {\rm Weld}(\widetilde{\QA}^\fd(2 - \frac{\gamma^2}{2};\ell),\QD_{1,0}^\fd(\ell))\ell 
   d\ell.\label{eq:welding-lem4.17-2}
   \end{align}
   The right-hand sides represent the uniform conformal welding along the inner forested boundary of a sample from $\widetilde{\QA}^\fd(\gamma^2-2)$ or $\widetilde{\QA}^\fd(2 - \frac{\gamma^2}{2})$ and the entire forested boundary arc of a sample from $\QD_{1,0}^\fd$ conditioned on having the same generalized quantum length. 
\end{lemma}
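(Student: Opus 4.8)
The plan is to deduce both displays from Proposition~\ref{prop:weld-wind}, which already welds the ``open'' strand $\eta([0,\sigma_1])$, by reinserting the closing strand $\eta^\circlearrowleft$ (resp.\ $\eta^\circlearrowright$) and then reorganizing the curve-decorated quantum surface using the resampling identities of Lemmas~\ref{lem:QA-resample}, \ref{lem:QD-forested-resample} and \ref{lem:QT(W,2,W)-forested} together with the forested welding results of Lemmas~\ref{lem:weld-fd-disk} and \ref{lem:weld-fd-triangle}. We argue for \eqref{eq:welding-lem4.17-1} in detail; \eqref{eq:welding-lem4.17-2} is entirely parallel, the only change being that in Proposition~\ref{prop:weld-wind} one works on the event $\ell_2>\ell_1$ rather than $\ell_1>\ell_2$, so that the two forested arcs adjacent to the weight-$(\tfrac{3\gamma^2}{2}-2)$ vertex switch roles and one ends up with the weight $2-\tfrac{\gamma^2}{2}$ in place of $\gamma^2-2$.

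\emph{Step 1: reduce to a marked identity.} It suffices to prove \eqref{eq:welding-lem4.17-1} after adding, on both sides, a boundary marked point sampled from the generalized quantum length measure on the full forested boundary of $\QD_{1,0}^\fd$ (equivalently, the outer forested boundary of the annulus, which is free in the welding), since reweighting by that generalized length and marking the point commutes with conformal welding and can be undone afterwards. On the left this produces $\QD_{1,0,\bullet}^\fd\otimes\mu^\circlearrowleft$, which by Lemma~\ref{lem:QD-forested-resample} equals $C\,(\mathcal{M}_2^\fd(\gamma^2-2)\times\QD_{1,1}^\fd)\otimes\mu^\circlearrowleft$. On the right, the forested form of Lemma~\ref{lem:QA-resample} gives $\widetilde{\QA}_\bullet^\fd(\gamma^2-2)=\Mfd_{2,\bullet}(2)\times\Mfd_2(\gamma^2-2)$, and Lemma~\ref{lem:QT(W,2,W)-forested} identifies $\Mfd_{2,\bullet}(2)=C\cdot\QT^\fd(2,\gamma^2-2,2)$.

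\emph{Step 2: reinsert the closing strand.} By the construction recalled just above Proposition~\ref{prop:conformal-welding-cardy} and the discussion before Proposition~\ref{prop:weld-wind}, $\mu^\circlearrowleft$ agrees in law with the concatenation of $\eta([\theta,\sigma_1])$ with an independent chordal $\SLE_\kappa$ from $\eta(\sigma_1)$ to $z$ drawn in the component of $\mathbb{D}\setminus\eta([0,\sigma_1])$ adjacent to $\partial\mathbb{D}$. In the welding picture of Proposition~\ref{prop:weld-wind}, the interior of $\eta([0,\sigma_1])$ is the $\QD_{1,1}^\fd$ and that adjacent component is the self-welded forested triangle $\QT^\fd(\tfrac{3\gamma^2}{2}-2,\gamma^2-2,2-\tfrac{\gamma^2}{2})$. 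Drawing $\eta^\circlearrowleft$ inside this self-welded triangle from its pinch point ($=\eta(\sigma_1)$) to the boundary point $z$ and applying the chordal $\SLE_\kappa$ welding of Lemma~\ref{lem:weld-fd-disk} together with the forested-triangle welding of Lemma~\ref{lem:weld-fd-triangle}, the triangle splits along $\eta^\circlearrowleft$ into a forested quantum disk, which merges with the interior $\QD_{1,1}^\fd$ via Lemma~\ref{lem:QD-forested-resample} into the interior $\QD_{1,0}^\fd$ of $\mu^\circlearrowleft$, plus a beaded forested annular remainder, which is the exterior of $\mu^\circlearrowleft$.

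\emph{Step 3: identify the exterior.} Combining the self-welding description of $\QT^\fd(\tfrac{3\gamma^2}{2}-2,\gamma^2-2,2-\tfrac{\gamma^2}{2})$ (with $\ell_1>\ell_2$, so the arc toward the weight-$(2-\tfrac{\gamma^2}{2})$ vertex is completely welded) with Lemmas~\ref{lem:weld-fd-triangle}, \ref{lem:QA-resample} and \ref{lem:QT(W,2,W)-forested}, one checks that the annular remainder is distributed as $\widetilde{\QA}^\fd(\gamma^2-2)$, and that all length measures and multiplicative constants assemble so that its welding to $\QD_{1,0}^\fd$ along the inner forested boundary carries exactly the factor $\ell\,d\ell$, the $\ell$ being the usual freedom of uniform welding along a full boundary component. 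This gives the marked form of \eqref{eq:welding-lem4.17-1}; unmarking yields \eqref{eq:welding-lem4.17-1}, and rerunning Steps~2--3 on the event $\ell_2>\ell_1$ yields \eqref{eq:welding-lem4.17-2} with $\widetilde{\QA}^\fd(2-\tfrac{\gamma^2}{2})$.

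The main obstacle is the bookkeeping in Steps~2 and~3: one has to track the topology of the beaded forested annulus obtained from the self-welded triangle (which boundary loop becomes the inner forested boundary that is welded to $\QD_{1,0}^\fd$, where the pinch point sits, and where the bulk marked point goes), keep the generalized quantum lengths consistent through the iterated weldings, and verify that the constraints in Lemmas~\ref{lem:weld-fd-disk}--\ref{lem:weld-fd-triangle} force precisely the weight $\gamma^2-2$ (resp.\ $2-\tfrac{\gamma^2}{2}$) and that the collected constants and length Jacobians combine into the stated $\ell\,d\ell$.
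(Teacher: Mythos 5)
Your high-level strategy matches the paper's: start from Proposition~\ref{prop:weld-wind}, reinsert the closing strand $\eta^\circlearrowleft$ (resp.\ $\eta^\circlearrowright$), and reorganize via the resampling identities (Lemmas~\ref{lem:QA-resample}, \ref{lem:QD-forested-resample}, \ref{lem:QT(W,2,W)-forested}) and forested welding results (Lemmas~\ref{lem:weld-fd-disk}, \ref{lem:weld-fd-triangle}). But the hard part of this lemma is exactly the chain of manipulations you flag in your last paragraph as ``the main obstacle,'' and that chain is not carried out. The paper's proof passes through several intermediate identities that your Steps~2--3 do not reproduce: first decompose the thin triangle via Definition~\ref{def:thin-triangle} to expose $\Mfd_2(2-\tfrac{\gamma^2}{2}) \times {\rm QT}^\fd(\tfrac{3\gamma^2}{2}-2,\gamma^2-2,\tfrac{3\gamma^2}{2}-2)$, then cut with $\eta^\circlearrowleft$ via Lemma~\ref{lem:weld-fd-triangle} with $W=W_1=W_2=W_3=\gamma^2-2$, then merge a piece with the inner disk to produce $\QD_{1,0,\bullet}^\fd$, then shift $\ell_1\mapsto\ell_1+\ell_4$ and apply Lemma~\ref{lem:QT(W,2,W)-forested} to forget the intermediate vertex, apply Lemma~\ref{lem:weld-fd-triangle} once more with $W=2-\tfrac{\gamma^2}{2}$ to peel off $\eta([0,\theta])$, and finally identify ${\rm QT}^\fd(2,2,\gamma^2-2)$ with $\wt{\QA}^\fd_\bullet(\gamma^2-2)$. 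Your sketch names the endpoints of this route but not the route itself, which is where the weight $\gamma^2-2$ and the $\ell\,d\ell$ factor actually emerge.

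There is also a structural mismatch in Step~1. You want to begin by adding an independently sampled boundary marked point to $\QD_{1,0}^\fd$ and decomposing both sides via the resampling lemmas. But in Proposition~\ref{prop:weld-wind} the boundary marked point of $\QD_{1,1}^\fd$ is \emph{not} independent of the curve---it is the starting point $1$ of the radial exploration, and the entire chain of weldings in the paper's argument is built around it. Only at the very end, after the welded picture has been reduced to ${\rm QT}^\fd(2,2,\gamma^2-2)$ welded to $\QD_{1,0,\bullet}^\fd$, does the paper invoke the fact that the law of $\mu^\circlearrowleft$ is independent of the starting point, so the marked point can be forgotten on both sides. Decomposing the left-hand side $\QD_{1,0,\bullet}^\fd\otimes\mu^\circlearrowleft$ with Lemma~\ref{lem:QD-forested-resample} \emph{before} relating the marking to the curve does not obviously connect to Proposition~\ref{prop:weld-wind}, because the beaded chain you split off is independent of $\mu^\circlearrowleft$ and the curve passes through both pieces. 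Finally, for \eqref{eq:welding-lem4.17-2} there is one more change beyond $\ell_2>\ell_1$: the closing strand $\eta^\circlearrowright$ is a chordal $\SLE_\kappa(\kappa-6)$ rather than a chordal $\SLE_\kappa$, which changes which case of Lemma~\ref{lem:weld-fd-triangle} is invoked.
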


\begin{proof}
We will only present the proof for $\mu^\circlearrowleft$, and the case for $\mu^\circlearrowright$ follows from the same argument. In this proof, the constant $C$ may change from line to line but only depends on $\gamma$. By Definition~\ref{def:thin-triangle}, we have ${\rm QT}^\fd (\frac{3\gamma^2}{2}-2,\gamma^2-2,2 - \frac{\gamma^2}{2}) = C \Mfd_2(2 - \frac{\gamma^2}{2}) \times {\rm QT}^\fd (\frac{3\gamma^2}{2}-2,\gamma^2-2,\frac{3\gamma^2}{2} - 2)$ for some $C>0$. Thus, the first welding equation in Proposition~\ref{prop:weld-wind} implies that
$$
\QD_{1,1}^\fd \otimes \tilde{\mathsf{m}}^\circlearrowleft = C \iiint_{\ell_1>\ell_2 + \ell_3} {\rm Weld}( {\rm QT}^\fd (\frac{3\gamma^2}{2}-2,\gamma^2-2,\frac{3\gamma^2}{2} - 2;\ell_1,\ell_2), \Mfd_2(2 - \frac{\gamma^2}{2};\ell_3), \QD_{1,1}^\fd(\ell_1-\ell_2-\ell_3)) d\ell_1d\ell_2 d\ell_3.
$$
By drawing the independent SLE$_\kappa$ $\eta^\circlearrowleft$ as defined earlier on both sides (on the right-hand side, this corresponds to drawing an SLE$_\kappa$ on top of a sample from ${\rm QT}^\fd (\frac{3\gamma^2}{2}-2,\gamma^2-2,\frac{3\gamma^2}{2} - 2)$ between the two $(\frac{3\gamma^2}{2}-2)$-weighted vertices) and applying Lemma~\ref{lem:weld-fd-triangle} with $W = W_1 = W_2 = W_3 = \gamma^2-2$, we further have
\begin{equation}\label{eq:weld-lem4.17-1}
\begin{aligned}
\QD_{1,1}^\fd \otimes  \tilde{\mathsf{m}}^\circlearrowleft \otimes \mbox{SLE$_\kappa$}(d\eta^\circlearrowleft) &= C \iiiint_{\ell_1>\ell_2 + \ell_3} {\rm Weld}\Big( {\rm QT}^\fd (\gamma^2-2,\gamma^2-2,\gamma^2-2;\ell_1,\ell_4), \Mfd_2(2 - \frac{\gamma^2}{2};\ell_4, \ell_2), \\
&\qquad \qquad \qquad\quad \qquad\qquad\qquad \Mfd_2(2 - \frac{\gamma^2}{2};\ell_3), \QD_{1,1}^\fd(\ell_1-\ell_2-\ell_3)\Big) d\ell_1d\ell_2 d\ell_3 d \ell_4.
\end{aligned}
\end{equation}
For the conformal welding on the right-hand side of~\eqref{eq:weld-lem4.17-1}, we first attach $\Mfd_2(2 - \frac{\gamma^2}{2};\ell_4, \ell_2)$ with $\QD_{1,1}^\fd(\ell_1-\ell_2-\ell_3)$. By Lemma~\ref{lem:QD-forested-resample}, for fixed $\ell_1>\ell_3$ and $\ell_4>0$, we have $\int_0^{\ell_1 - \ell_3} \Mfd_2(2 - \frac{\gamma^2}{2};\ell_4, \ell_2) \times \QD_{1,1}^\fd(\ell_1-\ell_2-\ell_3) d\ell_2 = C \QD_{1,0,\bullet}^\fd(\ell_4, \ell_1 - \ell_3)$. Hence, the right-hand side of~\eqref{eq:weld-lem4.17-1} becomes
$$
C \iiint_{\ell_1> \ell_3} {\rm Weld}( {\rm QT}^\fd (\gamma^2-2,\gamma^2-2,\gamma^2-2;\ell_1,\ell_4), \Mfd_2(2 - \frac{\gamma^2}{2};\ell_3), \QD_{1,0,\bullet}^\fd(\ell_4, \ell_1 - \ell_3))d\ell_1d\ell_3 d \ell_4.
$$
Next, we change the variable $\ell_1' = \ell_1 + \ell_4$ and integrate $\ell_4$ from 0 to $\ell_1' - \ell_3$ for fixed $\ell_1' > \ell_3$. By Lemma~\ref{lem:QT(W,2,W)-forested}, ${\rm QT}^\fd (\gamma^2-2,\gamma^2-2,\gamma^2-2) = C \Mfd_{2, \bullet}(\gamma^2-2)$, so we can view ${\rm QT}^\fd (\gamma^2-2,\gamma^2-2,\gamma^2-2)$ as $\Mfd_2(\gamma^2-2)$ with an additional boundary marked point on the left boundary, which corresponds to $\eta(\sigma_1)$. Furthermore, integrating $\ell_4$ from 0 to $\ell_1' - \ell_3$ corresponds to forgetting this additional marked point. Therefore, we have
\begin{equation}\label{eq:weld-lem4.17-2}
\QD_{1,1}^\fd \otimes  \tilde{\mathsf{m}}^\circlearrowleft \otimes \mbox{SLE$_\kappa$}(d\eta^\circlearrowleft) = C \iint_{\ell_1'> \ell_3} {\rm Weld}( \Mfd_{2}(\gamma^2-2; \ell_1'), \Mfd_2(2 - \frac{\gamma^2}{2};\ell_3), \QD_{1,0,\bullet}^\fd(\ell_1' - \ell_3))d\ell_1' d\ell_3.
\end{equation}
Now we mark the point on the left boundary of $\Mfd_2(2 - \frac{\gamma^2}{2}; \ell_1')$ that has distance $\ell_3$ to the top vertex and view a sample from $\Mfd_2(2 - \frac{\gamma^2}{2})$ with this additional boundary marked point as a sample from ${\rm QT}^\fd (\gamma^2-2,\gamma^2-2,\gamma^2-2)$. We further change the variable $\ell_1'' = \ell_1' - \ell_3$, apply Lemma~\ref{lem:weld-fd-triangle} with $W = 2 - \frac{\gamma^2}{2}$ and $W_1 = W_2 = W_3 = \gamma^2-2$, and forget the curve $\eta([0, \theta])$. This yields that
\begin{equation}\label{eq:weld-lem4.17-3}
\QD_{1,1}^\fd \otimes \mu^\circlearrowleft =  C \int_0^\infty {\rm Weld}( {\rm QT}^\fd (2,2,\gamma^2-2; \ell_1''), \QD_{1,0,\bullet}^\fd(\ell_1''))d\ell_1''.
\end{equation}
Then, we forget the boundary marked point on $\QD_{1,0,\bullet}^\fd(\ell_1'')$, which turns the conformal welding on the right-hand side of~\eqref{eq:weld-lem4.17-3} to a uniform conformal welding. Therefore,
\begin{equation}\label{eq:weld-lem4.17-4}
\QD_{1,1}^\fd \otimes \mu^\circlearrowleft =  C \int_0^\infty {\rm Weld}( {\rm QT}^\fd (2,2,\gamma^2-2; \ell_1''), \QD_{1,0}^\fd(\ell_1'')) \ell_1'' d\ell_1''.
\end{equation}
By Definition~\ref{def:thin-triangle} and Lemma~\ref{lem:QT(W,2,W)}, we have ${\rm QT} (2,2,\gamma^2-2) = C {\rm QT}(2,2,2) \times \Md_2(\gamma^2-2) = C' \Md_{2, \bullet}(2) \times \Md_2(\gamma^2-2)$. Combined with Lemma~\ref{lem:QA-resample}, we see that ${\rm QT} (2,2,\gamma^2-2) = \wt{\QA}_\bullet(\gamma^2 -2)$. Hence, we can view ${\rm QT}^\fd(2,2,\gamma^2-2)$ as $\wt{\QA}^\fd(\gamma^2 -2)$ with an additional boundary marked point on the unforested outer boundary. Note that the law of $\mu^\circlearrowleft$ does not depend on the starting point 1, which corresponds to this boundary marked point. By forgetting this marked point on both sides of~\eqref{eq:weld-lem4.17-4}, we arrive at the lemma.
\end{proof}

Next, we complete the proof of Proposition~\ref{prop:conformal-welding-cardy} using Lemma~\ref{lem:radial-sle-counterclock}.

\begin{proof}[Proof of Proposition~\ref{prop:conformal-welding-cardy}]
By Lemma~\ref{lem:radial-sle-counterclock} and~\eqref{eq:weld-forest-circle}, we have
\begin{equation}\label{eq:proof-prop4.17-1}
\QD_{1,0}^\fd \otimes \mu^{\circlearrowleft} =  C \iint_{\mathbb{R}_+^2} {\rm Weld}(\widetilde{\QA}^\fd(\gamma^2-2;\ell_2),\Mfr(\ell_1, \ell_2), \QD_{1,0}(\ell_1))\ell_1 \ell_2 
   d\ell_1 d\ell_2
\end{equation}
where the right-hand side stands for two uniform conformal weldings. By~\eqref{eq:weld-forest-circle}, we can view $\widetilde{\QA}^\fd(\gamma^2-2)$ as the measure on generalized quantum surfaces obtained by performing uniform conformal weldings of a sample from $\widetilde{\QA}(\gamma^2-2)$ with two independent samples from $\Mfr$ conditioned on having the same quantum boundary lengths. Then, by forgetting the looptrees attached to the outer boundary of both sides of~\eqref{eq:proof-prop4.17-1}, we obtain
\begin{equation}\label{eq:proof-prop4.17-2}
\QD_{1,0} \otimes \mu^{\circlearrowleft} =  C \iiint_{\mathbb{R}_+^3} {\rm Weld}(\widetilde{\QA}(\gamma^2-2;\ell_3), \Mfr(\ell_3, \ell_2), \Mfr(\ell_1, \ell_2), \QD_{1,0}(\ell_1))\ell_1 \ell_2 \ell_3 
   d\ell_1 d\ell_2 d\ell_3.
\end{equation}
Recall that $\mathsf{m}_T$ is the law of the inner boundary of a sample from $\mu^\circlearrowleft$. Hence, the conformal welding for $\mathsf{m}_T$ in Proposition~\ref{prop:conformal-welding-cardy} follows by taking $\mathcal{QA}_T = C \iiint_{\mathbb{R}_+^3} {\rm Weld}(\widetilde{\QA}(\gamma^2-2;\ell_3), \Mfr(\ell_3, \ell_2), \Mfr(\ell_1, \ell_2)) \ell_2 \ell_3 d\ell_1 d\ell_2 d\ell_3$ and forgetting the interfaces between them.

The conformal welding for $\mathsf{m}$ in Proposition~\ref{prop:conformal-welding-cardy} also follows from Lemma~\ref{lem:radial-sle-counterclock} since $\mathsf{m}$ can be represented using the iteration of $\mu^\circlearrowleft$ and $\mu^\circlearrowright$, as we now elaborate. Recall the setting of Lemma~\ref{lem:radial-sle-counterclock}. On the event that $\eta([0, \sigma_1])$ forms a counterclockwise loop, $\Loop$ has the same law as $\mu^\circlearrowleft$. On the event that $\eta([0, \sigma_1])$ forms a clockwise loop, let $D$ be the simply connected component of $\mathbb{D} \setminus \eta([0, \sigma_1])$ containing 0. Then the boundary of $D$ has the same law as the inner boundary of a sample from $\mu^\circlearrowright$. Moreover, by the Markov property of radial SLE$_\kappa(\kappa-6)$ and CLE$_\kappa$ as in \cite[Proposition 2.3]{MSWCLEgasket}, on this event, $\Loop$ has the same law as the outermost CLE$_\kappa$ loop surrounding 0 sampled in $D$. Therefore, the law of $\Loop$ can be obtained by iteratively sampling from the combined probability measure $\mu = \mu^\circlearrowleft + \mu^\circlearrowright$ until a sample from $\mu^\circlearrowright$ is obtained. This procedure also yields $\mathsf{m}$, which is the law of the inner boundary of $\Loop$. By Lemma~\ref{lem:radial-sle-counterclock}, we have an analog of~\eqref{eq:proof-prop4.17-2} for $\mu^{\circlearrowright}$ with $\widetilde{\QA}(2 - \frac{\gamma^2}{2})$ in place of $\widetilde{\QA}(\gamma^2 - 2)$. Combining these results yields the conformal welding for $\mathsf{m}$ in Proposition~\ref{prop:conformal-welding-cardy}. The measure of $\mathcal{QA}$ can be represented using the uniform conformal welding of $\widetilde{\QA}(\gamma^2-2), \widetilde{\QA}(2 - \frac{\gamma^2}{2}),$ and $\Mfr$. \qedhere
\end{proof}

Next, we present the conformal welding results for the outermost loop in CLE$_\kappa$ whose outer boundary surrounds a given point. For a CLE$_\kappa$ $\Gamma$ on $\mathbb{D}$, let $\mathcal{L}^*$ be the outermost loop in $\Gamma$ whose outer boundary surrounds $0$, and let $\ep{\mathcal{L}^*}$ be the outer boundary of $\mathcal{L}^*$. Let $T^* = \{ \mathcal{L}^* \cap \partial \mathbb{D} \neq \emptyset \}$, and $\mathsf{n}_{T^*}$ (resp.\ $\mathsf{n}_{(T^*)^c}$) be the law of $\ep{\mathcal{L}^*}$ restricted to the event $T^*$ (resp.\ $(T^*)^c$).

\begin{proposition}\label{prop:weld-backbone}
    There exist a constant $C>0$ and a measure $\mathcal{QA}^*$ on the space of quantum surfaces with annular topology such that:
    \begin{align}
     \QD_{1,0} \otimes \mathsf{n}_{T^*} &= C \int_0^\infty {\rm Weld}(\wt{\QA}(\gamma^2-2) (\ell), \QD_{1,0}(\ell)) \ell d\ell \,;\label{eq:conformal-welding-backbone-1}\\
     \QD_{1,0} \otimes \mathsf{n}_{(T^*)^c} &= \int_0^\infty {\rm Weld}(\mathcal{QA}^*(\ell), \QD_{1,0}(\ell)) \ell d\ell.\label{eq:conformal-welding-backbone-2}
    \end{align}
    The right-hand sides stand for the uniform conformal welding along the inner boundary of a sample from $\wt{\QA}(\gamma^2-2)$ or $\mathcal{QA}^*$ and the entire boundary arc of a sample from $\QD_{1,0}$ conditioned on having the same quantum length. 
\end{proposition}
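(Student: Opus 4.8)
The plan is to mirror the strategy used for Proposition~\ref{prop:conformal-welding-cardy}, but starting from the conformal welding for the boundary-touching loop measure in Lemma~\ref{lem:weld-bubble} rather than from Proposition~\ref{prop:weld-wind}. The key observation is that $\mathcal{L}^*$, the outermost CLE$_\kappa$ loop whose \emph{outer boundary} surrounds $0$, can be produced by the same exploration-tree iteration as $\mathcal{L}^o$ in Proposition~\ref{prop:conformal-welding-cardy}, except that we now only stop when the outer boundary of the discovered loop (an SLE$_{\tilde\kappa}$-type curve by SLE duality~\cite{Dub05, zhan2008duality}) surrounds $0$, i.e.\ when the radial exploration forms a loop \emph{around} $0$ whose outer boundary also surrounds $0$. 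Equivalently, one discovers loops one at a time: each discovered loop $\mathcal{L}$ that does not surround $0$ with its outer boundary leads to a Markovian restart of CLE$_\kappa$ inside the unexplored component, and by the SLE duality decomposition of~\cite{ACSW24} (used already in the proof of Lemma~\ref{lem:GQD+LT}), the outer boundary $\ep{\mathcal{L}}$ of the relevant loop, conditioned on touching $\partial\mathbb{D}$, is an SLE$_{\tilde\kappa}(\tilde\kappa-2)$-type bubble around $0$, which is precisely the loop $\mathsf{l}_2$ appearing in Lemma~\ref{lem:weld-bubble} after forgetting the looptrees.

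First I would treat the touching case~\eqref{eq:conformal-welding-backbone-1}. Start from Lemma~\ref{lem:weld-bubble}: ${\rm LF}_\H^{(\gamma,i),(0,0),f}\otimes \mathsf{l}_2 = C\int_0^\infty {\rm Weld}(\QD_{1,1}^\fd(\ell),\Mfd_2(\gamma^2-2;\ell))\, d\ell$. Use Lemma~\ref{lem:law-sphere-metric-ball} (or directly the relation $\QD_{1,0}^\fd = \iint {\rm Weld}(\QD_{1,0}(\ell_1),\Mfr(\ell_1,\ell_2))\ell_1\, d\ell_1 d\ell_2$ from~\eqref{eq:weld-forest-circle}, together with Lemma~\ref{lem:GQD+LT}) to pass from $\QD_{1,1}^\fd$ to $\QD_{1,0}$ by foresting/absorbing the boundary marked point and the attached looptrees, exactly as in the proof of Proposition~\ref{prop:conformal-welding-cardy}. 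Simultaneously, foresting the left forested boundary of $\Mfd_2(\gamma^2-2)$ and the inner forested boundary of a sample from $\wt{\QA}(\gamma^2-2)$ produces the same generalized quantum surface after forgetting looptrees, using~\eqref{eq:weld-forest-circle} and Lemma~\ref{lem:QA-resample}/Lemma~\ref{lem:QT(W,2,W)}; this is where $\wt\QA(\gamma^2-2)$ enters. Then transport $\mathsf{l}_2$ through the CLE iteration: on the event $T^*$ the first discovered loop already has outer boundary touching $\partial\mathbb{D}$ and surrounding $0$, so no iteration is needed, and $\ep{\mathcal{L}^*}$ restricted to $T^*$ agrees in law with the boundary-touching bubble $\mathsf{l}_2$ up to a multiplicative constant and a harmless rerooting. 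Forgetting the welding interfaces and the looptrees on the outer boundary yields~\eqref{eq:conformal-welding-backbone-1} with the measure on the $\wt{\QA}(\gamma^2-2)$ side.

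For the non-touching case~\eqref{eq:conformal-welding-backbone-2}, I would use the same iteration argument as in the last paragraph of the proof of Proposition~\ref{prop:conformal-welding-cardy}: if the first discovered loop's outer boundary does not surround $0$ (or surrounds $0$ but touches $\partial\mathbb{D}$ in a way handled by the previous case), one restarts CLE$_\kappa$ inside the unexplored doubly/simply connected component by the CLE Markov property (cf.~\cite[Proposition 2.3]{MSWCLEgasket}), and by conformal invariance each restart contributes another copy of a welding of the form ${\rm Weld}(\wt\QA(\cdot),\,\cdot\,)$ along quantum length. Summing the geometric series of such iterations (each step being a uniform conformal welding of a $\wt\QA$-type annular surface, with the mass split between the ``touch and stop'' and ``no touch, continue'' alternatives) produces a single measure $\mathcal{QA}^*$ on annular quantum surfaces such that~\eqref{eq:conformal-welding-backbone-2} holds, after forgetting all intermediate interfaces; the fact that the series converges and defines an $\sigma$-finite measure follows as in the CLE-iteration bookkeeping of Proposition~\ref{prop:conformal-welding-cardy}.

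The main obstacle I anticipate is bookkeeping the iteration precisely: identifying exactly which loop in the CLE exploration ``is'' $\mathcal{L}^*$ and controlling the law of its outer boundary conditioned on the touching/non-touching dichotomy, in particular matching the SLE$_{\tilde\kappa}$-type outer boundary produced by SLE duality with the loop $\mathsf{l}_2$ of Lemma~\ref{lem:weld-bubble} and with the welding interfaces of $\wt\QA(\gamma^2-2)$. One must be careful that foresting, forgetting looptrees, and the $\QD_{1,1}^\fd\leftrightarrow\QD_{1,0}$ passage are all compatible with the iteration, and that the rerooting of the forested boundary components (Remark after Definition~\ref{def:forested-disk}) does not affect the resulting annular measures. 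Once the $\wt\QA(\gamma^2-2)$ welding in~\eqref{eq:conformal-welding-backbone-1} is established, the non-touching case is essentially a repackaging, so the bulk of the technical work is concentrated in the touching case and the correct translation of Lemma~\ref{lem:weld-bubble}.
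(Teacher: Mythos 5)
Your high-level picture — iterate the CLE exploration and identify the touching case via a boundary-bubble conformal welding — is in the same spirit as the paper's proof, and you correctly anticipate that Lemma~\ref{lem:weld-bubble} is a key ingredient. However, the proposal contains a genuine gap in precisely the place you flag as ``bookkeeping''. The paper does not bypass Proposition~\ref{prop:weld-wind}: both~\eqref{eq:conformal-welding-backbone-1} and~\eqref{eq:conformal-welding-backbone-2} are derived by first decomposing according to whether the radial $\SLE_\kappa(\kappa-6)$ closes a counterclockwise loop (event $T$), closes a clockwise loop whose last excursion $\eta([\tau,\sigma_1])$ stays off $\partial\mathbb D$ (event $T^*\setminus T$, handled in Lemma~\ref{lem:weld4.21}), or closes a clockwise loop whose last excursion touches $\partial\mathbb D$ (event $(T^*)^c$, handled in Lemmas~\ref{lem:weld4.22} and~\ref{lem:weld4.23}). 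Your claim that on $T^*$ ``the first discovered loop already has outer boundary touching $\partial\mathbb D$ and surrounding $0$, so no iteration is needed, and $\ep{\mathcal L^*}$ restricted to $T^*$ agrees in law with $\mathsf l_2$ up to a multiplicative constant and a harmless rerooting'' both conflates $\mathcal L^*$ (a non-simple $\SLE_\kappa$-type loop) with $\ep{\mathcal L^*}$ (its simple $\SLE_{\tilde\kappa}$-type outer boundary) and leaves the central identification unjustified: $\mathsf l_2$ is only defined implicitly by the welding in Lemma~\ref{lem:weld-bubble}, and matching it with the law of the relevant CLE observable is precisely what the radial decomposition of Proposition~\ref{prop:weld-wind} accomplishes. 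In particular, on $T^*\setminus T$ the loop $\mathcal L^*$ is not the loop $\mathcal L^o$ traced by the exploration, and the event $T^*\setminus T$ corresponds to a nontrivial boundary-length inequality $\ell_1>\ell_3$ in the welding picture; Lemma~\ref{lem:weld-bubble} is then applied \emph{inside} this restricted welding, not as a starting point.

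There is also a structural problem with how you propose to produce $\wt\QA(\gamma^2-2)$. By Lemma~\ref{lem:QA-resample}, $\wt\QA_\bullet(\gamma^2-2)=\Md_{2,\bullet}(2)\times\Md_2(\gamma^2-2)$, i.e.\ the pinched annulus is a cyclic concatenation of a weight-$2$ piece \emph{and} a weight-$(\gamma^2-2)$ piece. Lemma~\ref{lem:weld-bubble} only supplies the $\Mfd_2(\gamma^2-2)$ half; the complementary $\Md_2(2)$ piece arises in Lemmas~\ref{lem:radial-sle-counterclock} and~\ref{lem:weld4.21} from the quantum-triangle $\QT^\fd(2,2,\gamma^2-2)$ that is built out of the radial-exploration segments $\eta([0,\theta])$, $\eta([\theta,\sigma_1])$ and the clockwise $\Mfd_2(2-\frac{\gamma^2}{2})$ excursion — exactly the part of the picture your proposal discards. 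Saying that ``foresting the left forested boundary of $\Mfd_2(\gamma^2-2)$ \ldots\ produces the same generalized quantum surface'' as $\wt\QA(\gamma^2-2)$ is not correct: these have different weight structure and different topology, and no amount of foresting/forgetting looptrees bridges the gap. Finally, the non-touching case is not a mere repackaging of the touching case; it requires restricting~\eqref{eq:welding-lem4.17-2} to a welding-interface–measurable event $(T^*)^c$ (Lemma~\ref{lem:weld4.22}) and then iterating via Lemma~\ref{lem:weld4.23}.
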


Proposition~\ref{prop:weld-backbone} also follows from Proposition~\ref{prop:weld-wind} combined with several other conformal welding results in Section~\ref{subsec:conformal-welding-known}. Recall the setting of Proposition~\ref{prop:weld-wind}: $\eta$ is a radial SLE$_\kappa(\kappa-6)$ in $\mathbb{D}$ from 1 to 0, and let $\sigma_1$ be the first time $\eta$ forms a loop around 0. To determine if the event $T^*$ occurs, there are three cases to consider:
\begin{enumerate}[(1)]
    \item $\eta([0,\sigma_1])$ forms a counterclockwise loop; see Figure~\ref{fig:event-T} (left). In this case, we have $\Loop = \mathcal{L}^*$ and both the events $T$ and $T^*$ occur. We denote by $\mathsf{n}_T$ the law of $\ep{\mathcal{L}^*}$ restricted to the event $T$. Then, using Lemmas~\ref{lem:GQD+LT} and \ref{lem:radial-sle-counterclock}, we can deduce that~\eqref{eq:conformal-welding-backbone-1} holds with $\mathsf{n}_T$ in place of $\mathsf{n}_{T^*}$.

    \item $\eta([0,\sigma_1])$ forms a clockwise loop. In this case, the event $T$ does not occur, but whether the event $T^*$ occurs depends on the behavior of $\eta$. Specifically, let $\tau$ be the time before $\sigma_1$ that $\eta$ visits $\eta(\sigma_1)$, which is almost surely unique. There are two subcases:

    \begin{enumerate}[(i)]
        \item $\eta([\tau, \sigma_1])$ does not hit $\partial \mathbb{D}$; see Figure~\ref{fig:event-Tout} (left). In this case, $T^*$ occurs. Let $z$ be the left-most intersection point of $\eta([0,\tau]) \cap \partial \mathbb{D}$, and let $\theta$ be the last time $\eta$ visits $z$. Draw an independent chordal SLE$_\kappa$ curve $\eta^*$ in the connected component of $\mathbb{D} \setminus \eta([0, \sigma_1])$ containing $z$, from $\eta(\sigma_1)$ to $z$. See Figure~\ref{fig:event-Tout} (left). Let $\mu^*$ be the law of the concatenation of $\eta([\theta, \sigma_1]) \cup \eta^*$, and let $\mathsf{n}_{T^* \setminus T}$ be the law of the outer boundary of a sample from $\mu^*$. In the next subcase, the event $T^*$ does not occur. Therefore, $\mu^*$ agrees with the law of $\mathcal{L}^*$ restricted to the event $T^* \setminus T$, and we also have $\mathsf{n}_{T^*} = \mathsf{n}_T + \mathsf{n}_{T^* \setminus T}$. \label{case:outermost}

        \item $\eta([\tau, \sigma_1])$ hits $\partial \mathbb{D}$; see Figure~\ref{fig:event-Tout} (right). In this case, the event $T^*$ does not occur. We will perform the same operation as in Figure~\ref{fig:event-T} (right). Specifically, let $z$ be as defined before. Let $z'$ be the right-most intersection point of $\eta([0,\sigma_1]) \cap \partial \mathbb{D}$, and let $\theta'$ be the last time $\eta$ visits $z'$. Draw an independent chordal SLE$_\kappa(\kappa - 6)$ curve $\eta^\circlearrowright$ in the connected component of $\mathbb{D} \setminus \eta([0, \sigma_1])$ containing $z'$, from $\eta(\sigma_1)$ to $z'$, with the force point at $z$. Let $\mu^{\circlearrowright,*}$ be the law of the concatenation of $\eta([\theta', \sigma_1]) \cup \eta^\circlearrowright$ on this event, and let $\mathsf{m}^*$ be the law of the inner boundary of a sample from $\mu^{\circlearrowright,*}$.
    \end{enumerate}
    
\end{enumerate}

\begin{figure}[h]
\centering
\includegraphics[scale = 0.8]{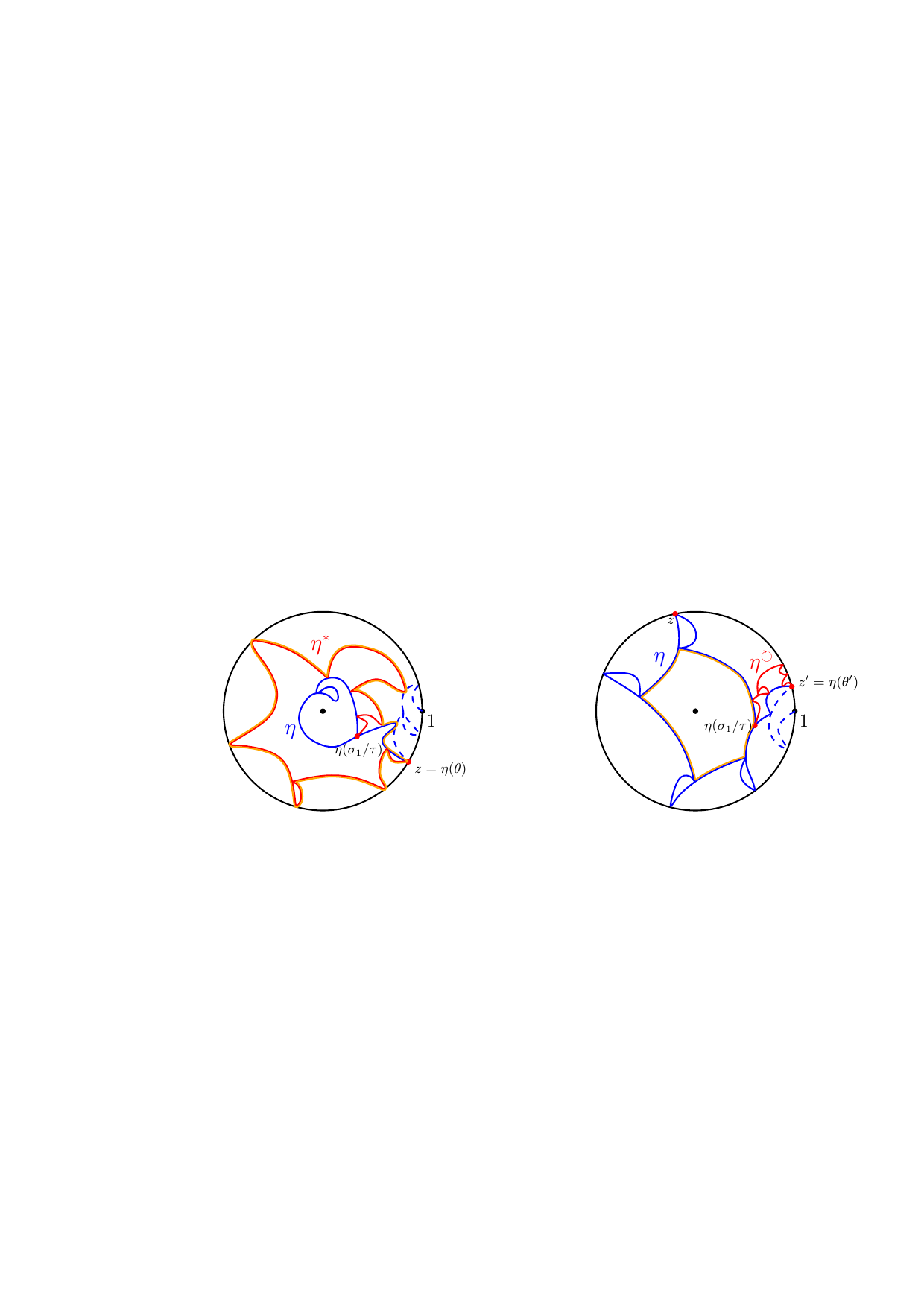}
\caption{\textbf{Left:} $\eta([0, \sigma_1])$ forms a clockwise loop, and $\eta([\tau, \sigma_1])$ does not intersect $\partial \mathbb{D}$. In this case, the event $T^* \setminus T$ occurs. $\mu^*$ is the law of the loop formed by the solid blue and red curves, which is equivalent to the law of $\mathcal{L}^*$ restricted to the event $T^* \setminus T$. The measure $\mathsf{n}_{T^* \setminus T}$ is the law of the orange loop, which will be the welding interface in Lemma~\ref{lem:weld4.21}. \textbf{Right:} $\eta([0, \sigma_1])$ forms a clockwise loop, and $\eta([\tau, \sigma_1])$ intersects $\partial \mathbb{D}$. In this case, the event $T^*$ does not occur. $\mu^{\circlearrowright,*}$ is the law of the loop formed by the solid blue and red curves, and $\mathsf{m}^*$ is the law of the orange loop, which will be the welding interface in Lemma~\ref{lem:weld4.23}.}
\label{fig:event-Tout}
\end{figure}

We first show the conformal welding result for $\mathsf{n}_{T^* \setminus T}$.
\begin{lemma}\label{lem:weld4.21}
    For some constant $C>0$, we have
   \begin{equation}\label{eq:weld-lem4.21}
   \QD_{1,0} \otimes \mathsf{n}_{T^* \setminus T} =  C \int_0^\infty {\rm Weld}(\widetilde{\QA}(\gamma^2-2;\ell),\QD_{1,0}(\ell))\ell 
   d\ell.
   \end{equation}
\end{lemma}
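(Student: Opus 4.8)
The plan is to mirror the proof of Lemma~\ref{lem:radial-sle-counterclock}, carried out this time in the clockwise regime of Proposition~\ref{prop:weld-wind} and with the restriction to the event $T^*\setminus T$ kept along, and then to de-forest. First I would establish the forested identity
$$
\QD_{1,0}^\fd\otimes\mu^* = C\int_0^\infty {\rm Weld}\big(\wt{\QA}^\fd(\gamma^2-2;\ell),\,\QD_{1,0}^\fd(\ell)\big)\,\ell\,d\ell,
$$
and then pass from it to \eqref{eq:weld-lem4.21} using Lemma~\ref{lem:GQD+LT}, exactly as Lemma~\ref{lem:GQD+LT} is combined with Lemma~\ref{lem:radial-sle-counterclock} to prove~\eqref{eq:conformal-welding-backbone-1} for $\mathsf{n}_T$ in case~(1) of the discussion following Proposition~\ref{prop:weld-backbone}. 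The reason this last step produces a clean welding of $\wt{\QA}(\gamma^2-2)$ with $\QD_{1,0}$ (rather than leaving extra forested-circle factors behind) is that $\mathsf{n}_{T^*\setminus T}$ is the law of the \emph{outer} boundary $\ep{\mu^*}$, which is precisely the inner boundary of the $\wt{\QA}(\gamma^2-2)$ piece; note also that $\mu^*$, like $\mu^\circlearrowleft$, is built from a chordal $\SLE_\kappa$ rather than $\SLE_\kappa(\kappa-6)$, which is what makes the resulting weight $\gamma^2-2$ (matching the target of the lemma) rather than $2-\frac{\gamma^2}{2}$.

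For the forested identity, I would start from the second (clockwise) welding equation of Proposition~\ref{prop:weld-wind} and split the thin weight-$(2-\tfrac{\gamma^2}{2})$ vertex of the triangle via Definition~\ref{def:thin-triangle}, writing $\QT^\fd(\tfrac{3\gamma^2}{2}-2,\gamma^2-2,2-\tfrac{\gamma^2}{2}) = C\,\Mfd_2(2-\tfrac{\gamma^2}{2})\times\QT^\fd(\tfrac{3\gamma^2}{2}-2,\gamma^2-2,\tfrac{3\gamma^2}{2}-2)$. The event $T^*\setminus T$ then has to be matched with an event on the welded surface describing how far the self-welding of the two forested arcs at the weight-$(\tfrac{3\gamma^2}{2}-2)$ vertex runs before it closes: on $T^*\setminus T$ it proceeds past the thin $\Mfd_2(2-\tfrac{\gamma^2}{2})$ factor and closes strictly inside the thick triangle $\QT^\fd(\tfrac{3\gamma^2}{2}-2,\gamma^2-2,\tfrac{3\gamma^2}{2}-2)$, so that the loop-closing point $\eta(\sigma_1)$ and the arc $\eta([\tau,\sigma_1])$ stay away from the boundary arc of the triangle that becomes $\partial\mathbb{D}$. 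On this event, drawing the independent chordal $\SLE_\kappa$ curve $\eta^*$ from $\eta(\sigma_1)$ to the leftmost point $z$ of $\eta([0,\tau])\cap\partial\mathbb{D}$ plays exactly the role that $\eta^\circlearrowleft$ plays in the proof of Lemma~\ref{lem:radial-sle-counterclock}: it amounts to drawing an $\SLE_\kappa$ between the two weight-$(\tfrac{3\gamma^2}{2}-2)$ vertices of $\QT^\fd(\tfrac{3\gamma^2}{2}-2,\gamma^2-2,\tfrac{3\gamma^2}{2}-2)$. From here I would run the same chain of surgeries word for word: apply Lemma~\ref{lem:weld-fd-triangle} to produce an auxiliary $\Mfd_2(2-\tfrac{\gamma^2}{2})$ and a $\QT^\fd(\gamma^2-2,\gamma^2-2,\gamma^2-2)$; absorb the two $\Mfd_2(2-\tfrac{\gamma^2}{2})$ pieces into $\QD_{1,1}^\fd$ to form $\QD_{1,0,\bullet}^\fd$ via Lemma~\ref{lem:QD-forested-resample}; rewrite $\QT^\fd(\gamma^2-2,\gamma^2-2,\gamma^2-2)$ as $\Mfd_{2,\bullet}(\gamma^2-2)$ via Lemma~\ref{lem:QT(W,2,W)-forested}, forget the marked point corresponding to $\eta(\sigma_1)$, apply Lemma~\ref{lem:weld-fd-triangle} again and forget $\eta([0,\theta])$; forget the marked point on $\QD_{1,0,\bullet}^\fd$ to turn the welding uniform; and finally identify $\QT^\fd(2,2,\gamma^2-2)$ with $\wt{\QA}^\fd(\gamma^2-2)$ bearing a spurious marked point on the unforested outer boundary via Lemmas~\ref{lem:QT(W,2,W)} and~\ref{lem:QA-resample}, which can be dropped since the law of $\mu^*$ does not depend on the starting point of $\eta$ (the argument being the same one that shows $\mu^\circlearrowleft$ and $\mu^\circlearrowright$ are independent of the starting point). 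This gives the forested identity, and Lemma~\ref{lem:GQD+LT} then finishes the proof as above.

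The step I expect to be the genuine obstacle is the identification, within the clockwise welding of Proposition~\ref{prop:weld-wind}, of the event $T^*\setminus T$ with a clean surface-measurable event: one must prove that ``$\eta([\tau,\sigma_1])$ avoids $\partial\mathbb{D}$'' is measurable with respect to the triangle and thin-disk pieces alone (not $\QD_{1,1}^\fd$) and has the description above in terms of how far the self-welding digs, so that the restriction commutes with all the subsequent surgeries. This requires a careful analysis of how the radial $\SLE_\kappa(\kappa-6)$ exploration near the time $\sigma_1$ is encoded by the welding order of the forested arcs, in the spirit of the arguments in \cite{ASYZ24} underlying Proposition~\ref{prop:weld-wind}. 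Everything else is bookkeeping, parallel to Lemma~\ref{lem:radial-sle-counterclock} and to case~(1) above.
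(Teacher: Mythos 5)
Your overall route does match the paper: start from the clockwise welding in Proposition~\ref{prop:weld-wind}, split the thin vertex via Definition~\ref{def:thin-triangle}, restrict to $T^*\setminus T$, draw $\eta^*$, apply Lemma~\ref{lem:weld-fd-triangle}, and de-forest at the end via~\eqref{eq:weld-forest-circle} and Lemma~\ref{lem:GQD+LT}. Your identification of $T^*\setminus T$ with the self-welding closing strictly inside the thick triangle is also the right picture (in the paper's parametrization this is exactly the constraint $\ell_1>\ell_3$), and your remark that $\mu^*$ uses a plain chordal $\SLE_\kappa$, which is why the answer is $\wt\QA(\gamma^2-2)$ rather than $\wt\QA(2-\frac{\gamma^2}{2})$, is correct. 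However, this event identification is \emph{not} the genuine obstacle; the paper dispatches it in a few lines. The real gap is in the absorption step that you describe as running ``word for word'' as in Lemma~\ref{lem:radial-sle-counterclock}.

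Concretely: you propose to collapse $\Mfd_2(2-\frac{\gamma^2}{2};\ell_4,\ell_2)$ and $\QD_{1,1}^\fd$ via Lemma~\ref{lem:QD-forested-resample}, as in Lemma~\ref{lem:radial-sle-counterclock}. That lemma applies there because under the counterclockwise constraint $\ell_1>\ell_2+\ell_3$, the $\ell_2$-integration runs over $(0,\ell_1-\ell_3)$ and $\QD_{1,1}^\fd(\ell_1-\ell_2-\ell_3)$ is simply \emph{attached} at the bottom vertex of the thin chain, with no curve interface, which is precisely the setting of Lemma~\ref{lem:QD-forested-resample}. In the present clockwise case restricted to $\ell_1>\ell_3$, the constraint is $\ell_2+\ell_3>\ell_1>\ell_3$, so $\ell_2$ ranges over $(\ell_1-\ell_3,\infty)$ and $\QD_{1,1}^\fd(\ell_2+\ell_3-\ell_1)$ is conformally \emph{welded} to an interior sub-arc of the right boundary of $\Mfd_2(2-\frac{\gamma^2}{2};\ell_4,\ell_2)$, with a nontrivial curve interface that on the SLE side is exactly $\eta([\tau,\sigma_1])$. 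This is topologically different from the $\QD_{1,0,\bullet}^\fd$ decomposition, and Lemma~\ref{lem:QD-forested-resample} does not apply. What the paper does instead is mark the attachment point, shuffle the marked point from the right to the left boundary of the thin disk using Lemma~\ref{lem:QT(W,2,W)-forested}, invoke the weld-bubble identity of Lemma~\ref{lem:weld-bubble} (whose whole purpose in the paper is this step), and then forget the welding interface $\eta([\tau,\sigma_1])$ to land on $\QD_{1,0,\bullet}^\fd(\ell_4,\ell_1-\ell_3)$; only at that point does the remainder of Lemma~\ref{lem:radial-sle-counterclock} apply verbatim. Lemma~\ref{lem:weld-bubble} is therefore an essential ingredient that your proposal omits, and without it the chain of surgeries does not close. (A smaller slip: even in Lemma~\ref{lem:radial-sle-counterclock} only one of the two $\Mfd_2(2-\frac{\gamma^2}{2})$ pieces is absorbed into $\QD_{1,1}^\fd$; the other, $\Mfd_2(2-\frac{\gamma^2}{2};\ell_3)$, must survive to be assembled into the pinched annulus at the end.)
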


\begin{proof}
    The proof is similar to that of Lemma~\ref{lem:radial-sle-counterclock}. In this proof, the constant $C$ may change from line to line but only depend on $\gamma$. By Definition~\ref{def:thin-triangle}, we have ${\rm QT}^\fd (\frac{3\gamma^2}{2}-2,\gamma^2-2,2 - \frac{\gamma^2}{2}) = C \Md_2(2 - \frac{\gamma^2}{2}) \times {\rm QT}^\fd (\frac{3\gamma^2}{2}-2,\gamma^2-2,\frac{3\gamma^2}{2} - 2)$ for some $C>0$. Thus, the second welding equation in Proposition~\ref{prop:weld-wind} implies that
$$
\QD_{1,1}^\fd \otimes \tilde{\mathsf{m}}^\circlearrowright = C \iiint_{\ell_2 + \ell_3 > \ell_1} {\rm Weld}( {\rm QT}^\fd (\frac{3\gamma^2}{2}-2,\gamma^2-2,\frac{3\gamma^2}{2} - 2;\ell_1,\ell_2), \Mfd_2(2 - \frac{\gamma^2}{2};\ell_3), \QD_{1,1}^\fd(\ell_2 + \ell_3 - \ell_1)) d\ell_1d\ell_2 d\ell_3.
$$
Now we restrict both sides to the event $T^* \setminus T$. We claim that on the right-hand side, this event corresponds to $\ell_1 > \ell_3$. This is due to the fact that on the event $T^* \setminus T$ (namely, the case~\eqref{case:outermost}), $\eta(\tau) = \eta(\sigma_1)$ corresponds to the first $(\frac{3\gamma^2}{2}-2)$-weighted vertex in ${\rm QT}^\fd (\frac{3\gamma^2}{2}-2,\gamma^2-2,\frac{3\gamma^2}{2} - 2)$ as well as a point on the forested boundary arc between two $(\frac{3\gamma^2}{2}-2)$-weighted vertices with generalized quantum distance $\ell_2 + \ell_3 - \ell_1$ to the first $(\frac{3\gamma^2}{2}-2)$-weighted vertex. Moreover, $\eta([\tau, \sigma_1])$ corresponds to the forested boundary arc from this point to the first $(\frac{3\gamma^2}{2}-2)$-weighted vertex which does not intersect $\partial \mathbb{D}$. Hence, the event $T^* \setminus T$ happens if and only if this point exists, which is equivalent to $\ell_2 + \ell_3 - \ell_1 < \ell_2$, namely $\ell_1 > \ell_3$. Therefore, we have
\begin{align*}
\QD_{1,1}^\fd \otimes \tilde{\mathsf{m}}^\circlearrowright 1_{T^* \setminus T} &= C \iiint_{\ell_2 + \ell_3 > \ell_1, \ell_1 > \ell_3} {\rm Weld}( {\rm QT}^\fd (\frac{3\gamma^2}{2}-2,\gamma^2-2,\frac{3\gamma^2}{2} - 2;\ell_1,\ell_2), \\
&\qquad \qquad \qquad \qquad \qquad \qquad \qquad \Mfd_2(2 - \frac{\gamma^2}{2};\ell_3), \QD_{1,1}^\fd(\ell_2 + \ell_3 - \ell_1)) d\ell_1d\ell_2 d\ell_3.
\end{align*}
Then, we draw the independent chordal SLE$_\kappa$ curve $\eta^*$ as defined in case~\eqref{case:outermost} (on the right-hand side, this corresponds to drawing an SLE$_\kappa$ on top of a sample from ${\rm QT}^\fd (\frac{3\gamma^2}{2}-2,\gamma^2-2,\frac{3\gamma^2}{2} - 2)$ between the two $(\frac{3\gamma^2}{2}-2)$-weighted vertices). Next, we apply Lemma~\ref{lem:weld-fd-triangle} with $W = W_1 = W_2 = W_3 = \gamma^2-2$. This yields that
\begin{equation}\label{eq:weld-lem4.22-1}
\begin{aligned}
\QD_{1,1}^\fd \otimes  \tilde{\mathsf{m}}^\circlearrowright 1_{T^* \setminus T} \otimes {\rm SLE}_\kappa(d\eta^*) &= C \iiiint_{\ell_2 + \ell_3 > \ell_1 > \ell_3} {\rm Weld}\Big( {\rm QT}^\fd (\gamma^2-2,\gamma^2-2,\gamma^2-2;\ell_1,\ell_4), \\
&\qquad \Mfd_2(2 - \frac{\gamma^2}{2};\ell_4, \ell_2), \Mfd_2(2 - \frac{\gamma^2}{2};\ell_3), \QD_{1,1}^\fd(\ell_2 + \ell_3 - \ell_1)\Big) d\ell_1d\ell_2 d\ell_3 d \ell_4.
\end{aligned}
\end{equation}
Now we add a boundary marked point to $\Mfd_2(2 - \frac{\gamma^2}{2}; \ell_4, \ell_2)$ on the right boundary with generalized quantum length $\ell_2 + \ell_3 - \ell_1 < \ell_2$ to the top vertex, which yields the quantum surface $\Mfd_{2, \bullet}(2 - \frac{\gamma^2}{2};\ell_4, \ell_2 + \ell_3 - \ell_1, \ell_1 - \ell_3)$. By Lemma~\ref{lem:QT(W,2,W)-forested}, we can identify it in another way: we can first sample from $\Mfd_2(2 - \frac{\gamma^2}{2};\ell_4 + \ell_1 - \ell_3, \ell_2 + \ell_3 - \ell_1)$ and then add a boundary marked point on the left boundary with generalized quantum length $\ell_4$ to the top vertex. Combined with Lemma~\ref{lem:weld-bubble}, we see that for fixed $\ell_1 > \ell_3$ and $\ell_4>0$, $\int_{\ell_1 - \ell_3}^\infty {\rm Weld}(\Mfd_{2, \bullet}(2 - \frac{\gamma^2}{2};\ell_4, \ell_2 + \ell_3 - \ell_1, \ell_1 - \ell_3) ,\QD_{1,1}^\fd(\ell_2 + \ell_3 -\ell_1)) d\ell_2 = C {\rm LF}_{\H, \bullet}^{(\gamma, i), (0, 0), f}(\ell_4, \ell_1 - \ell_3)$, which is equivalent to $C\QD_{1,0,\bullet}^\fd(\ell_4, \ell_1 - \ell_3)$ by~\cite[Theorem 3.4]{ARS21}. We forget the welding interface in this conformal welding on both sides of~\eqref{eq:weld-lem4.22-1} (which corresponds to $\eta([\tau, \sigma_1])$ on the left-hand side), and further forget the marked point. This yields that
\begin{align*}
&\quad \QD_{1,1}^\fd \otimes  \widetilde{\mathsf{m}} \otimes {\rm SLE}_\kappa(d\eta^*) \\
&= C \iiint_{\ell_1> \ell_3} {\rm Weld}( {\rm QT}^\fd (\gamma^2-2,\gamma^2-2,\gamma^2-2;\ell_1,\ell_4), \Mfd_2(2 - \frac{\gamma^2}{2};\ell_3), \QD_{1,0,\bullet}^\fd(\ell_4, \ell_1 - \ell_3))d\ell_1d\ell_3 d \ell_4,
\end{align*}
where $\widetilde{\mathsf{m}}$ denotes the law of $\eta([0, \tau])$ on the event $T^* \setminus T$. Note that the right-hand side is essentially the right-hand side of~\eqref{eq:weld-lem4.17-2}. Using the same argument in the proof of Lemma~\ref{lem:radial-sle-counterclock}, we obtain that 
$$
\QD_{1,0}^\fd \otimes  \widetilde{\mathsf{m}} \otimes {\rm SLE}_\kappa(d\eta^*) = C \int_0^\infty {\rm Weld}(\widetilde{\QA}^\fd(\gamma^2-2;\ell),\QD_{1,0}^\fd(\ell)) \ell d \ell.
$$
Using~\eqref{eq:weld-forest-circle} and Lemma~\ref{lem:GQD+LT}, we obtain the desired result.
\end{proof}

Next, we show the conformal welding result for $\mu^{\circlearrowright,*}$.

\begin{lemma}\label{lem:weld4.22}
    For some constant $C>0$, we have
    \begin{equation}\label{eq:weld-lem4.22}
    \begin{aligned}
   \QD_{1,0}^\fd \otimes \mu^{\circlearrowright,*} =  C \int_0^\infty {\rm Weld}(\widetilde{\QA}^\fd(2 - \frac{\gamma^2}{2};\ell),\QD_{1,0}^\fd(\ell))1_{(T^*)^c} \ell 
   d\ell.
   \end{aligned}
   \end{equation}
   The right-hand side stands for the uniform conformal welding restricted to the event $(T^*)^c$. Note that this event is measurable with respect to the welding interface. Specifically, the welding interface between $\widetilde{\QA}^\fd(2 - \frac{\gamma^2}{2})$ and $\QD_{1,0}^\fd$ can be decomposed into excursions based on its hitting time of the outer unforested boundary arc of $\widetilde{\QA}^\fd(2 - \frac{\gamma^2}{2})$, each corresponding to part of a CLE loop that touches the boundary. $(T^*)^c$ corresponds to the event that there is no excursion that separates the unforested boundary arc of $\QD_{1,0}^\fd$ from the outer unforested boundary arc of $\widetilde{\QA}^\fd(2 - \frac{\gamma^2}{2})$.
\end{lemma}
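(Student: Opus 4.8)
The plan is to mimic the proof of Lemma~\ref{lem:radial-sle-counterclock} almost verbatim, using the second conformal welding equation of Proposition~\ref{prop:weld-wind} as the starting point and carefully tracking the event $(T^*)^c$ through each welding operation. First I would use Definition~\ref{def:thin-triangle} to split the $(2-\frac{\gamma^2}{2})$-weighted vertex of the forested quantum triangle, writing ${\rm QT}^\fd(\frac{3\gamma^2}{2}-2,\gamma^2-2,2-\frac{\gamma^2}{2}) = C\,\Md_2(2-\frac{\gamma^2}{2}) \times {\rm QT}^\fd(\frac{3\gamma^2}{2}-2,\gamma^2-2,\frac{3\gamma^2}{2}-2)$, and feed this into the $\tilde{\mathsf m}^\circlearrowright$ welding identity. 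Since in case~(2)(ii) we are on the complement of $T^*\setminus T$, the relevant constraint on boundary lengths is $\ell_2+\ell_3-\ell_1 \geq \ell_2$, i.e. $\ell_1 \leq \ell_3$, in contrast to the $\ell_1 > \ell_3$ condition appearing in Lemma~\ref{lem:weld4.21}. I would then draw the independent chordal $\SLE_\kappa(\kappa-6)$ curve $\eta^\circlearrowright$ with the appropriate force point, apply Lemma~\ref{lem:weld-fd-triangle} with $W=W_1=W_2=W_3=\gamma^2-2$ to absorb the new triangle into an $\Md_2(\gamma^2-2)$ weld, then collapse the $\Md_2(2-\frac{\gamma^2}{2})$ pieces into a $\QD_{1,0,\bullet}^\fd$ using Lemma~\ref{lem:QD-forested-resample} (exactly as in the proof of Lemma~\ref{lem:radial-sle-counterclock}), change variables to merge $\ell_4$ with the other length, and use Lemma~\ref{lem:QT(W,2,W)-forested} to reinterpret ${\rm QT}^\fd(\gamma^2-2,\gamma^2-2,\gamma^2-2)$ as $\Mfd_{2,\bullet}(\gamma^2-2)$ so that forgetting the extra marked point corresponds to forgetting the radial curve $\eta([0,\theta'])$.

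After these manipulations I expect to arrive at an identity of the shape $\QD_{1,1}^\fd \otimes \mu^{\circlearrowright,*} = C\int_0^\infty {\rm Weld}({\rm QT}^\fd(2,2,\gamma^2-2;\ell),\QD_{1,0,\bullet}^\fd(\ell))\,d\ell$ restricted to the relevant length constraint, parallel to~\eqref{eq:weld-lem4.17-3}. Forgetting the boundary marked point on $\QD_{1,0,\bullet}^\fd$ converts this to a uniform conformal welding with the $\ell\,d\ell$ factor, and then — as in the last paragraph of the proof of Lemma~\ref{lem:radial-sle-counterclock} — I would use Definition~\ref{def:thin-triangle}, Lemma~\ref{lem:QT(W,2,W)}, and Lemma~\ref{lem:QA-resample} to identify ${\rm QT}(2,2,\gamma^2-2)$ with $\wt\QA_\bullet(2-\frac{\gamma^2}{2})$ wait — one must check the weight: here the $\mu^\circlearrowright$-type construction on a clockwise loop corresponds to $\widetilde\QA(2-\frac{\gamma^2}{2})$, matching equation~\eqref{eq:welding-lem4.17-2} of Lemma~\ref{lem:radial-sle-counterclock}. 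Passing from $\QD_{1,1}^\fd$ and $\QD_{1,0,\bullet}^\fd$ to $\QD_{1,0}^\fd$ and then to $\QD_{1,0}$ is done via~\eqref{eq:weld-forest-circle} and Lemma~\ref{lem:GQD+LT}, exactly as at the end of the proofs above. This yields~\eqref{eq:weld-lem4.22} once we restrict to $(T^*)^c$.

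The main obstacle, and the one point where genuinely new bookkeeping is required rather than a citation of Lemma~\ref{lem:radial-sle-counterclock}, is the claim at the end of the lemma statement: that the event $(T^*)^c$ is measurable with respect to the welding interface, and more precisely that it corresponds to \emph{no} excursion of the interface (about its hitting times of the outer unforested boundary arc of $\widetilde\QA^\fd(2-\frac{\gamma^2}{2})$) separating the unforested boundary of $\QD_{1,0}^\fd$ from the outer unforested boundary of $\widetilde\QA^\fd(2-\frac{\gamma^2}{2})$. I would justify this by unwinding the excursion decomposition of a radial $\SLE_\kappa(\kappa-6)$ interface touching a boundary: each excursion of the welding curve corresponds to a portion of a CLE loop that hits $\partial\mathbb D$, and the outer boundary of $\mathcal L^*$ surrounds $0$ (i.e.\ separates the origin from $\partial\mathbb D$) precisely when one of these excursions together with the relevant arc encloses the inner disk. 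This is the same topological fact underlying the three-case split above (cases (1), (2)(i), (2)(ii)), so the argument is really a careful reading of the target-invariance/Markov structure of $\SLE_\kappa(\kappa-6)$ and CLE$_\kappa$ rather than new probability; still, it is the place where the proof genuinely differs from Lemma~\ref{lem:radial-sle-counterclock} and where I would spend most of the writing effort, invoking \cite[Proposition 2.3]{MSWCLEgasket} and the analysis of $\mu^\circlearrowright$ in \cite[Proposition 5.1]{Sheffield09} as in the discussion preceding Lemma~\ref{lem:weld4.21}.
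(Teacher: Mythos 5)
Your proof reaches the right conclusion but takes a considerably longer route than the paper. The paper's proof of Lemma~\ref{lem:weld4.22} is a single sentence: equation~\eqref{eq:welding-lem4.17-2} of Lemma~\ref{lem:radial-sle-counterclock} already gives the conformal welding identity for the full measure $\mu^{\circlearrowright}$, and since (as explained in the lemma statement itself, via the excursion decomposition of the welding interface) the event $(T^*)^c$ is measurable with respect to that interface, one simply restricts both sides of~\eqref{eq:welding-lem4.17-2} to $1_{(T^*)^c}$ to obtain~\eqref{eq:weld-lem4.22}. You instead go all the way back to Proposition~\ref{prop:weld-wind} and redo the decomposition-and-surgery chain of Lemma~\ref{lem:radial-sle-counterclock} while tracking the event through each step. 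This does work — carrying a $1_{(T^*)^c}$ indicator through the welding manipulations is legitimate precisely because it is interface-measurable, the very fact you flag as the crux — but it duplicates work already done in proving~\eqref{eq:welding-lem4.17-2}. Your own final remark (``This yields~\eqref{eq:weld-lem4.22} once we restrict to $(T^*)^c$'') gives the game away: you could have applied that restriction directly to~\eqref{eq:welding-lem4.17-2} and been done. Also note you need not engage with the $\ell_1>\ell_3$ versus $\ell_1\le\ell_3$ bookkeeping of Lemma~\ref{lem:weld4.21} at all here (that constraint tracking belongs to the $T^*\setminus T$ analysis, not to $\mu^{\circlearrowright,*}$); the whole point of going through Lemma~\ref{lem:radial-sle-counterclock} is that its clockwise equation is already event-agnostic, so the restriction can be applied at the very end with no new surgery.
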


\begin{proof}
    This lemma directly follows from~\eqref{eq:welding-lem4.17-2} in Lemma~\ref{lem:radial-sle-counterclock} by restricting to the event $(T^*)^c$.
\end{proof}

As a consequence of Lemma~\ref{lem:weld4.22} and~\eqref{eq:weld-forest-circle}, we have the following lemma.

\begin{lemma}\label{lem:weld4.23}
    There exists a measure $\mathcal{QA}^{**}$ on the space of quantum surfaces with annular topology such that:
    \begin{equation}
     \QD_{1,0} \otimes \mathsf{m}^* = \int_0^\infty {\rm Weld}(\mathcal{QA}^{**}(\ell), \QD_{1,0}(\ell)) \ell d\ell.
    \end{equation}
\end{lemma}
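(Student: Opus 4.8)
The plan is to mimic the proof of Lemma~\ref{lem:weld4.21}, but now starting from the conformal welding result for $\mathsf{n}_{(T^*)^c}$-type configurations already obtained in Lemma~\ref{lem:weld4.22}. Recall that $\mathsf{m}^*$ is the law of the inner boundary of a sample from $\mu^{\circlearrowright,*}$, and that $\mu^{\circlearrowright,*}$ is built from the radial $\mathrm{SLE}_\kappa(\kappa-6)$ curve $\eta$ on the event that $\eta([0,\sigma_1])$ forms a clockwise loop and $\eta([\tau,\sigma_1])$ hits $\partial\mathbb{D}$. So the strategy is: first express the welding for $\mu^{\circlearrowright,*}$ as in~\eqref{eq:weld-lem4.22}, then peel off the outer structure to pass from the inner boundary curve $\eta^\circlearrowright$ (together with $\eta([\theta',\sigma_1])$) down to the single loop $\ep{}$-type interface $\mathsf{m}^*$.

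First I would take~\eqref{eq:weld-lem4.22} from Lemma~\ref{lem:weld4.22}, namely
$$
\QD_{1,0}^\fd \otimes \mu^{\circlearrowright,*} =  C \int_0^\infty {\rm Weld}(\widetilde{\QA}^\fd(2 - \tfrac{\gamma^2}{2};\ell),\QD_{1,0}^\fd(\ell))1_{(T^*)^c} \ell \, d\ell.
$$
Since $\mathsf{m}^*$ is the law of the inner boundary of $\mu^{\circlearrowright,*}$, and the inner boundary separates $\QD_{1,0}^\fd$ from the rest, the natural move is to integrate out (forget) the forested annular region between this inner boundary and the outer boundary of $\widetilde{\QA}^\fd(2-\tfrac{\gamma^2}{2})$ — exactly as in the last step of the proof of Proposition~\ref{prop:conformal-welding-cardy}, where forgetting the interfaces between $\widetilde{\QA}$ and the forested-circle pieces produces a new annular measure. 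Concretely, using~\eqref{eq:weld-forest-circle} to view $\widetilde{\QA}^\fd(2-\tfrac{\gamma^2}{2})$ as the uniform welding of $\widetilde{\QA}(2-\tfrac{\gamma^2}{2})$ with two forested circles $\Mfr$, one forgets the looptrees attached to the outer boundary to get down from $\QD_{1,0}^\fd$ to $\QD_{1,0}$ on both sides, and then forgets the remaining looptree interface (which on the $\mu^{\circlearrowright,*}$ side corresponds to the portion of the loop other than its inner boundary $\eta([\theta',\sigma_1])$) to replace $\mu^{\circlearrowright,*}$ by $\mathsf{m}^*$. The resulting annular measure, call it $\mathcal{QA}^{**}$, is then the uniform conformal welding of $\widetilde{\QA}(2-\tfrac{\gamma^2}{2})$ with a suitable forested-circle layer, restricted to the event $(T^*)^c$ (which is measurable with respect to the welding interface as explained in Lemma~\ref{lem:weld4.22}). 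This yields
$$
\QD_{1,0} \otimes \mathsf{m}^* = \int_0^\infty {\rm Weld}(\mathcal{QA}^{**}(\ell), \QD_{1,0}(\ell)) \ell \, d\ell,
$$
which is the assertion of Lemma~\ref{lem:weld4.23}.

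The bookkeeping obstacle, and the step I expect to require the most care, is tracking exactly which boundary arcs and marked points survive when passing from $\mu^{\circlearrowright,*}$ to $\mathsf{m}^*$: one must check that forgetting the relevant welding interface indeed turns the decorated welding into a \emph{uniform} welding (with the correct $\ell\,d\ell$ weight accounting for the rooting freedom), and that the restriction to $(T^*)^c$ is preserved under these operations since it is a property of the interface rather than of the marked points — precisely the same subtlety handled in Lemmas~\ref{lem:weld4.21} and~\ref{lem:weld4.22}. Because all of these manipulations (foresting/unforesting via~\eqref{eq:weld-forest-circle}, forgetting interfaces, forgetting marked points, converting to uniform welding) are exactly the toolkit already deployed in the proofs of Proposition~\ref{prop:conformal-welding-cardy} and Lemma~\ref{lem:weld4.21}, the argument is routine once the correspondence is set up, so I would keep the write-up brief and refer back to those proofs rather than repeating the computations.
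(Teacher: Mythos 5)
Your approach is correct and matches the paper's, which simply cites Lemma~\ref{lem:weld4.22} together with~\eqref{eq:weld-forest-circle}: decompose the forested surfaces via~\eqref{eq:weld-forest-circle}, forget the looptrees and internal interfaces, and absorb the $1_{(T^*)^c}$ restriction into the definition of $\mathcal{QA}^{**}$, exactly as in the final step of the proof of Proposition~\ref{prop:conformal-welding-cardy}. One small slip in your parenthetical: the inner boundary of $\mu^{\circlearrowright,*}$ is not $\eta([\theta',\sigma_1])$ (that is merely one sub-arc of the full loop $\eta([\theta',\sigma_1])\cup\eta^\circlearrowright$, and the inner boundary traces along parts of both arcs), but this misidentification is incidental and does not affect the argument.
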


Finally we finish the proof of Proposition~\ref{prop:weld-backbone} by combining Lemmas~\ref{lem:weld4.21} and~\ref{lem:weld4.23}.

\begin{proof}[Proof of Proposition~\ref{prop:weld-backbone}]
    By Lemmas~\ref{lem:radial-sle-counterclock} and \ref{lem:GQD+LT}, we have
    $$
    \QD_{1,0} \otimes \mathsf{n}_{T} =  C \int_0^\infty {\rm Weld}(\widetilde{\QA}(\gamma^2-2;\ell),\QD_{1,0}(\ell))\ell 
    d\ell.
    $$
    Since $\mathsf{n}_{T^*} = \mathsf{n}_T + \mathsf{n}_{T^* \setminus T}$, the conformal welding~\eqref{eq:conformal-welding-backbone-1} follows by combining the above welding equation with Lemma~\ref{lem:weld4.21}. 
    
    Next, we prove~\eqref{eq:conformal-welding-backbone-2}. The key observation is that the law $\mathsf{n}_{(T^*)^c}$ can be represented using the iteration of $\mathsf{n}_{T^*}$ and $\mathsf{m}^*$, similarly to how $\mathsf{m}$ can be represented using $\mu^\circlearrowleft$ and $\mu^\circlearrowright$. This is due to the fact that on the event $(T^*)^c$, $\mathcal{L}^*$ has the same law as the outermost loop whose outer boundary surrounds 0 in an independent CLE$_\kappa$ sampled in the domain enclosed by a loop from $\mathsf{m}^*$. Therefore, the welding equation~\eqref{eq:conformal-welding-backbone-2} follows from Lemma~\ref{lem:weld4.23} and~\eqref{eq:conformal-welding-backbone-1}. The measure of $\mathcal{QA}^*$ can be represented using the uniform conformal welding of $\wt{\QA}(\gamma^2-2)$ and $\mathcal{QA}^{**}$.
\end{proof}

\section{Derivation of $p_B(\tau)$ and $p_{BW}(\tau)$}
\label{sec:proof-Cardy}

In Section~\ref{subsec:decorate-12arm}, we introduce the Brownian annuli decorated with the complement of the one-arm event and the polychromatic two-arm event. Using the conformal welding results from Section~\ref{subsec:conformal-welding-CLE}, we can compute the joint boundary length distributions of the decorated Brownian annuli. The proof of Theorem~\ref{thm:cardy-formula} is then completed in Section~\ref{subsec:proof-cardy} using Proposition~\ref{prop:kpz-annulus}.

\subsection{The Brownian annuli decorated with the arm events}
\label{subsec:decorate-12arm}

First, we introduce the Brownian annulus decorated with the complement of the one-arm event. We sample a field from ${\rm LF}_\mathbb{H}^{(\gamma,i)}$ and an independent CLE$_6$ on top of it. Then, we apply the metric ball construction of ${\rm BA}$ from Lemma~\ref{lem:BA-equivalent}, restricted to the event that the metric ball of radius 1 centered at $i$ is enclosed by the outermost CLE$_6$ loop surrounding $i$. The resulting quantum surface, BA$_{B^c}$ defined below, along with the CLE$_6$ on it, can be viewed as the joint scaling limits of random quadrangulations and critical percolation on it restricted to the complement of the one-arm event.

\begin{lemma}\label{lem:decorate-BA-1arm}
Let $\gamma=\sqrt{8/3}$. Sample $(\mathbb{H}, i, \phi)$ from $\LF_\H^{(\gamma,i)}$ and let $d_\phi$ be the $\sqrt{8/3}$-LQG metric of $\phi$. On the event $E=\{d_\phi(i, \partial \H) > 1\}$, let $\mathcal{A} := \H \setminus B^\bullet_{d_\phi} (i,1)$ be an annular domain, and $\mathscr{L}$ be the quantum boundary length of $B^\bullet_{d_\phi} (i,1)$. Independently sample a ${\rm CLE}_6$ $\Gamma$ on $\mathbb{H}$, and let $\mathcal{L}^i$ be the outermost loop in $\Gamma$ that surrounds $i$, with $D_{\mathcal{L}^i}$ being the connected component of $\mathbb{H} \setminus \mathcal{L}^i$ that contains $i$. Let the event $F = \{ B^\bullet_{d_\phi} (i,1) \subset D_{\mathcal{L}^i} \}$; see Figure~\ref{fig:2} (left). Let ${\rm BA}_{B^c}$ denote the law of $(\mathcal{A}, \phi) /{\sim_\gamma}$ under the reweighted measure $\frac{1_{E \cap F}}{C_2 \mathscr{L} \cf(\mathscr{L})} \LF_\H^{(\gamma,i)} \otimes {\rm CLE}_6(d \Gamma)$. Then, we have ${\rm BA}_{B^c} = \frac{1}{\sqrt{2}}  (1 - p_B(\tau))\eta(2i\tau) \LF_\tau(d \phi) 1_{\tau>0} d\tau$.
\end{lemma}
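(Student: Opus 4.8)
The plan is to combine the metric-ball description of the Brownian annulus from Lemma~\ref{lem:BA-equivalent} with the CLE$_6$ encoding of the one-arm event from Lemma~\ref{lem:CLE-12arm}, and then use the conditional independence between the quantum surface and the CLE on top of it. Concretely, in Lemma~\ref{lem:BA-equivalent} we already know that under the reweighted measure $M = \frac{1_E}{C_2 \mathscr{L} \cf(\mathscr{L})} \LF_\H^{(\gamma,i)}$, the quantum surface $(\mathcal{A}, \phi)/{\sim_\gamma}$ has law ${\rm BA} = \frac{1}{\sqrt 2}\eta(2i\tau)\LF_\tau(d\phi)1_{\tau>0}d\tau$. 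The new measure ${\rm BA}_{B^c}$ is obtained from $M \otimes {\rm CLE}_6(d\Gamma)$ by further restricting to the event $F = \{B^\bullet_{d_\phi}(i,1)\subset D_{\mathcal{L}^i}\}$ and then forgetting $\Gamma$ (only $(\mathcal{A},\phi)/{\sim_\gamma}$ is retained). So the claim amounts to showing that the conditional probability of $F$, given the reweighted surface, depends only on the modulus $\tau$ of $\mathcal{A}$ and equals $1 - p_B(\tau)$.

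The first step is to make precise that $F$ is, up to a $\Gamma$-null set, an event depending only on the pair $\big(B^\bullet_{d_\phi}(i,1),\,\Gamma\big)$ through the topological/conformal data of the annular region $\mathbb{H}\setminus B^\bullet_{d_\phi}(i,1)$: namely $F$ occurs iff no CLE$_6$ loop surrounding $i$ is contained in $\mathcal{A}\cup\partial B^\bullet_{d_\phi}(i,1)$ together with $i$, equivalently iff the outermost loop $\mathcal{L}^i$ does not disconnect $\partial B^\bullet_{d_\phi}(i,1)$ from $\partial\H$ inside $\H$. Since $\Gamma$ is independent of $\phi$ and conformally invariant, conditionally on the quantum surface $(\mathcal{A},\phi)/{\sim_\gamma}$ — equivalently, conditionally on the conformal annulus $\H\setminus B^\bullet_{d_\phi}(i,1)$ together with the marked point $i$ and the inner boundary — the conditional law of the restriction of $\Gamma$ relevant to $F$ is that of CLE$_6$ in $\H$ viewed through this annular geometry. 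Here is where I would invoke Lemma~\ref{lem:CLE-12arm}: by that lemma, for a Jordan loop $\ell$ in $\mathbb{D}$ surrounding $0$, with $A$ the component of $\mathbb{D}\setminus\ell$ containing $0$ and $\tau = {\rm Mod}(\mathbb{D}\setminus\bar A)$, the probability under CLE$_6$ that $A\subset D_{\mathcal{L}^o}$ equals $1-p_B(\tau)$. Conformally mapping $(\H, i)$ to $(\mathbb{D},0)$ (using that CLE$_6$ and the event $F$ are conformally invariant), the filled metric ball $B^\bullet_{d_\phi}(i,1)$ becomes such an $A$ with ${\rm Mod}(\mathbb{D}\setminus\bar A) = {\rm Mod}(\mathcal{A}) = \tau$, and $F$ becomes exactly $\{A\subset D_{\mathcal{L}^o}\}$. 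Hence $\P[F \mid (\mathcal{A},\phi)/{\sim_\gamma}] = 1-p_B(\tau)$, a deterministic function of $\tau$ alone.

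Given this, the conclusion follows by disintegrating: under $M$, write the law of $(\mathcal{A},\phi)/{\sim_\gamma}$ as $\frac{1}{\sqrt 2}\eta(2i\tau)\LF_\tau(d\phi)1_{\tau>0}d\tau$ by Lemma~\ref{lem:BA-equivalent}, and then ${\rm BA}_{B^c}$ is obtained by multiplying the $\tau$-marginal density by the conditional probability $1-p_B(\tau)$ of $F$, i.e. ${\rm BA}_{B^c} = \frac{1}{\sqrt 2}(1-p_B(\tau))\eta(2i\tau)\LF_\tau(d\phi)1_{\tau>0}d\tau$, which is the claim. One point to be careful about in writing this out: the reweighting factor $\frac{1_E}{C_2\mathscr{L}\cf(\mathscr{L})}$ and the restriction to $F$ are measurable with respect to different $\sigma$-algebras — $\mathscr{L}$ and $E$ depend on $\phi$ only, while $F$ depends on $\phi$ and $\Gamma$ — so one should first integrate out $\Gamma$ (producing the factor $1-p_B(\tau)$, which is $\phi$-measurable through $\tau$) and only then apply Lemma~\ref{lem:BA-equivalent}; since $\tau$ is $\sigma((\mathcal{A},\phi)/{\sim_\gamma})$-measurable and $M$-integrating against a bounded $\tau$-function commutes with the identification in Lemma~\ref{lem:BA-equivalent}, this is legitimate.

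The main obstacle is the measure-theoretic bookkeeping around the event $F$: one must verify that $F$ truly factors through the quantum surface $(\mathcal{A},\phi)/{\sim_\gamma}$ and the independent CLE, i.e. that the filled metric ball's complement carries exactly the conformal-annulus data needed, that the outermost-loop event is insensitive to the parametrization/representative chosen, and that conditioning on $E$ (which constrains $\phi$) does not bias $\Gamma$ — the last point being immediate from independence of $\Gamma$ and $\phi$. Everything else is a direct application of Lemmas~\ref{lem:BA-equivalent} and~\ref{lem:CLE-12arm} together with the conformal invariance of CLE$_6$ and of the event $\{A\subset D_{\mathcal{L}^o}\}$. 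I would also remark that $p_B(\tau)$ here is, by Remark~\ref{rmk:doubly-connected} and Lemma~\ref{lem:percolation-limit}, the same scaling-limit crossing probability appearing in Theorem~\ref{thm:cardy-formula}, so no separate identification is required.
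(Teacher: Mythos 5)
Your proposal is correct and takes essentially the same approach as the paper's proof: apply Lemma~\ref{lem:BA-equivalent} to obtain the $\mathrm{BA}$ law under $M$, then use Lemma~\ref{lem:CLE-12arm} together with the independence of $\phi$ and $\Gamma$ to introduce the factor $1-p_B(\tau)$, and conclude by disintegration over $\tau$. One small slip in a side remark worth correcting: you write that $F$ occurs iff $\mathcal{L}^i$ does \emph{not} disconnect $\partial B^\bullet_{d_\phi}(i,1)$ from $\partial\H$, but $F=\{B^\bullet_{d_\phi}(i,1)\subset D_{\mathcal{L}^i}\}$ holds exactly when $\mathcal{L}^i$ \emph{does} separate the inner boundary from $\partial\H$; since this characterization is unused in your main argument, it does not affect the proof.
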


\begin{proof}
    By Lemma~\ref{lem:BA-equivalent} and Proposition~\ref{prop:BA-Liouville}, the law of $(\mathcal{A}, \phi) /{\sim_\gamma}$ under the reweighted measure $\frac{1_{E}}{C_2 \mathscr{L} \cf(\mathscr{L})} \LF_\H^{(\gamma,i)}$ is $\frac{1}{\sqrt{2}}\eta(2i\tau) \LF_\tau(d \phi) 1_{\tau>0} d\tau$. Note that the boundary of $B^\bullet_{d_\phi} (i,1)$ is a Jordan loop almost surely. By Lemma~\ref{lem:CLE-12arm}, conditioning on $\phi$, the event $F$ happens with probability $1 - p_B(\tau)$ almost surely, where $\tau$ is the modulus of $\mathcal{A}$. Combining these two results yields the lemma.
\end{proof}

Similarly, we can define the Brownian annulus decorated with the polychromatic two-arm event. The resulting quantum surface, BA$_{BW}$ defined below, along with the CLE$_6$ on it, can be seen as the joint scaling limits of random quadrangulations and critical percolation restricted to the polychromatic two-arm event.

\begin{lemma}\label{lem:decorate-BA-2arm}
In the same setting as Lemma~\ref{lem:decorate-BA-1arm}, let the event $G = \{ \mathcal{L}^i \cap \partial \mathbb{H} \neq \emptyset \mbox{ and } B^\bullet_{d_\phi}(i,1) \not \subset D_{\mathcal{L}^i}\}$; see Figure~\ref{fig:2} (right). Let ${\rm BA}_{BW}$ denote the law of $(\mathcal{A}, \phi) /{\sim_\gamma}$ under the reweighted measure $\frac{1_{E \cap G}}{C_2 \mathscr{L} \cf(\mathscr{L})} \LF_\H^{(\gamma,i)} \otimes {\rm CLE}_6(d \Gamma)$. Then, we have ${\rm BA}_{BW} = \frac{1}{2 \sqrt{2}} \cdot p_{BW}(\tau) \eta(2i\tau) \LF_\tau(d \phi) 1_{\tau>0} d\tau$.
\end{lemma}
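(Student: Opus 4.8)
The plan is to mirror the proof of Lemma~\ref{lem:decorate-BA-1arm} essentially verbatim, using Lemma~\ref{lem:CLE-12arm} once more but for the second identity in~\eqref{eq:CLE-12arm} rather than the first. First I would invoke Lemma~\ref{lem:BA-equivalent} together with Proposition~\ref{prop:BA-Liouville} to record that the law of $(\mathcal{A},\phi)/{\sim_\gamma}$ under the reweighted measure $\frac{1_E}{C_2 \mathscr{L}\cf(\mathscr{L})}\LF_\H^{(\gamma,i)}$ is exactly $\frac{1}{\sqrt{2}}\eta(2i\tau)\LF_\tau(d\phi)1_{\tau>0}d\tau$, so that $(\mathcal{A},\phi)/{\sim_\gamma}$ is a free Brownian annulus whose modulus $\tau$ has density proportional to $\eta(2i\tau)$. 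This is the same starting point as in the one-arm case and requires no new input.

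Next I would condition on $\phi$ (equivalently, on the conformally embedded Brownian annulus $\mathcal{A}$ and on the radius-$1$ filled metric ball $B^\bullet_{d_\phi}(i,1)$, whose boundary is a.s.\ a Jordan curve). The independent ${\rm CLE}_6$ $\Gamma$ on $\mathbb{H}$ restricted to $\mathbb{D}$-type considerations: applying Lemma~\ref{lem:CLE-12arm} with the Jordan loop $\ell = \partial B^\bullet_{d_\phi}(i,1)$, the region $A$ there is the component of the complement containing $i$, and the modulus of the doubly connected domain between $\ell$ and $\partial\mathbb{H}$ is precisely the modulus $\tau$ of $\mathcal{A}$. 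The event $G$ is exactly the event $\{\Loop\cap\partial\mathbb{H}\neq\emptyset,\ A\not\subset D_{\Loop}\}$ from Lemma~\ref{lem:CLE-12arm} (with $\Loop=\mathcal{L}^i$), so the second identity in~\eqref{eq:CLE-12arm} gives $\mathbb{P}[G\mid\phi] = \tfrac{1}{2}p_{BW}(\tau)$ almost surely. Here one should note a minor point: Lemma~\ref{lem:CLE-12arm} is stated on $\mathbb{D}$, but by the conformal invariance of ${\rm CLE}_6$ and the fact that ${\rm Mod}$ and the events $G$, $F$ are conformally invariant, the statement transfers to $\mathbb{H}$ with interior point $i$ — this is already implicitly used in Lemma~\ref{lem:decorate-BA-1arm}.

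Multiplying the $\tau$-law $\frac{1}{\sqrt{2}}\eta(2i\tau)\LF_\tau(d\phi)1_{\tau>0}d\tau$ by the conditional probability $\tfrac{1}{2}p_{BW}(\tau)$ of the event $G$ then yields
\[
{\rm BA}_{BW} = \frac{1}{2\sqrt{2}}\,p_{BW}(\tau)\,\eta(2i\tau)\,\LF_\tau(d\phi)\,1_{\tau>0}\,d\tau,
\]
which is the claimed formula. I expect no genuine obstacle here: the lemma is a routine variant of Lemma~\ref{lem:decorate-BA-1arm}, and the only thing to be careful about is that the reweighting $\frac{1_{E\cap G}}{C_2\mathscr{L}\cf(\mathscr{L})}$ factorizes as (the reweighting producing the Brownian annulus) times (the indicator $1_G$ of an event independent of the $\mathscr{L}$- and $\cf$-reweighting, since $\Gamma$ is sampled independently of $\phi$), so that Fubini lets us integrate out $\Gamma$ first and replace $1_G$ by its conditional probability $\tfrac12 p_{BW}(\tau)$. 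The (very mild) subtlety worth a sentence is checking that $\{B^\bullet_{d_\phi}(i,1)\subset D_{\mathcal{L}^i}\}$ and its complement behave well with respect to the a.s.\ Jordan property of $\partial B^\bullet_{d_\phi}(i,1)$, exactly as invoked in the previous lemma, so that Lemma~\ref{lem:CLE-12arm} applies without boundary-touching pathologies.
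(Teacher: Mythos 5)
Your proof is correct and follows essentially the same route as the paper, which simply states that the argument mirrors Lemma~\ref{lem:decorate-BA-1arm} using the second identity of Lemma~\ref{lem:CLE-12arm} instead of the first. You have merely spelled out the details (Jordan-loop property of $\partial B^\bullet_{d_\phi}(i,1)$, conformal transfer from $\mathbb{D}$ to $\mathbb{H}$, independence of $\Gamma$ from $\phi$) that the paper leaves implicit by reference.
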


\begin{proof}
    The proof follows identically to that of Lemma~\ref{lem:decorate-BA-1arm}, using the second equation in Lemma~\ref{lem:CLE-12arm}.
\end{proof}

\begin{figure}[h]
\centering
\includegraphics[scale = 0.8]{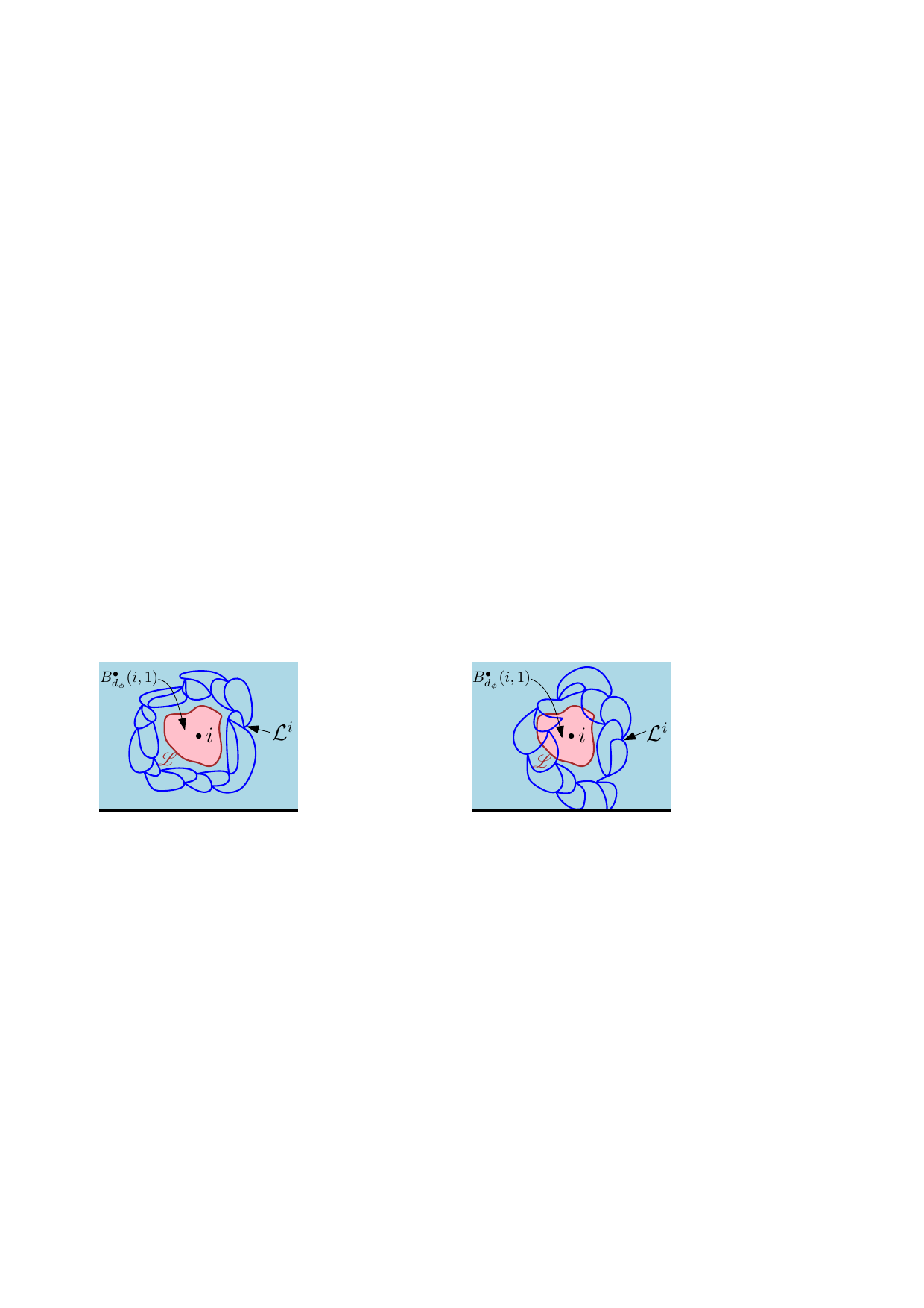}
\caption{Illustration of the events in Lemmas~\ref{lem:decorate-BA-1arm} and \ref{lem:decorate-BA-2arm}. Sample $\phi$ from $\LF_\H^{(\gamma,i)}$ and an independent CLE$_6$ on $\mathbb{H}$. In both figures, we have $d_\phi(i ,\partial \mathbb{H}) > 1$. On this event, the pink region is $B^\bullet_{d_\phi} (i,1)$ whose quantum boundary length is $\mathscr{L}$, and the light blue region is $\mathcal{A}$. The blue loop $\mathcal{L}^i$ is the outermost CLE loop that surrounds $i$. \textbf{Left:} The event $F$ occurs, i.e., the pink region is surrounded by $\mathcal{L}^i$. \textbf{Right:} The event $G$ occurs, i.e., $\mathcal{L}^i$ touches the boundary of $\mathbb{H}$ and moreover the pink region is not surrounded by $\mathcal{L}^i$.} 
\label{fig:2}
\end{figure}

In light of Proposition~\ref{prop:kpz-annulus}, to derive the formulae for $p_B(\tau)$ and $p_{BW}(\tau)$, it suffices to calculate the joint boundary length distributions of ${\rm BA}_{B^c}$ and ${\rm BA}_{BW}$. We will use the conformal welding results for $\mathcal{L}^i$ from Section~\ref{subsec:conformal-welding-CLE} and Proposition~\ref{prop:formula-CR} to derive the length distribution of the inner boundary of $\mathcal{L}^i$ and its distribution conditioned on the event $\{ \mathcal{L}^i \cap \partial \mathbb{H} \neq \emptyset \}$. This, combined with Lemma~\ref{lem:law-metric-ball}, gives the joint boundary length distributions of the desired decorated Brownian annuli; see Section~\ref{subsec:proof-cardy} for details.

\subsection{Proof of Theorem~\ref{thm:cardy-formula}}
\label{subsec:proof-cardy}

We begin by deriving the joint boundary length distributions of the quantum annuli $\mathcal{QA}$ and $\mathcal{QA}_T$ (defined in Proposition~\ref{prop:conformal-welding-cardy}) using Proposition~\ref{prop:formula-CR}. Let $\{\mathcal{QA}(\ell_1, \ell_2)\}_{\ell_1,\ell_2>0}$ be the disintegration of $\mathcal{QA}$ over the quantum lengths of the outer and inner boundaries, as in Section~\ref{subsec:KPZ}. Here, $\mathcal{QA}(\ell_1, \ell_2)$ is supported on the set of quantum annuli with outer boundary length $\ell_1$ and inner boundary length $\ell_2$. Similarly, $\{ \mathcal{QA}_T(\ell_1,\ell_2) \}_{\ell_1, \ell_2>0}$ denotes the disintegration of $\mathcal{QA}_T$.

\begin{lemma}\label{lem:distribution-QA}
Fix $\gamma = \sqrt{8/3}$. For any $\ell_1>0$ and $x \in \mathbb{R}$, we have
\begin{align}
    &\int_0^\infty |\mathcal{QA}(\ell_1,\ell_2)| \ell_2^{ix} d\ell_2 = \frac{\cos(\pi(1 - \frac{\gamma^2}{4}))}{\cosh( \frac{ \pi \gamma^2 x}{4})} \ell_1^{ix - 1};\label{eq:lem-QA-1}\\
    &\int_0^\infty |\mathcal{QA}_T(\ell_1,\ell_2)| \ell_2^{ix} d\ell_2 =  
   \frac{2\cos(\pi(1 - \frac{\gamma^2}{4})) \sinh(\pi(1 - \frac{\gamma^2}{4})x)}{\sinh(\pi x)} \ell_1^{ix - 1} .\label{eq:lem-QA-2}
\end{align}
\end{lemma}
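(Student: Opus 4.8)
The plan is to unwind the conformal welding identities of Proposition~\ref{prop:conformal-welding-cardy} and then substitute the conformal radius formulae of Proposition~\ref{prop:formula-CR}; I describe the argument for $\mathcal{QA}$, the one for $\mathcal{QA}_T$ being identical with the event $T=\{\Loop\cap\partial\mathbb{D}\neq\emptyset\}$ inserted throughout and Proposition~\ref{prop:formula-CR}(B) used in place of (A). \emph{Step 1 (reduction to a moment of the inner loop length).} Disintegrating the first identity of Proposition~\ref{prop:conformal-welding-cardy} over the outer boundary length gives, for each fixed $\ell_1>0$,
\[
\QD_{1,0}(\ell_1)\otimes\mathsf{m}=\int_0^\infty\Wd\big(\mathcal{QA}(\ell_1,\ell_2),\QD_{1,0}(\ell_2)\big)\,\ell_2\,d\ell_2.
\]
Pushing both sides forward under the quantum length of the welding interface, using $|\Wd(\mathcal{QA}(\ell_1,\ell_2),\QD_{1,0}(\ell_2))|=|\mathcal{QA}(\ell_1,\ell_2)|\,|\QD_{1,0}(\ell_2)|$ and the known power law for $|\QD_{1,0}(\ell)|$, one obtains for real $x$
\[
\int_0^\infty|\mathcal{QA}(\ell_1,\ell_2)|\,\ell_2^{ix}\,d\ell_2=C\int \big(\text{quantum length of the inner boundary of }\Loop\big)^{ix+a}\,d\big(\QD_{1,0}(\ell_1)\otimes\mathsf{m}\big)
\]
for suitable constants $C,a$ coming from that power law, where on the right $\Loop$ is the outermost CLE$_6$ loop surrounding the marked point of $\QD_{1,0}(\ell_1)$, sampled together with an independent CLE$_6$ in the embedding $(\mathbb{D},\psi,0)$. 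By the scaling covariance of $\QD_{1,0}$, the right-hand side equals $\ell_1^{ix-1}$ times a function of $x$ alone, so it remains to identify that function.

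\emph{Step 2 (the function via the conformal radius).} Embed $\QD_{1,0}$ on $\mathbb{D}$ with the interior point at $0$. By conformal invariance of CLE$_6$, the conformal radius $r:={\rm CR}(0,D_{\Loop})$ of the domain $D_{\Loop}$ enclosed by $\Loop$ is independent of the field, with law given by Proposition~\ref{prop:formula-CR}(A). Let $g:\mathbb{D}\to D_{\Loop}$ be conformal with $g(0)=0$ and $g'(0)=r>0$; since quantum length is conformally invariant, the inner boundary length equals $\mathscr{L}^\gamma_{\psi\circ g+Q\log|g'|}(\partial\mathbb{D})$. Splitting $\psi$ into its restriction to $D_{\Loop}$ and the Gaussian field on the annular region $\mathbb{D}\setminus\overline{D_{\Loop}}$ and applying the conformal change-of-coordinates formula for Liouville fields together with the Girsanov theorem, as in~\cite{AHS21}, one shows that the conditional $(ix)$-moment of the inner boundary length given $r$ and $\ell_1$ is $c\,\ell_1^{ix}\,r^{\mu(x)}$, with the linear-in-$ix$ and constant parts of $\mu(x)$ coming from the deterministic factors $Q\log|g'|$ and the $\gamma$-insertion at $0$, and the quadratic part from the variance of the annular Gaussian field. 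Integrating over $r$ and carrying this through Step 1 (which shifts the exponent of $r$ by $a$) turns the sought quantity into $\ell_1^{ix-1}\,\mathbb{E}\big[{\rm CR}(0,D_{\Loop})^{\lambda(x)}\big]$ with $\lambda(x)=\tfrac{(\kappa-4)^2+16x^2}{8\kappa}$ and $\kappa=6$ (and into $\ell_1^{ix-1}\,\mathbb{E}[{\rm CR}(0,D_{\Loop})^{\lambda(x)}\mathbf{1}_T]$ for $\mathcal{QA}_T$).

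\emph{Step 3 (substitution).} Since $\lambda(x)\ge\tfrac{(\kappa-4)^2}{8\kappa}$ always exceeds the integrability thresholds $\tfrac{3\kappa}{32}+\tfrac2\kappa-1$ and $\tfrac\kappa8-1$, we may plug $\lambda=\lambda(x)$ into \eqref{eq:CR-CLE} and \eqref{eq:CR-CLE-touch}. One has $\sqrt{(\kappa-4)^2-8\kappa\lambda(x)}=4ix$, so $\tfrac\pi\kappa\sqrt{(\kappa-4)^2-8\kappa\lambda(x)}=i\,\tfrac{\pi\gamma^2x}{4}$ and $\tfrac{\pi(\kappa-4)}{4\kappa}\sqrt{(\kappa-4)^2-8\kappa\lambda(x)}=i\,\pi\big(1-\tfrac{\gamma^2}{4}\big)x$, using $\kappa\gamma^2=16$ and $\tfrac{\kappa-4}{\kappa}=1-\tfrac{\gamma^2}{4}$. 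With $\cos(i\theta)=\cosh\theta$ and $\sin(i\theta)=i\sinh\theta$, \eqref{eq:CR-CLE} and \eqref{eq:CR-CLE-touch} become $\tfrac{\cos(\pi(1-\gamma^2/4))}{\cosh(\pi\gamma^2x/4)}$ and $\tfrac{2\cos(\pi(1-\gamma^2/4))\sinh(\pi(1-\gamma^2/4)x)}{\sinh(\pi x)}$ respectively; tracking the normalizations through Steps 1--2 shows the residual constant is $1$, which gives \eqref{eq:lem-QA-1} and \eqref{eq:lem-QA-2}.

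The technical heart is Step 2: making the conformal change of coordinates and the Girsanov reweighting rigorous — in particular splitting the Liouville field relative to the random, CLE-measurable domain $D_{\Loop}$, verifying the needed independence and measurability, and checking that the variance of the annular Gaussian field produces exactly the quadratic dependence on $x$. One must also keep careful track of the exponent shift in Step 1, since the raw Girsanov exponent $\mu(x)$ becomes the clean $\lambda(x)$ only after incorporating the power law of $|\QD_{1,0}(\ell)|$. For \eqref{eq:lem-QA-2} it is additionally used that $T$ is a measurable function of $\Loop$ alone, so that the restriction commutes with both the welding and the conformal radius identity.
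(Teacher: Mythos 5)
Your proposal follows the same high-level strategy as the paper: unwind the conformal welding of Proposition~\ref{prop:conformal-welding-cardy}, translate the moment of the inner boundary length into a moment of the conformal radius via a Girsanov-type reweighting, and substitute Proposition~\ref{prop:formula-CR}. The parametrization checks out: your $\lambda(x)=\tfrac{(\kappa-4)^2+16x^2}{8\kappa}$ is indeed $2\Delta_\alpha-2$ evaluated at $\alpha=Q+\tfrac{\gamma\,(ix)}{2}$, and the trigonometric simplifications in Step~3 are correct.

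That said, there is a meaningful structural difference and a genuine soft spot. The paper works exclusively with \emph{real} $\alpha>\gamma/2$: it upgrades the welding identity from $\alpha=\gamma$ to general real $\alpha$ by the Girsanov reweighting of~\cite[Theorem 4.6]{ARS21} (which produces the clean factorization $\LF_\H^{(\alpha,i)}\otimes\mathsf{m}^\alpha$ with $\tfrac{d\mathsf{m}^\alpha}{d\mathsf{m}}=|\psi_\eta'(i)|^{2\Delta_\alpha-2}$, so the dependence on the loop is \emph{by construction} only through the conformal radius), then reads off $\int|\mathcal{QA}(\ell_1,\ell_2)|\,\ell_2^z\,d\ell_2$ for real $z=\tfrac{2(\alpha-Q)}{\gamma}$, and finally does an explicit analytic continuation in $z$ to reach $z=ix$. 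You instead try to compute the $(ix+a)$-moment of the inner boundary length directly, conditionally on the loop. Two issues arise. First, your conditional-moment claim $c\,\ell_1^{ix}r^{\mu(x)}$ asserts dependence on $\eta$ through $r={\rm CR}(i,D_\eta)$ alone; a priori the conditional distribution of $\mathscr{L}^\gamma_\phi(\eta)/\mathscr{L}^\gamma_\phi(\partial\mathbb{H})$ depends on the full shape of $\eta$, and the reduction to $r$ is precisely the nontrivial content of the Girsanov step that the paper delegates to~\cite{ARS21} (where it is phrased as a statement about the \emph{joint} law of field and loop, not a conditional moment). Second, by working directly with the imaginary exponent $ix$ you are implicitly doing Girsanov with a complex change of measure (equivalently a complex insertion $\alpha$), which is not a standard probability reweighting; the paper's route of establishing the identity for real $\alpha$ and then invoking analyticity of $z\mapsto\int|\mathcal{QA}(\ell_1,\cdot)|\,\ell_2^z\,d\ell_2$ on $\{\mathrm{Re}\,z>-4/\gamma^2\}$ avoids this. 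So the approach is right, but Step~2 needs to be recast either as a welding identity for real $\alpha$ followed by analytic continuation (the paper's route) or with an explicit justification of why the complex Girsanov computation is legitimate and why the shape dependence collapses to $r$.

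Your observation at the end — that $T$ is measurable with respect to $\Loop$ alone, so the restriction commutes with both the welding and the conformal radius identity — is correct and is exactly what makes the $\mathcal{QA}_T$ case go through in parallel.
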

\begin{proof}
    We begin with the proof of~\eqref{eq:lem-QA-1}. Recall the measure $\mathsf{m}$ from Proposition~\ref{prop:conformal-welding-cardy}. For a loop $\eta$ sampled from $\mathsf{m}$, let $D_\eta$ be the connected component of $\mathbb{H} \setminus \eta$ containing $i$, and let $\psi_\eta: \mathbb{H} \rightarrow D_\eta $ be a conformal map with $\psi_\eta(i) = i$. For $\alpha \in \mathbb{R}$, we write $\Delta_\alpha = \frac{\alpha}{2}(Q - \frac{\alpha}{2})$. Define the measure $\mathsf{m}^\alpha(\eta)$ by $\frac{d \mathsf{m}^\alpha(\eta)}{d \mathsf{m}(\eta)} = |\psi_\eta'(i)|^{2 \Delta_\alpha - 2}$. Using Proposition~\ref{prop:conformal-welding-cardy}, we will show that
    \begin{equation}\label{eq:lem-distribution-QA-1}
     \LF_\H^{(\alpha,i)} \otimes \mathsf{m}^\alpha(\eta) = \int_0^\infty  {\rm Weld}(\mathcal{QA}(\ell), \LF_\H^{(\alpha,i)}(\ell)) \ell d\ell.
    \end{equation}
    By \cite[Theorem 3.4]{ARS22}, there exists some $C>0$ such that $\QD_{1,0} = C \cdot \LF_\H^{(\gamma,i)}$ when viewed as a measure on quantum surfaces. Hence, by Proposition~\ref{prop:conformal-welding-cardy}, Equation~\eqref{eq:lem-distribution-QA-1} holds with $\alpha = \gamma$. For $\alpha \neq \gamma$, let $(\phi,\eta)$ be a sample from the left-hand side of~\eqref{eq:lem-distribution-QA-1} with $\alpha = \gamma$. Let $p$ be a point sampled from the harmonic measure on $\partial D_\eta$ viewed from $i$, and we fix the conformal map $\psi_\eta: \mathbb{H} \rightarrow D_\eta $ by requiring that $\psi_\eta(0) = p$. Define $X = \phi\circ\psi_\eta+Q\log|\psi_\eta'|$. The claim follows by weighting the law of $(\phi, \eta)$ by $\e^{\frac{\alpha^2-\gamma^2}{2}}e^{(\alpha-\gamma)X_\epsilon(i)}$ and sending $\epsilon \rightarrow 0$, where $X_\epsilon(i)$ is the average of the field $X$ around $i$. The proof is identical to that of~\cite[Theorem 4.6]{ARS21}, so we omit the details.

    By~\eqref{eq:CR-CLE} in Proposition~\ref{prop:formula-CR}, we have $|\mathsf{m}^\alpha| = \mathsf{m}[|\psi_\eta'(i)|^{2 \Delta_\alpha - 2}] = \frac{\cos(\pi(1 - \frac{\gamma^2}{4}))}{ \cos(\frac{\pi\gamma(Q - \alpha)}{2})}$. Applying the conformal welding from~\eqref{eq:lem-distribution-QA-1} and Lemma~\ref{lem:LFH-onepoint}, we get that for any $\ell_1>0$ and $\alpha> \frac{\gamma}{2}$,
    \begin{equation}\label{eq:lem-distribution-QA-2}
    \int_0^\infty |\mathcal{QA}(\ell_1,\ell_2)| \ell_2^{\frac{2(\alpha - Q)}{\gamma}} d\ell_2 = \frac{\cos(\pi(1 - \frac{\gamma^2}{4}))}{ \cos(\frac{\pi\gamma(Q - \alpha)}{2})} \ell_1^{\frac{2(\alpha - Q)}{\gamma} - 1}.
    \end{equation}
    Hence, as a function of $z$, $\int_0^\infty |\mathcal{QA}(\ell_1,\ell_2)| \ell_2^{z} d\ell_2$ is analytic for $\{z \in \mathbb{C} : {\rm Re} z > -\frac{4}{\gamma^2} \}$. Applying analytic continuation to both sides of~\eqref{eq:lem-distribution-QA-2} and setting $z = ix$ yields~\eqref{eq:lem-QA-1}. Equation~\eqref{eq:lem-QA-2} can be derived from a similar argument using Proposition~\ref{prop:conformal-welding-cardy} and Equation~\eqref{eq:CR-CLE-touch} in Proposition~\ref{prop:formula-CR}. \qedhere
\end{proof}

We prove Theorem~\ref{thm:cardy-formula} using the following integral identities, whose proof is postponed to Section~\ref{sec:calculation}.
\begin{lemma}\label{lem:laplace-calculation}
The following holds for all $x \in \mathbb{R}$:
    \begin{enumerate}[(1)]
        \item \label{eq:eta-integral-1}$\int_0^\infty e^{-\frac{2\pi x^2 \tau}{3}}(1-\sqrt{\frac{3}{2}} \frac{\eta(6i \tau) \eta(\frac{3}{2}i \tau)}{\eta(2 i \tau)\eta(3 i \tau)}) \eta(2 i \tau)d\tau = \frac{\sqrt{3}\tanh(\frac{2 \pi x}{3}) }{4x \cosh(\pi x)}$.
        \item \label{eq:eta-integral-2}$ \int_0^\infty e^{-\frac{2\pi x^2 \tau}{3}}  \frac{ \sqrt{3} \eta(i\tau) \eta(6i\tau)^2}{\eta(3i\tau) \eta(2i\tau)^2} \eta(2 i \tau)d\tau = \frac{\sqrt{3} \sinh(\frac{\pi x}{3})^2}{x \sinh(\pi x)} $.
    \end{enumerate}
\end{lemma}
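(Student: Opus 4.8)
\textbf{Proof strategy for Lemma~\ref{lem:laplace-calculation}.}
The plan is to reduce both identities to a single family of Laplace-transform computations for theta-type series, using the product-to-series expansions of $p_B$ and $p_{BW}$ recorded in~\eqref{eq:expand-tilde-q}. Writing $\tilde q = e^{-2\pi\tau}$, the factor $\eta(2i\tau) = \tilde q^{1/12}\prod_{n\ge 1}(1-\tilde q^{2n})$ cancels against the denominator $\prod_{n\ge 1}(1-\tilde q^{2n})$ appearing in~\eqref{eq:expand-tilde-q}, so that the integrand in~\eqref{eq:eta-integral-1} becomes, up to the prefactor, a sum of monomials $\tilde q^{2h_{r,s}+1/12} = e^{-2\pi\tau(2h_{r,s}+\frac{1}{12})}$, and similarly for~\eqref{eq:eta-integral-2}. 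More precisely, for~\eqref{eq:eta-integral-1} one has $\eta(2i\tau) - \sqrt{3/2}\,\eta(6i\tau)\eta(\tfrac32 i\tau)\eta(3i\tau)^{-1} = \tilde q^{1/12}\big(\prod(1-\tilde q^{2n})\big)\cdot\big(1 - \sqrt{3/2}\,\tfrac{\sum_k(\tilde q^{2h_{4k-1/2,0}}-\tilde q^{2h_{4k+3/2,0}})}{\prod(1-\tilde q^{2n})}\big)$, and the $\prod(1-\tilde q^{2n})$ in the outer product cancels the one hidden in the second term; a small bookkeeping check (the $k=0$ term of the first sum contributes the ``$1$'') shows the bracket equals $-\sqrt{3/2}\sum_{k\ne 0\text{ appropriately}}(\cdots) + (\text{correction})$. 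So the first step is: expand, cancel the $\eta$ prefactors, and reduce each left-hand side to $\sum_{m}c_m\int_0^\infty e^{-2\pi\tau(x^2/3 + a_m)}\,d\tau = \sum_m \frac{c_m}{2\pi(x^2/3+a_m)}$ with $a_m$ ranging over an arithmetic-in-$k$ family coming from $2h_{r,s}+\tfrac{1}{12}$.

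The second step is to evaluate the resulting rational-in-$x$ series in closed form. Since $h_{r,s} = \frac{(3r-2s)^2-1}{24}$, for the one-arm case $2h_{4k-1/2,0}+\frac{1}{12} = \frac{(12k-\frac32)^2}{12}$ and $2h_{4k+3/2,0}+\frac{1}{12} = \frac{(12k+\frac92)^2}{12}$, so $x^2/3 + a_m$ is proportional to $x^2 + (\text{linear in }k)^2$; the sum over $k\in\mathbb Z$ of $\frac{1}{x^2+(\alpha+\beta k)^2}$ is a classical partial-fraction identity yielding hyperbolic cotangents/tangents via $\sum_{k\in\mathbb Z}\frac{1}{x^2+(k+a)^2} = \frac{\pi}{x}\cdot\frac{\sinh(2\pi x)}{\cosh(2\pi x)-\cos(2\pi a)}$ (Poisson summation / $\pi\cot$ expansion). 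Matching the two shifted families $a\mapsto -\tfrac18$ and $a\mapsto \tfrac38$ (from $12k\mp\tfrac32$ and $12k+\tfrac92$ after rescaling by $12$) and simplifying the trigonometric combination should collapse to $\frac{\sqrt3\tanh(2\pi x/3)}{4x\cosh(\pi x)}$. The two-arm identity~\eqref{eq:eta-integral-2} is handled identically, with $2h_{0,6k+1}+\frac{1}{12} = \frac{(12k+2)^2}{12}$ and $2h_{0,6k+2}+\frac{1}{12} = \frac{(12k+4)^2}{12}$, giving shifts $a = \tfrac16$ and $a = \tfrac13$, and the target $\frac{\sqrt3\sinh(\pi x/3)^2}{x\sinh(\pi x)}$ should emerge after using $\sinh^2(\pi x/3) = \tfrac12(\cosh(2\pi x/3)-1)$.

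I would justify the interchange of sum and integral by noting that for fixed real $x$ the series converges absolutely and uniformly on $\tau\ge\epsilon$, while near $\tau=0$ the quantity $1-p_B$ (resp.\ $p_{BW}$) vanishes fast enough — indeed like $e^{-c/\tau}$ by the dual ($q$-)expansion in~\eqref{eq:expand-q} or by the modular transformation $\eta(-1/z) = \sqrt{-iz}\,\eta(z)$ — so there is no integrability issue at either endpoint, and for $x$ in a complex neighborhood of $\mathbb R$ one gets analyticity of both sides, so it suffices to verify the identity for $x$ real (or even $x$ in a small interval). The main obstacle I anticipate is purely computational rather than conceptual: correctly tracking the shift parameters $a$ and the overall constants through the rescaling $x^2/3+a_m \propto x^2 + (\cdots)^2$, and then algebraically reducing the resulting combination of four hyperbolic functions (two from each shifted lattice sum) to the compact closed forms on the right. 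A secondary subtlety is the ``$+1$'' bookkeeping in~\eqref{eq:eta-integral-1}: one must be careful that the $k=0$ term of $\sum_k\tilde q^{2h_{4k-1/2,0}}$ is exactly what produces the leading $1$ in $p_B$, so that subtracting $\sqrt{3/2}\,p_B\,\eta(2i\tau)$ from $\eta(2i\tau)$ leaves a convergent series with no constant term — matching the fact that the right-hand side $\frac{\sqrt3\tanh(2\pi x/3)}{4x\cosh(\pi x)}$ is finite (not divergent) as a function of $x$.
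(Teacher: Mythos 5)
Your route is genuinely different from the paper's and, with a bit more care, would also work. The paper's proof first applies the modular transformation $\eta(2i\tau)=\frac{1}{\sqrt{2\tau}}\eta(\frac{i}{2\tau})$ and the Jacobi triple product so as to write the integrand in the \emph{open channel}, i.e.\ as $\frac{1}{\sqrt{2\tau}}q^{1/24}\sum_k(\pm 1)q^{a_k}$ with $q=e^{-\pi/\tau}$. The key payoff is the classical integral $\int_0^\infty\frac{1}{\sqrt{2\tau}}e^{-\frac{2\pi x^2\tau}{3}}q^a\,d\tau=\frac{\sqrt{3}}{2|x|}e^{-\sqrt{8a/3}\,\pi|x|}$, which \emph{linearizes} the quadratic exponent $a_k\sim k^2$ into $e^{-c|k|x}$; the $k$-sum then collapses by geometric series to the target hyperbolic expressions essentially instantly. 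You instead stay in the \emph{closed channel} with $\tilde q=e^{-2\pi\tau}$, where $\int_0^\infty e^{-\frac{2\pi x^2\tau}{3}}\tilde q^a\,d\tau$ is a rational function $\frac{1}{2\pi(x^2/3+a)}$. The exponent stays quadratic in $k$, so you must invoke the Mittag--Leffler/partial-fraction identity $\sum_{k\in\mathbb Z}\frac{1}{x^2+(k+a)^2}=\frac{\pi\sinh(2\pi x)}{x[\cosh(2\pi x)-\cos(2\pi a)]}$ — correctly quoted — and then do a nontrivial trigonometric reduction of ratios and products of such expressions. That can be made to work (and is morally the Poisson-dual of the paper's computation), but is appreciably more laborious than the open-channel route.

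Two concrete points to fix before this can be turned into a proof. First, your bookkeeping remark about the ``$1$'' is wrong in the $\tilde q$-channel: the $k=0$ term of $\sum_k\tilde q^{2h_{4k-1/2,0}}$ is $\tilde q^{5/48}$, not $1$, so nothing cancels the standalone $\eta(2i\tau)$ contribution. You therefore must also expand $\eta(2i\tau)=\tilde q^{1/12}\prod_{n\ge1}(1-\tilde q^{2n})$ via Euler's pentagonal number theorem as a third lattice sum $\sum_j(-1)^j\tilde q^{(6j-1)^2/12}$, and carry the resulting alternating sign through the partial-fraction identity (splitting $j$ even/odd or using the $\csc$-type analogue). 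Second, the final reduction of three separate $\frac{\sinh}{\cosh-\cos}$ expressions to $\frac{\sqrt3\tanh(2\pi x/3)}{4x\cosh(\pi x)}$ is where the real work lies and you only assert it ``should collapse''; without carrying it out, the argument is a plan rather than a proof. Given the extra moving parts, the modular-transformation route the paper takes is the cleaner choice, but your Poisson-dual approach is legitimate and would be a fine consistency check.
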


\begin{proof}[Proof of Theorem~\ref{thm:cardy-formula} given Lemma~\ref{lem:laplace-calculation}]
Fix $\gamma = \sqrt{8/3}$. Recall the decorated Brownian annuli ${\rm BA}_{B^c}$ and ${\rm BA}_{BW}$ from Lemmas~\ref{lem:decorate-BA-1arm} and \ref{lem:decorate-BA-2arm}. Their moduli include the terms $(1 - p_B(\tau))$ and $p_{BW}(\tau)$, respectively. We will show that for all $x \in \mathbb{R}$,
\begin{equation}\label{eq:proof-thm1.1-1}
{\rm BA}_{B^c}[\mathscr{L}_1 e^{-\mathscr{L}_1} \mathscr{L}_2^{ix}] = \frac{\pi \Gamma( 1 + ix)}{4 \cosh(\pi x)  \cosh(\frac{2 \pi x}{3})} \quad \mbox{and} \quad {\rm BA}_{BW}[\mathscr{L}_1 e^{-\mathscr{L}_1} \mathscr{L}_2^{ix}] = \frac{\pi \sinh(\frac{\pi x}{3})^2 \Gamma(1+ix)}{2 \sinh(\pi x) \sinh(\frac{2 \pi x}{3})}
\end{equation}
where $\mathscr{L}_1$ and $\mathscr{L}_2$ are the quantum lengths of the outer and inner boundaries of a sample from ${\rm BA}_{B^c}$ or ${\rm BA}_{BW}$. Theorem~\ref{thm:cardy-formula} follows by combining these formulae with Proposition~\ref{prop:kpz-annulus} and Lemma~\ref{lem:laplace-calculation}. Note that by Lemma~\ref{lem:percolation-limit}, $p_B(r,R)$ and $p_{BW}(r,R)$ defined in~\eqref{eq:def-p(r,R)} are equal to $p_B(\tau)$ and $p_{BW}(\tau)$, respectively, with $\tau = \frac{1}{2 \pi} \log(\frac{R}{r})$ being the modulus of $A(r,R)$.

We first prove the formula for ${\rm BA}_{B^c}$ in~\eqref{eq:proof-thm1.1-1}. Let ${\rm BA}_{B^c}(\ell_1, \ell_2)$ be the disintegration of ${\rm BA}_{B^c}$ over the quantum lengths of the outer and inner boundaries, as in Section~\ref{subsec:KPZ}. Sample $(\mathbb{H}, \phi, i)$ from $\LF_\H^{(\gamma,i)}$ and an independent CLE$_6$ $\Gamma$ on top of it. Let $\eta$ be the inner boundary of the outermost loop in $\Gamma$ that surrounds $i$, and let $D_\eta$ be the connected component of $\mathbb{H} \setminus \eta$ containing $i$. Let $\mathscr{L}_1$ (resp.\ $\mathscr{L}_0$ and $\mathscr{L}_2$) be the quantum length of $\partial \mathbb{H}$ (resp.\ $\partial D_\eta$ and $\partial B^\bullet_\phi(i,1)$). By the conformal welding from Proposition~\ref{prop:conformal-welding-cardy}, the joint density of $(\mathscr{L}_1, \mathscr{L}_0)$ is given by $p(\ell_1, \ell_0):= \ell_0 |\mathcal{QA}(\ell_1, \ell_0)| \cdot |\LF_\H^{(\gamma,i)}(\ell_0)| 1_{\ell_0,\ell_1>0} $. Moreover, conditioned on $\mathscr{L}_1 = \ell_1$ and $\mathscr{L}_0 = \ell_0$, the conditional law of $(D_\eta, \phi, i)/{\sim_\gamma}$ follows $\LF_\H^{(\gamma,i)}(\ell_0)^\#$. It is evident that the event $E \cap F$ defined in Lemma~\ref{lem:decorate-BA-1arm} is equivalent to $\{ d_\phi(i, \partial D_\eta) > 1 \}$. Therefore, by Lemma~\ref{lem:law-metric-ball}, we have for $\ell_1, \ell_2>0$
\begin{equation}\label{eq:proof-thm1.1-2}
|{\rm BA}_{B^c}(\ell_1, \ell_2)| = \int_0^\infty p(\ell_0, \ell_1) \times \frac{Z(\ell_0)}{Z(\infty)} \frac{1}{Z(\ell_0)} \frac{\ell_0^{3/2} \ell_2 \cf(\ell_2)}{\sqrt{\ell_0\ell_2}(\ell_0 + \ell_2)} \times \frac{1}{C_2 \ell_2\cf(\ell_2)} d \ell_0,
\end{equation}
where the term $\frac{1}{C_2 \ell_2\cf(\ell_2)}$ comes from the weighted measure in the definition of ${\rm BA}_{B^c}$ (see Lemma~\ref{lem:decorate-BA-1arm}). Recall from Lemma~\ref{lem:BA-equivalent} that $|\LF_\H^{(\gamma,i)}(\ell_0)| = C_1 \ell_0^{-3/2}$ and $C_2 = 2C_1/Z(\infty)$. Therefore, we further have
$$
{\rm BA}_{B^c}[\mathscr{L}_1 e^{-\mathscr{L}_1} \mathscr{L}_2^{ix}] = \iiint_0^\infty \ell_1 e^{-\ell_1} \frac{\ell_2^{ix} \sqrt{\ell_0}}{2\sqrt{\ell_2}(\ell_0+\ell_2)} | \mathcal{QA}(\ell_1, \ell_0) | d \ell_0 d\ell_1 d\ell_2.
$$
By substituting $\ell_2 = \ell_0 t$, the above integral becomes
\begin{align*}
{\rm BA}_{B^c}[\mathscr{L}_1 e^{-\mathscr{L}_1} \mathscr{L}_2^{ix}] &= \iint_0^\infty  \ell_1 e^{-\ell_1} \ell_0^{ix} | \mathcal{QA}(\ell_1, \ell_0) |  d \ell_0 d\ell_1 \cdot \int_0^\infty \frac{t^{ix}}{2\sqrt{t}(1+t)} dt \\
& = \frac{\pi}{2 \cosh(\pi x)} \iint_0^\infty  \ell_1 e^{-\ell_1} \ell_0^{ix} | \mathcal{QA}(\ell_1, \ell_0) |  d \ell_0 d\ell_1,
\end{align*}
where we used $\int_0^\infty \frac{t^{ix}}{\sqrt{t}(1+t)} dt = \frac{\pi}{\cosh(\pi x)}$ for $x \in \mathbb{R}$. Applying~\eqref{eq:lem-QA-1} from Lemma~\ref{lem:distribution-QA}, we obtain:
\begin{align*}
    {\rm BA}_{B^c}[\mathscr{L}_1 e^{-\mathscr{L}_1} \mathscr{L}_2^{ix}] = \frac{\pi}{4 \cosh(\pi x) \cosh(\frac{2 \pi x}{3})} \int_0^\infty \ell_1^{ix} e^{-\ell_1} d\ell_1 = \frac{\pi \Gamma(1 + ix)}{4 \cosh(\pi x) \cosh(\frac{2 \pi x}{3})}.
\end{align*}
By first calculating the case of $x=0$, we can check that the above integrals are absolutely integrable, allowing us to exchange the order of integration. This completes the proof of the first equation in~\eqref{eq:proof-thm1.1-1}.

Next, we derive the formula for ${\rm BA}_{BW}$ in~\eqref{eq:proof-thm1.1-1}. Let ${\rm BA}_{BW}(\ell_1, \ell_2)$ be the disintegration of ${\rm BA}_{BW}$ over the quantum lengths of the outer and inner boundaries. We use the same setting and notation as before: $(\mathbb{H}, \phi, i)$ is sampled from $\LF_\H^{(\gamma,i)}$, with an independent CLE$_6$ $\Gamma$ on top of it. Recall from Lemmas~\ref{lem:decorate-BA-1arm} and \ref{lem:decorate-BA-2arm} that $\mathcal{L}^i$ is the outermost loop in $\Gamma$ that surrounds $i$, and by definition, $D_{\mathcal{L}^i} = D_\eta$. Let $T = \{ \mathcal{L}^i \cap \partial \mathbb{H} \neq \emptyset \}$, and recall the event $G = \{ \mathcal{L}^i \cap \partial \mathbb{H} \neq \emptyset, B^\bullet_{d_\phi}(i,1) \not \subset D_{\mathcal{L}^i} \}$. Then we have $G \subset T$ and $T \setminus G = \{\mathcal{L}^i \cap \partial \mathbb{H} \neq \emptyset, B^\bullet_{d_\phi}(i,1) \subset D_\eta \}$. 

We will use the relation $1_{E \cap G} = 1_{E \cap T} - 1_{E \cap (T \setminus G)}$ to compute $|{\rm BA}_{BW}(\ell_1, \ell_2)|$. Since $\phi$ and $\Gamma$ are independently sampled, the joint density of $(\mathscr{L}_1, \mathscr{L}_2)$ under the reweighted measure $\frac{1_{E \cap T}}{C_2 \mathscr{L} \cf(\mathscr{L})} \LF_\H^{(\gamma,i)} \otimes {\rm CLE}_6(d \Gamma)$ is given by
$$
\mathbb{P}[T] \times \frac{1}{2\sqrt{\ell_1 \ell_2}(\ell_1+\ell_2) } 1_{\ell_1,\ell_2>0} d\ell_1 d\ell_2 = \frac{1}{4\sqrt{\ell_1 \ell_2}(\ell_1+\ell_2) } 1_{\ell_1,\ell_2>0} d\ell_1 d\ell_2,
$$
where we used the identity $\mathbb{P}[T] = 1/2$ from Proposition~\ref{prop:formula-CR}. Similar to~\eqref{eq:proof-thm1.1-2}, we can derive the joint density of $(\mathscr{L}_1, \mathscr{L}_2)$ under the reweighted measure $\frac{1_{E \cap (T \setminus G)}}{C_2 \mathscr{L} \cf(\mathscr{L})} \LF_\H^{(\gamma,i)} \otimes {\rm CLE}_6(d \Gamma) $. Therefore, for $\ell_1, \ell_2>0$, we have:
$$
|{\rm BA}_{BW}(\ell_1, \ell_2)| = \frac{1}{4\sqrt{\ell_1 \ell_2}(\ell_1+\ell_2) } - \int_0^\infty \frac{\sqrt{\ell_0}}{2\sqrt{\ell_2}(\ell_0+\ell_2)}| \mathcal{QA}_T(\ell_1, \ell_0) | d \ell_0.
$$
Using similar calculations as before and \eqref{eq:lem-QA-2} from Lemma~\ref{lem:distribution-QA}, we conclude that 
$$
{\rm BA}_{BW}[\mathscr{L}_1 e^{-\mathscr{L}_1} \mathscr{L}_2^{ix}] = \frac{\pi \Gamma(1+ix)}{4 \cosh(\pi x)} - \frac{\pi \sinh(\frac{\pi x}{3}) \Gamma(1+ix)}{2 \cosh(\pi x) \sinh(\pi x)} = \frac{\pi \sinh(\frac{\pi x}{3})^2 \Gamma(1+ix)}{2 \sinh(\pi x) \sinh(\frac{2 \pi x}{3})}. \qedhere
$$

\end{proof}
\section{Derivation of $p_{BB}(\tau)$}
\label{sec:backbone}

In Section~\ref{subsec:backbone-annulus}, we introduce the Brownian annulus decorated with the monochromatic two-arm event. In Section~\ref{subsec:proof-lem5.2}, we will prove Lemma~\ref{lem:decorate-BA-backbone}, showing that the modulus of this quantum surface is indeed weighted by the crossing probability $p_{BB}(\tau)$, by combining various results. The proof of Theorem~\ref{thm:backbone-crossing} is completed in Section~\ref{subsec:finalproof-backbone}. Although the high-level strategy is similar to the derivation of $p_{B}(\tau)$ and $p_{BW}(\tau)$, the arguments are more tricky in several places.

\subsection{The Brownian annulus decorated with the monochromatic two-arm event}
\label{subsec:backbone-annulus}

We begin by introducing the Brownian annulus decorated with the monochromatic two-arm event. Similar to the cases of BA$_{B^c}$ and BA$_{BW}$, it can be viewed as the joint scaling limits of random quadrangulations and critical percolation restricted to the monochromatic two-arm event.

\begin{definition}\label{def:decorate-BA-backbone}
    Set $\gamma = \sqrt{8/3}$. Sample $(\mathbb{H}, i, \phi)$ from $\LF_\H^{(\gamma,i)}$ and an independent ${\rm CLE}_6$ $\Gamma$ on top of it. Let $\mathcal{L}^*$ be the outermost loop in $\Gamma$ whose outer boundary surrounds $i$. Let $\ep{\mathcal{L}^*}$ be the outer boundary of $\mathcal{L}^*$ and $\mathscr{L}$ be the quantum length of $\ep{\mathcal{L}^*}$. Let $T^* = \{ \mathcal{L}^* \cap \partial \mathbb{H} \neq \emptyset\}$. On the event $(T^*)^c$, let $\mathcal{A}$ be the annular connected component of $\mathbb{H} \setminus \mathcal{L}^*$; see Figure~\ref{fig:3} (left). We write ${\rm BA}_{BB}$ as the law of $(\mathcal{A}, \phi)/{\sim_\gamma}$ under the reweighted measure $\sqrt{\mathscr{L}} 1_{(T^*)^c} \LF_\H^{(\gamma,i)} \otimes {\rm CLE}_6 (d\Gamma)$. 

\end{definition}

\begin{figure}[h]
\centering
\includegraphics[scale = 0.8]{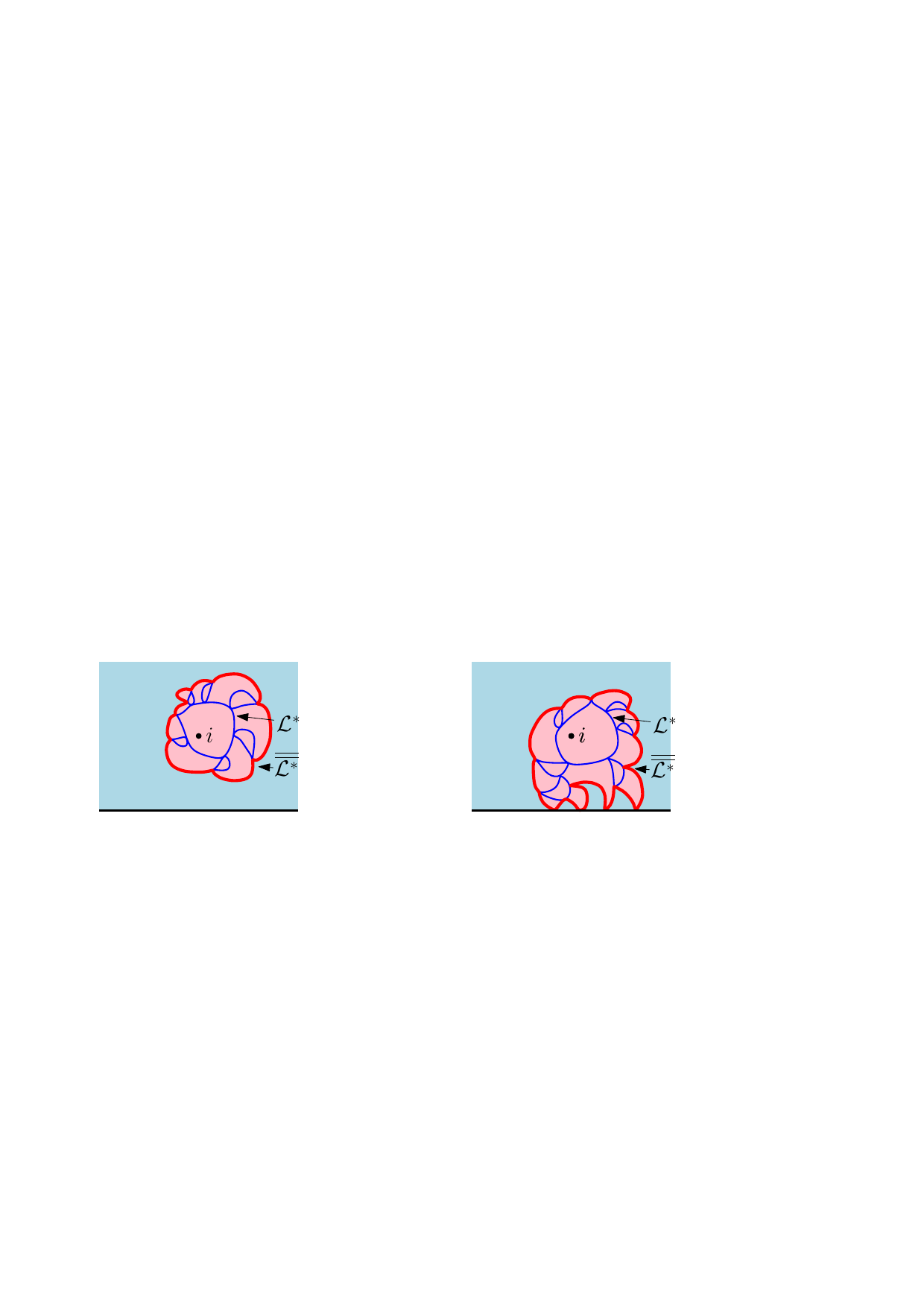}
\caption{Illustration of Definition~\ref{def:decorate-BA-backbone}. The blue loop $\mathcal{L}^*$ is the outermost CLE$_6$ loop whose outer boundary surrounds $i$. The red loop $\ep{\mathcal{L}^*}$ is its outer boundary which has quantum length $\mathscr{L}$. \textbf{Left:} The event $(T^*)^c$ occurs, i.e., $\mathcal{L}^*$ does not touch $\partial \mathbb{H}$. On this event, the light blue region corresponds to $\mathcal{A}$. \textbf{Right:} The event $T^*$ occurs, i.e., $\mathcal{L}^*$ touches $\partial \mathbb{H}$. On this event, as shown by the conformal welding result in Proposition~\ref{prop:weld-backbone}, the law of the quantum surface corresponding to the light blue region can be described by $\wt{\QA}(\gamma^2-2)$.} 
\label{fig:3}
\end{figure}

The following lemma states that the modulus of ${\rm BA}_{BB}$ encodes the crossing probability $p_{BB}(\tau)$. Its proof will be provided in Section~\ref{subsec:proof-lem5.2}.

\begin{lemma}\label{lem:decorate-BA-backbone}
    There exists some constant $C_3>0$ such that ${\rm BA}_{BB} = C_3 \cdot p_{BB}(\tau) \eta(2i\tau) \LF_\tau(d\phi) 1_{\tau>0} d\tau $.
\end{lemma}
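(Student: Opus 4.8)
The plan is to follow the same template that produced Lemmas~\ref{lem:decorate-BA-1arm} and~\ref{lem:decorate-BA-2arm}, but with the subtlety that the quantum surface $\mathcal A$ cut out by $\ep{\mathcal L^*}$ on the event $(T^*)^c$ is \emph{already} an annular quantum surface, rather than an annulus obtained by removing a metric ball. First I would disintegrate the construction in Definition~\ref{def:decorate-BA-backbone} according to $\ep{\mathcal L^*}$: sample $(\mathbb H, i, \phi)$ from $\LF_\H^{(\gamma,i)}$ and an independent $\CLE_6$, and condition on $\ep{\mathcal L^*}=\eta$ with $\mathscr L_\phi^\gamma(\eta)=\ell$. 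On the event $(T^*)^c$, the loop $\ep{\mathcal L^*}$ separates $\partial\mathbb H$ from $i$, so $\mathbb H\setminus\eta$ has two components: the annular one $\mathcal A$ (between $\partial\mathbb H$ and $\eta$) and the disk-type one $D_{\ep{\mathcal L^*}}$ containing $i$. By the Markov/restriction property of the GFF (the same computation as in the proof of Lemma~\ref{lem:distribution-QA}, i.e.\ conditioning a Liouville field on a Jordan curve), conditionally on $\eta$ and on the length $\ell$ of $\eta$, the surface $(D_{\ep{\mathcal L^*}},\phi,i)/{\sim_\gamma}$ is distributed as $\LF_\H^{(\gamma,i)}(\ell)^\#$ (equivalently $\QD_{1,0}(\ell)^\#$, using $\QD_{1,0}=C\,\LF_\H^{(\gamma,i)}$ from~\cite[Theorem 3.4]{ARS22}), and is independent of the annular surface $(\mathcal A,\phi)/{\sim_\gamma}$ given $\ell$. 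This is exactly the structure appearing in Proposition~\ref{prop:weld-backbone}: the conformal welding $\QD_{1,0}\otimes\mathsf n_{(T^*)^c}=\int_0^\infty \mathrm{Weld}(\mathcal{QA}^*(\ell),\QD_{1,0}(\ell))\,\ell\,d\ell$ says precisely that, under $\LF_\H^{(\gamma,i)}\otimes\CLE_6$ restricted to $(T^*)^c$ and weighted by the welding Jacobian, the law of $(\mathcal A,\phi)/{\sim_\gamma}$ is $\mathcal{QA}^*$.

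The next step is to identify the reweighting. Comparing the ``welding'' disintegration $\int_0^\infty \mathrm{Weld}(\mathcal{QA}^*(\ell),\QD_{1,0}(\ell))\,\ell\,d\ell$ with the raw product measure $\LF_\H^{(\gamma,i)}\otimes\CLE_6$ restricted to $(T^*)^c$, the discrepancy is: (i) a factor $\ell$ from the freedom of the welding point, and (ii) the ratio of the total masses $|\QD_{1,0}(\ell)|$ versus the conditional normalization. By Lemma~\ref{lem:LFH-onepoint} we have $|\LF_\H^{(\gamma,i)}(\ell)|=C_1\ell^{-3/2}$, so the proper weight needed to turn the raw measure into the welding measure, integrated over the $D_{\ep{\mathcal L^*}}$-component, produces an overall factor of $\ell\cdot\ell^{-3/2}=\ell^{-1/2}$ up to constants — hence the weight $\sqrt{\mathscr L}$ in Definition~\ref{def:decorate-BA-backbone} is exactly what is required so that, after integrating out $D_{\ep{\mathcal L^*}}$, the law of $(\mathcal A,\phi)/{\sim_\gamma}$ becomes a constant multiple of $\mathcal{QA}^*$. (One must be slightly careful: $\QD_{1,0}$ versus $\LF_\H^{(\gamma,i)}$ differ by a constant, and the harmonic-measure marked point does not affect the surface law; these are the same bookkeeping steps as in Lemma~\ref{lem:distribution-QA}.) At this point ${\rm BA}_{BB}=C\cdot\mathcal{QA}^*$ as measures on annular quantum surfaces, for some constant $C>0$.

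The final ingredient is to show that $\mathcal{QA}^*$, as a measure on annular quantum surfaces, equals $p_{BB}(\tau)\,\eta(2i\tau)\,\LF_\tau(d\phi)\,1_{\tau>0}\,d\tau$ up to a constant. Here I would invoke Lemma~\ref{lem:CLE-backbone}: the crossing event $\mathcal A_{BB}$ (in the continuum) is the event that no CLE$_6$ loop's outer boundary separates $0$ and $\infty$, i.e.\ precisely $(T^*)^c$ when $\CLE_6$ is sampled \emph{on the annular surface} $\mathcal A$; by the Markov property of CLE$_6$ (restriction to the annular component of $\mathbb H\setminus\ep{\mathcal L^*}$ yields an independent CLE$_6$ on that component), the annular surface $\mathcal A$ carries an independent CLE$_6$, and $\mathcal A_{BB}$ happens for that CLE$_6$ with conditional probability $p_{BB}(\mathrm{Mod}(\mathcal A))$ given $\phi$. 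Thus, writing everything in terms of the modulus $\tau$: the ``underlying'' annular surface before the CLE$_6$ restriction has law proportional to the free Brownian annulus $\mathrm{BA}=\frac1{\sqrt2}\eta(2i\tau)\LF_\tau(d\phi)1_{\tau>0}d\tau$ (Proposition~\ref{prop:BA-Liouville}), and decorating with the monochromatic two-arm event multiplies by $p_{BB}(\tau)$, giving ${\rm BA}_{BB}=C_3\,p_{BB}(\tau)\,\eta(2i\tau)\,\LF_\tau(d\phi)\,1_{\tau>0}\,d\tau$. The main obstacle I anticipate is step two — the careful matching of the welding Jacobian factor $\ell$, the Liouville-field conditioning normalization $\ell^{-3/2}$, and the $\QD_{1,0}$-versus-$\LF_\H^{(\gamma,i)}$ constant — to confirm that the weight $\sqrt{\mathscr L}$ (and not some other power) is correct, together with justifying the simultaneous use of the GFF Markov property and the CLE$_6$ Markov/restriction property to factor the surface measure; handling the $T^*$ piece of the decomposition of $p_{BB}$ (Figure~\ref{fig:3}, right) via Proposition~\ref{prop:weld-backbone}'s first equation may also require care if one proves the identity by a ``complementary'' argument rather than directly.
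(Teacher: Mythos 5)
Your first two paragraphs are correct and essentially reproduce what the paper observes in passing in the proof of Lemma~\ref{lem:length-distribution-backbone}: the $\sqrt{\mathscr{L}}$ weight is designed precisely so that, after integrating out the $\QD_{1,0}$-factor over $\ell$ in the welding decomposition from Proposition~\ref{prop:weld-backbone}, the factors $\ell\cdot\ell^{-3/2}\cdot\sqrt{\ell}$ cancel and one obtains ${\rm BA}_{BB}=C\cdot\mathcal{QA}^*$. That bookkeeping is fine.

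The gap is in your third paragraph. You assert that the "underlying annular surface before the CLE$_6$ restriction" — that is, $(\mathcal{A},\phi)/{\sim_\gamma}$ with $\mathcal{A}$ cut out by $\ep{\mathcal{L}^*}$, weighted by $\sqrt{\mathscr{L}}$ but without imposing $(T^*)^c$ — has the law of the free Brownian annulus ${\rm BA}$, with the CLE$_6$ on $\mathcal{A}$ being an \emph{independent} annulus CLE$_6$ and $(T^*)^c$ corresponding to its $\mathcal{A}_{BB}$ event. Neither half of this claim is justified. First, $\ep{\mathcal{L}^*}$ is determined by the CLE$_6$ on $\mathbb{H}$, not sampled from a measure independent of the decoration; cutting along the \emph{outermost} outer boundary surrounding $i$ imposes exactly the constraint that no loop in the annular region separates the two boundary components, so the CLE$_6$ restricted to $\mathcal{A}$ given $\ep{\mathcal{L}^*}$ is \emph{not} unconditioned — it is already conditioned on the event you want to isolate. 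There is no "CLE$_6$ Markov property" that lets you pass outward through the outermost loop and obtain an unconditioned CLE$_6$ on the exterior; the standard domain Markov property only gives independence for the region \emph{inside} the revealed loop. Second, the welding law of the annular side is $\mathcal{QA}^*$, not ${\rm BA}$; and indeed if $\mathcal{QA}^*\propto{\rm BA}$ then Lemma~\ref{lem:decorate-BA-backbone} would trivially imply $p_{BB}\equiv\mathrm{const.}$, which is false. The free Brownian annulus structure is what you are trying to prove appears after multiplying by $p_{BB}(\tau)$; you cannot take it as the starting point.

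The paper's proof of this lemma (Lemma~\ref{lem:BABB-equivalent}) uses a genuinely different mechanism that sidesteps both difficulties: it lifts the construction to the quantum sphere ${\rm QS}_2$ decorated with whole-plane percolation and samples a loop $\eta$ from the \emph{counting measure} on all percolation interfaces, not from the law of the outermost one. Then (i) by Werner's theorem (Lemma~\ref{lem:percolation-sle-loop}) the law of $\ep{\eta}$ equals the ${\rm SLE}_{8/3}$ loop measure, so the welding Lemma~\ref{lem:QS-weld} identifies the exterior of $\ep\eta$ as an $\LF_{\H}^{(\gamma,i)}$ surface with the $\sqrt{\mathscr{L}_2}$ weight, and removing a filled metric ball produces ${\rm BA}$ via Lemma~\ref{lem:law-metric-ball} and Proposition~\ref{prop:BA-Liouville}; (ii) by the spatial Markov property for an interface sampled from the counting measure (Lemma~\ref{lem:percolation-markov}), the percolation restricted to the annular region is \emph{unconditioned} given $\ep\eta$, so Lemma~\ref{lem:CLE-backbone} turns the event $K$ into the factor $p_{BB}(\tau)$; and (iii) the key observation closing the loop is that on the event $J\cap K$ the interface sampled from the counting measure necessarily coincides with $\mathcal{L}^*$, which is why this sphere construction agrees with Definition~\ref{def:decorate-BA-backbone}. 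The "counting measure plus restriction to $J\cap K$" device is what replaces the nonexistent exterior Markov property you were invoking, and your proposal would need to import this device (or a substitute for it) to close the argument.
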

We now give an outline of the proof of Theorem~\ref{thm:backbone-crossing} based on Lemma~\ref{lem:decorate-BA-backbone}.
In light of Proposition~\ref{prop:kpz-annulus}, to derive $p_{BB}(\tau)$, it suffices to calculate the joint boundary length distribution of ${\rm BA}_{BB}$. By Proposition~\ref{prop:weld-backbone}, ${\rm BA}_{BB}$ is simply the quantum annulus $\mathcal{QA}^*$ defined there modulo a multiplicative constant. We can use a similar argument as in Lemma~\ref{lem:distribution-QA} to derive the joint boundary length distribution of $\mathcal{QA}^*$. However, this involves the law of the conformal radius of $\ep{\mathcal{L}^*}$ restricted to the event $(T^*)^c$, which is not known. To overcome this difficulty, we also consider the law of the quantum surface between $\partial \mathbb{H}$ and $\ep{\mathcal{L}^*}$ on the event $T^*$, which can be described by a pinched quantum annulus $\wt{\QA}(\gamma^2-2)$; see~\eqref{eq:conformal-welding-backbone-1} of Proposition~\ref{prop:weld-backbone} and Figure~\ref{fig:3} (right). The joint boundary length distribution of this pinched quantum annulus can be derived from the boundary structure constants of Liouville conformal field theory~\cite{RZ22}. Hence, we can derive the law of the conformal radius of $\ep{\mathcal{L}^*}$ restricted to the event $(T^*)^c$ up to some constant. This, together with Proposition~\ref{prop:formula-CR}, yields the boundary length distribution of $\mathcal{QA}^*$ up to two unknown constants (see Lemma~\ref{lem:length-distribution-backbone}). Finally, these two constants can be fixed using Proposition~\ref{prop:kpz-annulus} and the fact that $\lim_{\tau \to 0} p_{BB}(\tau) = 1$. We refer to Section~\ref{subsec:finalproof-backbone} for details.

\subsection{Proof of Lemma~\ref{lem:decorate-BA-backbone}}
\label{subsec:proof-lem5.2}
We first explain the discrete intuition behind Lemma~\ref{lem:decorate-BA-backbone}. Sample a random quadrangulation of the disk with one bulk marked point and an independent  critical Bernoulli percolation on top of it. Consider the outermost filled percolation interface (obtained by filling passages with width 1) that surrounds the marked point which corresponds to $\ep{\mathcal{L}^*}$ in the scaling limit. This filled interface divides the random quadrangulation into two independent parts conditioned on the number of vertices along it. The outer part may have pinched point depending on whether the filled percolation interface touches the boundary. The key observation is that if it has annular topology, then the percolation on it is decorated with the monochromatic two-arm event and thus the modulus encodes $p_{BB}(\tau)$ in the continuum. Instead of using the convergence result to make this intuition rigorous, we will directly prove Lemma~\ref{lem:decorate-BA-backbone} in the continuum. 

Now we outline the proof of Lemma~\ref{lem:decorate-BA-backbone}. The idea is to give an equivalent description of ${\rm BA}_{BB}$ starting from the Brownian sphere ${\rm BS}_2$. Sample a quantum surface from ${\rm BS}_2$ and embed it as $(\widehat{\mathbb{C}}, 0, \infty)$, and then sample a whole-plane critical percolation on $\widehat{\mathbb{C}}$. Let $\mathscr{C}_1$ be the boundary of the filled unit metric ball centered at $\infty$, and let $\mathscr{C}_2$ be the outer boundary of a percolation interface sampled from the counting measure. We will show that the quantum annulus bounded by $\mathscr{C}_1$ and $\mathscr{C}_2$ has the same law as both ${\rm BA}_{BB}$ and the Brownian annulus ${\rm BA}$ decorated with the monochromatic two-arm event, provided that the monochromatic two-arm event occurs in the annular region and the measure is reweighted by quantum boundary lengths; see Lemma~\ref{lem:BABB-equivalent} for the precise statement and Figure~\ref{fig:4} for an illustration. On the one hand, we can sample $\mathscr{C}_1$ first, and by Lemma~\ref{lem:law-sphere-metric-ball}, the quantum surface enclosed by it can be described by ${\rm LF}_\H^{(\gamma,i)}$. The rest of the construction of the quantum annulus is exactly the same as in Definition~\ref{def:decorate-BA-backbone}, and thus, we obtain ${\rm BA}_{BB}$. On the other hand, we can sample $\mathscr{C}_2$ first and consider the region outside of it. In fact, this quantum surface can also be described by ${\rm LF}_\H^{(\gamma,i)}$. This follows from the fact that the marginal law of $\mathscr{C}_2$ is the same as the SLE$_{8/3}$ loop measure on $\widehat{\mathbb{C}}$ by~\cite{werner-loop}, combined with the conformal welding result for the SLE$_{8/3}$ loop measure and ${\rm QS}_2$ (which is equivalent to ${\rm BS}_2$ by~\cite{lqg-tbm1, lqg-tbm2, lqg-tbm3}) from~\cite{AHS-loop}; see Sections~\ref{subsec:QS} and~\ref{subsec:percolation-whole-plane} for details. Then, by Lemma~\ref{lem:BA-equivalent}, sampling $\mathscr{C}_1$ yields the Brownian annulus ${\rm BA}$ and the restricted event is equivalent to the monochromatic two-arm event.

\subsubsection{Quantum sphere and the SLE$_{8/3}$ loop}
\label{subsec:QS}
In this section, we review the $\sqrt{8/3}$-LQG description of the free Brownian sphere ${\rm BS}_2$ given by Miller and Sheffield~\cite{lqg-tbm1, lqg-tbm2, lqg-tbm3}. The two-pointed quantum sphere was introduced in~\cite{DMS14}, which we now recall.

\begin{definition}\label{def:QS}
    Fix $\gamma=\sqrt{8/3}$ and let $Q=\frac{2}{\gamma}+\frac{\gamma}{2}$. Let $\mathcal{C}=\mathbb{R} \times [0,2\pi]/{\sim}$ be a horizontal cylinder. Set $h=h^1+h^2_\cC+\mathbf c$, where $h^1,h^2$ are random generalized functions on $\cC$, $\mathbf c \in \mathbb{R}$, and they are independently sampled as follows. Let $h^1(z) = Y_{\mathrm{Re} z}$ for each $z \in \cC$, where \[Y_t =
	\left\{
	\begin{array}{ll}
	B_{t} - (Q -\gamma)t  & \mbox{if } t \geq 0 \\
	\wt B_{-t} +(Q-\gamma) t & \mbox{if } t < 0
	\end{array}
	\right. ,\]
   and $(B_s)_{s \geq 0}$ and $(\widetilde B_s)_{s \geq 0}$ are two independent standard Brownian motions conditioned on $B_s - (Q-\gamma)s<0$ and $\widetilde B_s - (Q-\gamma)s<0$ for all $s>0$. Let $h^2_\cC$ be the lateral part of the Gaussian free field on $\cC$. Let $\mathbf c\in \R$ be sampled from $ \frac\gamma2 e^{2(\gamma-Q)c}dc$. Let ${\rm QS}_2$ be the infinite measure describing the law of $(\mathcal{C},h,-\infty,\infty)/{\sim_\gamma}$. We call a sample from ${\rm QS}_2$ a two-pointed quantum sphere.
\end{definition}

The following theorem summarizes the relation between ${\rm QS}_2$ and ${\rm BS}_2$.

\begin{theorem}\label{thm:bm-lqg-sphere}
Recall the constants $c_1 = 1$ and $c_2$ from Theorem~\ref{thm:bm-lqg}. Sample $h$ from ${\rm QS}_2$, and let $(\mathcal{S}, x, y, d,\mu)$ be the marked metric-measure space given by $(\mathcal{C}, -\infty, \infty, c_1 d_h,c_2 \mu_h)$. Then the law of $(\mathcal{S}, x, y, d,\mu)$ is $C \cdot {\rm BS}_2$ for some constant $C>0$. Moreover, $(\mathcal{S}, x, y, d,\mu)$  and the quantum surface $(\mathcal{C}, h, -\infty, \infty)/{\sim_\gamma}$ are measurable with respect to each other. 
\end{theorem}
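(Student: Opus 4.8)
The plan is to transport the Miller--Sheffield correspondence from the disk to the sphere, in exact parallel with the proof of Theorem~\ref{thm:bm-lqg}. The first step is to observe that the measure ${\rm QS}_2$ of Definition~\ref{def:QS} coincides with the two-pointed quantum sphere introduced in \cite{DMS14}: this is just a matter of matching the cylinder parametrization of the field---the Bessel-type radial profile $Y$ together with the log-weighted constant mode $\mathbf c$ sampled from $\frac{\gamma}{2}e^{2(\gamma-Q)c}\,dc$---with the definition there. Then one invokes \cite{lqg-tbm1, lqg-tbm2, lqg-tbm3}: the $\sqrt{8/3}$-LQG metric $d_h$ and area measure $\mu_h$ of the embedded field, after scaling by the constants $c_1=1$ and $c_2$, turn a quantum sphere conditioned on a fixed total area into the Brownian sphere of the same area, with the marked points $\pm\infty$ mapping to the two marked points of ${\rm BS}_2$. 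The fact that $c_1$ and $c_2$ are the same constants as in Theorem~\ref{thm:bm-lqg} is because the metric (via the LFPP regularization of \cite{dddf-lfpp, gm-uniqueness}) and the area measure are built from the same local GFF-based regularization; cf.\ Remark~\ref{rmk:equivalence} and \cite{ghs-metric-peano}.

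The second step is to match the area disintegrations of the two infinite measures. Writing $h = h^1 + h^2_{\mathcal{C}} + \mathbf c$ one has $\mu_h(\mathcal{C}) = e^{\gamma \mathbf c}\,\mu_{h^1+h^2_{\mathcal{C}}}(\mathcal{C})$, so the change of variables turning the law $\frac{\gamma}{2}e^{2(\gamma-Q)c}\,dc$ of $\mathbf c$ into the law of the total quantum area produces the weight $A^{-4/\gamma^2}\,dA$; for $\gamma=\sqrt{8/3}$ this is $A^{-3/2}\,dA$, exactly the exponent appearing in Definition~\ref{def:BS}. Combining this with the conditioned identification from the first step and the scale covariance of $A^{-3/2}\,dA$ under the rescaling $A\mapsto c_2 A$ yields ${\rm QS}_2 \mapsto C\cdot {\rm BS}_2$ for a constant $C>0$ absorbing $c_2$ and the relative normalization of the unit-area probability measures of \cite{legall-uniqueness, miermont-brownian-map}.

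The last step is the mutual measurability of $(\mathcal{S},x,y,d,\mu)$ and the quantum surface $(\mathcal{C},h,-\infty,\infty)/{\sim_\gamma}$, which is the sphere analogue of the corresponding statement in Theorem~\ref{thm:bm-lqg}: it was proved in \cite{lqg-tbm3} and given a constructive proof in \cite{gms-poisson-voronoi}. I do not expect a genuine obstacle anywhere here---the theorem is essentially a repackaging of the Miller--Sheffield results---and the only points demanding a little care are the bookkeeping of the two marked points and the verification that the normalization constants coincide with the disk case; both can be double-checked by removing a filled metric ball and appealing to the already-established disk identification of Theorem~\ref{thm:bm-lqg} through Lemma~\ref{lem:sphere-ball}.
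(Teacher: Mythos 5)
Your argument is correct and matches the paper's implicit reasoning: Theorem~\ref{thm:bm-lqg-sphere} is stated without an explicit proof, presented as a summary of the Miller--Sheffield identification exactly parallel to the short citation-based proof of Theorem~\ref{thm:bm-lqg} for the disk. Your fleshing-out of the area-disintegration step (obtaining the exponent $-4/\gamma^2=-3/2$) and the bookkeeping of $c_1,c_2$ via Remark~\ref{rmk:equivalence} and the metric-ball consistency check through Lemma~\ref{lem:sphere-ball} is accurate and is a reasonable expansion of what the paper leaves implicit.
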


Next, we recall a conformal welding result from~\cite{AHS-loop, ACSW24} about the quantum sphere. We first introduce the SLE$_{8/3}$ loop measure, as defined in~\cite{werner-loop}. We say that a family of measures defined on simple loops on any Riemann surface satisfies the conformal restriction property if (1). for any conformal map $f: S \rightarrow S'$, $f \circ \mu_S = \mu_{S'}$ (2). for any $S \subset S'$, $\mu_S$ is equal to the measure $\mu_{S'}$ restricted to those loops that are contained in $S$. 

\begin{theorem}[Theorem 1 of \cite{werner-loop}]\label{thm:sle-loop}
    There exists a unique family of measures $(\mu_S)$ (modulo a multiplicative constant) that satisfies the conformal restriction property.
\end{theorem}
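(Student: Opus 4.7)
The plan is to establish both existence of such a measure and uniqueness up to the multiplicative constant, following a two-step strategy.

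\textbf{Existence.} I would construct $\mu_S$ as the pushforward of the Lawler--Werner Brownian loop intensity $\mu^{\mathrm{BL}}_S$ under the outer-boundary map. The Brownian loop measure is conformally invariant and satisfies the restriction property directly from the Markov property of Brownian motion. Since the outer boundary of a planar Brownian loop is an SLE$_{8/3}$-type simple loop (Lawler--Schramm--Werner), this produces a candidate family $(\mu_S)$ of simple loop measures inheriting both axioms. Conformal invariance of the pushforward is automatic, and the restriction axiom transfers because a Brownian loop is contained in $S \subset S'$ iff its outer boundary is.

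\textbf{Uniqueness.} Suppose $(\mu_S)$ satisfies the two axioms. By conformal invariance it suffices to pin down $\mu_{\mathbb{C}}$ up to a scalar. The idea is to reduce the problem to understanding the ``bubble measure'' on loops through a fixed point. For $z_0 \in \mathbb{C}$ and small $r > 0$, consider $\mu_{\mathbb{C}}$ restricted to loops that meet $B(z_0, r)$ and lie in some fixed bounded neighborhood. A self-similarity argument, obtained by applying the restriction axiom to the nested disks $B(z_0, \lambda^n r)$ together with Koebe-type distortion estimates, forces the total mass to decay like $r^{\alpha}$ for a definite exponent $\alpha$. After rescaling by $r^{-\alpha}$ and sending $r \to 0$, one obtains a weak limit $\hat\mu_{z_0}$, a bubble measure supported on loops through $z_0$.

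The restriction axiom for $\mu$ translates into a one-sided conformal restriction property for the family $\{\hat\mu_{z_0}\}$, viewed as measures on loops surrounding $z_0$: restricting to loops in a subdomain $D \ni z_0$ agrees, after the appropriate conformal factor, with the analogous bubble measure built from $\mu_D$. By the Lawler--Schramm--Werner classification of conformal restriction measures, such measures form a one-parameter family indexed by a real weight, and the requirement that $\hat\mu_{z_0}$ comes from a genuinely unrooted loop measure (no distinguished boundary point after passing to the outer boundary) selects the parameter value corresponding to SLE$_{8/3}$. Since the family $\{\hat\mu_{z_0}\}_{z_0 \in \mathbb{C}}$ determines $\mu_{\mathbb{C}}$ via reverse disintegration over marked points, uniqueness up to a multiplicative constant follows.

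The main obstacle I expect is the bubble-measure limit step: extracting the exponent $\alpha$ and proving weak convergence using only the abstract restriction axioms, without a priori tightness from an explicit construction. Relatedly, one must carefully justify the unrooting that connects this two-sided loop measure to the one-sided, chordal-type restriction measures classified by Lawler--Schramm--Werner, handling the fact that the outer boundary of a loop is a Jordan curve rather than a chord and that the natural ``root'' has to be produced as a limit of interior points.
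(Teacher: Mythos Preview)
The paper does not prove this theorem at all: it is quoted verbatim as Theorem~1 of \cite{werner-loop} and used as a black box. There is no ``paper's own proof'' to compare against. Your proposal is an attempt to reconstruct Werner's original argument, which goes well beyond what the present paper does.

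As a sketch of Werner's proof, your outline is in the right spirit but has a real gap in the existence part. You claim the restriction axiom transfers to the outer-boundary pushforward ``because a Brownian loop is contained in $S\subset S'$ iff its outer boundary is.'' The forward implication is fine, but the converse fails: the outer boundary of a Brownian loop can lie in $S$ while the loop itself does not (the loop can have excursions into bounded complementary components that exit $S$ when $S$ is not simply connected). So the pushforward of $\mu^{\mathrm{BL}}_S$ under the outer-boundary map is \emph{not} the same as $\mu^{\mathrm{BL}}_{S'}$ restricted to loops whose outer boundary lies in $S$. Werner handles this more carefully, essentially working first in simply connected domains and then extending. Your uniqueness sketch via bubble measures and the LSW conformal-restriction classification is closer to Werner's actual route, though the step you flag as the main obstacle (extracting the scaling exponent and tightness from the axioms alone) is indeed where most of the work lies, and your description of how the ``unrooting'' forces the SLE$_{8/3}$ exponent is more of a hope than an argument.
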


We say that a family of measures satisfies the weak conformal restriction property if we restrict to simply connected domains. As shown in Section 3 of~\cite{werner-loop}, there exists a unique family of measures that satisfies the weak conformal restriction property, which can be obtained by restricting $(\mu_S)$ to simply connected domains.

Let $\mu_{\mathbb{C}}$ be the SLE$_{8/3}$ loop measure on $\mathbb{C}$. The original construction in~\cite{werner-loop} is via the Brownian loop measures. We also refer to~\cite{zhan-sle-loop} for a construction via SLE. Let $\mu^{\rm sur}_{\mathbb{C}}$ be the measure $\mu_{\mathbb{C}}$ restricted to those loops that surround 0. In~\cite[Proposition 6.5]{ACSW24}, based on~\cite{AHS-loop}, the following conformal welding result is proved for the quantum sphere and the SLE$_{8/3}$ loop measure.

\begin{lemma}\label{lem:QS-weld}
    We have ${\rm QS}_2 \otimes \mu^{\rm sur}_{\mathbb{C}} = C \cdot \int_0^\infty {\rm Weld}({\rm QD}_{1,0}(\ell), {\rm QD}_{1,0}(\ell)) \ell d \ell$ for some $C>0$. Here, the right-hand side stands for the uniform conformal welding of two independent samples from ${\rm QD}_{1,0}$ conditioned on having the same quantum boundary length.
\end{lemma}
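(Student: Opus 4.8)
The statement is a direct combination of two ingredients that are both available in the literature cited. The plan is to deduce it from the conformal welding identity for the quantum sphere and the SLE$_{8/3}$ loop established in~\cite{AHS-loop} (see also~\cite[Proposition 6.5]{ACSW24}), together with the identification $\QD_{1,0} = C \cdot \LF_\H^{(\gamma,i)}$ as measures on quantum surfaces (from~\cite[Theorem 3.4]{ARS22}, or equivalently via Theorem~\ref{thm:bm-lqg} together with the equivalence between $\BD_{1,0}$ and the one-pointed quantum disk). So, strictly speaking, there is nothing to prove beyond translating the cited result into the normalization and notation used here; I will organize the argument as follows.

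First I would recall the precise form of the conformal welding result from~\cite{AHS-loop}: for $\gamma = \sqrt{8/3}$, welding two independent weight-$2$ quantum disks with an interior marked point along their boundaries conditioned on equal quantum length, and integrating over the common length $\ell$ with the extra factor $\ell$ that accounts for the rotational freedom of the welding (as in the definition of uniform conformal welding recalled in Section~\ref{subsec:conformal-welding-known}), produces the two-pointed quantum sphere decorated by an independent loop, whose law is the SLE$_{8/3}$ loop measure restricted to loops separating the two marked points. Since in our embedding the two marked points of ${\rm QS}_2$ are the two interior marked points carried by the two copies of $\QD_{1,0}$, and a loop separating these two points is exactly a loop surrounding the designated point $0$ in the $\widehat{\mathbb C}$-embedding, the restricted loop measure is $\mu^{\rm sur}_{\mathbb C}$. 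This gives
$$
{\rm QS}_2 \otimes \mu^{\rm sur}_{\mathbb{C}} = C \cdot \int_0^\infty {\rm Weld}({\rm QD}_{1,0}(\ell), {\rm QD}_{1,0}(\ell)) \, \ell \, d\ell
$$
for some constant $C>0$, which is exactly the claim. The identification of the welding interface as (a restriction of) the SLE$_{8/3}$ loop measure uses the characterization of this measure by the conformal restriction property (Theorem~\ref{thm:sle-loop}) together with the restriction/locality properties of the welding interface proved in~\cite{AHS-loop}; alternatively one invokes~\cite[Proposition 6.5]{ACSW24} directly, where this identification is already carried out.

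The only genuine point requiring care — and hence the main (minor) obstacle — is the bookkeeping of multiplicative constants and of the reference measures: one must check that the disintegration convention for ${\rm QD}_{1,0}$ along its boundary length used here matches the one in~\cite{AHS-loop, ACSW24}, that the weight-$2$ disk appearing there is indeed $\QD_{1,0}$ in our normalization (up to the harmless constant absorbed into $C$), and that the two-pointed quantum sphere ${\rm QS}_2$ of Definition~\ref{def:QS} coincides with the object appearing on the left-hand side of the cited welding statement. All of these are standard identifications: the first is fixed by Definition~\ref{def:QD} and Theorem~\ref{thm:bm-lqg}, the second by~\cite[Theorem 3.4]{ARS22}, and the third by Definition~\ref{def:QS} and Theorem~\ref{thm:bm-lqg-sphere}. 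Since every statement we use is only asserted up to a positive multiplicative constant, these identifications affect nothing beyond the value of $C$, and the lemma follows.
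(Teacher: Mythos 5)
Your proposal matches the paper's treatment: the paper presents Lemma~\ref{lem:QS-weld} as a direct citation of \cite[Proposition 6.5]{ACSW24} (which in turn builds on \cite{AHS-loop}), with no further argument supplied. Your additional discussion of the normalization bookkeeping (matching the disintegration convention, identifying the weight-$2$ disk with $\QD_{1,0}$, and matching $\QS_2$ with the object in the cited welding statement) is correct and harmless, since every identification holds up to a positive multiplicative constant absorbed into $C$.
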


\subsubsection{Critical percolation on $\mathbb{C}$}
\label{subsec:percolation-whole-plane}

In this section, we provide some properties of critical percolation in $\mathbb{C}$. As explained in Section~\ref{sec:percolation-cle}, for any open domain $D \subset \mathbb{C}$, we can use a law on the quad-crossing configuration $\omega \in \mathcal{H}(D)$ to describe the scaling limit of critical percolation on $D$. Specifically, the quad-crossing configuration induced by Bernoulli $\frac{1}{2}$-site percolation on $\delta \mathbb{T}$ is known to converge in law to $\omega$ for the metric $d_{\mathcal{H}(D)}$ as $\delta \rightarrow 0$. Furthermore, when $D$ is a simply connected domain or doubly connected domain with piecewise smooth boundaries, the quad-crossing configuration and the percolation interfaces for the discrete percolation jointly converge in law to $(\omega, \Gamma)$ for the metrics $d_{\mathcal{H}(D)}$ and $d_\ell$. Note that when defining the percolation interfaces, we always assume that the boundary conditions are black. In addition, $\omega$ and $\Gamma$ are measurable with respect to each other, and their joint law is denoted by $\mathbb{P}_D$. The joint law of $(\omega, \Gamma)$ can also be extended to general simply connected domain or doubly connected domains by conformal mappings.

We have the following Markov property for critical percolation on $\mathbb{C}$. It can be deduced from the independence of discrete percolation in disjoint quads and the scaling limit argument.

\begin{lemma}\label{lem:percolation-markov}
    Sample $(\omega, \Gamma)$ from $\mathbb{P}_{\mathbb{C}}$ and $\eta$ from the counting measure on those percolation interfaces in $\Gamma$ that have black color on the outside. Let $D_\eta$ be the infinite connected component of $\mathbb{C} \setminus \eta$. Then, conditioned on $\ep{\eta}$ (the boundary of $D_\eta$), $\omega|_{D_\eta}$ has the same law as a quad-crossing configuration sampled from $\mathbb{P}_{D_\eta}$.
\end{lemma}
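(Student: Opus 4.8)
The plan is to obtain Lemma~\ref{lem:percolation-markov} from the spatial Markov property of discrete Bernoulli percolation, transported through the percolation scaling limit, in the spirit of the ``scaling limit argument'' already used for Lemmas~\ref{lem:percolation-limit} and~\ref{lem:CLE-double-domain}. First I would work on $\delta\mathbb{T}$: sample Bernoulli-$\tfrac12$ site percolation, with induced pair $(\omega_\delta,\Gamma_\delta)$ as in Section~\ref{subsec:percolation}, let $\eta_\delta$ be a percolation interface with black hexagons on its outer side, and let $D_{\eta_\delta}$ be the unbounded component of $\mathbb{C}\setminus\eta_\delta$. Since the colors are i.i.d., conditionally on the event that $\eta_\delta$ is an interface, on $\eta_\delta$ itself, and on the colors of all hexagons not strictly contained in $D_{\eta_\delta}$, the colors of the hexagons with center strictly inside $D_{\eta_\delta}$ are fresh i.i.d.\ Bernoulli-$\tfrac12$ (with black boundary condition inherited along $\ep{\eta_\delta}$). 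Because every crossing of a quad $Q$ compactly contained in $D_{\eta_\delta}$ lies inside $Q([0,1]^2)\subset D_{\eta_\delta}$, the restriction $\omega_\delta|_{D_{\eta_\delta}}$ depends only on these colors; hence, conditionally on $\mathcal{G}_\delta:=\sigma\big(\eta_\delta,(\omega_\delta,\Gamma_\delta)|_{\mathbb{C}\setminus D_{\eta_\delta}}\big)$, the law of $\omega_\delta|_{D_{\eta_\delta}}$ is the law of the quad-crossing configuration of percolation on the discretization of $D_{\eta_\delta}$. When $\eta$ is sampled from the ($\sigma$-finite) counting measure on black-outside interfaces I would first disintegrate over which interface is selected --- for instance restricting to interfaces of diameter $\ge\epsilon$ that meet a fixed bounded region, of which there are only finitely many --- so that the conditioning above is literal.

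The second step is to pass to the limit $\delta\to0$. Using the joint weak convergence of $(\omega_\delta,\Gamma_\delta)$ to $(\omega,\Gamma)$ (on a bounded window first, then exhausting $\mathbb{C}$ and using the locality in Lemma~\ref{lem:local}) together with Skorokhod's representation theorem, I would couple everything so that $(\omega_\delta,\Gamma_\delta)\to(\omega,\Gamma)$ almost surely; then $\eta_\delta\to\eta$, the external frontier of $\eta_\delta$ converges to $\ep{\eta}$ (outer boundaries of percolation interfaces converge to outer boundaries of CLE$_6$ loops, which are simple SLE$_{8/3}$-type curves), and $D_{\eta_\delta}\to D_\eta$ in the Carath\'eodory sense. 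This yields $\omega_\delta|_{D_{\eta_\delta}}\to\omega|_{D_\eta}$ in $\mathcal{H}(D_\eta)$, and by Lemma~\ref{lem:local} applied to an increasing sequence of nice domains approximating $D_\eta$ (as in the proof of Lemma~\ref{lem:percolation-limit}) the discrete percolation law on $D_{\eta_\delta}$ converges to $\mathbb{P}_{D_\eta}$. Passing to the limit in the conditional law from the first step gives that, conditionally on $\mathcal{G}:=\sigma\big(\eta,(\omega,\Gamma)|_{\mathbb{C}\setminus D_\eta}\big)$, the law of $\omega|_{D_\eta}$ is $\mathbb{P}_{D_\eta}$.

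The last step is a soft reduction: $D_\eta$, and therefore $\mathbb{P}_{D_\eta}$, is a deterministic function of $\ep{\eta}$, and $\sigma(\ep{\eta})\subseteq\mathcal{G}$; so taking conditional expectations of the conditional law from the second step over $\sigma(\ep{\eta})$ shows that already conditionally on $\ep{\eta}$ the law of $\omega|_{D_\eta}$ is $\mathbb{P}_{D_\eta}$, which is exactly the claim. I expect the genuine difficulty to lie in the second step: proving that $\omega_\delta|_{D_{\eta_\delta}}$ converges \emph{jointly} with $(\omega,\Gamma)$ to $\omega|_{D_\eta}$, which needs $D_{\eta_\delta}\to D_\eta$ and, crucially, rules out that a quad crossing is created or destroyed in the limit within an $\epsilon$-neighbourhood of the random fractal curve $\ep{\eta}$. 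The latter is handled by the half-plane three-arm (and whole-plane six-arm) estimates already invoked around~\eqref{eq:prop2.1-3} and in the proof of Lemma~\ref{lem:CLE-backbone}, together with the a priori regularity of $\ep{\eta}$ as an $\mathrm{SLE}_{8/3}$-type curve; the bookkeeping needed to make the $\sigma$-finite counting-measure disintegration rigorous is routine but must be carried out with some care.
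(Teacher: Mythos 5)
The paper supplies only a one-sentence indication of proof for this lemma (``it can be deduced from the independence of discrete percolation in disjoint quads and the scaling limit argument''), and your proposal is a correct and careful fleshing-out of exactly that route: discrete Markov property along a fixed black-outside interface, then passage to the continuum via the joint $(\omega_\delta,\Gamma_\delta)\to(\omega,\Gamma)$ convergence, Carath\'eodory convergence of $D_{\eta_\delta}$, and the tower-property reduction from the full exterior $\sigma$-algebra to $\sigma(\ep{\eta})$. Your handling of the $\sigma$-finiteness of the counting measure by disintegrating over diameter-$\ge\epsilon$ interfaces in a bounded window, and your flagging of the arm-exponent estimates needed to control the boundary effect when replacing $D_{\eta_\delta}$ by $D_\eta$, are precisely the points one would have to address; they mirror the treatment around~\eqref{eq:prop2.1-3} and in Lemmas~\ref{lem:percolation-limit} and~\ref{lem:CLE-backbone}. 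One small simplification you could make: convergence of $\omega_\delta|_{D_{\eta_\delta}}$ to $\omega|_{D_\eta}$ for quads compactly contained in $D_\eta$ is automatic from $\omega_\delta\to\omega$ and Carath\'eodory convergence (such a quad is eventually compactly contained in $D_{\eta_\delta}$), so the three-arm/six-arm input is only needed for the separate statement that the law $\mathbb{P}_{D_{\eta_\delta}}$ of a fresh percolation on the shrinking lattice domain converges to $\mathbb{P}_{D_\eta}$, not for the restriction step itself.
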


Now we describe the law of $\ep{\eta}$ for the loop $\eta$ as sampled in Lemma~\ref{lem:percolation-markov}. For any simply connected domain $D \subset \mathbb{C}$, sample a critical percolation on $D$ and $\eta$ from the counting measure on the percolation interfaces that have black color on the outside and do not touch $\partial D$, and let $\tilde{\mu}_D$ be the law of $\ep{\eta}$. Then, by Lemma~\ref{lem:local}, the family of measures $(\mu_D)_{D \subset \mathbb{C}}$ satisfies the weak conformal restriction property. As a consequence of Theorem~\ref{thm:sle-loop}, we have the following lemma, which is first stated in Section 8 of~\cite{werner-loop}.

\begin{lemma}\label{lem:percolation-sle-loop}
    $\tilde{\mu}_{\mathbb{C}}$ and $\mu_{\mathbb{C}}$ have the same law up to a multiplicative constant.
\end{lemma}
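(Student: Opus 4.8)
The statement is Lemma~\ref{lem:percolation-sle-loop}: the law $\tilde\mu_{\mathbb C}$ of the outer boundary $\ep\eta$ of a critical percolation interface (with black outside, not touching $\partial D$) sampled from the counting measure on $\mathbb C$ coincides, up to a multiplicative constant, with the SLE$_{8/3}$ loop measure $\mu_{\mathbb C}$. The strategy is to reduce everything to Werner's uniqueness theorem (Theorem~\ref{thm:sle-loop}): there is a \emph{unique} (up to scaling) family of loop measures on Riemann surfaces satisfying the conformal restriction property, and moreover — as recalled after Theorem~\ref{thm:sle-loop} — the weak version (restricting to simply connected domains) already pins down the family, and any such family is the restriction of $(\mu_S)$. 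So it suffices to check that the family $(\tilde\mu_D)_{D}$ indexed by simply connected domains $D\subset\mathbb C$ satisfies the weak conformal restriction property, and then identify the two measures on $\mathbb C$.

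\textbf{Step 1: conformal covariance.} First I would verify that $f\circ\tilde\mu_D = \tilde\mu_{D'}$ for any conformal map $f\colon D\to D'$ between simply connected domains. This is exactly where Lemma~\ref{lem:local}\eqref{claim:lem-local-1} enters: the law $\mathbb P_D$ of the quad-crossing configuration (equivalently, by the measurability of $\omega$ and $\Gamma$ with respect to each other on simply connected domains, the law of the percolation interface collection $\Gamma$) is conformally invariant. The counting measure on interfaces not touching $\partial D$ is intrinsic, and the operation ``take the outer boundary $\ep{\cdot}$'' commutes with conformal maps. Hence pushing forward under $f$ carries $\tilde\mu_D$ to $\tilde\mu_{D'}$.

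\textbf{Step 2: restriction.} Next I would check that for $D\subset D'$ simply connected, $\tilde\mu_D$ is $\tilde\mu_{D'}$ restricted to loops contained in $D$. Here the point is Lemma~\ref{lem:local}\eqref{claim:lem-local-2}: the percolation configuration on $D$ (with black boundary) is the restriction to $D$ of the configuration on $D'$, at least for the quad-crossing data, and for the interface collection this follows because a percolation interface of $D'$ that stays inside $D$ and does not touch $\partial D'$ is exactly a percolation interface of $D$ not touching $\partial D$ — the black boundary conditions are irrelevant for such ``deep interior'' loops. A small technical care is needed: one must argue that the counting measure (an infinite measure with infinitely many macroscopic loops) behaves well under restriction, i.e.\ that no measure-zero or infinite-multiplicity issue arises; this is handled the same way as in \cite{werner-loop, gps-pivotal} by noting that for any fixed quad $Q\subset D$ only finitely many loops of $\Gamma$ cross $Q$, so the counting measure restricted to loops meeting $Q$ is finite and consistent. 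Taking the outer boundary and noting $\ep\eta\subset D$ iff $\eta\subset D$ (the outer boundary of a loop in $D$ stays in $D$ since $D$ is simply connected) completes the restriction identity.

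\textbf{Step 3: identification.} Steps 1--2 show $(\tilde\mu_D)$ satisfies the weak conformal restriction property, so by Theorem~\ref{thm:sle-loop} and the remark following it, there is a constant $c>0$ with $\tilde\mu_D = c\,\mu_D$ for every simply connected $D$. It remains to upgrade this to the statement on $\mathbb C$. Since $\mathbb C$ itself is simply connected (as a Riemann surface it is conformally $\widehat{\mathbb C}\setminus\{pt\}$, a simply connected domain), the equality $\tilde\mu_{\mathbb C}=c\,\mu_{\mathbb C}$ is immediate once we know $\tilde\mu_{\mathbb C}$ is well-defined and finite on loops meeting any fixed compact quad — which is again the local finiteness of $\Gamma$. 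The main obstacle, and the place requiring the most care, is not any deep estimate but rather making Steps 1--2 fully rigorous at the level of \emph{infinite} loop measures (the counting measure on interfaces): one has to be precise that ``sample $\eta$ from the counting measure'' means disintegrating over a choice of marked quad or marked point and that the conformal-invariance and restriction identities hold $\sigma$-finitely. This is entirely parallel to the treatment in \cite[Section~2.4]{gps-pivotal} and \cite[Section~8]{werner-loop}, so I would follow those references for the bookkeeping and keep the argument short.
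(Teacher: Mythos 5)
Your proof is correct and follows the same route the paper takes: the paper's two-sentence argument preceding the lemma invokes Lemma~\ref{lem:local} to get the weak conformal restriction property for $(\tilde\mu_D)_D$ and then applies the uniqueness of Theorem~\ref{thm:sle-loop}. You have simply spelled out the two verifications (conformal covariance and restriction) and the $\sigma$-finiteness bookkeeping that the paper leaves implicit.
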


\subsubsection{Equivalent construction of ${\rm BA}_{BB}$}

In the following lemma, we define a quantum annulus by starting with a sample from ${\rm QS}_2$ and an independent whole-plane percolation on top of it. Then we will show by combining results from Sections~\ref{subsec:QS} and \ref{subsec:percolation-whole-plane} that the resulting quantum annulus has the same law as ${\rm BA}_{BB}$ up to a multiplicative constant. Moreover, its modulus encodes the crossing probability $p_{BB}(\tau)$. We refer to the beginning of Section~\ref{subsec:proof-lem5.2} for the proof outline. As an immediate consequence, we obtain Lemma~\ref{lem:decorate-BA-backbone}. Let $\widehat{\mathbb{C}} = \mathbb{C} \cup \{ \infty \}$ be the Riemann sphere.

\begin{lemma}\label{lem:BABB-equivalent}
    Let $(\widehat{\mathbb{C}}, h, 0, \infty)$ be an embedding of a sample from ${\rm QS}_2$ and $d_h$ be the $\sqrt{8/3}$-LQG metric of $h$. Sample a whole-plane percolation $(\omega, \Gamma)$ from $\mathbb{P}_\mathbb{C}$ on top of it, and a loop $\eta$ from the counting measure on those percolation interfaces in $\Gamma$ that have black color on the outside. Define the event 
    $$J = \{ d_h(\infty, \ep{\eta}) > 1, \ep{\eta} \mbox{ separates 0 and $\infty$} \}.$$ 
    On the event $J$, let $\mathcal{A}$ be the annular connected component of $\widehat{\mathbb{C}} \setminus (B_{d_h}(\infty,1) \cup \eta)$. Consider the restriction of $\omega$ on the domain $\mathcal{A}$, namely $\omega|_\mathcal{A}$, and let $\Gamma_\mathcal{A}$ be the annulus ${\rm CLE}_6$ induced by $\omega|_{\mathcal{A}}$ on $\mathcal{A}$ with black boundary conditions as in Lemma~\ref{lem:CLE-double-domain}. Let
    $$
    K = \{\mbox{there does not exist $\gamma \in \Gamma_\mathcal{A}$ such that $\ep{\gamma}$ separates $0$ and $\infty$} \}.
    $$
    Let $\mathscr{L}_1$ be the quantum length of $\partial B^\bullet_{d_h}(\infty,1)$ and $\mathscr{L}_2$ be the quantum length of $\ep \eta$. We write $\widetilde {\rm BA}$ as the law of $(\mathcal{A}, h)/{\sim_\gamma}$ under the reweighted measure $\frac{1_{J \cap K} \sqrt{\mathscr{L}_2}}{\mathscr{L}_1 \cf(\mathscr{L}_1)} {\rm QS}_2 \otimes \mathbb{P}_\mathbb{C} (d\Gamma) \otimes {\rm Count}_\Gamma(d\eta)$. Then there exist constants $C, C'>0$ such that the following holds:
    \begin{enumerate}[(i)]
        \item $\widetilde {\rm BA} = C \cdot {\rm BA}_{BB}$.\label{claim:BA-equivalent}
        \item $\widetilde {\rm BA} = C' \cdot p_{BB}(\tau) \eta(2i\tau) \LF_\tau(d\phi) 1_{\tau>0} d\tau $.\label{claim:BA-modulus}
    \end{enumerate}
    \begin{figure}[h]
\centering
\includegraphics[scale = 0.8]{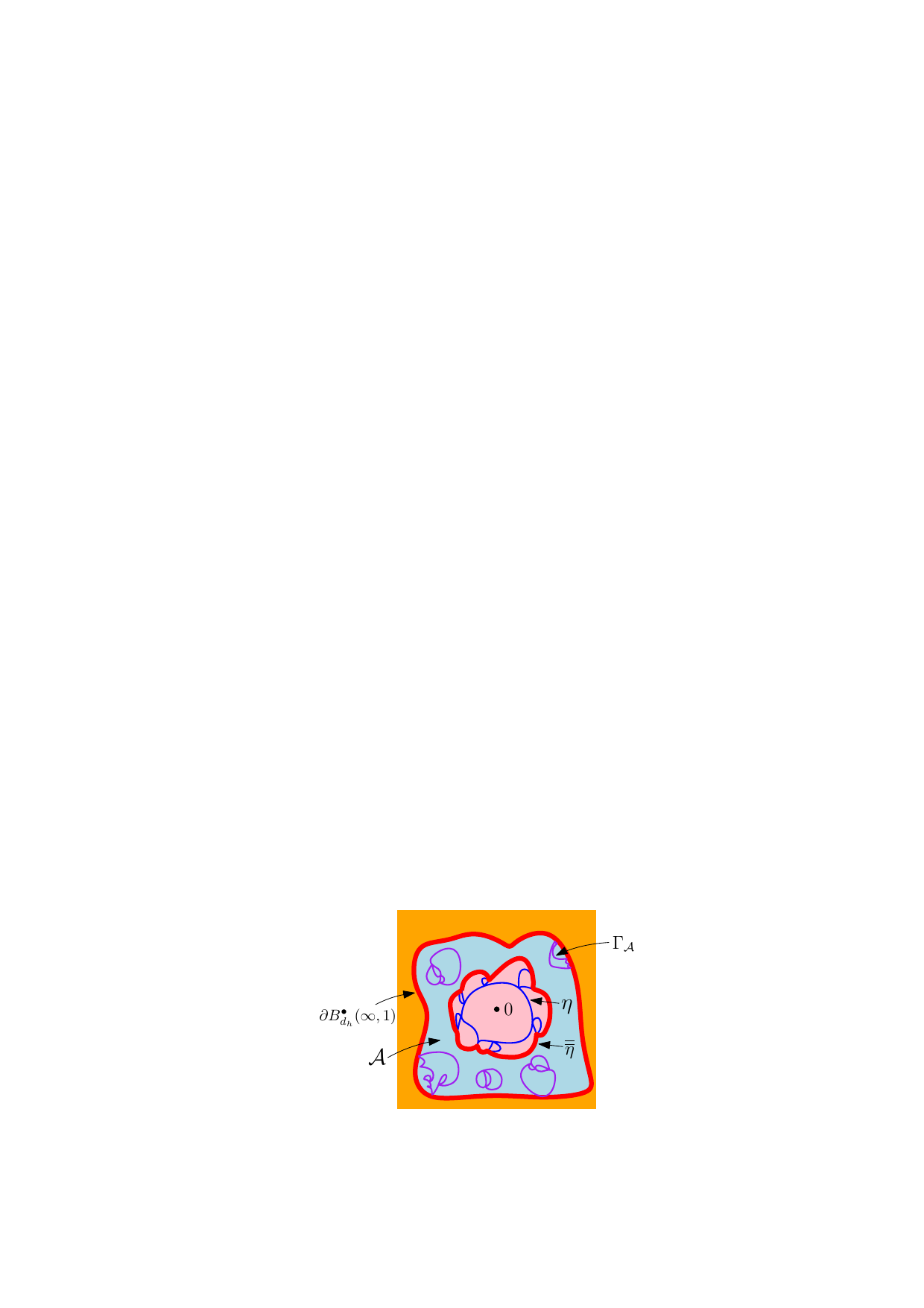}
\caption{Illustration of the construction of $\widetilde {\rm BA}$. In this figure, both the events $J$ and $K$ occur. Specifically, the orange region corresponds to $B^\bullet_{d_h}(\infty,1)$, and the light-blue region corresponds to $\mathcal{A}$. The purple loops in $\mathcal{A}$ correspond to $\Gamma_\mathcal{A}$, which are the percolation interfaces induced by $\omega|_{\mathcal{A}}$ with black boundary conditions. Note that the boundary touching loops in $\Gamma_\mathcal{A}$ may not appear in $\Gamma$. $K$ is the event that there are no purple loops whose outer boundaries separate $0$ and $\infty$. The two red loops have quantum length $\mathscr{L}_1$ and $\mathscr{L}_2$, respectively.} 
\label{fig:4}
\end{figure}
\end{lemma}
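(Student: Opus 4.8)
\textbf{Proof proposal for Lemma~\ref{lem:BABB-equivalent}.}

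The plan is to prove the two claims by sampling the two relevant marked loops ($\partial B^\bullet_{d_h}(\infty,1)$ and $\ep\eta$) in the two opposite orders, and checking that each order reproduces one of the two descriptions. For claim~\eqref{claim:BA-equivalent}, I would first sample the metric ball $B^\bullet_{d_h}(\infty,1)$. By Lemma~\ref{lem:law-sphere-metric-ball}, under the reweighting $\frac{1_{\{d_h(\infty,\cdot)>1\}}}{\mathscr{L}_1 \cf(\mathscr{L}_1)} {\rm QS}_2$, the quantum surface $D_\infty$ carved out by removing the (filled) unit ball around one marked point, with the other marked point retained as an interior point, has the law ${\rm LF}_\H^{(\gamma,i)}$ up to a multiplicative constant. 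Crucially, the whole-plane percolation $(\omega,\Gamma)$ restricted to $D_\infty$ has, by Lemma~\ref{lem:local}\eqref{claim:lem-local-2} (restriction) and conformal invariance, the law of critical percolation on a simply connected domain; so $(D_\infty, h|_{D_\infty}, i)$ together with this percolation is exactly the input of Definition~\ref{def:decorate-BA-backbone}. Then I would identify the loop $\ep\eta$ with the outer boundary $\ep{\mathcal L^*}$ of the outermost loop $\mathcal L^*$ in the percolation restricted to $D_\infty$ whose outer boundary surrounds $i$: indeed $\eta$ is sampled from the counting measure on black-outside interfaces of $\Gamma$, and the event $J$ forces $\ep\eta$ to separate $0$ from $\infty$, i.e.\ to lie in $D_\infty$ and surround $i$; the event $(T^*)^c$ in Definition~\ref{def:decorate-BA-backbone} (the loop $\mathcal L^*$ does not touch the boundary of $D_\infty$, i.e.\ does not touch $\partial B^\bullet_{d_h}(\infty,1)$) must be matched up with the part of $J$ and $K$ that says the annular component $\mathcal A$ is genuinely annular and that $\ep\eta$ is the outermost such separating outer boundary. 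One should argue — using Lemma~\ref{lem:CLE-double-domain} (measurability of the annulus CLE$_6$ with respect to $\omega|_{\mathcal A}$) and Lemma~\ref{lem:CLE-backbone} — that conditionally on the surface and on $\ep\eta$, the event $K$ has probability $p_{BB}(\tau)$; but for claim~\eqref{claim:BA-equivalent} I only need the measure-level identity of the decorated surfaces, so the event $K$ together with the weight $\sqrt{\mathscr L_2}$ on both sides matches verbatim once the roles of $\mathscr L_1,\mathscr L_2$ are matched ($\mathscr L_1$ is $\partial B^\bullet_{d_h}(\infty,1)$, which in Definition~\ref{def:decorate-BA-backbone} is the outer boundary, and $\mathscr L_2=\mathscr L$ is the quantum length of $\ep{\mathcal L^*}$). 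The weight $\frac{1}{\mathscr L_1\cf(\mathscr L_1)}$ is exactly absorbed by Lemma~\ref{lem:law-sphere-metric-ball} when converting ${\rm QS}_2$ to ${\rm LF}_\H^{(\gamma,i)}$, leaving the weight $\sqrt{\mathscr L_2}\,1_{(T^*)^c}$, which is the weight in Definition~\ref{def:decorate-BA-backbone}. This gives $\widetilde{\rm BA}= C\cdot{\rm BA}_{BB}$.

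For claim~\eqref{claim:BA-modulus} I would instead sample $\ep\eta$ first. By Lemma~\ref{lem:percolation-sle-loop}, the marginal law of $\ep\eta$ (loop sampled from the counting measure on black-outside interfaces, outer boundary taken) is, up to a constant, the SLE$_{8/3}$ loop measure $\mu_{\mathbb C}$; restricting to loops that separate $0$ and $\infty$ gives $\mu^{\rm sur}_{\mathbb C}$. By Lemma~\ref{lem:QS-weld}, ${\rm QS}_2\otimes\mu^{\rm sur}_{\mathbb C} = C\int_0^\infty {\rm Weld}({\rm QD}_{1,0}(\ell),{\rm QD}_{1,0}(\ell))\,\ell\,d\ell$; consequently, conditionally on $\ep\eta$ separating $0$ and $\infty$, the quantum surface $D_\eta$ (the infinite component of $\widehat{\mathbb C}\setminus\eta$, containing $\infty$) together with its boundary length $\mathscr L_2$ has, up to a constant, the law ${\rm QD}_{1,0}$ disintegrated over its boundary length — which is ${\rm LF}_\H^{(\gamma,i)}$ by \cite[Theorem 3.4]{ARS22} — with the independent percolation on it being, by the Markov property Lemma~\ref{lem:percolation-markov}, a critical percolation on the simply connected domain $D_\eta$ with black boundary conditions. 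Then, inside $D_\eta$, sampling the unit metric ball around $\infty$ and removing it: by Lemma~\ref{lem:BA-equivalent}, after the reweighting $\frac{1_E}{C_2\mathscr L_1\cf(\mathscr L_1)}{\rm LF}_\H^{(\gamma,i)}$ the resulting annular quantum surface $\mathcal A$ is a Brownian annulus ${\rm BA}$, whose law is $\frac1{\sqrt2}\eta(2i\tau){\rm LF}_\tau(d\phi)1_{\tau>0}d\tau$ by Proposition~\ref{prop:BA-Liouville}. It remains to account for the residual weight $\sqrt{\mathscr L_2}$, for the extra factor $\ell=\mathscr L_2$ in the uniform welding of Lemma~\ref{lem:QS-weld}, and for the indicator $1_K$: the event $K$ — that no CLE$_6$ loop in the annulus $\mathcal A$ has outer boundary separating $0$ and $\infty$ — is, by Lemma~\ref{lem:CLE-backbone}, exactly the continuum monochromatic two-arm event $\mathcal A_{BB}$ for the annulus $\mathcal A$, and hence conditionally on the modulus $\tau$ of $\mathcal A$ occurs with probability $p_{BB}(\tau)$. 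Integrating out everything except $\tau$ and $\phi$ collects the constants and the boundary-length weights into a single constant $C'$, giving $\widetilde{\rm BA}=C'\cdot p_{BB}(\tau)\eta(2i\tau){\rm LF}_\tau(d\phi)1_{\tau>0}d\tau$. Combining \eqref{claim:BA-equivalent} and \eqref{claim:BA-modulus} then yields Lemma~\ref{lem:decorate-BA-backbone} with $C_3 = C'/C$.

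The main obstacle I anticipate is the careful bookkeeping of which events and weights correspond on the two sides — in particular, verifying that the event $J\cap K$ built from the \emph{whole-plane} percolation restricted to the annulus $\mathcal A$ (with its \emph{own} black boundary conditions, as in Lemma~\ref{lem:CLE-double-domain}) genuinely matches, on the ${\rm BA}_{BB}$ side, the event that $\mathcal L^*$ is non-boundary-touching together with the monochromatic two-arm event, and on the ${\rm BA}$ side the monochromatic two-arm event for ${\rm BA}$. This requires showing that the annulus-CLE$_6$ $\Gamma_\mathcal A$ induced by $\omega|_{\mathcal A}$ is measurable with respect to $\omega|_{\mathcal A}$ (Lemma~\ref{lem:CLE-double-domain}) and that the relevant separation events are measurable with respect to the quantum-surface-decorated-by-percolation, so that the two reweightings by $1_K$ are literally the same measurable function; the almost-sure coincidence of ``outermost black-outside interface surrounding the inner marked point'' under the two samplings is the delicate point, and should follow from Menger-type duality as in Section 2.2 of~\cite{NQSZ23} together with the conformal restriction characterization of $\mu_{\mathbb C}$. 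A secondary technical point is checking that all the infinite measures involved are $\sigma$-finite and that the disintegrations/Fubini manipulations are justified; this can be handled exactly as in the analogous arguments of Sections~\ref{subsec:KPZ} and~\ref{subsec:decorate-12arm}.
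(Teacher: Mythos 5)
Your proposal is correct and follows essentially the same two-directional strategy as the paper's proof: for claim (i) sample the metric ball first (Lemma~\ref{lem:law-sphere-metric-ball}) and identify the resulting picture with Definition~\ref{def:decorate-BA-backbone}, and for claim (ii) sample $\ep\eta$ first (Lemmas~\ref{lem:percolation-sle-loop}, \ref{lem:QS-weld}, \ref{lem:percolation-markov}) to reduce to ${\rm LF}_\H^{(\gamma,i)}$ decorated by independent percolation and then apply the metric-ball construction of ${\rm BA}$ together with Lemma~\ref{lem:CLE-backbone}. The one step you gesture at but leave unverified — that the event $J\cap K$ picks out a \emph{unique} interface from the counting measure, namely the one corresponding to $\mathcal L^*$ in the disk carved out by the metric ball, and conversely — is exactly the pointwise correspondence the paper spells out; make that argument explicit (if two distinct $\eta$'s both satisfied $J\cap K$, the inner one would be a loop of $\Gamma_\mathcal{A}$ violating $K$ for the outer one) and the proof is complete.
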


\begin{proof}
    Let $(\widehat{\mathbb{C}}, h, 0, \infty)$ be an embedding of a sample from ${\rm QS}_2$, and sample a whole-plane percolation $(\omega, \Gamma)$. We begin with the proof of Claim~\eqref{claim:BA-equivalent}. On the event $O := \{d_h(0, \infty) > 1 \} \supset J$, let $D_0$ be the connected component of $\widehat{\mathbb{C}} \setminus B_{d_h}(\infty, 1)$ containing 0. Let $\Gamma_0$ be the disk CLE$_6$ induced by $\omega|_{D_0}$ with black boundary conditions. By definition, those percolation interfaces in $\Gamma_0$ that have black color on the outside and do not touch $\partial D_0$ are the same as those percolation interfaces in $\Gamma$ that have black color on the outside and are contained in $D_0$. Therefore, if a loop $\eta \in \Gamma$ satisfies the event $J \cap K$, then it must be contained in $D_0$ and do not touch $\partial D_0$, and thus it is also a loop in $\Gamma_0$. Moreover, this loop must be the outermost loop in $\Gamma_0$ whose outer boundary surrounds 0. (This is because the event $J$ implies that $\ep{\eta}$ surrounds 0, and the event $K$ implies that $\eta$ is the outermost such loop; otherwise, if there exists another loop in $\Gamma_0$ whose outer boundary surrounds 0 and $\eta$, then this loop also appears in $\Gamma_\mathcal{A}$ and so the event $K$ cannot occur.) On the other hand, if the outermost loop in $\Gamma_0$ whose outer boundary surrounds 0 does not touch $\partial D_0$, then it is in $\Gamma$ and satisfies the event $J \cap K$. By Lemma~\ref{lem:local}, given $h$ and on the event $O$, $(\omega|_{D_0}, \Gamma_0)$ has the same law as critical percolation on $D_0$, namely following the law $\mathbb{P}_{D_0}$. Therefore, after first sampling $h$ from $\frac{1_{O}}{\mathscr{L}_1 \cf(\mathscr{L}_1)} {\rm QS}_2$ and restricting to the quantum surface $(D_0, h)/{\sim_\gamma}$, the rest of the construction of $\widetilde {\rm BA}$ and ${\rm BA}_{BB}$ as defined in Definition~\ref{def:decorate-BA-backbone} are the same. Furthermore, by Lemma~\ref{lem:law-sphere-metric-ball} and Theorem~\ref{thm:bm-lqg-sphere}, the law of $(D_0, h)/{\sim_\gamma}$ under the reweighted measure $\frac{1_{O}}{\mathscr{L}_1 \cf(\mathscr{L}_1)} {\rm QS}_2$ is equivalent to ${\rm LF}_\H^{(\gamma,i)}$ up to a multiplicative constant. Hence, we obtain Claim~\eqref{claim:BA-equivalent}.

    Next, we prove Claim~\eqref{claim:BA-modulus}. Let $D_\eta$ be the infinite connected component of $\mathbb{C} \setminus \eta$ and $\tilde \mu_\mathbb{C}$ be the law of $\ep{\eta}$. By Lemmas~\ref{lem:percolation-markov} and \ref{lem:local}, conditioned on $\ep{\eta}$ and $h$ and on the event $J$, $(\omega|_\mathcal{A}, \Gamma_\mathcal{A})$ has the same law as critical percolation on $\mathcal{A}$, namely following the law $\mathbb{P}_\mathcal{A}$. We claim that $(\mathcal{A}, h)/{\sim_\gamma}$, under the reweighted measure $\frac{1_J \sqrt{\mathscr{L}_2}}{\mathscr{L}_1 \cf(\mathscr{L}_1)} {\rm QS}_2 \otimes \tilde \mu_\mathbb{C}$, has the law of $\eta(2i\tau) \LF_\tau(d\phi) 1_{\tau>0} d\tau$ up to a multiplicative constant. Combining this with Lemma~\ref{lem:CLE-backbone} yields Claim~\eqref{claim:BA-modulus}. Finally we prove the claim. Let $J' := \{\ep{\eta} \mbox{ separates $0$ and $\infty$}\} \supset J$. By Lemma~\ref{lem:percolation-sle-loop}, $\tilde \mu_\mathbb{C}$ restricted to the event $J'$ is equivalent to $\mu_\mathbb{C}^{\rm sur}$ up to a multiplicative constant. Therefore, by Lemma~\ref{lem:QS-weld} and the fact that ${\rm QD}_{1,0} = C \cdot {\rm LF}_\H^{(\gamma,i)}$ and $|{\rm LF}_\H^{(\gamma,i)}(\ell)| = C_1 \ell^{-\frac{3}{2}}$, the law of $(D_\eta, h)/{\sim_\gamma}$ under the reweighted measure $1_{J'} \sqrt{\mathscr{L}_2} {\rm QS}_2 \otimes \tilde \mu_\mathbb{C}$ is equivalent to ${\rm LF}_\H^{(\gamma,i)}$ up to a multiplicative constant. Then applying Lemma~\ref{lem:law-metric-ball} and Proposition~\ref{prop:BA-Liouville} yields the claim.
\end{proof}

\begin{proof}[Proof of Lemma~\ref{lem:decorate-BA-backbone}]
    Lemma~\ref{lem:decorate-BA-backbone} follows directly from Lemma~\ref{lem:BABB-equivalent}.
\end{proof}

\subsection{Proof of Theorem~\ref{thm:backbone-crossing}}\label{subsec:finalproof-backbone}

Recall the pinched quantum annulus $\wt{\QA}(\gamma^2-2)$ from Definition~\ref{def:QA}. We will first provide the joint boundary length distribution of $\wt{\QA}(\gamma^2-2)$.

\begin{lemma}\label{lem:QA(W)-length}
    Fix $\gamma \in (\sqrt{2},2)$. Let $\{\wt{\QA}(\gamma^2-2; \ell_1, \ell_2) \}_{\ell_1, \ell_2 >0}$ be the disintegration of $\wt{\QA}(\gamma^2-2)$ over the quantum lengths of the outer and inner boundaries. There exists a constant $C>0$ depending on $\gamma$ such that for any $\ell_1>0$ and $\alpha \in (\frac{\gamma}{2}, Q)$, 
    $$
    \int_0^\infty |\wt{\QA}(\gamma^2-2; \ell_1, \ell_2)|  \ell_2^{\frac{2(\alpha - Q)}{\gamma}} d\ell_2 = C \cdot \frac{\sin(\pi ( 1 - \frac{\gamma^2}{4}) \cdot \frac{2(\alpha - Q)}{\gamma} )}{\sin(\frac{\pi \gamma^2 }{4}\cdot \frac{2(\alpha - Q)}{\gamma})} \ell_1^{\frac{2(\alpha - Q)}{\gamma} - 1}.
    $$
\end{lemma}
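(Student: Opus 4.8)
The plan is to compute the joint boundary-length law of $\wt{\QA}(\gamma^2-2)$ directly from its Poissonian definition together with a Liouville conformal field theory boundary structure constant; note that one cannot route this through the conformal-radius formula of Proposition~\ref{prop:formula-CR}(C), as that would be circular --- Proposition~\ref{prop:formula-CR}(C) gives only the unrestricted law of ${\rm CR}(0,D_{\ep{\mathcal{L}^*}})$, whereas identifying its $T^*$-restricted law is precisely one of the uses of Lemma~\ref{lem:QA(W)-length} downstream. Unwinding Definition~\ref{def:QA} with $W=\gamma^2-2$ (so that $\gamma^2-W=2\ge\tfrac{\gamma^2}{2}$): the surface $\wt{\QA}(\gamma^2-2)$ is obtained by cyclically concatenating, along a circle $S_T$ of random perimeter $T\sim(\tfrac{4-\gamma^2}{\gamma^2})^{-2}t^{-1}\mathds{1}_{t>0}\,dt$, the disks of a Poisson point process on $S_T$ of intensity $\mathrm{Leb}_{S_T}\times\Md_2(2)$, and its outer (resp.\ inner) boundary length $\ell_1$ (resp.\ $\ell_2$) is the sum of the left (resp.\ right) boundary lengths of the constituent disks. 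The exponential formula for Poisson point processes then gives, for $\mathrm{Re}\,\mu_1,\mathrm{Re}\,\mu_2>0$,
\[
\iint|\wt{\QA}(\gamma^2-2;\ell_1,\ell_2)|\,e^{-\mu_1\ell_1-\mu_2\ell_2}\,d\ell_1\,d\ell_2=\Big(\tfrac{4-\gamma^2}{\gamma^2}\Big)^{-2}\int_0^\infty t^{-1}e^{-tG(\mu_1,\mu_2)}\,dt,\qquad G(\mu_1,\mu_2):=\int\big(1-e^{-\mu_1 L-\mu_2 R}\big)\,\Md_2(2)(dL\,dR),
\]
and although the $t$-integral diverges logarithmically at $0$ (reflecting the infinite mass of $\wt{\QA}(\gamma^2-2)$), one differentiation in $\mu_1$ removes this and yields the clean identity $\iint\ell_1\,|\wt{\QA}(\gamma^2-2;\ell_1,\ell_2)|\,e^{-\mu_1\ell_1-\mu_2\ell_2}\,d\ell_1\,d\ell_2=(\tfrac{4-\gamma^2}{\gamma^2})^{-2}\,\partial_{\mu_1}\log G(\mu_1,\mu_2)$.

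Next I would record the scaling covariance. Shifting the additive constant in the Liouville field of Definition~\ref{def:thick-disk} (with $\beta=\gamma$) shows that $G$ is homogeneous of degree $\tfrac{4}{\gamma^2}-1$ in $(\mu_1,\mu_2)$, so $|\wt{\QA}(\gamma^2-2;\ell_1,\ell_2)|$ is homogeneous of degree $-2$ and hence $\int_0^\infty|\wt{\QA}(\gamma^2-2;\ell_1,\ell_2)|\,\ell_2^{s}\,d\ell_2=C(s)\,\ell_1^{s-1}$ for $s:=\tfrac{2(\alpha-Q)}{\gamma}\in(-\tfrac{4}{\gamma^2},0)$; it remains to compute $C(s)$. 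To do so, integrate this identity against $\ell_1 e^{-\ell_1}\,d\ell_1$, so that its right side becomes $C(s)\,\Gamma(s+1)$ (legitimate for $s\in(-1,0)$, the general case following by analytic continuation in $\alpha$), and on the left write $\ell_2^{s}=\Gamma(-s)^{-1}\int_0^\infty u^{-s-1}e^{-u\ell_2}\,du$ and invoke the clean identity of the previous paragraph:
\[
C(s)\,\Gamma(s+1)=\frac{(\tfrac{4-\gamma^2}{\gamma^2})^{-2}}{\Gamma(-s)}\int_0^\infty u^{-s-1}\,\partial_{\mu_1}\log G(\mu_1,u)\big|_{\mu_1=1}\,du.
\]

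The one non-elementary input is the closed form of $G(\mu_1,\mu_2)$: the joint boundary-length law of $\Md_2(2)$ is a Liouville boundary structure constant --- with both boundary insertions equal to $\gamma$ and with boundary cosmological constants $\mu_1,\mu_2$ on the two boundary arcs --- computed in~\cite{RZ22}, and for the insertion value $\gamma$ the $\Gamma_b$-function factors collapse to elementary functions, so $\partial_{\mu_1}\log G(\mu_1,u)|_{\mu_1=1}$ is an explicit combination of powers of $u$ and trigonometric functions of $\log u$. Carrying out the Mellin integral and simplifying with the reflection formula $\Gamma(-s)\Gamma(s+1)=-\pi/\sin(\pi s)$ yields $C(s)=C\cdot\dfrac{\sin\big(\pi(1-\tfrac{\gamma^2}{4})s\big)}{\sin\big(\tfrac{\pi\gamma^2}{4}s\big)}$, which, after the substitution $s=\tfrac{2(\alpha-Q)}{\gamma}$, is the assertion; the window $\alpha\in(\tfrac{\gamma}{2},Q)$ is precisely the range in which all the integrals above converge.

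The crux --- and the only genuinely non-routine step --- is the handling of $G$ via~\cite{RZ22}: writing down the boundary structure constant with unequal boundary cosmological constants, specializing to insertion $\gamma$, verifying the collapse of the $\Gamma_b$-factors, and then checking that the Mellin transform in $u$ of $\partial_{\mu_1}\log G(\mu_1,u)|_{\mu_1=1}$ telescopes to the ratio of sines in the statement (all $\gamma$-dependent prefactors being absorbed into $C$). The remaining ingredients --- the Poisson exponential formula, the homogeneity, the Fubini exchanges, and the analytic continuation in $\alpha$ --- are routine; a useful consistency check is that $\wt{\QA}(\gamma^2-2)$ is invariant under exchanging its two boundaries (equivalently $G(\mu_1,\mu_2)=G(\mu_2,\mu_1)$, by the reflection symmetry of $\Md_2(2)$), which the resulting formula respects.
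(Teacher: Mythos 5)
Your argument follows essentially the same route as the paper's one-line proof: both combine the Poisson-concatenation structure of Definition~\ref{def:QA} with the explicit joint boundary-length law of $\Md_2(2)$, which the paper obtains by citing \cite[Proposition~3.5]{AHS21} while you re-derive it from the Liouville boundary structure constant of \cite{RZ22} (an equivalent input). The Laplace/Mellin manipulation of the Poisson exponential formula, the removal of the logarithmic $t\to 0$ divergence by a single $\mu_1$-derivative, the homogeneity reduction, and the non-circularity remark about Proposition~\ref{prop:formula-CR}(C) are all correct, and the one step you flag as genuinely non-routine --- carrying the $\Gamma_b$-collapse at insertion $\gamma$ through the Mellin integral --- is precisely what \cite[Proposition~3.5]{AHS21} packages.
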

\begin{proof}
    This result follows from Definition~\ref{def:QA} and \cite[Proposition 3.5]{AHS21}. 
\end{proof}

Next, we derive the joint boundary length distribution of BA$_{BB}$ using the conformal welding results in Proposition~\ref{prop:weld-backbone}, the formula for the CLE conformal radius from Proposition~\ref{prop:formula-CR}, and Lemma~\ref{lem:QA(W)-length}.

\begin{lemma}\label{lem:length-distribution-backbone}
    Fix $\gamma = \sqrt{8/3}$. There exist constants $C_4, C_5>0$ such that for any $\ell_1>0$ and $x \in \mathbb{R}$,
    \begin{equation}\label{eq:lem-BA-length-1}
    \int_0^\infty |{\rm BA}_{BB}(\ell_1,\ell_2)| \ell_2^{ix} d\ell_2 = \Big(C_4 \cdot \frac{\sinh(\pi x)}{\sinh(\frac{\pi \gamma^2 x}{2}) - x\sin(\frac{\pi \gamma^2}{2}) } - C_5 \cdot \frac{\sinh(\pi ( 1 - \frac{\gamma^2}{4}) x)}{\sinh(\frac{\pi \gamma^2 x}{4})} \Big)\ell_1^{ix - 1}.
    \end{equation}
\end{lemma}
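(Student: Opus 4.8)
\textbf{Proof strategy for Lemma~\ref{lem:length-distribution-backbone}.}

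The plan is to reproduce the argument that gave Lemma~\ref{lem:distribution-QA}, but now using the two conformal welding identities in Proposition~\ref{prop:weld-backbone} together with the conformal radius formulae in Proposition~\ref{prop:formula-CR}, and the joint boundary length law for $\wt{\QA}(\gamma^2-2)$ in Lemma~\ref{lem:QA(W)-length}. Since Lemma~\ref{lem:BABB-equivalent} (via Proposition~\ref{prop:weld-backbone}) identifies ${\rm BA}_{BB}$ with the quantum annulus $\mathcal{QA}^*$ up to a multiplicative constant, it suffices to compute the joint boundary length distribution of $\mathcal{QA}^*$, and then translate. Concretely, I would first treat the $T^*$-branch: the welding $\QD_{1,0}\otimes \mathsf{n}_{T^*} = C\int_0^\infty {\rm Weld}(\wt{\QA}(\gamma^2-2)(\ell),\QD_{1,0}(\ell))\,\ell\,d\ell$ from~\eqref{eq:conformal-welding-backbone-1}, combined with the reweighting trick from Lemma~\ref{lem:distribution-QA} (weighting by $|\psi_\eta'(i)|^{2\Delta_\alpha-2}$ to pass from $\QD_{1,0}=C\cdot\LF_\H^{(\gamma,i)}$ to $\LF_\H^{(\alpha,i)}$ for general $\alpha>\frac\gamma2$), yields $|\mathsf{n}_{T^*}^\alpha| = \mathsf{n}_{T^*}[|\psi_\eta'(i)|^{2\Delta_\alpha-2}]$, which is exactly the conformal radius moment $\mathbb{E}[{\rm CR}(0,D_{\ep{\mathcal{L}^*}})^\lambda 1_{T^*}]$ for $\lambda = 2\Delta_\alpha - 2$. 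But by~\cite{ASYZ24} we know $\mathbb{P}[T]=\tfrac12$ and the analog here, and more importantly Lemma~\ref{lem:QA(W)-length} gives the boundary-length transform of $\wt{\QA}(\gamma^2-2)$ directly. Matching the two, using $\LF_\H^{(\alpha,i)}(\ell)$ has mass $C\ell^{2(\alpha-Q)/\gamma-1}$ (Lemma~\ref{lem:LFH-onepoint}), fixes $\int_0^\infty |\mathcal{QA}^*_{T^*}(\ell_1,\ell_2)|\ell_2^{z}d\ell_2$ for $z=\frac{2(\alpha-Q)}\gamma\in(-\frac{4}{\gamma^2},0)$, and analytic continuation to $z=ix$ gives the $T^*$-contribution.

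Next I would handle the $(T^*)^c$-branch using~\eqref{eq:conformal-welding-backbone-2}, $\QD_{1,0}\otimes\mathsf{n}_{(T^*)^c}=\int_0^\infty{\rm Weld}(\mathcal{QA}^*(\ell),\QD_{1,0}(\ell))\ell\,d\ell$. The same reweighting identifies the transform of $|\mathcal{QA}^*(\ell_1,\ell_2)|$ against $\ell_2^{2(\alpha-Q)/\gamma}$ with the conformal radius moment $\mathbb{E}[{\rm CR}(0,D_{\ep{\mathcal{L}^*}})^\lambda 1_{(T^*)^c}]$, $\lambda=2\Delta_\alpha-2$. This last quantity is obtained by subtraction: $\mathbb{E}[{\rm CR}^\lambda 1_{(T^*)^c}] = \mathbb{E}[{\rm CR}^\lambda] - \mathbb{E}[{\rm CR}^\lambda 1_{T^*}]$, where the full moment $\mathbb{E}[{\rm CR}(0,D_{\ep{\mathcal{L}^*}})^\lambda]$ is~\eqref{eq:CR-backbone} of Proposition~\ref{prop:formula-CR} and the $T^*$-part is what was just computed from Lemma~\ref{lem:QA(W)-length}. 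Setting $\lambda = 2\Delta_\alpha-2$, i.e. $\alpha = Q + \tfrac\gamma2\cdot(\text{something})$, and using $(\kappa-4)^2-8\kappa\lambda$ with $\kappa = 16/\gamma^2$ translates all the trigonometric arguments in~\eqref{eq:CR-backbone} and Lemma~\ref{lem:QA(W)-length} into functions of the single variable $u := \tfrac{2(\alpha-Q)}{\gamma}$ (and ultimately $x$ via $u = ix$). After the substitution $\gamma^2 = 8/3$, the full-moment term of~\eqref{eq:CR-backbone} produces (up to constants) the factor $\frac{\sinh(\pi x)}{\sinh(\frac{\pi\gamma^2 x}{2}) - x\sin(\frac{\pi\gamma^2}{2})}$ — note $\sin(\frac{\pi}{4}\sqrt{\cdots})$ in the numerator of~\eqref{eq:CR-backbone} becomes $\sinh(\pi x)$ after continuation, and the denominator $\sin(\frac{2\pi}{\kappa}\sqrt{\cdots}) - \frac14\sin(\frac{8\pi}{\kappa})\sqrt{\cdots}$ becomes the stated combination — while the $T^*$-term from Lemma~\ref{lem:QA(W)-length} contributes the $\frac{\sinh(\pi(1-\frac{\gamma^2}{4})x)}{\sinh(\frac{\pi\gamma^2 x}{4})}$ term. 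Finally, Lemma~\ref{lem:decorate-BA-backbone} identifies ${\rm BA}_{BB}$ with $\mathcal{QA}^*$ up to a constant, and the freedom in the welding (the $\ell\,d\ell$ factor) together with the factor $\sqrt{\mathscr L}$ in Definition~\ref{def:decorate-BA-backbone} accounts for the precise $\ell_1^{ix-1}$ power; collecting the constants as $C_4, C_5 > 0$ completes the proof.

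The main obstacle I expect is twofold. First, bookkeeping the conformal-radius-to-boundary-length dictionary: one must be careful that the exponent $\lambda$ in Proposition~\ref{prop:formula-CR} corresponds to $2\Delta_\alpha - 2 = \frac\alpha2(Q-\frac\alpha2)-2$, that the range $\alpha\in(\frac\gamma2, Q)$ maps to the convergence region $\lambda > -\beta_2$ of~\eqref{eq:CR-backbone} (this requires checking $-\beta_2$ corresponds to the endpoint, i.e. relating $\beta_2$ to $2\Delta_{\gamma/2}-2$ — indeed $\beta_2 = 2 - 2\Delta_{\gamma/2}$ up to the identification, which is exactly the statement that the conformal radius moment diverges at the threshold), and that the square root $\sqrt{(\kappa-4)^2 - 8\kappa\lambda}$ becomes purely imaginary $= i\cdot(\text{const})\cdot x$ along the continuation $\alpha \to Q + \frac{i\gamma x}{2}$, turning all the $\sin$'s and $\cos$'s into $\sinh$'s and $\cosh$'s with the right arguments. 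Second, justifying the analytic continuation: as in Lemma~\ref{lem:distribution-QA} one argues that $z\mapsto \int_0^\infty |\mathcal{QA}^*(\ell_1,\ell_2)|\ell_2^z\,d\ell_2$ is analytic on a strip $\{{\rm Re}\,z > -\frac{4}{\gamma^2}\}$ (which follows because $\mathsf{n}_{(T^*)^c}$ is dominated by $\mathsf n$ and the moments of ${\rm CR}(0,D_{\ep{\mathcal{L}^*}})$ are finite on that range), so the identity, valid for real $z = \frac{2(\alpha-Q)}{\gamma}$ in that strip, extends to $z = ix$. The actual trigonometric simplification after plugging in $\gamma^2 = 8/3$ is routine but delicate, and I would verify it at $x=0$ to pin down consistency of constants. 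Everything else is a direct transcription of the $p_B$, $p_{BW}$ argument from Section~\ref{subsec:proof-cardy}.
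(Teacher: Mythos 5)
Your proposal is essentially the paper's proof: both use the two welding identities of Proposition~\ref{prop:weld-backbone}, the $|\psi_\eta'(i)|^{2\Delta_\alpha-2}$ reweighting to pass from $\gamma$- to general $\alpha$-insertions, the boundary-length transform of $\wt{\QA}(\gamma^2-2)$ from Lemma~\ref{lem:QA(W)-length}, subtraction from the full conformal radius moment~\eqref{eq:CR-backbone} to isolate the $(T^*)^c$-contribution, and analytic continuation from a real interval of exponents to $z=ix$. One parenthetical remark in your plan is incorrect: $\beta_2 \neq 2-2\Delta_{\gamma/2}$ (the latter equals $\tfrac23$ at $\gamma^2=\tfrac83$, whereas $\beta_2\approx 0.357$ is strictly smaller), so the finiteness threshold of the conformal radius moment does \emph{not} coincide with $\alpha=\tfrac\gamma2$; this is harmless, since the identity only needs to hold on a sub-interval of $\alpha$ near $Q$ for the analytic continuation to go through, which is also what the paper's argument relies on.
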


\begin{proof}
    In this proof, the constant $C$ may change from line to line. Using~\eqref{eq:conformal-welding-backbone-2} from Proposition~\ref{prop:weld-backbone} and the fact that $\QD_{1,0} = C \LF_\H^{(\gamma,i)}$ from \cite[Theorem 3.4]{ARS21}, we obtain 
    \begin{equation}\label{eq:lem6.12-1}
    \LF_\H^{(\gamma,i)} \otimes \mathsf{n}_{(T^*)^c} = \int_0^\infty {\rm Weld}(\mathcal{QA}^*(\ell), \LF_\H^{(\gamma,i)}(\ell)) \ell d\ell
    \end{equation}
    where $\mathsf{n}_{(T^*)^c}$ is the law of $\ep{\mathcal{L}^*}$ restricted to the event $(T^*)^c$. Using the fact that $|\LF_\H^{(\gamma,i)}(\ell)| = C_1 \ell^{-\frac{3}{2}}$, we see from Definition~\ref{def:decorate-BA-backbone} that ${\rm BA}_{BB} = C \cdot \mathcal{QA}^*$ for some $C>0$. For a loop $\eta$ sampled from $\mathsf{n}_{(T^*)^c}$, let $D_\eta$ be the connected component of $\mathbb{H} \setminus \eta$ containing $i$, and let $\psi_\eta: \mathbb{H} \rightarrow D_\eta $ be the conformal map with $\psi_\eta(i) = i$. Define the measure $\mathsf{n}_{(T^*)^c}^\alpha$ by $\frac{d \mathsf{n}_{(T^*)^c}^\alpha(\eta)}{d \mathsf{n}_{(T^*)^c}(\eta)} = |\psi_\eta'(i)|^{2 \Delta_\alpha - 2}$. Similar to the proof in Lemma~\ref{lem:distribution-QA}, we can derive from~\eqref{eq:lem6.12-1} that
    $$
    \LF_\H^{(\alpha,i)} \otimes \mathsf{n}_{(T^*)^c}^\alpha = \int_0^\infty {\rm Weld}(\mathcal{QA}^*(\ell), \LF_\H^{(\alpha,i)}(\ell)) \ell d\ell.
    $$
    Applying Lemma~\ref{lem:LFH-onepoint}, we get that for any $\ell_1>0$ and $\alpha > \frac{\gamma}{2}$:
    \begin{equation}\label{eq:lem6.12-2}
    \int_0^\infty |\mathcal{QA}^*(\ell_1,\ell_2)| \ell_2^{\frac{2(\alpha - Q)}{\gamma}} d\ell_2 = \mathsf{n}_{(T^*)^c}[|\psi_\eta'(i)|^{2 \Delta_\alpha - 2}].
    \end{equation}
    Similarly, we can use~\eqref{eq:conformal-welding-backbone-2} of Proposition~\ref{prop:weld-backbone} to derive that
    \begin{equation}\label{eq:lem6.12-3}
    \int_0^\infty |\wt{\QA}(\gamma^2-2; \ell_1,\ell_2)| \ell_2^{\frac{2(\alpha - Q)}{\gamma}} d\ell_2 = \mathsf{n}_{T^*}[|\psi_\eta'(i)|^{2 \Delta_\alpha - 2}],
    \end{equation}
    where $\psi_\eta$ is defined similarly as before. Moreover, by~\eqref{eq:CR-backbone} from Proposition~\ref{prop:formula-CR}, we have
    \begin{equation}\label{eq:lem6.12-4}
    \mathsf{n}_{(T^*)^c}[|\psi_\eta'(i)|^{2 \Delta_\alpha - 2}] + \mathsf{n}_{T^*}[|\psi_\eta'(i)|^{2 \Delta_\alpha - 2}] = \frac{4 \sin(\frac{\pi \gamma^2}{2})}{\gamma^2 \sin(\frac{4 \pi}{\gamma^2})} \times \frac{\sin(\pi \frac{2(\alpha - Q)}{\gamma})}{\sin(\frac{\pi \gamma^2}{2}\cdot \frac{2(\alpha - Q)}{\gamma}) - \frac{2(\alpha - Q)}{\gamma} \sin(\frac{\pi \gamma^2}{2})}.
    \end{equation}
    Combining~\eqref{eq:lem6.12-2}--\eqref{eq:lem6.12-4}, Lemma~\ref{lem:QA(W)-length}, and using the fact that ${\rm BA}_{BB} = C \cdot \mathcal{QA}^*$, we obtain that for $\frac{\gamma}{2} < \alpha < Q$,
    \begin{equation}\label{eq:lem6.12-5}
    \int_0^\infty |{\rm BA}_{BB}(\ell_1,\ell_2)| \ell_2^{\frac{2(\alpha - Q)}{\gamma}} d\ell_2 = \Big( C_4 \cdot \frac{\sin(\pi \frac{2(\alpha - Q)}{\gamma})}{\sin(\frac{\pi \gamma^2}{2}\cdot \frac{2(\alpha - Q)}{\gamma}) - \frac{2(\alpha - Q)}{\gamma} \sin(\frac{\pi \gamma^2}{2})} - C_5 \cdot \frac{\sin(\pi ( 1 - \frac{\gamma^2}{4}) \cdot \frac{2(\alpha - Q)}{\gamma} )}{\sin(\frac{\pi \gamma^2 }{4}\cdot \frac{2(\alpha - Q)}{\gamma})} \Big) \ell_1^{\frac{2(\alpha - Q)}{\gamma} - 1}
    \end{equation}
    for some constants $C_4,C_5>0$. As a function of $z$, $ \int_0^\infty |{\rm BA}_{BB}(\ell_1,\ell_2)| \ell_2^z d\ell_2$ is analytic for $\{z \in \mathbb{C}: -\frac{4}{\gamma^2} < {\rm Re} z < 0\}$. Moreover, by sending $\alpha$ to $Q$, we can deduce from~\eqref{eq:lem6.12-5} that $\int_0^\infty |{\rm BA}_{BB}(\ell_1,\ell_2)| d\ell_2 < \infty$. Applying analytic continuation to both sides of~\eqref{eq:lem6.12-5} and sending $z$ to $ix$ yields the lemma. Note that the function converges as $z$ tends to $ix$, as $\int_0^\infty |{\rm BA}_{BB}(\ell_1,\ell_2)| d\ell_2 < \infty$.
\end{proof}

Now we prove Theorem~\ref{thm:backbone-crossing}, assuming Lemma~\ref{lem:laplace-calculation-1}, whose proof is postponed to Section~\ref{sec:calculation}.

\begin{lemma}\label{lem:laplace-calculation-1}
    For $\tau>0$, let $y(\tau)$ be the right-hand side of~\eqref{eq:thm-backbone-2} or~\eqref{eq:thm-backbone-1}   in Theorem~\ref{thm:backbone-crossing}. Then
    \begin{equation}\label{eq:lem7.3-1}
    \int_0^\infty e^{-\frac{2\pi x^2 \tau}{3}} y(\tau) \eta(2 i \tau)d\tau = \frac{\sqrt{3}}{x} \bigg( \frac{\sinh(\frac{2}{3}\pi x) \sinh(\pi x)}{\sinh(\frac{4}{3} \pi x) + \frac{\sqrt{3}}{2} x} - \sinh(\frac{1}{3} \pi x) \bigg) \quad \mbox{for all }x \in \mathbb{R}.
    \end{equation}
    Moreover, $\lim_{\tau \to 0} y(\tau) = 1$.
\end{lemma}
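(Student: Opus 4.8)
## Proof strategy for Lemma~\ref{lem:laplace-calculation-1}

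The plan is to prove the two assertions separately. For the Laplace transform identity~\eqref{eq:lem7.3-1}, I would work from the closed-channel expansion~\eqref{eq:thm-backbone-2}, since there the $\tilde q$-powers are transparent and the Laplace transform in $\tau$ of each term is elementary. Write $\tilde q = e^{-2\pi\tau}$, so that $\tilde q^{s - 1/12} \eta(2i\tau) = e^{-2\pi(s-1/12)\tau} \cdot e^{-\pi\tau/6}\prod_{n\ge1}(1-e^{-4\pi n\tau})$; expanding the product $\prod_{n\ge1}(1-\tilde q^{2n})^{-1}$ against $\prod_{n\ge1}(1-\tilde q^{2n})$ from $\eta$ we are left with $\eta(2i\tau) / \prod(1-\tilde q^{2n}) = \tilde q^{1/12}$. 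Hence the left side of~\eqref{eq:lem7.3-1} becomes $\sum_{s\in\mathcal S} c(s) \int_0^\infty e^{-\frac{2\pi x^2\tau}{3}} e^{-2\pi s\tau}\,d\tau = \sum_{s\in\mathcal S} c(s) \cdot \frac{1}{2\pi(s + x^2/3)}$, where $c(s) = \frac{-\sqrt3 \sin(\frac{2\pi}{3}\sqrt{3s})\sin(\pi\sqrt{3s})}{\cos(\frac{4\pi}{3}\sqrt{3s}) + \frac{3\sqrt3}{8\pi}}$. So the identity reduces to a partial-fractions / residue statement: the meromorphic function $F(w) := \frac{\sqrt3}{w}\big(\frac{\sinh(\frac23\pi w)\sinh(\pi w)}{\sinh(\frac43\pi w)+\frac{\sqrt3}{2}w} - \sinh(\frac13\pi w)\big)$ should have its poles exactly at $w = \pm i\sqrt{3s}$ for $s\in\mathcal S$, with the correct residues matching $c(s)$, after accounting for the substitution $w^2 = -3s$. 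One then has to verify absolute convergence of both sides and the absence of extra poles; this is where Lemma~\ref{lem:absolute-convergent} (referenced in the excerpt) and the characterization of $\mathcal S$ as $\{s : \sin(4\pi\sqrt{s/3}) + \frac32\sqrt s = 0\}\setminus\{0,\tfrac13\}$ enter. Concretely I would: (i) rewrite $\sinh(\frac43\pi w) + \frac{\sqrt3}{2}w$ in terms of $\sqrt s$ and check its zeros correspond to $\mathcal S$ plus the spurious roots $s=0,\frac13$ (which must cancel against the $-\sinh(\frac13\pi w)$ correction term and the $\frac1w$ factor); (ii) compute the residue of $F$ at each genuine pole using $\frac{d}{dw}[\sinh(\frac43\pi w)+\frac{\sqrt3}{2}w]$ and the double-angle relations among $\sinh(\frac13\pi w), \sinh(\frac23\pi w), \sinh(\pi w), \sinh(\frac43\pi w)$; (iii) match against $\frac{c(s)}{2\pi}$ divided by the Jacobian of $s\mapsto w$. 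Since both sides are meromorphic with the same poles, residues, and decay, they agree.

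For the consistency of the two expansions — i.e.\ that the open-channel form~\eqref{eq:thm-backbone-1} yields the same $y(\tau)$ — I would either verify directly that the stated $g_j,h_j$ arise from applying a modular/Poisson-summation transformation to~\eqref{eq:thm-backbone-2} (swapping $\tilde q = e^{-2\pi\tau}$ for $q = e^{-\pi/\tau}$ via $\eta(i\tau) = \tau^{-1/2}\eta(i/\tau)$ and the Jacobi-type transformation of the theta-like sums), or, more cheaply, just note that it suffices to prove~\eqref{eq:lem7.3-1} for \emph{one} of the two expressions and then invoke injectivity of the Laplace transform on a suitable class to conclude the other expansion defines the same function. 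The latter is cleaner: once~\eqref{eq:lem7.3-1} is established for~\eqref{eq:thm-backbone-2}, and once one checks that the right-hand side of~\eqref{eq:thm-backbone-1} also has a Laplace transform against $\eta(2i\tau)$ equal to the same right side of~\eqref{eq:lem7.3-1} — which follows from the standard $q \leftrightarrow \tilde q$ modular computation carried out term by term, using $\frac{1}{\tau}$-expansions of the relevant theta constants — the two agree as functions of $\tau > 0$ by uniqueness of Laplace inversion. I expect the bookkeeping of the $g_j,h_j$ polynomials (the binomial and double-factorial coefficients, the two residue classes $n+2m \in \{j,j-1\}$ versus $\{j-1,j-2\}$) to be the fiddliest part, but it is purely mechanical once the modular transformation of $\frac{\sinh(\frac23\pi x)\sinh(\pi x)}{\sinh(\frac43\pi x)+\frac{\sqrt3}{2}x}$ is organized as a geometric-type series in $q^{2/3}$.

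Finally, for $\lim_{\tau\to0} y(\tau) = 1$: this is immediate from the open-channel expansion~\eqref{eq:thm-backbone-1}, since as $\tau\to0$ we have $q = e^{-\pi/\tau}\to0$, and every term $g_j(\tau)q^{(2j^2-j)/3}$, $h_j(\tau)q^{(2j^2+j)/3}$ with $j\ge1$ has a power of $q$ with positive exponent that dominates the polynomial prefactor in $\tau^{-1}$ (one checks $(2j^2\mp j)/3 > 0$ for $j\ge1$, and $q^c(\tau^{-1})^d \to 0$ as $\tau\to0$ for any $c>0, d\ge0$); likewise $\prod_{n\ge1}(1-q^n)^{-1}\to1$. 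Hence $y(\tau)\to 1$. One should double-check uniform control of the tail of the series near $\tau=0$ — I would bound $|g_j(\tau)|, |h_j(\tau)|$ crudely by $C^j \tau^{-j}$ for $\tau$ small and note $\sum_j C^j\tau^{-j} q^{(2j^2-j)/3}$ converges to $0$ as $\tau\to0$ because the Gaussian-type decay $q^{2j^2/3} = e^{-2\pi j^2/(3\tau)}$ beats $(C/\tau)^j$.

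\textbf{Main obstacle.} The crux is step (ii)--(iii) of the first part: verifying that the \emph{residues} of the rational-hyperbolic function $F(w)$ at the zeros of $\sinh(\frac43\pi w) + \frac{\sqrt3}{2}w$ are exactly $c(s)/(2\pi)$ up to the change-of-variables Jacobian, and in particular that the apparent poles at the spurious roots $s=0$ and $s=\frac13$ (which are excluded from $\mathcal S$) genuinely cancel. This cancellation is the whole reason the correction term $-\sqrt3\sinh(\frac13\pi x)/x$ appears, and getting the hyperbolic identities to line up — especially at $s=\frac13$, where $\sqrt{3s}=1$ and several factors degenerate — will require care. The convergence/no-extra-poles bookkeeping (leaning on Lemma~\ref{lem:absolute-convergent}) is the other place where one must be precise rather than formal.
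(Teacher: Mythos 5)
Your plan goes in the \emph{forward} direction (Laplace-transform each term of the given series and match the result against the right-hand side of~\eqref{eq:lem7.3-1}), whereas the paper's proof runs the computation \emph{backwards}: it starts from the closed-form $g(t)$ on the right side, applies the inverse Laplace transform, and then \emph{derives} both the $\tilde q$- and $q$-series from scratch. For the closed-channel expansion the two are dual views of the same calculation: your Mittag--Leffler/partial-fractions matching of $F(w)$ against $\sum_{s\in\mathcal S} c(s)/(2\pi(s+x^2/3))$ is precisely the paper's Bromwich contour deformed into the left half-plane, and the spurious cancellations at $s=0$ and $s=\tfrac13$ (which you correctly identify as the crux) are encoded in the paper by simply excluding those two points from $\mathcal{S}'$ and noting $g(t)$ is regular there. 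The decay estimate on large semicircles that you would need to close your Mittag--Leffler sum without polynomial correction terms is exactly the paper's~\eqref{eq:lem7.3-2-4} plus Lemma~\ref{lem:absolute-convergent}, so the technical content is the same. Your argument for $\lim_{\tau\to0}y(\tau)=1$ from the $q$-series is also the paper's (the paper states it in one line, but your tail estimate $|g_j|,|h_j|\le C^j\tau^{-j}$ beaten by $q^{2j^2/3}=e^{-2\pi j^2/(3\tau)}$ is the reason it works).

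One genuine misstep, though you do not lean on it: your first proposed route to the open-channel series (``verify that the stated $g_j,h_j$ arise from applying a modular/Poisson-summation transformation to~\eqref{eq:thm-backbone-2}'') would fail. Poisson summation trades a theta sum over an \emph{arithmetic} progression for its dual, and that is indeed how the $q\leftrightarrow\tilde q$ passage goes for $p_B$ and $p_{BW}$ (Remark~\ref{rmk:expansion}), because the exponents in~\eqref{eq:expand-tilde-q} and~\eqref{eq:expand-q} sit on lattices. But here the closed-channel exponents $\{s-\tfrac1{12}:s\in\mathcal S\}$ are the transcendental roots of $\sin(4\pi\sqrt{s/3})+\tfrac32\sqrt s=0$, which is not a lattice, so no Jacobi/Poisson identity relates the two series directly. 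Your ``cheaper'' fallback is the right one, and it is essentially what the paper does: both expansions get tied to the same explicit $g(t)$ through the Laplace transform (the paper via the explicit Gaussian kernel identity~\eqref{eq:lem7.3-1-4} which makes the inverse transform of $x^{n-1}e^{-u\pi x}$ elementary, and which you would also need for the forward transform of the $\tau^{-n}q^{\alpha}$ terms). So the proposal is correct provided you drop the Poisson-summation alternative; the remaining route is sound and, modulo the orientation of the Laplace transform, coincides with the paper's.
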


\begin{proof}[Proof of Theorem~\ref{thm:backbone-crossing} given Lemma~\ref{lem:laplace-calculation-1}]
    Combining Proposition~\ref{prop:kpz-annulus}, Lemmas~\ref{lem:decorate-BA-backbone} and \ref{lem:length-distribution-backbone}, we obtain that for all $x \in \mathbb{R}$,
    \begin{equation}\label{eq:proof-1.2-1}
    \begin{aligned}
    \int_0^\infty e^{-\frac{\pi \gamma^2 x^2 \tau}{4}} \times C_3 \cdot p_{BB}(\tau) \eta(2i\tau) d \tau &= \frac{2 \sinh (\frac{\gamma^2}{4} \pi x)}{\pi \gamma x \Gamma(1+i x)} \iint_0^\infty \ell_1 e^{-\ell_1} \ell_2^{ix} \times |{\rm BA}_{BB}(\ell_1, \ell_2)| d \ell_1 d \ell_2 \\
    &=\frac{2 \sinh (\frac{\gamma^2}{4} \pi x)}{\pi \gamma x} \Big(C_4 \cdot \frac{\sinh(\pi x)}{\sinh(\frac{\pi \gamma^2 x}{2}) - x\sin(\frac{\pi \gamma^2}{2}) } - C_5 \cdot \frac{\sinh(\pi ( 1 - \frac{\gamma^2}{4}) x)}{\sinh(\frac{\pi \gamma^2 x}{4})} \Big).
    \end{aligned}
    \end{equation}
    Since $0 \leq p_{BB}(\tau) \leq 1$ and $\eta(2i\tau)$ has an exponential tail as $\tau$ tends to $\infty$, the left-hand side of~\eqref{eq:proof-1.2-1} converges to 0 as $x$ tends to infinity, and so must the right-hand side. This implies that $C_4 = C_5$. Therefore, by Lemma~\ref{lem:laplace-calculation-1}, $p_{BB}(\tau)$ agrees with the right-hand side of~\eqref{eq:thm-backbone-2} and~\eqref{eq:thm-backbone-1} modulo a multiplicative constant,  which can be fixed using the fact $\lim_{\tau \rightarrow 0} p_{BB}(\tau) = 1$. This concludes the proof.
\end{proof}

\section{Integral identities: proof of Lemmas~\ref{lem:laplace-calculation} and \ref{lem:laplace-calculation-1}}\label{sec:calculation}

For $\tau>0$, let $q = e^{-\pi/\tau}$ and $\tilde q = e^{-2\pi\tau}$. Recall that the Dedekind eta function $\eta(z) = e^{\frac{i \pi z}{12}} \prod_{n=1}^\infty (1-e^{2 n i \pi z})$ for $z \in \mathbb{C}$ with ${\rm Im} z >0 $. It satisfies the relation $\eta(\frac{i}{\tau}) = \sqrt{\tau} \eta(i \tau)$ for any $\tau>0$. 

\begin{proof}[Proof of Lemma~\ref{lem:laplace-calculation}]
    It suffices to only prove the case for $x>0$. To prove Claim~\eqref{eq:eta-integral-1}, we first show that
    \begin{equation}\label{eq:lema.1-1}
        \Big(1-\sqrt{\frac{3}{2}} \frac{\eta(6i \tau) \eta(\frac{3}{2}i \tau)}{\eta(2 i \tau)\eta(3 i \tau)}\Big) \eta(2 i \tau) = \frac{1}{\sqrt{2 \tau}} q^{\frac{1}{24}} \sum_{k \in \mathbb{Z}} (-1)^k \Big( q^{\frac{3k^2-k}{2}} - q^{\frac{2k^2+k}{3}} \Big).
    \end{equation}
    Using the identity $\eta(2 i \tau) = \frac{1}{\sqrt{2\tau}} \eta(\frac{i}{2\tau})$ and the Euler identity $\prod_{n=1}^\infty(1-q^n) = \sum_{k \in \mathbb{Z}} (-1)^k q^{\frac{3k^2-k}{2}}$, we see that $\eta(2 i \tau) = \frac{1}{\sqrt{2 \tau}} q^{\frac{1}{24}} \sum_{k \in \mathbb{Z}} (-1)^k q^{\frac{3k^2-k}{2}}$. The Jacobi triple product identity states that for any $z,y \in \mathbb{C}$ with $|z|<1$ and $y \neq 0$,
\begin{equation}\label{eq:jacobi-triple}
\prod_{n=1}^\infty(1-z^{2n})(1+z^{2n-1}y)(1+z^{2n-1} y^{-1}) = \sum_{k=-\infty}^\infty z^{k^2} y^k.
\end{equation}
Taking $z = q^{\frac{2}{3}}$ and $y = - q^{\frac{1}{3}}$ in~\eqref{eq:jacobi-triple} and using $\eta(\frac{i}{\tau}) = \sqrt{\tau} \eta(i \tau)$ with $\frac{3}{2}\tau, 3 \tau, 6 \tau$ in place of $\tau$, we obtain
$$
\sum_{k=-\infty}^\infty (-1)^k q^{\frac{2k^2+k}{3}} = \prod_{n=1}^\infty(1-q^\frac{4n}{3})(1-q^\frac{4n-1}{3})(1-q^\frac{4n-3}{3}) = q^{-\frac{1}{24}} \frac{\eta(\frac{i}{6 \tau}) \eta(\frac{2i}{3\tau})}{\eta(\frac{i}{3\tau})} = \sqrt{3 \tau} q^{-\frac{1}{24}} \frac{\eta(6i \tau) \eta(\frac{3}{2}i \tau)}{\eta(3 i \tau)}.
$$
Combining this with the preceding identity for $\eta(2 i \tau)$ yields~\eqref{eq:lema.1-1}. Then, using \eqref{eq:lema.1-1} and the identity $\int_0^\infty \frac{1}{\sqrt{2 \tau}} e^{-\frac{2\pi x^2 \tau}{3}} q^a d\tau = \frac{\sqrt{3}}{2 x} e^{-\sqrt{\frac{8 a}{3}} \pi x}$, which holds for any $a>0$, we obtain
    $$
    \int_0^\infty e^{-\frac{2\pi x^2 \tau}{3}}\Big(1-\sqrt{\frac{3}{2}} \frac{\eta(6i \tau) \eta(\frac{3}{2}i \tau)}{\eta(2 i \tau)\eta(3 i \tau)}\Big) \eta(2 i \tau)d\tau = \frac{\sqrt{3}}{2x} \sum_{k \in \mathbb{Z}} (-1)^k \Big(e^{-\frac{|6j-1|}{3} \pi x} - e^{-\frac{|4j+1|}{3} \pi x} \Big).
    $$
After simplifying, we conclude Claim~\eqref{eq:eta-integral-1}.

Claim~\eqref{eq:eta-integral-2} can be proved similarly. Using $\eta(i \tau) = \frac{1}{\sqrt{\tau}} \eta(\frac{i}{\tau})$ with $\tau, 2\tau, 3\tau, 6\tau$ in place of $\tau$, we obtain
\begin{align*}
\frac{\sqrt{3} \eta(i\tau) \eta(6i\tau)^2}{\eta(3i\tau) \eta(2i\tau)} &= \frac{1}{\sqrt{2 \tau}} \frac{\eta(\frac{i}{\tau}) \eta(\frac{i}{6 \tau})^2}{\eta(\frac{i}{3\tau})\eta(\frac{i}{2\tau})} =  \frac{1}{\sqrt{2 \tau}} q^{\frac{1}{24}} \prod_{n=1}^\infty \frac{(1-q^{2n}) (1-q^{\frac{n}{3}})^2}{(1-q^{\frac{2n}{3}}) (1-q^n)}\\
&= \frac{1}{\sqrt{2 \tau}} q^{\frac{1}{24}} \prod_{n=1}^\infty (1 + e^{\frac{2 \pi i}{3}} q^{\frac{n}{3}}) (1 + e^{-\frac{2 \pi i}{3}} q^{\frac{n}{3}})(1 - q^{\frac{n}{3}}) \\
&= \frac{1}{\sqrt{2 \tau}} q^{\frac{1}{24}}\sum_{k \in \mathbb{Z}} \Big( q^{6k^2+k} + q^{6k^2+5k+1}-2q^{6k^2+3k+\frac{1}{3}}  \Big),
\end{align*}
where the last equation follows by taking $z = q^{\frac{1}{6}}$ and $y = e^{\frac{2 \pi i}{3}} q^{\frac{1}{6}}$ in~\eqref{eq:jacobi-triple} and dividing both sides by $(1 + e^{-\frac{2 \pi i}{3}})$. Then, we can obtain Claim~\eqref{eq:eta-integral-2} using the identity $\int_0^\infty \frac{1}{\sqrt{2 \tau}} e^{-\frac{2\pi x^2 \tau}{3}} q^a d\tau = \frac{\sqrt{3}}{2 |x|} e^{-\sqrt{\frac{8 a}{3}} \pi x}$.
\end{proof}

\begin{remark}\label{rmk:expansion}

Here we explain the derivation of~\eqref{eq:expand-tilde-q} and~\eqref{eq:expand-q} from Section~\ref{subsec:main-result}. The expressions in terms of $q$ have been established in the proof of Lemma~\ref{lem:laplace-calculation}, while the expressions in terms of $\tilde q$ follow from the Poisson summation formula. 

\end{remark}

Next, we prove Lemma~\ref{lem:laplace-calculation-1}.

\begin{proof}[Proof of Lemma~\ref{lem:laplace-calculation-1}]
    We prove Lemma~\ref{lem:laplace-calculation-1} by using the inverse Laplace transform of the right-hand side of~\eqref{eq:lem7.3-1} to solve $y(\tau)$ and show that it equals the right-hand side of~\eqref{eq:thm-backbone-2} or~\eqref{eq:thm-backbone-1}. The relation $\lim_{\tau \to 0} y(\tau) = 1$ can be seen from the right-hand side of~\eqref{eq:thm-backbone-1}. First, we derive the expression in terms of $q$. For sufficiently large $x$, the right-hand side of~\eqref{eq:lem7.3-1} is
    \begin{equation}\label{eq:lem7.3-1-1}
    \begin{aligned}
    &=\frac{\sqrt{3}}{x}  \bigg( \sinh(\frac{2}{3}\pi x) \sinh(\pi x) \sum_{n=0}^\infty (-\frac{\sqrt{3}}{2}x )^n \sinh(\frac{4}{3} \pi x)^{-n-1} - \sinh(\frac{1}{3} \pi x) \bigg) \\
    &=\sqrt{3} \bigg( \sum_{m,n \geq 0} \frac{1}{2} (-\sqrt{3})^n \binom{n+m}{m} x^{n-1} (e^{\frac{5}{3} \pi x} - e^{\frac{1}{3} \pi x} - e^{-\frac{1}{3} \pi x} + e^{-\frac{5}{3} \pi x}) e^{-\frac{4}{3} (n+2m+1) \pi x } - \frac{1}{x}\sinh(\frac{1}{3} \pi x) \bigg).
    \end{aligned}
    \end{equation}
    In the second equality, we used the relation $\sinh(\frac{4}{3} \pi x)^{-n-1} = 2^{n+1} e^{-\frac{4}{3} (n+1) \pi x} \sum_{m=0}^\infty \binom{n+m}{m} e^{-\frac{8}{3} m \pi x }$. 
    
    For any integer $n \geq 0$, the following identity holds:
    $$
    x^{n-1} e^{-2x} = \frac{1}{\sqrt{\pi}} \int_0^\infty e^{-x^2 \tau - \frac{1}{\tau}} \sum_{k=0}^\infty (-\frac{1}{2})^k \binom{n}{2k} (2k-1)!! \times \tau^{k-n-\frac{1}{2}} d\tau.
    $$
    For any $u>0$, replacing $x$ with $\frac{u \pi}{2} x$ and $\tau$ with $\frac{8}{3 u^2 \pi} \tau$ in the above identity, we get
    \begin{equation}\label{eq:lem7.3-1-4}
    \begin{aligned}
        &x^{n-1} e^{- u \pi x} = \int_0^\infty e^{-\frac{2\pi x^2 \tau}{3}} g(\tau; n, u) d \tau \\
        \mbox{with}\quad &g(\tau; n,u)=q^{\frac{3}{8} u^2} \sum_{k=0}^\infty (-\frac{1}{2 \pi})^k \binom{n}{2k} (2k-1)!! \times 2^{3k-2n+\frac{1}{2}} 3^{n-k-\frac{1}{2}} u^{n-2k} \tau^{k-n-\frac{1}{2}}.
    \end{aligned}
    \end{equation}
    Applying the above inverse Laplace transform to~\eqref{eq:lem7.3-1-1} term by term, we formally have $y(\tau) \eta(2 i \tau)$ is
    \begin{equation}\label{eq:lem7.3-1-2}
    \begin{aligned}
    &= \sqrt{3} \sum_{m,n \geq 0} \frac{1}{2} (-\sqrt{3})^n \binom{n+m}{m} \sum_{k=0}^\infty (-\frac{1}{2 \pi})^k \binom{n}{2k} (2k-1)!! \times 2^{3k-2n+\frac{1}{2}} 3^{n-k-\frac{1}{2}} \tau^{k-n-\frac{1}{2}}\\
    &\qquad \quad \times\bigg((\frac{4}{3}(n+2m+1) - \frac{5}{3})^{n-2k}q^{\frac{3}{8}(\frac{4}{3}(n+2m+1) - \frac{5}{3})^2} - (\frac{4}{3}(n+2m+1) - \frac{1}{3})^{n-2k}q^{\frac{3}{8}(\frac{4}{3}(n+2m+1) - \frac{1}{3})^2} 
    \\
    &\qquad \qquad - (\frac{4}{3}(n+2m+1)+ \frac{1}{3})^{n-2k}q^{\frac{3}{8}(\frac{4}{3}(n+2m+1) + \frac{1}{3})^2} + (\frac{4}{3}(n+2m+1)+ \frac{5}{3})^{n-2k}q^{\frac{3}{8}(\frac{4}{3}(n+2m+1) + \frac{5}{3})^2} \bigg).
    \end{aligned}
    \end{equation}
    We justify this equation by showing that the Laplace transform of the right-hand side matches the right-hand side of~\eqref{eq:lem7.3-1} for sufficiently large $x$. Then, by the uniqueness of the Laplace transform, we see that~\eqref{eq:lem7.3-1-2} holds for any $\tau$. We first verify that the infinite summation on the right-hand side of~\eqref{eq:lem7.3-1-2} is absolutely convergent and the absolute summation can be bounded by $Ce^{C\tau}$ for some large $C>0$. When $\tau \leq 1$, the power of $q$ in~\eqref{eq:lem7.3-1-2} dominates other terms, so the absolute summation is bounded by a universal constant. When $\tau>1$, there exists $C>0$ such that the absolute summation for~\eqref{eq:lem7.3-1-2} is upper-bounded by
    \begin{equation}\label{eq:lem7.3-1-3}
    C \times \sum_{\substack{m,n \geq 0\\ 0 \leq k \leq \lfloor n/2 \rfloor}} C^{n+m} k^k \tau^{k-n} (n+2m)^{n-2k} e^{-\frac{(n+2m)^2}{3 \tau}} = C \times \sum_{\substack{m,n \geq 0\\ 0 \leq k \leq \lfloor n/2 \rfloor}} C^{n+m}(\frac{n+2m}{\tau})^n (\frac{k \tau}{(n+2m)^2})^k e^{-\frac{(n+2m)^2}{3 \tau}}.
    \end{equation}
    Let $A > 100$ be a constant to be chosen. When $n + 2m \geq A \tau$, we have $k \tau \leq (n+2m)^2$. Hence, the summand term on the right-hand side of~\eqref{eq:lem7.3-1-3} is upper-bounded by $C^{n+m}\exp((n+2m)( \log(\frac{n+2m}{\tau}) - \frac{n+2m}{3\tau}))$. We can fix $A$ sufficiently large such that this term is upper-bounded by $e^{-n-2m}$, and thus the summation over $n + 2m \geq A \tau$ is upper-bounded by some universal constant. Next consider the range where $n+2m < A \tau$. Using $k \leq n + 2m$, the summand term on the right-hand side of~\eqref{eq:lem7.3-1-3} can be bounded by $C^{n+m} \cdot A^n \cdot  \max\{1, \frac{\tau}{n+2m} \}^{n+2m} \leq C'^\tau$ for some $C'>0$. Therefore, the right-hand side of~\eqref{eq:lem7.3-1-3} can be bounded by $C e^{C\tau}$ for $\tau>1$, where we enlarge the value of $C$.
    
    Therefore, we can integrate the right-hand side of~\eqref{eq:lem7.3-1-2} against $e^{-\frac{2\pi x^2 \tau}{3}} dx$ and interchange the integral for all sufficiently large $x$. By~\eqref{eq:lem7.3-1-1} and~\eqref{eq:lem7.3-1-4}, we see that~\eqref{eq:lem7.3-1} holds with $y(\tau) \eta(2 i \tau)$ replaced by the right-hand side of~\eqref{eq:lem7.3-1-2}. Using the uniqueness of the Laplace transform, we obtain that~\eqref{eq:lem7.3-1-2} holds. Recall that $\eta(2 i \tau) = \frac{1}{\sqrt{2 \tau}} \eta(\frac{i}{2 \tau}) = \frac{1}{\sqrt{2 \tau}} q^{\frac{1}{24}} \prod_{n=1}^\infty(1-q^n)$. After simplifying~\eqref{eq:lem7.3-1-2}, we obtain the right-hand side of~\eqref{eq:thm-backbone-1}.

    Next, we derive the expression in terms of $\tilde q$, namely the right-hand side of~\eqref{eq:thm-backbone-2}. Replacing $\frac{2\pi x^2}{3}$ in~\eqref{eq:lem7.3-1} with $t$, it is equivalent to solving the Laplace transform $\int_0^\infty e^{-t\tau} y(\tau) \eta(2 i \tau) d\tau = g(t)$ for any $t>0$, where $g(t)$ is given by
    \begin{equation}\label{eq:lem7.3-2-1}
    g(t) = \frac{\sqrt{3}}{x} \bigg( \frac{\sinh(\frac{2}{3}\pi x) \sinh(\pi x)}{\sinh(\frac{4}{3} \pi x) + \frac{\sqrt{3}}{2} x} - \sinh(\frac{1}{3} \pi x) \bigg) \quad \mbox{with} \quad x = \sqrt{\frac{3 t}{2 \pi}}.
    \end{equation}
    Define $\sqrt{z}$ via arguments in $(-\frac{\pi}{2}, \frac{\pi}{2}]$ for $z \in \mathbb{C}$. Then, we can verify that $g(t)$ is a meromorphic function in $\mathbb{C}$. In particular, it is analytic on $(-\infty,0)$ except for one pole smaller than $-\frac{2}{3} \pi$, determined by the solution to $\sinh(4 \pi \sqrt{\frac{t}{6\pi}}) + \frac{3}{2}\sqrt{\frac{t}{2\pi}} = 0$. 
    By the inverse Laplace transform formula, we have
    \begin{equation}\label{eq:lem7.3-2-2}
    y(\tau) \eta(2 i \tau) = \frac{1}{2 \pi i} \int_{-i \infty}^{i \infty} e^{\tau t} g(t) dt \quad \mbox{for any $\tau>0$.}
    \end{equation}
    The right-hand side of~\eqref{eq:lem7.3-2-2} is absolutely convergent because from~\eqref{eq:lem7.3-2-1}, we can deduce the existence of $C>0$ such that $|g(t)| \leq C \exp(-\sqrt{|t|}/C)$ for all $t \in i\mathbb{R}$. Fix $\tau>0$. Next, we will deform the integral contour in~\eqref{eq:lem7.3-2-2} and apply Cauchy's residue theorem to show that
    \begin{equation}\label{eq:lem7.3-2-3}
    y(\tau) \eta(2 i \tau) = \sum_{s \in \mathcal{S}'} {\rm Res}(e^{\tau t} g(t), s)
    \end{equation}
    where $\mathcal{S}'$ is the set of poles of $g(t)$ in $\mathbb{C}$, and ${\rm Res}(e^{\tau t} g(t), s)$ is the residue of $e^{\tau t} g(t)$ at $t=s$. Specifically, $\mathcal{S}'$ consists of all the complex solutions to $\sinh(4 \pi \sqrt{\frac{t}{6\pi}}) + \frac{3}{2}\sqrt{\frac{t}{2\pi}} = 0$ except $0$ and $-\frac{2 \pi}{3}$. We refer to Lemma~\ref{lem:absolute-convergent} for the method to solve this equation and the proof that the right-hand side of~\eqref{eq:lem7.3-2-3} is absolutely convergent. The right-hand side of~\eqref{eq:thm-backbone-2} readily follows from~\eqref{eq:lem7.3-2-3} (note that $\mathcal{S}$ in Theorem~\ref{thm:backbone-crossing} is equivalent to $-\frac{1}{2 \pi} \mathcal{S}'$) and the identity $\eta(2 i \tau) = \tilde q^{\frac{1}{12}} \prod_{n=1}^\infty (1-\tilde q^n)$. 
    
    Now we prove~\eqref{eq:lem7.3-2-3}. We claim that for any $\epsilon>0$, there exists $R > \frac{1}{\epsilon}$ such that 
    \begin{equation}\label{eq:lem7.3-2-4}
    \sup_{\theta \in [\frac{\pi}{2}, \frac{3\pi}{2}]} |g(Re^{i\theta})| <\epsilon.
    \end{equation}
    We can first show that there exists a sequence $\{R_n\}_{n\geq 1}$ such that $\lim_{n \rightarrow \infty} R_n = \infty$ and $|\sinh(\frac{4}{3} \pi x) + \frac{\sqrt{3}}{2} x| \geq \frac{1}{100} \max \{|\sinh(\frac{4}{3} \pi x)|, |x| \}$ for all $x \in \{R_n e^{i \theta}: \theta \in [\frac{\pi}{4}, \frac{\pi}{2}] \cup (-\frac{\pi}{2}, -\frac{\pi}{4}]$ \}. Taking $R = \frac{2 \pi}{3} R_n^2$ for sufficiently large $n$ yields~\eqref{eq:lem7.3-2-4}. Using~\eqref{eq:lem7.3-2-4} and the fact that $|g(t)| \leq C \exp(-\sqrt{|t|}/C)$ for all $t \in i \mathbb{R}$, we can deform the integral contour of~\eqref{eq:lem7.3-2-2} to $\partial D$, where $D = \{ re^{i\theta}: 0 \leq r \leq R,  \frac{\pi}{2} \leq \theta \leq \frac{3 \pi}{2} \}$, with an error of at most $o_\epsilon(1)$. By Cauchy's residue theorem, we get $y(\tau) \eta(2 i \tau) = \sum_{s \in \mathcal{S}' \cap D} {\rm Res}(e^{\tau t} g(t), s) + o_\epsilon(1)$. Note that $\mathcal{S}' \subset \{z \in \mathbb{C}: {\rm Re} z < 0\}$ and the sum $\sum_{s \in \mathcal{S}'} {\rm Res}(e^{\tau t} g(t), s)$ is absolutely convergent for any fixed $\tau>0$. Hence, taking $\epsilon$ to 0 yields~\eqref{eq:lem7.3-2-3}.
\end{proof}

Finally, we solve the set $\mathcal{S}'$ (and $\mathcal{S}$) and show that the right-hand side of~\eqref{eq:lem7.3-2-3} is absolutely convergent.

\begin{lemma}\label{lem:absolute-convergent}
    The right-hand side of~\eqref{eq:lem7.3-2-3} is absolutely convergent.
\end{lemma}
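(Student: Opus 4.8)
The plan is to reduce the absolute convergence of~\eqref{eq:lem7.3-2-3} to two ingredients: a description of the set $\mathcal S'$ of poles of $g$ (in particular the rate at which their real parts tend to $-\infty$), and a crude bound on the residues. Recall from~\eqref{eq:lem7.3-2-1} that, with $x=\sqrt{3t/(2\pi)}$,
\[
g(t)=\frac{\sqrt3}{x}\Bigl(\frac{N(x)}{D(x)}-\sinh\tfrac{\pi x}{3}\Bigr),\qquad N(x)=\sinh\tfrac{2\pi x}{3}\,\sinh\pi x,\quad D(x)=\sinh\tfrac{4\pi x}{3}+\tfrac{\sqrt3}{2}x .
\]
Both $\frac1x\sinh\frac{\pi x}3$ and $\frac{N(x)}{xD(x)}$ are even functions of $x$, hence single-valued meromorphic functions of $t$, so the poles of $g$ come only from zeros of $D$, and a zero of $D$ is a pole of $g$ only if $N$ does not vanish there to at least the same order. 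Substituting $u=\frac{4\pi x}{3}$ (so that $t=\frac{3u^2}{8\pi}$) turns $D(x)=0$ into $F(u):=\sinh u+c_0u=0$ with $c_0=\tfrac{3\sqrt3}{8\pi}$, and $\mathcal S'$ is the set of images $t=\frac{3u^2}{8\pi}$ of the nonzero roots $u$ of $F$, apart from those giving removable singularities of $g$. The only removable one is $u=\pm i$, i.e.\ $t=-\tfrac{2\pi}3$, where $\sinh\pi x$ vanishes too; this is why $\tfrac13$ is deleted from $\mathcal S$. Since $F$ is entire it has finitely many roots in any bounded region, so it suffices to control the roots of large modulus.

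The first main step is to localize these roots. From $|\sinh u|\ge\sinh|{\rm Re}\,u|$ one sees $F(u)\ne0$ whenever $\sinh|{\rm Re}\,u|>c_0|u|$; combined with $|u|\le|{\rm Re}\,u|+|{\rm Im}\,u|$ this rules out roots with $|{\rm Re}\,u|\gtrsim|{\rm Im}\,u|$ and forces every root with $|{\rm Im}\,u|$ large to satisfy $|{\rm Re}\,u|\le\log|{\rm Im}\,u|+O(1)$. Writing $u=a+bi$ and splitting $F(u)=0$ into real and imaginary parts gives $\sinh a\cos b+c_0a=0$ and $\cosh a\sin b+c_0b=0$; the second equation forces $\cosh a\ge c_0|b|$, hence $|a|=\log|b|+O(1)$, and then the first gives $|\cos b|=c_0|a|/|\sinh a|=O(\log|b|/|b|)\to0$, so $b$ approaches the zeros of $\cos$, spaced by $\pi$. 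A parity check in the second equation selects one root (up to $u\mapsto-u$) near every other such zero, and a Rouch\'e comparison of $F$ with $\cosh$ on the rectangles $\{|{\rm Re}\,u|\le2\log|{\rm Im}\,u|,\ |{\rm Im}\,u-m\pi|\le\tfrac\pi2\}$ makes the counting rigorous. Since $F$ is odd with real coefficients, the large roots come in quadruples $\{u,-u,\bar u,-\bar u\}$, and under $t=\frac{3u^2}{8\pi}$ the $k$-th quadruple collapses to a conjugate pair $t_k,\bar t_k$ with $|{\rm Im}\,u_k|\asymp k$ and $|{\rm Re}\,u_k|=\log k+O(1)$, so that ${\rm Re}\,t_k=\tfrac{3}{8\pi}(({\rm Re}\,u_k)^2-({\rm Im}\,u_k)^2)\asymp-k^2$ and $|{\rm Im}\,t_k|\asymp k\log k$. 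The same splitting on the real and imaginary axes shows $\mathcal S$ has no element with negative real part (there $\sinh$ cannot equal a negative multiple of a positive number) and that the only real element in $(0,\tfrac49)$ is the backbone exponent, since $\sin(4\pi\sqrt{s/3})+\tfrac32\sqrt s$ is unimodal there with zeros $s=\tfrac13$ (removable) and $\beta_2$.

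The second step is the residue bound. Each large zero of $D$ is simple in $t$ (as $F'(u_k)=\cosh u_k+c_0\ne0$), so using $\frac{dt}{dx}=\frac{4\pi}{3}x$, $D'(x)=\frac{4\pi}{3}\cosh\frac{4\pi x}{3}+\frac{\sqrt3}{2}$ and simplifying with $\sinh u_k=-c_0u_k$ gives
\[
{\rm Res}\bigl(e^{\tau t}g(t),\,t_k\bigr)=e^{\tau t_k}\cdot\frac{\sqrt3\,\sinh\tfrac{u_k}{2}\,\sinh\tfrac{3u_k}{4}}{\cosh u_k+c_0}\,,
\]
which, after the substitution $t\mapsto-2\pi s$ with $\tilde q=e^{-2\pi\tau}$, is exactly the summand in~\eqref{eq:thm-backbone-2}. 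Using $|{\rm Re}\,u_k|=\log k+O(1)$ we get $|\sinh\tfrac{u_k}2\sinh\tfrac{3u_k}4|\le e^{\frac54|{\rm Re}\,u_k|}=O(k^{5/4})$, while $\cosh^2u_k=1+\sinh^2u_k=1+c_0^2u_k^2$ yields $|\cosh u_k+c_0|\asymp|u_k|\asymp k$; hence the $k$-th coefficient grows at most polynomially in $k$. Since $|e^{\tau t_k}|=e^{\tau\,{\rm Re}\,t_k}$ and ${\rm Re}\,t_k\asymp-k^2$, for every fixed $\tau>0$ we obtain $|{\rm Res}(e^{\tau t}g(t),t_k)|\le C_\tau\,k^{O(1)}e^{-c\tau k^2}$, so
\[
\sum_{s\in\mathcal S'}\bigl|{\rm Res}\bigl(e^{\tau t}g(t),s\bigr)\bigr|\le C_\tau\sum_{k\ge1}k^{O(1)}e^{-c\tau k^2}<\infty ,
\]
which is the claim; the same estimate shows the series~\eqref{eq:thm-backbone-2} converges absolutely. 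The only real obstacle I anticipate is the root localization in the second paragraph — turning the heuristic ``${\rm Re}\,u_k\approx\pm\log(2c_0|{\rm Im}\,u_k|)$, ${\rm Im}\,u_k\approx$ zeros of $\cos$'' into honest two-sided bounds, so that there are provably no stray roots with large real part and ${\rm Re}\,t_k$ provably grows quadratically in $k$; this I expect to handle via the argument principle on the rectangles above, comparing $F(u)=\sinh u+c_0u$ with $\sinh u$ on their boundaries. The residue computation and the final summation are then routine.
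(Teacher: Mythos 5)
Your argument follows essentially the same route as the paper's: extract the pole equation $\sinh u + c_0 u = 0$, split into real and imaginary parts, deduce the asymptotics $|{\rm Re}\,u_k| \sim \log k$ and $|{\rm Im}\,u_k| \sim k$, observe that ${\rm Re}\,t_k \asymp -k^2$ while the residue prefactor grows only polynomially in $k$, and conclude absolute convergence. Two remarks. First, the gap you flagged — turning the heuristic root localization into an honest count — is precisely what the paper handles, but by a cleaner device than Rouch\'e: using the first equation to solve for $b$ in terms of $a$ modulo $2\pi$, substituting into the second, and showing that the resulting single-variable expression
\[
\frac{8\pi}{3\sqrt3}\cosh(\pi a)\sqrt{1-\frac{27a^2}{64\sinh(\pi a)^2}}+\arccos\Bigl(-\frac{3\sqrt3\,a}{8\sinh(\pi a)}\Bigr)
\]
is strictly increasing in $a>0$ with value between $2\pi$ and $4\pi$ at $a=0^+$, so for each integer $n\ge 2$ there is exactly one solution $(a_n,b_n)$; this simultaneously locates the roots, establishes $a_n\sim\log n$, $b_n\sim n$, and rules out stray roots without any contour comparison. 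Your Rouch\'e strategy would also work but is more labor to make airtight. Second, a small slip: the removable singularity is at $u=\pm\frac{4\pi i}{3}$ (equivalently $x=\pm i$, $t=-\frac{2\pi}{3}$, $s=\frac13$), not at $u=\pm i$; with $u=i$ one has $F(i)=i(\sin 1+c_0)\neq 0$, so $u=\pm i$ is not even a zero of $F$. Your residue computation and final summation bound agree with the paper's.
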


\begin{proof}
    We first solve all the complex solutions to the equation $\sinh(4 \pi \sqrt{\frac{t}{6\pi}}) + \frac{3}{2} \sqrt{\frac{t}{2\pi}} = 0$. The set $\mathcal{S}'$ is given by all these solutions except $0$ and $-\frac{2 \pi}{3}$, and the set $\mathcal{S}$ in Theorem~\ref{thm:backbone-crossing} is given by $-\frac{1}{2 \pi} \mathcal{S}'$. Let $4 \sqrt{\frac{t}{6\pi}} = a + bi$ with $a \in \mathbb{R}_{\geq 0}$ and $b \in \mathbb{R}$. Then we have $\sinh(\pi (a + bi)) + \frac{3 \sqrt{3}}{8}(a + bi) = 0$. Separating the real and imaginary parts yields
\begin{equation}\label{eq:rmk-solve}
\begin{aligned}
\sinh(\pi a)\cos(\pi b) + \frac{3\sqrt{3}}{8}a = 0 \qquad \mbox{and} \qquad \cosh(\pi a)\sin(\pi b) + \frac{3\sqrt{3}}{8}b = 0.
\end{aligned}
\end{equation}
If $(a,b)$ is a solution, then $(a, -b)$ is also a solution. Hence, it suffices to solve~\eqref{eq:rmk-solve} for $a\geq 0$ and $b \geq 0$. If $a = 0$, then $b$ solves the equation $\sin( \pi b) + \frac{3 \sqrt{3}}{8} b = 0$, which has three non-negative solutions: 0, $\frac{4}{3}$, and one root in the interval $(\frac{4}{3}, \frac{5}{3})$. The first two solutions correspond to $x = 0, -\frac{2}{3}\pi$, and the last solution corresponds to the only real solution in $\mathcal{S}'$ smaller than $-\frac{2}{3}\pi$. If $a > 0$, then by the first equation in~\eqref{eq:rmk-solve} and $\sin( \pi b) < 0$, we have $\pi b = 2n \pi - \arccos(-\frac{3 \sqrt{3} a}{8 \sinh( \pi a)})$ for some integer $n \geq 1$. Then the second equation in~\eqref{eq:rmk-solve} becomes
\begin{equation}\label{eq:rmk-solve-1}
\frac{8 \pi }{3\sqrt{3}} \cosh(\pi a) \sqrt{1 - \frac{27 a^2 }{64 \sinh(\pi a)^2}} +  \arccos \Big( -\frac{3 \sqrt{3} a}{8 \sinh( \pi a)} \Big) = 2n \pi.
\end{equation}
The left-hand side of~\eqref{eq:rmk-solve-1} is increasing in $a$ and is between $2 \pi$ and $4 \pi$  at $a = 0$. Hence, this equation has exactly one solution for each $n \geq 2$. Solving these equations gives the set $\mathcal{S}'$ and also $\mathcal{S}$. 

Finally, we show that the sum on the right-hand side of~\eqref{eq:lem7.3-2-3} is absolutely convergent. Let $(a_n,b_n)$ be the solution to~\eqref{eq:rmk-solve-1} with respect to the integer $n \geq 2$. Then as $n \rightarrow \infty$, we have $a_n \sim \log n$ and $b_n \sim n$. By simple calculations, the summand term on the right-hand side of~\eqref{eq:lem7.3-2-3} corresponding to $t \in \mathcal{S}'$ is equal to
$$
\frac{\sqrt{3} \sinh(\frac{2 \pi}{3} \sqrt{\frac{3 t}{2 \pi}}) \sinh( \pi \sqrt{\frac{3 t}{2 \pi}})}{\cosh(\frac{4\pi }{3} \sqrt{\frac{3 t}{2 \pi}}) + \frac{3\sqrt{3}}{8 \pi}} \times e^{\tau t}.
$$
For integers $n \geq 2$, the real part of $t = \frac{3 \pi}{8}(a_n \pm b_n i)^2$ is proportional to $-n^2$. Moreover, for some universal constants $C,C'>0$,
$$
\Big{|}\frac{\sqrt{3} \sinh(\frac{2 \pi}{3} \sqrt{\frac{3 t}{2 \pi}}) \sinh( \pi \sqrt{\frac{3 t}{2 \pi}})}{\cosh(\frac{4\pi }{3} \sqrt{\frac{3 t}{2 \pi}}) + \frac{3\sqrt{3}}{8 \pi}} \Big{|} \leq C e^{C a_n} \leq n^{C'}.
$$
Therefore, the sum on the right-hand side of~\eqref{eq:lem7.3-2-3} is absolutely convergent.
\end{proof}

\bibliographystyle{alpha}
\bibliography{theta}

\end{document}